\definecolor{cite}{rgb}{0.30,0.60,1.00}
\definecolor{url}{rgb}{0.00,0.00,0.80}
\definecolor{link}{rgb}{0.40,0.10,0.20}
\newtheorem{theorem}{Theorem}[section]
\newtheorem{proposition}[theorem]{Proposition}
\newtheorem{lemma}[theorem]{Lemma}
\newtheorem{corollary}[theorem]{Corollary}
\theoremstyle{definition}
\theoremstyle{definition}
\newtheorem{remark}[theorem]{Remark}
\theoremstyle{definition}
\newtheorem{example}[theorem]{Example}
\newcommand{\cref}[1]{\zcref{#1}}
\newcommand{\Cref}[1]{\zcref[S]{#1}}
\newcommand{\cComplex}{\mathbb{C}}
\newcommand{\multiplicativegroup}[1]{#1^{\times}}
\newcommand{\Hom}{\mathrm{Hom}}
\newcommand{\Span}{\mathrm{span}}
\newcommand{\idmap}{\mathrm{id}}
\newcommand{\conjugate}[1]{\overline{#1}}
\newcommand{\indicatorFunction}[1]{\delta_{#1}}
\renewcommand{\abs}[1]{\left|#1\right|}
\newcommand{\sizeof}[1]{\left|#1\right|}
\renewcommand{\innerproduct}[2]{\left(#1,#2\right)}
\newcommand{\standardForm}[2]{\left\langle #1,#2\right\rangle}
\newcommand{\fieldCharacter}{\psi}
\newcommand{\centralCharacter}[1]{\omega_{#1}}
\newcommand{\Ind}[3]{\mathrm{Ind}_{#1}^{#2}\left(#3\right)}
\newcommand{\Whittaker}{\mathcal{W}}
\newcommand{\Contragradient}[1]{#1^{\vee}}
\newcommand{\besselFunction}{\mathcal{J}}
\newcommand{\SpehRepresentation}[2]{\Delta\left(#1, #2\right)}
\newcommand{\gbesselSpehFunction}[2]{\mathcal{BS}_{#1, #2}}
\newcommand{\besselSpehFunction}[2]{\mathcal{BS}_{\SpehRepresentation{#1}{#2}, \fieldCharacter}}
\newcommand{\specialBesselSpeh}[2][\fieldCharacter]{\mathcal{K}_{{#2}, {#1}}}
\newcommand{\fourierTransform}[2]{\mathcal{F}_{#1}#2}
\newcommand{\GKGammaFactor}[3]{\gamma^{\mathrm{GK}}\left(#1 \times #2, #3\right)}
\newcommand{\ShGammaFactor}[3]{\Gamma^{\mathrm{Sh}}\left(#1 \times #2, #3\right)}
\newcommand{\GKPreGammaFactor}[3]{\Gamma^{\mathrm{GK}}\left(#1 \times #2, #3\right)}
\newcommand{\gGJPreGammaFactor}[3]{\Gamma^{\mathrm{GJ}}\left(#1 \times #2, #3\right)}
\newcommand{\GJPreGammaFactor}[2]{\Gamma^{\mathrm{GJ}}\left(#1, #2\right)}
\newcommand{\genericRepGammaFactor}[3]{\Gamma^{\mathrm{gen}}\left(#1 \times #2, #3\right)}
\newcommand{\transpose}[1]{\, {}^{t}#1}
\newcommand{\inverseTranspose}[1]{#1^{\iota}}
\newcommand{\IdentityMatrix}[1]{I_{#1}}
\newcommand{\diag}{\mathrm{diag}}
\renewcommand{\trace}{\operatorname{tr}}
\newcommand{\GL}{\mathrm{GL}}
\newcommand{\UnipotentSubgroup}{U}
\newcommand{\UnipotentRadicalForWss}[2]{N_{\qty(#2^{#1})}}
\newcommand{\UnipotentRadicalForWssRecursion}[2]{\mathcal{Y}_{#2,#1}}
\newcommand{\UnipotentRadical}{N}
\newcommand{\UnipotentRadicalDeleted}[2]{\UnipotentRadicalForWss{#1}{#2}^\circ}
\newcommand{\ParabolicSubgroup}{P}
\newcommand{\ostar}{\!{\mathpalette\make@circled *}\!}
\newcommand{\make@circled}[2]{%
	\ooalign{$\m@th#1\smallbigcirc{#1}$\cr\hidewidth$\m@th#1#2$\hidewidth\cr}%
} 
\newcommand{\smallbigcirc}[1]{%
	\vcenter{\hbox{\scalebox{0.77778}{$\m@th#1\bigcirc$}}}%
}
\newcommand{\Erdelyi}{Erd{\'e}lyi}
\newcommand{\Toth}{T{\'o}th}
\newcommand{\finiteField}{\mathbb{F}}
\newcommand{\algebraicClosure}[1]{\overline{#1}}
\newcommand{\squareMatrix}{\operatorname{Mat}}
\newcommand{\Mat}[2]{\operatorname{Mat}_{#1 \times #2}}
\newcommand{\Steinberg}{\operatorname{St}}
\newcommand{\ProjectionOperator}{\operatorname{pr}}
\newcommand{\SymmetricGroup}{\mathfrak{S}}
\newcommand{\whittakerVector}[1]{v_{#1, \fieldCharacter}}
\newcommand{\WhittakerProjection}{\ProjectionOperator_{\mathrm{Wh}}}
\newcommand{\ParabolicForSpeh}[2]{P_{\left({#1}^{#2}\right)}}
\newcommand{\UnipotentForSpeh}[2]{N_{\left({#1}^{#2}\right)}}
\newcommand{\PoincarePolynomial}[2]{P_{#2}}
\newcommand{\localField}{F}
\newcommand{\WhittakerFunctional}[1]{\ell_{#1, \fieldCharacter}}
\newcommand{\gSpehWhittakerFunctional}[3]{\ell_{\SpehRepresentation{#1}{#3}, \fieldCharacterkc{#2}{#3}}}
\newcommand{\gShortSpehWhittakerFunctional}[3]{\ell_{\SpehRepresentation{#1}{#3}}}
\newcommand{\GaussSum}[2]{\mathcal{G}\left(#1, #2\right)}
\newcommand{\GKGaussSum}[3]{\mathcal{G}\left(#1 \times #2, #3\right)}
\newcommand{\kcNotation}[3]{(#1,#2)_{#3}}
\newcommand{\fieldCharacterkc}[2]{\fieldCharacter_{\qty(#2^{#1})}}
\newcommand{\zelevinskyCharacter}[1]{\fieldCharacter^{\mathrm{Zel}}_{#1}}
\newcommand{\ExoticKloosterman}{\mathrm{Kl}}
\newcommand{\zetaOperator}{\mathrm{Z}}
\newcommand{\dualZetaOperator}{\mathrm{Z}^{\ast}}
\title[Ginzburg--Kaplan gamma factors for $\GL_n$]{On Ginzburg--Kaplan gamma factors and Bessel--Speh functions for finite general linear groups}
\author{Oded Carmon}
\address{The Weizmann Institute of Science, Rehovot, Israel}
\email{oded.carmon@weizmann.ac.il}
\author{Elad Zelingher}
\address{Department of Mathematics, University of Michigan, 1844 East Hall, 530 Church Street, Ann Arbor, MI 48109-1043 USA}
\email{eladz@umich.edu}
\keywords{Matrix Kloosterman sums, Bessel functions}
\subjclass[2020]{20C33, 11L05, 11T24}
\begin{document}

\begin{abstract}
	We give a new construction of tensor product gamma factors for a pair of irreducible representations of $\GL_c\left(\finiteField_q\right)$ and $\GL_k\left(\finiteField_q\right)$. This construction is a finite field analog of a construction of doubling type due to Kaplan in the local field case~\cite[Appendix A]{kaplan2018} and due to Ginzburg in the global case~\cite{ginzburg2019tensor}, and it only assumes that one of the representations in question is generic. We use this construction to establish a relation between special values of Bessel functions attached to Speh representations of generic principal series representations and twisted matrix Kloosterman sums. Using this relation, we establish the multiplicativity identity of twisted matrix Kloosterman sums.
\end{abstract}

\maketitle

\section{Introduction}\label{sec:introduction}

In the representation theory of $p$-adic groups, a useful way of studying irreducible representations is by attaching invariants to them. Among the invariants one may attach to an irreducible representation, the most prominent ones are \emph{local $L$-factors}. Local $L$-factors are important for the definition of global $L$-functions of automorphic representations. Poles of local $L$-factors (and of their counterpart global $L$-functions) often encode information and provide insights about the representations in question. A good theory of local $L$-factors often goes through a corresponding theory of other invariants called \emph{local gamma factors}. Local gamma factors often appear as proportionality factors for certain functional equations. In some cases~\cite{shahidi1984fourier, shahidi1990proof, CaiFriedbergKaplan2022}, one starts with a definition of local gamma factors, and uses them in order to define the corresponding local $L$-factors.

Given a theory of local gamma factors for irreducible representations of a reductive $p$-adic group $G$, one can try to study its finite field version, that is, the analogous theory of gamma factors for irreducible representations of the $\finiteField$-points of $G$, where $\finiteField$ is a finite field. This can be seen as a ``toy model'' of the gamma factor theory of the $p$-adic group case.

A theory of gamma factors for finite fields is usually related to its counterpart theory for $p$-adic groups via level zero representations~\cite{Ye18, YeZeligher18}. However, these factors are interesting by themselves. Let us give some examples in a special case. For irreducible generic representations $\pi$ and $\sigma$ of $\GL_n\left(\finiteField\right)$ and $\GL_m\left(\finiteField\right)$, respectively, let $\genericRepGammaFactor{\pi}{\sigma}{\fieldCharacter}$ be the tensor product gamma factor of $\GL_n\left(\finiteField\right) \times \GL_m\left(\finiteField\right)$ attached to $\pi \otimes \sigma$ \cite{Roditty10, Nien14, SoudryZelingher2023}, with respect to a fixed non-trivial character $\fieldCharacter \colon \finiteField \to \multiplicativegroup{\cComplex}$. Then
\begin{itemize}
	\item One can find the number of times that an irreducible cuspidal representation $\sigma$ appears in the cuspidal support of $\pi$ by computing the absolute value $\abs{\genericRepGammaFactor{\pi}{\sigma^{\vee}}{\fieldCharacter}}$ (see~\cite[Theorem 1.3]{SoudryZelingher2023}).
	\item The converse theorem~\cite{Nien14, SoudryZelingher2023} implies that $\pi$ is determined by its central character and the values of gamma factors $\left(\genericRepGammaFactor{\pi}{\sigma}{\fieldCharacter}\right)_{\sigma}$ where $\sigma$ ranges over all irreducible cuspidal representations of $\GL_m\left(\finiteField\right)$ for every $1 \le m \le \frac{n}{2}$.
	\item Values of the Bessel function of $\pi$ can be computed recursively~\cite{Zelingher2023} using gamma factors of the form $\genericRepGammaFactor{\pi}{\sigma}{\fieldCharacter}$. This allows one to give a realization of $\pi$ \cite{AlperinJames1995}.
\end{itemize}

In this work, we study a finite field analog of a new construction for the tensor product gamma factor by Kaplan~\cite[Appendix A]{kaplan2018} for $\GL_c\left(\finiteField\right) \times \GL_k\left(\finiteField\right)$. This construction can be seen as a generalization of two classical families of constructions of gamma factors for $\GL_n\left(\finiteField\right) \times \GL_1\left(\finiteField\right)$. Let us review these two families. For the next section, let $\pi$ be an irreducible representation of $\GL_n\left(\finiteField\right)$ and let $\chi \colon \multiplicativegroup{\finiteField} \to \multiplicativegroup{\cComplex}$ be a character. 

\subsection{Classical constructions of gamma factors}

\subsubsection{First family: construction via matrix coefficients}
The first family consists of a construction by Kondo~\cite{Kondo1963} and Godement--Jacquet~\cite{GodementJacquet1972}. This construction works for any irreducible representation $\pi$. It considers the operator $$\GKGaussSum{\pi}{\chi}{\fieldCharacter} = q^{-\frac{n^2}{2}} \sum_{g \in \GL_n\left(\finiteField\right)} \chi\left(\det g\right) \fieldCharacter\left(\trace g^{-1}\right) \pi\left(g\right).$$
Since the assignment $g \mapsto \chi\left(\det g\right) \fieldCharacter\left(\trace g^{-1}\right)$ is a class function, the operator $\GKGaussSum{\pi}{\chi}{\fieldCharacter}$ lies in $\Hom_{\GL_n\left(\finiteField\right)}\left(\pi, \pi\right)$, and therefore by Schur's lemma there exists a complex number $\gGJPreGammaFactor{\pi}{ \chi}{\fieldCharacter} \in \cComplex$ such that $$\GKGaussSum{\pi}{\chi}{\fieldCharacter} = \gGJPreGammaFactor{\pi}{\chi}{\fieldCharacter} \cdot \idmap_{\pi}.$$
This gamma factor fits into a functional equation~\cite{Macdonald80}, which we will explain now. If $\chi^{-1}$ does not appear in the cuspidal support of $\pi$, then for any $f \colon \squareMatrix_n\left(\finiteField\right) \to \cComplex$ we have $$\sum_{g \in \GL_n\left(\finiteField\right)} \fourierTransform{\fieldCharacter}{f}\left(g\right) \chi^{-1}\left(\det g\right) \pi\left(g^{-1}\right) = \GJPreGammaFactor{\pi \times \chi}{\fieldCharacter} \sum_{g \in \GL_n\left(\finiteField\right)} f\left(g\right) \chi\left(\det g\right) \pi\left(g\right),$$
where $\fourierTransform{\fieldCharacter}{f} \colon \squareMatrix_n\left(\finiteField\right) \to \cComplex$ is the Fourier transform of $f$ (see \Cref{sec:godement-jacquet-functional-equation}).

In~\cite{Kondo1963}, Kondo explicitly computed $\gGJPreGammaFactor{\pi}{\chi}{\fieldCharacter}$ in terms of Gauss sums of the characters that correspond to the cuspidal support of $\pi$. 

\subsubsection{Second family: constructions via Whittaker models}
The second family of constructions assume that the representation $\pi$ is \emph{generic}. We briefly recall this notion. Assume that $n \ge 2$ and define a character of the upper triangular unipotent subgroup $\UnipotentSubgroup_n$ of $\GL_n\left(\finiteField\right)$ by the formula
$$\fieldCharacter \begin{pmatrix}
	1 & x_1 & \ast & \ast  & \ast \\
	& 1 & x_2 & \ast & \ast \\
	& & \ddots & \ddots & \ast  \\
	& & & 1 &  x_{n-1} \\
	& & & & 1
\end{pmatrix} = \fieldCharacter\left( \sum_{j=1}^{n-1} x_j \right).$$
The representation $\pi$ is generic if it admits a non-zero \emph{$\fieldCharacter$-Whittaker vector}, that is, if there exists $0 \ne v \in \pi$ such that $\pi\left(u\right) v = \fieldCharacter\left(u\right) v$ for every $u \in \UnipotentSubgroup_n$. A well known result of Gelfand--Graev is that in this case, such a vector $v$ is unique up to scalar multiplication. Let $\Whittaker\left(\pi, \fieldCharacter\right)$ be the unique subspace of $\Ind{\UnipotentSubgroup_n}{\GL_n\left(\finiteField\right)}{\fieldCharacter}$ that is isomorphic to $\pi$, this is the \emph{$\fieldCharacter$-Whittaker model of $\pi$}. Choose an inner product $\innerproduct{\cdot}{\cdot}$ on $\pi$ invariant to the action of $\GL_n\left(\finiteField\right)$ and consider the matrix coefficient $$\besselFunction_{\pi, \fieldCharacter}\left(g\right) = \frac{\innerproduct{\pi\left(g\right) v}{v}}{\innerproduct{v}{v}},$$
where $0 \ne v \in \pi$ is a $\fieldCharacter$-Whittaker vector. The matrix coefficient $\besselFunction_{\pi, \fieldCharacter}$ is independent of the choice of $v$ and of the inner product, and we call it the \emph{normalized Bessel function of $\pi$ with respect to $\fieldCharacter$}.

Using the Bessel function we may define a gamma factor for the pair $\pi \times \chi$ by the formula $$\genericRepGammaFactor{\pi}{\chi}{\fieldCharacter} = q^{\frac{n-2}{2}} \sum_{x \in \multiplicativegroup{\finiteField}} \besselFunction_{\pi, \fieldCharacter} \begin{pmatrix}
	& \IdentityMatrix{n-1}\\
	x
\end{pmatrix} \chi\left(x\right).$$
This gamma factor fits into a functional equation of Jacquet--Piatetski-Shapiro--Shalika~\cite{Jacquet1983rankin}: Let $W \in \Whittaker\left(\pi, \fieldCharacter\right)$ be an element of the $\fieldCharacter$-Whittaker model of $\pi$. Set
$$\zetaOperator\left(W,\chi\right) = \sum_{x \in \multiplicativegroup{\finiteField}} W \begin{pmatrix}
	x\\
	& \IdentityMatrix{n-1}
\end{pmatrix}\chi\left(x\right)$$ and \begin{equation*}
	\dualZetaOperator\left(W, \chi\right) = q^{-\frac{n-2}{2}} \sum_{\substack{x \in \multiplicativegroup{\finiteField}\\
			Y \in \Mat{1}{\left(n-2\right)}\left(\finiteField\right)}} W\begin{pmatrix}
		& 1\\
		& & \IdentityMatrix{n-2}\\		
		x & & Y
	\end{pmatrix} \chi\left(x\right).
\end{equation*} If $\chi^{-1}$ does not appear in the cuspidal support of $\pi$, then the following functional equation holds: $$\dualZetaOperator\left(W, \chi\right) = \genericRepGammaFactor{\pi}{\chi}{\fieldCharacter} \zetaOperator\left(W, \chi\right).$$

As explained in~\cite{SoudryZelingher2023}, the gamma factor $\genericRepGammaFactor{\pi}{\chi}{\fieldCharacter}$ can also be realized using Shahidi's construction~\cite{shahidi1984fourier} which considers the intertwining operator between the parabolically induced representations $\pi \circ \chi^{-1} \to \chi^{-1} \circ \pi$. We will not use this approach in this paper.

In~\cite{Nien17}, Nien computed the value of $\genericRepGammaFactor{\pi}{\chi}{\fieldCharacter}$ for an irreducible cuspidal representation $\pi$. In~\cite[Section 4.3]{SoudryZelingher2023}, Soudry and the second author used the multiplicativity property of this gamma factor and Nien's computation to establish an equality of this gamma factor and the one from the previous section.

We would like to mention that Nien's computation involves computing the special values $\besselFunction_{\pi, \fieldCharacter}\left(\begin{smallmatrix}
	& \IdentityMatrix{n-1}\\
	x
\end{smallmatrix}\right)$ appearing in the definition of $\genericRepGammaFactor{\pi}{\chi}{\fieldCharacter}$ for cuspidal representations $\pi$. A computation of this special value for general irreducible generic representations $\pi$ was done before by Curtis--Shinoda in~\cite{curtis2004zeta}. These special values of the Bessel function turn out to be exotic Kloosterman sums, which were defined and studied by Katz~\cite[Sections 8.8.4--8.8.7]{katz2016gauss} (see also~\cite[Page 152]{katz1993estimates}). In~\cite{SoudryZelingher2023} it was shown that the result of Curtis--Shinoda can be deduced by Nien's computation combined with the multiplicativity property of the gamma factor $\genericRepGammaFactor{\pi}{\chi}{\fieldCharacter}$.

Although this second family of constructions might seem more involved than the first family we discussed, it generalizes to any pair of generic representations of $\GL_n\left(\finiteField\right)$ and $\GL_m\left(\finiteField\right)$ for any $n$ and $m$. This was established for local fields in the pioneering works of Jacquet--Piatetski-Shapiro--Shalika~\cite{Jacquet1983rankin} and Shahidi~\cite{shahidi1984fourier} (which was later vastly generalized in \cite{shahidi1990proof}). The analogous theory of the construction of Jacquet--Piatetski-Shapiro--Shalika over finite fields was developed by Edva Roditty-Gershon in her master's thesis~\cite{Roditty10} and was studied later by Nien~\cite{Nien14, Nien17}, who used it to state and prove the converse theorem. The analogous theory of Shahidi's construction over finite fields was studied by Soudry and the second author~\cite{soudry1979, SoudryZelingher2023}.

\subsubsection{Constructions of doubling type}

In the last few years, constructions of doubling type of tensor product gamma factors over local fields appeared in the literature~\cite{CaiFriedbergGinzburgKaplan2019, CaiFriedbergKaplan2022, kaplan2018, GourevitchKaplan2023, GinzburgSoudry2020}. These constructions are integral representations for the $L$-function corresponding to the tensor product representation, attached to a pair of representations $\pi$ and $\tau$, of $G$ and $\GL_k$, respectively, where $G$ is a classical group. A remarkable feature of these constructions is that they only assume that $\tau$ is generic, and they do not assume anything about $\pi$ (in particular, they do not assume that $\pi$ is generic). This is crucial because irreducible cuspidal representations of $G$ need not be generic. This allowed Cai--Friedberg--Kaplan to give a new proof for the existence of a standard functorial lift for non-generic cuspidal automorphic representations of split classical groups~\cite{kaplan2018}. Another interesting feature of these constructions is that they do not use the representation $\tau$ directly, but instead go through a generalized Whittaker model of a Speh representation attached to it. We move to explain this point in the context of finite fields. 

Suppose that $k \ge 1$ and $c \ge 1$. Let $\UnipotentRadicalForWss{k}{c}$ be the unipotent radical of $\GL_{kc}\left(\finiteField\right)$ corresponding to the composition $\left(c^k\right)$. We define a character $\fieldCharacterkc{k}{c} \colon \UnipotentRadicalForWss{k}{c} \to \multiplicativegroup{\cComplex}$ by the formula $$\fieldCharacterkc{k}{c} \begin{pmatrix}
	\IdentityMatrix{c} & X_1 & \ast & \ast  & \ast \\
	& \IdentityMatrix{c} & X_2 & \ast & \ast \\
	& & \ddots & \ddots & \ast  \\
	& & & \IdentityMatrix{c} &  X_{k-1} \\
	& & & & \IdentityMatrix{c}
\end{pmatrix} = \fieldCharacter\left( \sum_{j=1}^{k-1} \trace X_j \right).$$
Let $\tau$ be an irreducible generic representation of $\GL_k\left(\finiteField\right)$. In his master's thesis~\cite{Carmon2023}, the first author defined the notion of the \emph{Speh representation} $\SpehRepresentation{\tau}{c}$. It is a representation of $\GL_{kc}\left(\finiteField\right)$. When $\tau$ is cuspidal, it is also known in the literature as the \emph{generalized trivial representation}. In the case that $\tau$ is cuspidal, $\SpehRepresentation{\tau}{c}$ is the irreducible subrepresentation of minimal dimension of the parabolic induction $\tau^{\circ c} = \tau \circ \dots \circ \tau$. The first author proved that for any irreducible generic representation $\tau$, the Speh representation $\SpehRepresentation{\tau}{c}$ admits a multiplicity one property with respect to $\fieldCharacterkc{k}{c}$: he showed that there exists a unique (up to scalar multiplication) $\kcNotation{k}{c}{\fieldCharacter}$ vector $0 \ne v \in \SpehRepresentation{\tau}{c}$, that is, a vector $v$ such that $\SpehRepresentation{\tau}{c}\left(u\right) v = \fieldCharacterkc{k}{c}\left(u\right)v$ for any $u \in \UnipotentRadicalForWss{k}{c}$.
Furthermore, for any $h\in\GL_c(\finiteField)$, this $v$ satisfies
\(
\SpehRepresentation{\tau}{c}(\diag^k(h))v
=
\centralCharacter{\tau}(\det h)v,
\)
where $\diag^k(h)=\diag(h,\dots,h)$ and $\centralCharacter{\tau}$ is the central character of $\tau$.
These results are analogous to the results of~\cite{CaiFriedbergGourevitchKaplan2023}.

Let $\centralCharacter{\tau}^{\diag^k\left(\GL_c\right)}$ be the character of $\diag^k\left(\GL_c(\finiteField)\right)$ given by ${\centralCharacter{\tau}^{\diag^k\left(\GL_c\right)}(\diag^k\left(h\right)) = \centralCharacter{\tau}\left(\det h\right)}.$
Similarly to before, denote by $\Whittaker\left(\SpehRepresentation{\tau}{c}, \fieldCharacterkc{k}{c}\right)$ the unique subspace of $$\Ind{\diag^k\left(\GL_c(\finiteField)\right)\ltimes\UnipotentRadicalForWss{k}{c}}{\GL_{kc}\left(\finiteField\right)}{\centralCharacter{\tau}^{\diag^k\left(\GL_c\right)}\otimes\fieldCharacterkc{k}{c}}$$ that is isomorphic to $\SpehRepresentation{\tau}{c}$. This is the \emph{$\kcNotation{k}{c}{\fieldCharacter}$~model of $\SpehRepresentation{\tau}{c}$}. Using the uniqueness of the $\kcNotation{k}{c}{\fieldCharacter}$ vector $v$, we may define an analogous Bessel function. Choose an inner product $\innerproduct{\cdot}{\cdot}$ on $\SpehRepresentation{\tau}{c}$ invariant to the $\GL_{kc}\left(\finiteField\right)$ action and define
$$\besselSpehFunction{\tau}{c}\left(g\right) = \frac{\innerproduct{\SpehRepresentation{\tau}{c}\left(g\right)v}{v}}{\innerproduct{v}{v}}.$$
We call $\besselSpehFunction{\tau}{c}$ the \emph{normalized Bessel--Speh function of $\SpehRepresentation{\tau}{c}$ with respect to $\fieldCharacter$}.

It is natural to ask at this point whether the Bessel--Speh function can be used in order to define gamma factors. It is also natural to ask what is the formula for the special values $\besselSpehFunction{\tau}{c}\left(\begin{smallmatrix}
	& \IdentityMatrix{\left(k-1\right)c}\\
	h
\end{smallmatrix}\right)$ for $h \in \GL_c\left(\finiteField\right)$. Are these Kloosterman sums of higher dimension?

\subsection{Main results}

\subsubsection{Ginzburg--Kaplan gamma factors}\label{subsec: GK gamma factors}

For any $h \in \GL_c\left(\finiteField\right)$, the function $\besselSpehFunction{\tau}{c}\left(g\right)$ is invariant under conjugation by $\diag^k\left(h\right) \in \GL_{kc}\left(\finiteField\right)$. It follows that the assignment $$h \mapsto \specialBesselSpeh{\tau}\left(h\right) \coloneqq  \begin{dcases}
	\besselSpehFunction{\tau}{c}\begin{pmatrix}
		& \IdentityMatrix{\left(k-1\right)c}\\
		h
	\end{pmatrix} & k \ge 2,\\
	\tau\left(\det h\right) \fieldCharacter\left(\trace h^{-1}\right) & k=1
\end{dcases}$$ is a class function of $\GL_c\left(\finiteField\right)$.  Given an irreducible representation $\pi$ of $\GL_c\left(\finiteField\right)$, we may consider the operator
$$\GKGaussSum{\pi}{\tau}{\fieldCharacter} = q^{\frac{\left(k-2\right)c^2}{2}} \sum_{h \in \GL_c\left(\finiteField\right)} \specialBesselSpeh{\tau}\left(h\right) \pi\left(h\right).$$
This operator is closely related to a new construction of the tensor product gamma factor due to Kaplan in the local case~\cite[Appendix A]{kaplan2018} and due to Ginzburg in the global case~\cite{ginzburg2019tensor}. By the discussion above, $\GKGaussSum{\pi}{\tau}{\fieldCharacter}$ lies in $\Hom_{\GL_c\left(\finiteField\right)}\left(\pi, \pi\right)$, and therefore by Schur's lemma is a scalar operator. Denote $$\GKGaussSum{\pi}{\tau}{\fieldCharacter} = \GKPreGammaFactor{\pi}{\tau}{\fieldCharacter} \cdot \idmap_{\pi},$$
where $\GKPreGammaFactor{\pi}{\tau}{\fieldCharacter} \in \cComplex$. We call  $\GKPreGammaFactor{\pi}{\tau}{\fieldCharacter}$ the \emph{Ginzburg--Kaplan gamma factor}.

We prove that this gamma factor fits into a functional equation that can be seen as a generalization of the previous functional equations discussed above (see \Cref{rem:gk-zeta-integral-with-function} and \Cref{thm: functional equation 1}. See also \Cref{thm: functional equation 2}).

\begin{theorem}
	Suppose that $k \ge 2$. For $W \in \Whittaker\left(\SpehRepresentation{\tau}{c}, \fieldCharacterkc{k}{c}\right)$ and $f \colon \squareMatrix_c\left(\finiteField\right) \to \cComplex$ let $$\zetaOperator\left(W, f, \pi \times \tau\right) = \sum_{g \in \GL_c\left(\finiteField\right)}
	W\begin{pmatrix}
		g \\
		& \IdentityMatrix{\left(k-1\right)c}
	\end{pmatrix} f\left(g\right) \pi\left(g\right)$$ and $$\dualZetaOperator\left(W, f, \pi \times \tau\right) = 	q^{-\frac{\left(k-3\right)c^2}{2}}
	\sum_{\substack{{X \in \Mat{c}{\left(k-2\right)c}\left(\finiteField\right)}\\
			Y \in \squareMatrix_c\left(\finiteField\right)\\
			{g \in \GL_c\left(\finiteField\right)}}}
	W \begin{pmatrix}
		& \IdentityMatrix{c}& \\
		& & \IdentityMatrix{\left(k-2\right)c} \\
		g & Y & X
	\end{pmatrix} \fourierTransform{\fieldCharacter^{-1}}{f}\left(g^{-1} Y\right)  \pi\left(g\right).$$
	Suppose that $\pi$ and $\Contragradient{\tau}$ do not have common elements in their cuspidal supports. Then
	$$\dualZetaOperator\left(W, f, \pi \times \tau\right) = \GKPreGammaFactor{\pi}{\tau}{\fieldCharacter} \cdot \zetaOperator\left(W, f, \pi \times \tau\right).$$
\end{theorem}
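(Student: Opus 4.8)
The plan is to derive the identity by transforming $\dualZetaOperator\left(W, f, \pi \times \tau\right)$ directly into a scalar multiple of $\zetaOperator\left(W, f, \pi \times \tau\right)$, and to recognize the scalar as $\GKPreGammaFactor{\pi}{\tau}{\fieldCharacter}$ at the very end by invoking the defining relation $\GKGaussSum{\pi}{\tau}{\fieldCharacter} = \GKPreGammaFactor{\pi}{\tau}{\fieldCharacter} \cdot \idmap_\pi$.

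First I would apply Fourier inversion on $\squareMatrix_c\left(\finiteField\right)$ to replace $\fourierTransform{\fieldCharacter^{-1}}{f}\left(g^{-1}Y\right)$ by a sum over $z \in \squareMatrix_c\left(\finiteField\right)$ of $f\left(z\right)$ against an additive character evaluated on $g^{-1}Y$ and $z$ (with the appropriate power of $q$), and then reinterpret that character value as $\fieldCharacterkc{k}{c}$ evaluated on a unipotent element of $\UnipotentRadicalForWss{k}{c}$, so that it can be absorbed into $W$. After a change of variables in $\GL_{kc}\left(\finiteField\right)$ — right translation of the argument of $W$ by elements of the $\left(k,c\right)$-Whittaker subgroup $\diag^k\left(\GL_c\left(\finiteField\right)\right) \ltimes \UnipotentRadicalForWss{k}{c}$ together with the Weyl element $\left(\begin{smallmatrix} & \IdentityMatrix{\left(k-1\right)c} \\ \IdentityMatrix{c} & \end{smallmatrix}\right)$ from the definition of $\specialBesselSpeh{\tau}$ — the equivariance $W\left(n \diag^k\left(h\right) x\right) = \centralCharacter{\tau}\left(\det h\right) \fieldCharacterkc{k}{c}\left(n\right) W\left(x\right)$ frees most of the entries $X$ and part of $Y$; the remaining entries are summed out, which is what reconciles the normalizing powers $q^{-\left(k-3\right)c^2/2}$, $q^{\left(k-2\right)c^2/2}$ and the Fourier normalization. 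A reproducing property of the Bessel--Speh function — coming from Schur orthogonality applied to the matrix coefficient $\innerproduct{\SpehRepresentation{\tau}{c}\left(g\right) v}{v} / \innerproduct{v}{v}$, valid for every $W$ in the model — then lets me rewrite the $W$-dependence so that, after introducing the new variable $h \in \GL_c\left(\finiteField\right)$, the summand factors into a value $\specialBesselSpeh{\tau}\left(h\right) \pi\left(h\right)$ times the expression $W\left(\begin{smallmatrix} g & \\ & \IdentityMatrix{\left(k-1\right)c}\end{smallmatrix}\right) f\left(g\right) \pi\left(g\right)$ occurring in $\zetaOperator$.

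At this stage $\dualZetaOperator\left(W, f, \pi \times \tau\right)$ becomes $\bigl( q^{\left(k-2\right)c^2/2}\sum_{h \in \GL_c\left(\finiteField\right)} \specialBesselSpeh{\tau}\left(h\right) \pi\left(h\right) \bigr) \circ \zetaOperator\left(W, f, \pi \times \tau\right)$ plus contributions indexed by the non-open Bruhat cells of $\GL_{kc}\left(\finiteField\right)$ with respect to the maximal parabolic of type $\left(c, \left(k-1\right)c\right)$. The first summand is $\GKGaussSum{\pi}{\tau}{\fieldCharacter} \circ \zetaOperator\left(W, f, \pi \times \tau\right) = \GKPreGammaFactor{\pi}{\tau}{\fieldCharacter} \cdot \zetaOperator\left(W, f, \pi \times \tau\right)$ by Schur's lemma, which is the desired formula; so everything comes down to showing that every non-open-cell term vanishes. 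I expect this to be the main obstacle: after unfolding, each such term is a lower-rank pairing between a Jacquet module of $\SpehRepresentation{\tau}{c}$ along (a subgroup of) $\UnipotentRadicalForWss{k}{c}$ and a twist of $\pi$, which is nonzero only when the cuspidal data of $\pi$ and of $\Contragradient{\tau}$ overlap — so the hypothesis that $\pi$ and $\Contragradient{\tau}$ have disjoint cuspidal supports is exactly what kills these terms. Executing this requires a careful analysis of the relevant Jacquet modules, plausibly via an exchange-of-roots manipulation (in the spirit of Ginzburg) to rearrange the unipotent summations, combined with the structure of $\SpehRepresentation{\tau}{c}$, and a final round of $q$-power bookkeeping. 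An alternative that sidesteps the explicit cell analysis is to first prove, by an equivariance argument based on the multiplicity-one property of the $\left(k,c\right)$-Whittaker model, that $\dualZetaOperator$ and $\zetaOperator$ are proportional with ratio depending only on $\pi \times \tau$ and $\fieldCharacter$, and then to evaluate the ratio using $f = \indicatorFunction{\IdentityMatrix{c}}$ and the distinguished $W$ attached to the $\left(k,c\right)$-Whittaker vector $v$ — for which $\zetaOperator$ collapses to $W\left(\IdentityMatrix{kc}\right) \cdot \idmap_\pi$ while $\dualZetaOperator$ unfolds directly into $\GKGaussSum{\pi}{\tau}{\fieldCharacter}$.
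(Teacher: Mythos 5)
Your second route is the one the paper actually follows, but as written it has a genuine gap at its decisive step. The paper does not deduce proportionality of $\dualZetaOperator$ and $\zetaOperator$ from the multiplicity-one property of the $\left(k,c\right)$ $\fieldCharacter$-Whittaker model of $\SpehRepresentation{\tau}{c}$, and indeed that property cannot suffice: it holds for every $\pi$, whereas the proportionality genuinely fails without the hypothesis on cuspidal supports (the paper notes that for $k=c$ and $\pi \cong \Contragradient{\tau}$ cuspidal the functional equation breaks down). What is actually needed, and what the paper proves, is a separate uniqueness theorem: the forms $\zetaOperator$ and $\dualZetaOperator$, viewed as trilinear forms on $\pi \otimes \Contragradient{\pi} \otimes \Whittaker\left(\SpehRepresentation{\tau}{c}, \fieldCharacterkc{k}{c}\right)$, are equivariant for the subgroup $e_1\left(\GL_c\left(\finiteField\right)\right) \cdot e_2\left(\GL_c\left(\finiteField\right)\right) \cdot \UnipotentRadicalDeleted{k}{c}$ against the character $1 \otimes \left(\centralCharacter{\tau}\right)_{\GL_c} \otimes \fieldCharacterkc{k}{c}$, and the space of such forms is at most one-dimensional. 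Proving that dimension bound is the bulk of the paper's argument: a stratification by the coefficient matrices of the characters by which $W$ transforms under suitable abelian unipotent subgroups, vanishing of the degenerate strata via Jacquet modules (this is exactly where the disjointness of the cuspidal supports of $\pi$ and $\Contragradient{\tau}$ enters) and via a root-exchange argument that exploits the $\left(k,c\right)$-type property of $\SpehRepresentation{\tau}{c}$ to rule out the intermediate-rank strata. Your first route acknowledges the same difficulty (killing the non-open-cell contributions) but only defers it to a "careful analysis of Jacquet modules"; moreover its engine, a "reproducing property" of $\besselSpehFunction{\tau}{c}$ valid for arbitrary $W$ obtained from Schur orthogonality, is not an identity you can actually invoke pointwise in the cell decomposition, and the paper avoids any such direct unfolding for precisely this reason.

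Two smaller points. First, the evaluation of the constant in your second route is essentially the paper's: it substitutes $W = \besselSpehFunction{\tau}{c}$, uses \Cref{prop:bessel-speh-on-diag-block-matrices} to collapse the $\zetaOperator$ side to $\idmap_{\pi}$, and unfolds the $\dualZetaOperator$ side into $\GKGaussSum{\pi}{\tau}{\fieldCharacter}$; this part of your sketch is sound. Second, to deduce the statement with a general $f$ from a proportionality proved for a single test pair, you need the uniqueness statement to cover the $f$-dependence as well; the paper handles this by first reducing the $f$-version to the $f$-free operators via $W \mapsto W_{\fourierTransform{\fieldCharacter^{-1}}{f}}$ (as in \Cref{rem:gk-zeta-integral-with-function}) and only then invoking uniqueness, a reduction your proposal does not address.
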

Notice that when $c=1$, $k=n$ and $f = 1$, we get the zeta integral of Jacquet--Piatetski-Shapiro--Shalika. Also notice that even though we only allow $k \ge 2$, the definition of $\zetaOperator$ also makes sense when $k=1$, and if $c=n$ we get that $\zetaOperator$ coincides with the zeta integral defined by Godement--Jacquet (however, in this case the dual integral $\dualZetaOperator$ does not make sense and requires a modification).

We prove that this gamma factor is multiplicative in both arguments (\Cref{thm:multiplicativitiy-in-first-variable,thm:multiplicativitiy-in-second-variable}). 
\begin{theorem}
	\begin{enumerate}
		\item Suppose that $k = k_1 + k_2$ and that for $j=1,2$, $\tau_j$ is an irreducible generic representation of $\GL_{k_j}\left(\finiteField\right)$, such that $\tau$ is an irreducible generic subrepresentation of the parabolic induction $\tau_1 \circ \tau_2$. Then 		$$\GKPreGammaFactor{\pi}{\tau}{\fieldCharacter} = \centralCharacter{\pi}\left(-1\right) \cdot \GKPreGammaFactor{\pi}{\tau_1}{\fieldCharacter} \cdot \GKPreGammaFactor{\pi}{\tau_2}{\fieldCharacter}.$$		
		\item Suppose that $c = c_1 + c_2$ and that for $j=1,2$, $\pi_j$ is an irreducible representation of $\GL_{c_j}\left(\finiteField\right)$, such that $\pi$ is an irreducible subrepresentation of the parabolic induction $\pi_1 \circ \pi_2$. Then
$$\GKPreGammaFactor{\pi}{\tau}{\fieldCharacter} = \GKPreGammaFactor{\pi_1}{\tau}{\fieldCharacter} \cdot \GKPreGammaFactor{\pi_2}{\tau}{\fieldCharacter}.$$		
	\end{enumerate}
\end{theorem}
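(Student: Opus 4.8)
The plan is to adapt the Jacquet--Piatetski-Shapiro--Shalika proof of multiplicativity of Rankin--Selberg gamma factors to the present doubling-type setup. Since $\GKGaussSum{\pi}{\tau}{\fieldCharacter}$ is already known to be the scalar operator $\GKPreGammaFactor{\pi}{\tau}{\fieldCharacter}\cdot\idmap_{\pi}$, in each case it is enough to determine that scalar, and the functional equation proved above does this: it characterizes $\GKPreGammaFactor{\pi}{\tau}{\fieldCharacter}$ through the proportionality $\dualZetaOperator\qty(W,f,\pi\times\tau)=\GKPreGammaFactor{\pi}{\tau}{\fieldCharacter}\cdot\zetaOperator\qty(W,f,\pi\times\tau)$ as $W$ runs over $\Whittaker\qty(\SpehRepresentation{\tau}{c},\fieldCharacterkc{k}{c})$ and $f$ over functions $\squareMatrix_c\qty(\finiteField)\to\cComplex$. (In the range where $\pi$ and $\Contragradient{\tau}$ share cuspidal support the functional equation is not available, but there one checks directly that all operators involved vanish, so the identities are trivial; hence we may assume the cuspidal-support hypothesis.) Apart from the functional equation, the argument needs two structural facts about Speh representations: their compatibility with parabolic induction --- $\SpehRepresentation{\tau}{c}$ occurs in $\SpehRepresentation{\tau_1}{c}\circ\SpehRepresentation{\tau_2}{c}$ when $\tau\hookrightarrow\tau_1\circ\tau_2$, and $\SpehRepresentation{\tau}{c}$ occurs in $\SpehRepresentation{\tau}{c_1}\circ\SpehRepresentation{\tau}{c_2}$ --- together with an explicit formula realizing the $(k,c)$-Whittaker--Speh functional on such an induced representation as an iterated sum, over a suitable unipotent subgroup of $\GL_{kc}\qty(\finiteField)$, of products of the two smaller Whittaker--Speh functionals. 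This last point is the finite-field analogue of the local $(k,c)$-functional computations underlying the doubling method, and it is what brings the combinatorics of the unipotent radical $\UnipotentRadicalForWss{k}{c}$ into play.

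For the second variable, realize $\SpehRepresentation{\tau}{c}$ inside $\SpehRepresentation{\tau_1}{c}\circ\SpehRepresentation{\tau_2}{c}$, induced from the parabolic of $\GL_{kc}\qty(\finiteField)$ of type $\qty(k_1 c,k_2 c)$, and pick a section $\varphi$ representing a given $W$. Using the factorization of the $(k,c)$-functional, the matrix entries of $W$ appearing in $\zetaOperator\qty(W,f,\pi\times\tau)$ and $\dualZetaOperator\qty(W,f,\pi\times\tau)$ become finite sums of products $W_1\qty(\cdot)W_2\qty(\cdot)$, with $W_j$ in the $(k_j,c)$-Whittaker--Speh model of $\SpehRepresentation{\tau_j}{c}$. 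Substituting this and reorganizing the sum over $\GL_c\qty(\finiteField)$ against the internal unipotent sum --- an Iwasawa-type unfolding inside the two $\GL_{k_j c}$ blocks, which is the bulk of the computation --- one finds that $\zetaOperator\qty(W,f,\pi\times\tau)$ factors as a composition of a $\zetaOperator$-operator for $\pi\times\tau_1$ with a $\dualZetaOperator$-operator for $\pi\times\tau_2$, and symmetrically for $\dualZetaOperator\qty(W,f,\pi\times\tau)$. Applying the functional equations for $\pi\times\tau_1$ and for $\pi\times\tau_2$ removes the two gamma factors; recombining the two halves into a single zeta operator forces a change of variables $g\mapsto-g$ on the $\GL_c\qty(\finiteField)$-variable (equivalently, the Fourier involution on $\squareMatrix_c\qty(\finiteField)$ produced by iterating the dual zeta operator, equivalently conjugation by the block-swap element of $\GL_{kc}\qty(\finiteField)$), and $\pi\qty(-\IdentityMatrix{c})=\centralCharacter{\pi}\qty(-1)$ accounts for the extra factor. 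This gives $\GKPreGammaFactor{\pi}{\tau}{\fieldCharacter}=\centralCharacter{\pi}\qty(-1)\GKPreGammaFactor{\pi}{\tau_1}{\fieldCharacter}\GKPreGammaFactor{\pi}{\tau_2}{\fieldCharacter}$.

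For the first variable the mechanism is the same, with $\pi$ realized inside $\Ind{\ParabolicSubgroup}{\GL_c\qty(\finiteField)}{\pi_1\otimes\pi_2}$ for $\ParabolicSubgroup$ the standard parabolic with Levi $\GL_{c_1}\qty(\finiteField)\times\GL_{c_2}\qty(\finiteField)$, so that vectors of $\pi$ are functions $\phi\colon\GL_c\qty(\finiteField)\to\underlyingVectorSpace{\pi_1}\otimes\underlyingVectorSpace{\pi_2}$. Evaluating $\zetaOperator\qty(W,f,\pi\times\tau)$ and $\dualZetaOperator\qty(W,f,\pi\times\tau)$ on such a $\phi$ at the base point and breaking the sum over $\GL_c\qty(\finiteField)$ along the Bruhat cells relative to $\ParabolicSubgroup$: on the open cell the sum factors --- using the quasi-invariance of the Whittaker--Speh function together with the occurrence of $\SpehRepresentation{\tau}{c}$ in $\SpehRepresentation{\tau}{c_1}\circ\SpehRepresentation{\tau}{c_2}$ to rewrite the restriction of $W$ to the relevant Levi --- and reproduces the composition of the zeta operators for $\pi_1\times\tau$ and $\pi_2\times\tau$; every other cell contributes $0$, because there an inner sum over a nontrivial part of the unipotent radical collapses to a sum of a nontrivial additive character over $\finiteField$. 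Applying the two functional equations and reading off the scalar on $\pi$ (using any $\phi$ nonzero at the base point) yields $\GKPreGammaFactor{\pi}{\tau}{\fieldCharacter}=\GKPreGammaFactor{\pi_1}{\tau}{\fieldCharacter}\GKPreGammaFactor{\pi_2}{\tau}{\fieldCharacter}$, with no correction term since no block-swap is needed.

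The principal obstacle is the structural lemma on the $(k,c)$-Whittaker--Speh functional of an induced representation --- expressing it as a completely explicit iterated sum of the smaller functionals over a specific unipotent subgroup --- and the unfolding it feeds into: this requires understanding how the unipotent radical $\UnipotentRadicalForWss{k}{c}$ meets the two parabolic subgroups in play and tracking the exact power of $q$ created by each collapse of a sum, since it is precisely the correctness of these normalizations that makes the final identities clean (free of stray powers of $q$) and that isolates the $\centralCharacter{\pi}\qty(-1)$. The vanishing of the non-open Bruhat cells in the first-variable argument is secondary but still needed, and is of the same nature as the analogous vanishing in the Jacquet--Piatetski-Shapiro--Shalika proof.
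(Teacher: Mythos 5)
Your strategy---characterizing $\GKPreGammaFactor{\pi}{\tau}{\fieldCharacter}$ through the functional equation and unfolding the zeta operators JPSS-style---has a genuine gap: it cannot prove the theorem as stated, because the theorem carries no hypothesis on cuspidal supports, while the functional equation is only available when the cuspidal supports of $\pi$ and $\Contragradient{\tau}$ (and, after your reductions, of the pairs $\pi\times\tau_j$ and $\pi_j\times\tau$) are disjoint. Your parenthetical fix for the excluded range---``one checks directly that all operators involved vanish, so the identities are trivial''---is false. The gamma factor is defined unconditionally by Schur's lemma applied to the Gauss sum operator $\GKGaussSum{\pi}{\tau}{\fieldCharacter}$, and it does not vanish when the supports meet: for $k=c$ and $\pi\isomorphic\Contragradient{\tau}$ cuspidal the paper records $\GKPreGammaFactor{\pi}{\tau}{\fieldCharacter}=-q^{-\frac{c}{2}}\ne 0$, and in that very case the functional equation itself fails (otherwise \Cref{cor:gamma-factor-norm-equal-1} would force absolute value $1$), so there is no proportionality of $\zetaOperator$ and $\dualZetaOperator$ to exploit, and the multiplicativity identity there is a genuine statement your argument does not reach. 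This is not a marginal case: the unconditional multiplicativity is exactly what is later used to identify $\GKGammaFactor{\pi}{\tau}{\fieldCharacter}$ with the $\varepsilon_0$-factor for arbitrary $\pi$.

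By contrast, the paper's proofs avoid the functional equation entirely and hence work without any disjointness hypothesis. For the second variable it proves \Cref{lem:convolution-of-bessel-speh-functions}, a convolution identity for the special values $\specialBesselSpeh{\tau}$ computed directly in the induced model $\SpehRepresentation{\tau_1}{c}\circ\SpehRepresentation{\tau_2}{c}$ (this is essentially the ``structural lemma'' you flag as the principal obstacle, and the constraint $xy=-h$ in it is what produces $\centralCharacter{\pi}\left(-1\right)$, rather than your block-swap/Fourier involution); substituting it into $\GKGaussSum{\pi}{\tau}{\fieldCharacter}$ gives the identity in two lines. For the first variable it proves the root-elimination identity \Cref{thm:unipotent-fourier-transofrm-bessel-speh} for $\sum_{n\in\UnipotentRadical_{\left(c_1,c_2\right)}}\specialBesselSpeh{\tau}\left(nh\right)$ together with the support statement \Cref{prop:support-of-whittaker-vector-in-induced-speh}, and applies the Gauss sum operator to the section of $\pi_1\circ\pi_2$ supported on $\ParabolicSubgroup_{\left(c_1,c_2\right)}$; this plays the role of your cell decomposition, but again with no appeal to $\zetaOperator$, $\dualZetaOperator$, or disjoint supports. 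The repair for your write-up is therefore to keep your unfolding of the $(k,c)$-functional on the induced representation, but feed it into the Gauss sum operator (a single sum over $\GL_c\left(\finiteField\right)$ against $\specialBesselSpeh{\tau}$) rather than into the functional equation; as written, the functional-equation route proves at most the disjoint-support case of the theorem.
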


\subsubsection{Relation to matrix Kloosterman sums}
The proof of these multiplicativity properties goes through proving certain identities for the Bessel--Speh function, which are described in the following theorem. These identities show a relation between the Bessel--Speh function and twisted matrix Kloosterman sums (\Cref{lem:convolution-of-bessel-speh-functions} and \Cref{thm:unipotent-fourier-transofrm-bessel-speh}).
\begin{theorem}\label{thm:bessel-speh-function-identities}
	Using the notations and the assumptions above we have
	\begin{enumerate}
		\item For any $h \in \GL_c\left(\finiteField\right)$, we have
		\begin{equation*}
			 \specialBesselSpeh{\tau}\left(h\right) = q^{- c^2} \sum_{\substack{x,y \in \GL_c\left(\finiteField\right)\\
					xy = -h}} \specialBesselSpeh{\tau_1}\left(x\right) \specialBesselSpeh{\tau_2}\left(y\right).
		\end{equation*}
		\item For any $h = \diag\left(h_1, h_2\right)$ where $h_1 \in \GL_{c_1}\left(\finiteField\right)$ and $h_2 \in \GL_{c_2}\left(\finiteField\right)$, we have,
		\begin{equation*}
			\begin{split}
				& q^{-c_1 c_2} \sum_{n \in \UnipotentRadical_{\left(c_1, c_2\right)}} \specialBesselSpeh{\tau}\left(nh\right)  = q^{- c_1 c_2 \left(k-1\right)} \specialBesselSpeh{\tau}\left(h_1\right) \specialBesselSpeh{\tau} \left(h_2\right).
			\end{split}
		\end{equation*}
	\end{enumerate}
\end{theorem}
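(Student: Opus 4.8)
The plan is to express the Bessel--Speh function as a normalized Whittaker function and then exploit an embedding of $\SpehRepresentation{\tau}{c}$ into a parabolic induction of smaller Speh representations, for which the $\left(k,c\right)$-Whittaker functional is multiplicative. Since $\SpehRepresentation{\tau}{c}$ is irreducible and its $\left(k,c\right)$ $\fieldCharacter$-Whittaker vector $v$ is unique up to scalar, the assignment $g \mapsto \innerproduct{\SpehRepresentation{\tau}{c}\left(g\right)v}{v}$ realizes $\SpehRepresentation{\tau}{c}$ in its $\left(k,c\right)$ $\fieldCharacter$-Whittaker model; writing $\ell = \innerproduct{\cdot}{v}$ for the associated (necessarily nonzero) $\left(k,c\right)$-Whittaker functional and $W_v = \ell\left(\SpehRepresentation{\tau}{c}\left(\cdot\right)v\right)$, we get $\besselSpehFunction{\tau}{c}\left(g\right) = W_v\left(g\right)/W_v\left(1\right)$, hence $\specialBesselSpeh{\tau}\left(h\right) = W_v\left(\begin{smallmatrix} & \IdentityMatrix{\left(k-1\right)c}\\ h \end{smallmatrix}\right)/W_v\left(1\right)$. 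Everything is thus reduced to evaluating $W_v$ at a fixed ``affine Weyl'' element carrying $h$ in its lower-left $c \times c$ block, where $W_v$ may equivalently be computed as the Bessel function of whatever induced representation contains $\SpehRepresentation{\tau}{c}$ as its unique generic constituent.

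For part (1), the structure theory of Speh representations provides an embedding $\SpehRepresentation{\tau}{c} \hookrightarrow \SpehRepresentation{\tau_1}{c} \circ \SpehRepresentation{\tau_2}{c} = \Ind{Q}{\GL_{kc}\left(\finiteField\right)}{\SpehRepresentation{\tau_1}{c} \otimes \SpehRepresentation{\tau_2}{c}}$, where $Q$ is the standard parabolic with Levi $\GL_{k_1 c}\left(\finiteField\right) \times \GL_{k_2 c}\left(\finiteField\right)$. Because $\left(c^k\right)$ refines $\left(k_1 c, k_2 c\right)$, the group $\UnipotentRadicalForWss{k}{c}$ factors as $\UnipotentRadicalForWss{k_1}{c} \times \UnipotentRadicalForWss{k_2}{c}$ times a complementary unipotent subgroup, and $\fieldCharacterkc{k}{c}$ restricts to $\fieldCharacterkc{k_1}{c} \otimes \fieldCharacterkc{k_2}{c}$ on the first factor; uniqueness of the $\left(k,c\right)$-Whittaker functional then pins down $\ell$ on the induced model as a sum over that complementary unipotent of $\left(\ell_{\tau_1} \otimes \ell_{\tau_2}\right)\left(f\left(\omega u\right)\right)$ twisted by a character, where $\ell_{\tau_j}$ is the $\left(k_j,c\right)$-Whittaker functional of $\SpehRepresentation{\tau_j}{c}$ and $\omega$ interchanges the last $c$-block of the $\tau_1$-part with the first $c$-block of the $\tau_2$-part. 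Applying this to $\SpehRepresentation{\tau}{c}\left(\begin{smallmatrix} & \IdentityMatrix{\left(k-1\right)c}\\ h \end{smallmatrix}\right)$ acting on the section $f_0$ supported on $Q$ with $f_0\left(1\right) = v_{\tau_1} \otimes v_{\tau_2}$, one performs the Bruhat decomposition of the elements that appear relative to $Q$: the complementary-unipotent sum collapses to one copy of $\GL_c\left(\finiteField\right)$ parametrized by $x$ with $y = -x^{-1}h$ (so $xy = -h$), the $Q$-parts act on $v_{\tau_1} \otimes v_{\tau_2}$ by the two representations to yield $\specialBesselSpeh{\tau_1}\left(x\right)$ and $\specialBesselSpeh{\tau_2}\left(y\right)$, and the surviving character contribution is $\fieldCharacterkc{k}{c}$ evaluated on the block $X_{k_1}$ crossing between the two Levi factors. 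That block is absorbed into $\specialBesselSpeh{\tau_j}$ exactly when $k_j \ge 2$; when $k_j = 1$ the representation $\SpehRepresentation{\tau_j}{c}$ is one-dimensional and carries no internal Whittaker datum, so the crossing contribution survives as $\fieldCharacter\left(\trace x^{-1}\right)$, respectively $\fieldCharacter\left(\trace y^{-1}\right)$ — this is the origin of the $\fieldCharacter^{\indicatorFunction{k_j,1}}$ factors. The sign $-h$ comes from $\omega$ together with the normalization of $\besselSpehFunction{}{}$ at $1$, and $q^{-c^2}$ is a direct count of the normalizing constants.

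For part (2), the argument runs in parallel, using instead an embedding $\SpehRepresentation{\tau}{c} \hookrightarrow \SpehRepresentation{\tau}{c_1} \circ \SpehRepresentation{\tau}{c_2}$ into a parabolic induction whose Levi is the copy of $\GL_{k c_1}\left(\finiteField\right) \times \GL_{k c_2}\left(\finiteField\right)$ positioned compatibly with the $\left(c^k\right)$-structure (within each $c$-block, the first $c_1$ and the last $c_2$ coordinates), together with the corresponding multiplicativity of the $\left(k,c\right)$-Whittaker functional — the finite-field analog of the $p$-adic results of \cite{CaiFriedbergGourevitchKaplan2023}. The summation over $n \in \UnipotentRadical_{\left(c_1, c_2\right)} \subseteq \GL_c\left(\finiteField\right)$ on the left is exactly what is needed so that, after the Bruhat decomposition of $\left(\begin{smallmatrix} & \IdentityMatrix{\left(k-1\right)c}\\ nh \end{smallmatrix}\right)$ relative to this parabolic, the inner sum realizes the projections onto the $\left(k,c_i\right)$-Whittaker lines and produces $\specialBesselSpeh{\tau}\left(h_1\right)\specialBesselSpeh{\tau}\left(h_2\right)$; matching the $q^{c_1 c_2}$ elements of $\UnipotentRadical_{\left(c_1,c_2\right)}$ against the $q$-normalizations appearing in $\besselSpehFunction{\tau}{c}$, $\besselSpehFunction{\tau}{c_1}$ and $\besselSpehFunction{\tau}{c_2}$ then yields the stated powers $q^{-c_1 c_2}$ and $q^{-c_1 c_2 \left(k-1\right)}$. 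Here no delta-function correction arises, since $\tau$ is generic with $k \ge 2$ and carries a genuine Whittaker datum in each block.

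I expect the main obstacle to be the first technical input: establishing — with the correct normalizations — the multiplicativity of the $\left(k,c\right)$-Whittaker functional on these induced models (the finite-field counterpart of the $p$-adic multiplicativity, which in turn requires ruling out contributions of the non-generic Bruhat cells), and then carrying out the Bruhat-cell bookkeeping for the specific elements $\left(\begin{smallmatrix} & \IdentityMatrix{\left(k-1\right)c}\\ h \end{smallmatrix}\right)$ while tracking precisely the character $\fieldCharacterkc{k}{c}$ on the crossing block (the source of the $\fieldCharacter^{\indicatorFunction{k_j,1}}$ factors in part (1)), the sign, and every power of $q$.
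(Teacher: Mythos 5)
Your overall route for both parts is the paper's: embed $\SpehRepresentation{\tau}{c}$ into $\SpehRepresentation{\tau_1}{c}\circ\SpehRepresentation{\tau_2}{c}$ (resp.\ $\SpehRepresentation{\tau}{c_1}\circ\SpehRepresentation{\tau}{c_2}$), use uniqueness of the $\left(k,c\right)$ $\fieldCharacter$-Whittaker datum, and do the Bruhat-cell bookkeeping at the element $\left(\begin{smallmatrix}&\IdentityMatrix{\left(k-1\right)c}\\ h\end{smallmatrix}\right)$. But in part (1) there is a concrete error in the execution: you apply the Jacquet-type Whittaker functional to the \emph{standard section} $f_0$ supported on the parabolic $Q$, whereas the Bessel--Speh function is the Whittaker functional applied to the $\left(k,c\right)$ $\fieldCharacter$-Whittaker \emph{vector} of the induced representation, which is the function supported on the open cell $Q\,\omega\,\UnipotentRadicalForWss{k}{c}$ with $\omega=\left(\begin{smallmatrix}&\IdentityMatrix{k_1c}\\ \IdentityMatrix{k_2c}\end{smallmatrix}\right)$ (equivalently, one computes the matrix coefficient $\innerproduct{\rho\left(g\right)f}{f}$ of that vector, as the paper does in its proof of the convolution identity). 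With $f_0$ the support condition degenerates: already for $c=1$, $k_1=k_2=1$, $\tau\subset\alpha_1\circ\alpha_2$, one finds $\omega u g\in Q$ only for $u=1$, so $\ell\left(\rho\left(g\right)f_0\right)=\alpha_1\left(h\right)$, not the Kloosterman-type sum the theorem asserts, and moreover $\ell\left(f_0\right)=0$, so your normalization $W_v\left(g\right)/W_v\left(1\right)$ is not even available. The collapse you describe ``to one copy of $\GL_c$ parametrized by $x$ with $y=-x^{-1}h$'' is exactly what happens when the vector is the open-cell Whittaker vector (there the support condition is invertibility of the $c\times c$ corner block, which becomes $x$), so the fix is local, but as written the proposal computes the wrong Whittaker function. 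A smaller inaccuracy: the relevant Weyl element is the full block swap above, not an element interchanging only the adjacent $c$-blocks of the two factors.

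For part (2) your sketch is aligned with the paper's argument (realize the $\left(k,c\right)$ functional recursively on $\SpehRepresentation{\tau}{c_1}\circ\SpehRepresentation{\tau}{c_2}$ and let the average over $\UnipotentRadical_{\left(c_1,c_2\right)}$ produce the factorization), but the substantive content there is precisely what you defer: the root-elimination argument showing that this average kills the auxiliary unipotent sum $\mathcal{Y}$ in the recursive functional, reducing everything to the single value $f\left(\kappa\left(\begin{smallmatrix}&\IdentityMatrix{\left(k-1\right)c}\\ h\end{smallmatrix}\right)\right)$, together with the non-vanishing $f\left(\kappa\right)\neq 0$ which fixes the normalization $q^{-\binom{k}{2}c_1c_2}$; without these steps the power of $q$ and the factorization are not established. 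So: same strategy as the paper, but part (1) as written uses the wrong vector and would fail, and part (2) omits the key root-elimination and non-vanishing inputs.
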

Using the first identity repeatedly implies that if $\tau$ is a subrepresentation of the parabolic induction $\alpha_1 \circ \dots \circ \alpha_k$, where $\alpha_1, \dots, \alpha_k \colon \multiplicativegroup{\finiteField} \to \multiplicativegroup{\cComplex}$ are characters (i.e., $\tau$ is a generic principal series representation), then for any $h \in \GL_c\left(\finiteField\right)$,
$$ \specialBesselSpeh{\tau}\left(h\right) = q^{-\left(k-1\right)c^2} \sum_{\substack{x_1,\dots,x_k \in \GL_c\left(\finiteField\right)\\
		x_1 \cdot \dots \cdot x_k = \left(-1\right)^{k-1} h^{-1} }} \left(\prod_{j=1}^k \alpha_j^{-1}\left(\det x_j\right)\right) \fieldCharacter\left(\sum_{j=1}^k \trace x_j\right).$$
The sum appearing on the right hand side is the twisted matrix Kloosterman sum $\ExoticKloosterman(\alpha^{-1}, \fieldCharacter, \left(-1\right)^{k-1} h^{-1})$ studied in ~\cite{Zelingher2023}, where $\alpha = \alpha_1 \times \dots \times \alpha_k$.

As an immediate corollary of the second identity of \Cref{thm:bessel-speh-function-identities}, we get the following multiplicativity property of matrix Kloosterman sums (see \Cref{thm:multiplicativity-of-exotic-kloosterman-sums}).
\begin{theorem}\label{thm:multiplicativity-of-matrix-kloosterman-sums}
	If $h_1 \in \GL_{c_1}\left(\finiteField\right)$ and $h_2 \in \GL_{c_2}\left(\finiteField\right)$ have no common eigenvalues over the algebraic closure $\algebraicClosure{\finiteField}$, then for $h = \diag\left(h_1, h_2\right)$ we have
	$$\ExoticKloosterman\left(\alpha, \fieldCharacter, h\right) = q^{\left(k-1\right) c_1 c_2} \cdot \ExoticKloosterman\left(\alpha, \fieldCharacter, h_1\right) \cdot \ExoticKloosterman\left(\alpha, \fieldCharacter, h_2\right).$$
\end{theorem}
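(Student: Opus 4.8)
The plan is to read off \Cref{thm:multiplicativity-of-matrix-kloosterman-sums} from part (2) of \Cref{thm:bessel-speh-function-identities}, using the dictionary recorded right after that theorem between special values $\specialBesselSpeh{\tau}$ of Bessel--Speh functions of generic principal series and twisted matrix Kloosterman sums. We may assume $k \ge 2$; for $k = 1$ the claim is immediate, since then $\ExoticKloosterman\left(\alpha, \fieldCharacter, g\right) = \alpha_1\left(\det g\right)\fieldCharacter\left(\trace g\right)$ and both $\det$ and $\trace$ respect block-diagonal decompositions. Given $\alpha = \alpha_1 \times \dots \times \alpha_k$, I would let $\tau$ be the unique irreducible generic constituent of the parabolic induction $\alpha_1^{-1} \circ \dots \circ \alpha_k^{-1}$ of $\GL_k\left(\finiteField\right)$; since complex representations of $\GL_k\left(\finiteField\right)$ are semisimple, $\tau$ is in fact a subrepresentation, so iterating part (1) of \Cref{thm:bessel-speh-function-identities} as in the text applies to $\tau$ and yields, for every $m \ge 1$ and every $g \in \GL_m\left(\finiteField\right)$,
\[
\ExoticKloosterman\!\left(\alpha, \fieldCharacter, g\right) = q^{\left(k-1\right)m^2}\,\specialBesselSpeh{\tau}\!\left(\left(-1\right)^{k-1} g^{-1}\right),
\]
the point being that passing to the inverse characters turns the $\alpha^{-1}$ in the displayed formula of the text into our $\alpha$. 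This holds with one and the same $\tau$ for $m = c_1$, $m = c_2$, and $m = c_1 + c_2$.

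The only substantive input is a conjugacy lemma, which I expect to be the main obstacle to set up cleanly. Suppose $a \in \GL_{c_1}\left(\finiteField\right)$ and $b \in \GL_{c_2}\left(\finiteField\right)$ have no common eigenvalue over $\algebraicClosure{\finiteField}$. Then for every $n \in \UnipotentRadical_{\left(c_1, c_2\right)}$ the matrix $n\,\diag\left(a, b\right)$ is $\GL_{c_1+c_2}\left(\finiteField\right)$-conjugate to $\diag\left(a, b\right)$: writing $n$ in block form with identity diagonal blocks and upper-right block $N \in \Mat{c_1}{c_2}\left(\finiteField\right)$, the matrix $n\,\diag\left(a,b\right)$ is block upper triangular with diagonal blocks $a, b$ and upper-right block $Nb$, and conjugating by the block-unipotent element with upper-right block $X$ replaces that block by $Nb - \left(aX - Xb\right)$. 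The Sylvester operator $X \mapsto aX - Xb$ on $\Mat{c_1}{c_2}\left(\finiteField\right)$ is invertible precisely when $a$ and $b$ have disjoint spectra — its characteristic polynomial is $\prod_{i,j}\left(t - \left(\lambda_i - \mu_j\right)\right)$ for $\lambda_i$ the eigenvalues of $a$ and $\mu_j$ those of $b$ over $\algebraicClosure{\finiteField}$, and the vanishing of its determinant is insensitive to the field extension — so a suitable $X \in \Mat{c_1}{c_2}\left(\finiteField\right)$ kills the block. I would apply this with $\left(a, b\right) = \left(h_1', h_2'\right)$ where $h_j' \coloneqq \left(-1\right)^{k-1} h_j^{-1}$; these have disjoint spectra over $\algebraicClosure{\finiteField}$ because $h_1, h_2$ do and $\lambda \mapsto \left(-1\right)^{k-1}\lambda^{-1}$ is a bijection of $\multiplicativegroup{\algebraicClosure{\finiteField}}$.

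To finish, put $h = \diag\left(h_1, h_2\right)$, $h_j' = \left(-1\right)^{k-1} h_j^{-1}$, $h' = \diag\left(h_1', h_2'\right)$, and note $\left(-1\right)^{k-1}\left(h_j'\right)^{-1} = h_j$ and $\left(-1\right)^{k-1}\left(h'\right)^{-1} = h$. Since $\specialBesselSpeh{\tau}$ is a class function of $\GL_{c_1+c_2}\left(\finiteField\right)$, the conjugacy lemma gives $\specialBesselSpeh{\tau}\left(nh'\right) = \specialBesselSpeh{\tau}\left(h'\right)$ for all $n \in \UnipotentRadical_{\left(c_1, c_2\right)}$, so $\sum_{n \in \UnipotentRadical_{\left(c_1, c_2\right)}} \specialBesselSpeh{\tau}\left(nh'\right) = q^{c_1 c_2}\specialBesselSpeh{\tau}\left(h'\right)$ because $\sizeof{\UnipotentRadical_{\left(c_1, c_2\right)}} = q^{c_1 c_2}$. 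Feeding this into part (2) of \Cref{thm:bessel-speh-function-identities} applied to the pair $\left(h_1', h_2'\right)$ collapses that identity to $\specialBesselSpeh{\tau}\left(h'\right) = q^{-\left(k-1\right) c_1 c_2}\,\specialBesselSpeh{\tau}\left(h_1'\right)\,\specialBesselSpeh{\tau}\left(h_2'\right)$. Now rewrite each of the three Bessel--Speh values through the dictionary of the first paragraph (at sizes $c_1 + c_2$, $c_1$, $c_2$), namely $\specialBesselSpeh{\tau}\left(h'\right) = q^{-\left(k-1\right)\left(c_1+c_2\right)^2}\ExoticKloosterman\left(\alpha, \fieldCharacter, h\right)$ and likewise for the two factors; collecting the powers of $q$ and using $\left(c_1 + c_2\right)^2 - c_1^2 - c_2^2 - c_1 c_2 = c_1 c_2$ yields exactly
\[
\ExoticKloosterman\!\left(\alpha, \fieldCharacter, h\right) = q^{\left(k-1\right) c_1 c_2}\,\ExoticKloosterman\!\left(\alpha, \fieldCharacter, h_1\right)\,\ExoticKloosterman\!\left(\alpha, \fieldCharacter, h_2\right).
\]
Beyond the conjugacy lemma, the only point requiring care is the consistent tracking of the signs $\left(-1\right)^{k-1}$ and the inversions built into the dictionary.
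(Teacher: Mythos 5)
Your proposal is correct and follows essentially the same route as the paper: the paper also combines the second identity of \Cref{thm:bessel-speh-function-identities} (i.e.\ \Cref{thm:unipotent-fourier-transofrm-bessel-speh}), the principal-series dictionary obtained by iterating the first identity (with $\tau$ built from the inverse characters, exactly as the paper's \Cref{thm:multiplicativity-of-exotic-kloosterman-sums} uses the contragredient cuspidal support), and the fact that $nh$ is conjugate to $h$ when $h_1,h_2$ have disjoint spectra, so the unipotent average collapses to $q^{c_1c_2}$ times the class-function value. The only differences are cosmetic: you spell out the Sylvester-equation proof of the conjugacy step, which the paper merely asserts, and you carry out the sign and $q$-power bookkeeping explicitly.
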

This was proved by \Erdelyi{}--\Toth{} for the special case $k =2$ and $\alpha_1 = \alpha_2 = 1$ in~\cite[Theorem 1.1]{erdelyi2021matrix}. However, our proof is representation theoretical and based on $\kcNotation{k}{c}{\fieldCharacter}$ vectors for Speh representations, and our result is more general, as it holds for any $k$ and any $\alpha$.

It is now natural to consider generic representations $\tau$ that are not principal series representations. Is the special value $\specialBesselSpeh{\tau}\left(h\right)$ also some sort of matrix Kloosterman sum? In a sequel paper \cite{Zelingher2025}, the second author answers this question affirmatively. He defines the notion of exotic matrix Kloosterman sums and explains how to attach to each irreducible generic representation $\tau$ of $\GL_k\left(\finiteField\right)$ an exotic matrix Kloosterman sum such that a similar identity holds for $\specialBesselSpeh{\tau}\left(h\right)$.

\subsubsection{A new converse theorem}
Using the relation between the Ginzburg--Kaplan gamma factor and the tensor product $\varepsilon_0$-factor, we are able to deduce a new converse theorem, based on the special values $\specialBesselSpeh{\tau}$ (\Cref{thm:converse-theorem-based-on-bessel-speh-representation}).
\begin{theorem}
	Let $\tau_1$ and $\tau_2$ be irreducible generic representations of $\GL_k\left(\finiteField\right)$ with the same central character. Suppose that for every $1 \le c \le \frac{k}{2}$ and every $h \in \GL_c\left(\finiteField\right)$, $$\specialBesselSpeh{\tau_1}\left(h\right) = \specialBesselSpeh{\tau_2}\left(h\right).$$
	Then $\tau_1$ and $\tau_2$ are isomorphic.
\end{theorem}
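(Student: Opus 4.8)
The plan is to translate the hypothesis on special values of Bessel--Speh functions first into an equality of Ginzburg--Kaplan gamma factors, then into an equality of tensor product gamma factors, and finally to invoke Nien's converse theorem over finite fields. First, fix $c$ with $1\le c\le k/2$ and an irreducible representation $\pi$ of $\GL_c(\finiteField)$. Taking the trace of the scalar identity $\GKGaussSum{\pi}{\tau}{\fieldCharacter}=\GKPreGammaFactor{\pi}{\tau}{\fieldCharacter}\cdot\idmap_{\pi}$ from \Cref{subsec: GK gamma factors} yields
$$
q^{\frac{(k-2)c^2}{2}}\sum_{h\in\GL_c(\finiteField)}\specialBesselSpeh{\tau}(h)\,\trace\pi(h)=\dim(\pi)\cdot\GKPreGammaFactor{\pi}{\tau}{\fieldCharacter}.
$$
Since $h\mapsto\specialBesselSpeh{\tau}(h)$ is a class function on $\GL_c(\finiteField)$ and the irreducible characters form a basis of the space of class functions, the restriction of $\specialBesselSpeh{\tau}$ to $\GL_c(\finiteField)$ both determines and is determined by the collection $\{\GKPreGammaFactor{\pi}{\tau}{\fieldCharacter}:\pi\in\Irr(\GL_c(\finiteField))\}$. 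Hence the hypothesis that $\specialBesselSpeh{\tau_1}$ and $\specialBesselSpeh{\tau_2}$ agree on $\GL_c(\finiteField)$ for all $1\le c\le k/2$ is equivalent to
$$
\GKPreGammaFactor{\pi}{\tau_1}{\fieldCharacter}=\GKPreGammaFactor{\pi}{\tau_2}{\fieldCharacter}\qquad\text{for all }1\le c\le k/2\text{ and all }\pi\in\Irr(\GL_c(\finiteField)),
$$
and in particular for all irreducible \emph{generic} $\pi$ in this range, which is all that will be needed.

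Next, I would feed this into the relation between the Ginzburg--Kaplan gamma factor and the tensor product $\varepsilon_0$-factor established earlier in the paper. For irreducible generic $\pi$, that relation identifies $\GKPreGammaFactor{\pi}{\tau}{\fieldCharacter}$ with the Jacquet--Piatetski-Shapiro--Shalika tensor product gamma factor $\genericRepGammaFactor{\pi}{\tau}{\fieldCharacter}$ (equivalently, with $\EpsilonFactor{\pi}{\tau}{\fieldCharacter}$) up to an explicit nonzero scalar depending only on $c$, $k$, $q$, $\centralCharacter{\pi}$ and $\centralCharacter{\tau}$; this is obtained by matching the functional equation of \Cref{thm: functional equation 1} against the JPSS functional equation and propagating through the multiplicativity of $\GKPreGammaFactor{\cdot}{\cdot}{\fieldCharacter}$ in both arguments (\Cref{thm:multiplicativitiy-in-first-variable} and \Cref{thm:multiplicativitiy-in-second-variable}), which reduces the comparison to the cuspidal case treated by Nien. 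Since $\tau_1$ and $\tau_2$ are assumed to have the same central character, the scalar is the same for $\tau=\tau_1$ and for $\tau=\tau_2$, so the previous step gives $\genericRepGammaFactor{\pi}{\tau_1}{\fieldCharacter}=\genericRepGammaFactor{\pi}{\tau_2}{\fieldCharacter}$ for every irreducible generic representation $\pi$ of $\GL_c(\finiteField)$ with $1\le c\le\lfloor k/2\rfloor$.

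Finally, I would invoke Nien's converse theorem~\cite{Nien14, Nien17}: two irreducible generic representations of $\GL_k(\finiteField)$ with the same central character whose tensor product gamma factors against all irreducible generic representations of $\GL_c(\finiteField)$ agree for every $1\le c\le\lfloor k/2\rfloor$ must be isomorphic. This gives $\tau_1\cong\tau_2$. The main obstacle is the middle step: pinning down the exact relation between $\GKPreGammaFactor{\pi}{\tau}{\fieldCharacter}$ and the tensor product $\varepsilon_0$-factor, together with its precise normalizing constant, \emph{uniformly} over all irreducible $\pi$ in the relevant range (not merely cuspidal $\pi$), which requires a careful bookkeeping of powers of $q$ and of central-character signs as one matches the two functional equations and runs the multiplicativity reductions. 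The hypothesis that $\tau_1$ and $\tau_2$ share a central character is precisely what makes these normalizations cancel, and the range $c\le k/2$ in the statement coincides with the range $c\le\lfloor k/2\rfloor$ required by Nien's theorem, so nothing is lost there.
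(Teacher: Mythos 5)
Your first two steps track the paper closely: taking the trace of the operator $\GKGaussSum{\pi}{\tau}{\fieldCharacter}$ and using that $h\mapsto\specialBesselSpeh{\tau}\left(h\right)$ is a class function does show the hypothesis is equivalent to $\GKPreGammaFactor{\pi}{\tau_1}{\fieldCharacter}=\GKPreGammaFactor{\pi}{\tau_2}{\fieldCharacter}$ for all irreducible $\pi$ of $\GL_c\left(\finiteField\right)$, $1\le c\le k/2$, and the passage to tensor product factors is the paper's \Cref{thm:equality-of-GK-factors-and-epsilon-factors}. Note, however, that this identity is exact, $\GKGammaFactor{\pi}{\tau}{\fieldCharacter}=\EpsilonFactor{\pi}{\tau}{\fieldCharacter}$, with the only normalization being the sign $\centralCharacter{\pi}\left(-1\right)^{k-1}$; no constant depending on $\centralCharacter{\tau}$ enters, so the same-central-character hypothesis is not what makes normalizations cancel — it is needed later, as an input to the converse theorem itself.

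The genuine gap is in your final step. The converse theorem you attribute to Nien~\cite{Nien14, Nien17} is proved there for irreducible \emph{cuspidal} representations $\tau_1,\tau_2$ of $\GL_k\left(\finiteField\right)$; here $\tau_1,\tau_2$ are only generic, and the generic case is not contained in those references and is not an automatic consequence of the cuspidal case (one must show that equality of all the twisted gamma factors in the range $c\le k/2$ forces equality of the cuspidal supports, which is a separate argument). This is precisely where the paper takes a different final turn: using \Cref{thm:equality-of-GK-factors-and-epsilon-factors} together with~\cite[Theorem 2.19]{zelingher2022values}, it converts the equality of $\varepsilon_0$-factors into equality of the Shahidi gamma factors $\ShGammaFactor{\tau_1}{\pi}{\fieldCharacter}=\ShGammaFactor{\tau_2}{\pi}{\fieldCharacter}$ for all irreducible \emph{cuspidal} $\pi$ of $\GL_c\left(\finiteField\right)$ with $0\le c\le k/2$, and then invokes the converse theorem for generic representations of~\cite[Theorem 4.8]{SoudryZelingher2023}. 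Your argument can be repaired along the same lines, since the cuspidal twists required there are among the generic twists you already control, but as written the decisive ingredient — a converse theorem valid for generic $\tau_1,\tau_2$ — is misattributed and left unproved.
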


\subsection*{Organization of the paper}
The paper is organized as follows. In \Cref{sec:speh-representations}, we discuss the notion of representations of $(k,c)$ type and the notion of Speh representations, and discuss the $\kcNotation{k}{c}{\fieldCharacter}$ models associated with them.
 
In \Cref{sec:gamma-factors}, we recall the definition of the Godement--Jacquet gamma factor and define the Ginzburg--Kaplan gamma factor. We show that the Ginzburg--Kaplan gamma factor is multiplicative in both arguments and that it fits into several functional equations. 
 
In \Cref{sec:special-values-of-bessel-speh-function}, we use the multiplicativity results from \Cref{sec:gamma-factors} to deduce the multiplicativity property of twisted matrix Kloosterman sums. In addition, we give a new converse theorem. 
 
In Appendix \ref{appendix:appendix-special-value-in-for-c-2}, we show another approach for computing the special values of the Bessel--Speh function for the special case $c=2$.

\subsection*{Acknowledgments}
We would like to thank Eyal Kaplan for his thorough reading of an earlier draft of this manuscript and for his helpful comments and suggestions, which improved the mathematical exposition.

This work and \cite{Zelingher2025} originally appeared as a single preprint. Acting on a referee's recommendation, the work was split into two parts.

Finally, we would like to thank the referee for their careful reading and valuable feedback which improved the paper.

\tableofcontents

\section{Representations of \texorpdfstring{$\left(k,c\right)$}{(k,c)} type and Speh representations}\label{sec:speh-representations}

In this section, we discuss the notions of representations of $(k,c)$ type and $\kcNotation{k}{c}{\fieldCharacter}$ vectors and functionals. We then define a special family of representations attached to irreducible generic representations, called \emph{Speh representations}. These are representations of $(k,c)$ type. We end this section with explaining a recursive construction of $\kcNotation{k}{c}{\fieldCharacter}$ functionals for these Speh representations.

Let $\finiteField$ be a field with $q$ elements. Let $\fieldCharacter \colon \finiteField \rightarrow \multiplicativegroup{\cComplex}$ be a non-trivial character.

\subsection{Parameterization of irreducible representations of \texorpdfstring{$\GL_n\left(\finiteField\right)$}{general linear groups}}
In this section, we give a quick overview of the parameterization of irreducible representations of $\GL_n\left(\finiteField\right)$. See also~\cite[Section 6.2]{SayagVerma2020} or~\cite[Section 1]{Macdonald80}.

\subsubsection{Parabolic induction}\label{sec: Parabolic induction}
If $n$ is a non-negative integer, a \emph{composition} of $n$ is a tuple $\left(n_1,\dots,n_r\right)$ of positive integers, such that $n_1 + \dots + n_r = n$. The number $r$ is called the \emph{length} of the composition. A \emph{partition} of $n$ is a composition $\lambda = \left(n_1,\dots,n_r\right)$ of $n$ where the sequence is weakly decreasing, i.e., $n_1 \ge n_2 \ge \dots n_r > 0$ with $n_1 + \dots + n_r = n$. We write $\lambda \vdash n$ to specify that $\lambda$ is a partition of $n$ and denote $\abs{\lambda} = n$. Notice that if $n = 0$ then $()$ is the only composition (and partition) of $n$.

Given a composition $\left(n_1,\dots,n_r\right)$ of $n$, we denote $$\UnipotentRadical_{\left(n_1,\dots,n_r\right)} = \left\{ \begin{pmatrix}
	\IdentityMatrix{n_1} & \ast & \ast & \ast \\
	& \IdentityMatrix{n_2} & \ast & \ast \\
	& & \ddots & \ast \\
	& & & \IdentityMatrix{n_r}
\end{pmatrix}\right\} \,\,\,\text{and}\,\,\,\, D_{\left(n_1,\dots,n_r\right)} = \left\{\diag\left(g_1,\dots,g_r\right) \mid g_j \in \GL_{n_j}\left(\finiteField\right)\right\}.$$
We set $\ParabolicSubgroup_{\left(n_1,\dots,n_r\right)} = D_{\left(n_1,\dots,n_r\right)} \ltimes \UnipotentRadical_{\left(n_1,\dots,n_r\right)}$, this is the \emph{standard parabolic subgroup of $\GL_n\left(\finiteField\right)$ corresponding to the composition $\left(n_1,\dots,n_r\right)$}. We call $\UnipotentRadical_{\left(n_1,\dots,n_r\right)}$ and $D_{\left(n_1,\dots,n_r\right)}$ the unipotent radical and the Levi part of $\ParabolicSubgroup_{\left(n_1,\dots,n_r\right)}$, respectively.

Given representations $\sigma_1, \dots, \sigma_r$ of $\GL_{n_1}\left(\finiteField\right)$, $\dots$, $\GL_{n_r}\left(\finiteField\right)$, we define a representation $\sigma_1 \overline{\otimes} \dots \overline{\otimes} \sigma_r$ of $\ParabolicSubgroup_{\left(n_1,\dots,n_r\right)}$ by inflation as follows. The representation $\sigma_1 \overline{\otimes} \dots \overline{\otimes} \sigma_r$ acts on the tensor product space $\sigma_1 \otimes \dots \otimes \sigma_r$ with action $$\left(\sigma_1 \overline{\otimes} \dots \overline{\otimes} \sigma_r\right) \left(du\right) = \sigma_1\left(g_1\right) \otimes \dots \otimes \sigma_r\left(g_r\right),$$
where $d = \diag\left(g_1,\dots,g_r\right) \in D_{\left(n_1,\dots,n_r\right)}$ and $u \in \UnipotentRadical_{\left(n_1,\dots,n_r\right)}$.

For $\sigma_1,\dots,\sigma_r$ as above, we define $\sigma_1 \circ \dots \circ \sigma_r$ to be the following induced representation of $\GL_n\left(\finiteField\right)$:
$$\sigma_1 \circ \dots \circ \sigma_r = \Ind{\ParabolicSubgroup_{\left(n_1,\dots,n_r\right)}}{\GL_n\left(\finiteField\right)}{\sigma_1 \overline{\otimes} \dots \overline{\otimes} \sigma_r}.$$
The operation $\circ$ is called \emph{parabolic induction}\footnote{Over local fields a more common notation for parabolic induction is $\times$. However, we stick to the notation $\circ$, commonly used in the literature of finite fields~\cite{Green55, Gelfand70, Macdonald80}.}. It is a commutative and associative operation.

\subsubsection{Cuspidal representations and cuspidal support}
A representation $\pi$ of $\GL_n\left(\finiteField\right)$ is called \emph{cuspidal} if for any composition $\left(n_1,\dots,n_r\right) \ne \left(n\right)$ of $n$, $\pi$ does not admit non-zero $\UnipotentRadical_{\left(n_1,\dots,n_r\right)}$-fixed vectors, that is, if $v \in \pi$ is such that $\pi\left(u\right)v = v$ for every $u \in \UnipotentRadical_{\left(n_1,\dots,n_r\right)}$ for some composition $\left(n_1,\dots,n_r\right) \ne \left(n\right)$ of $n$, then $v = 0$. If $\pi$ is irreducible, $\pi$ is cuspidal if and only if $\pi$ is not a subrepresentation of a parabolically induced representation $\sigma_1 \circ \dots \circ \sigma_r$ as above, where $r \ge 2$.

By~\cite[Theorem 2.4]{Gelfand70}, for any irreducible representation $\pi$ of $\GL_n\left(\finiteField\right)$, there exists a composition $\left(n_1,\dots,n_r\right)$ of $n$ and irreducible cuspidal representations $\sigma_1$,$\dots$,$\sigma_r$ of $\GL_{n_1}\left(\finiteField\right)$, $\dots$, $\GL_{n_r}\left(\finiteField\right)$, respectively, such that $\pi$ is a subrepresentation of the parabolic induction $\sigma_1 \circ \dots \sigma_r$. The composition $\left(n_1,\dots,n_r\right)$ and the equivalence classes of $\sigma_1$, $\dots$, $\sigma_r$ are unique, up to permutation. We define the \emph{cuspidal support of $\pi$} to be the multiset (of equivalence classes of representations) $\left\{\sigma_1,\dots,\sigma_r\right\}$.

\subsubsection{Parameters of irreducible representations}

There exists a refinement of the cuspidal support of $\pi$ described in the previous section. A \emph{parameter} $\varphi$ is a partition valued assignment $\left(d, \sigma\right) \mapsto \varphi\left(d, \sigma\right)$, where $d \ge 1$ is an integer, $\sigma$ is an (equivalence class of an) irreducible cuspidal representation of $\GL_d\left(\finiteField\right)$ and $\varphi\left(d, \sigma\right)$ is a partition, such that for almost every $d$, and $\sigma$, $\varphi\left(d, \sigma\right) = ()$. Given a parameter $\varphi$, we define $$\abs{\varphi} = \sum_{\left(d, \sigma\right)} d \cdot \abs{\varphi\left(d, \sigma\right)}.$$
Then the irreducible representations of $\GL_n\left(\finiteField\right)$ are in bijection with parameters $\varphi$ such that $\abs{\varphi} = n$. We give some details about this bijection.

If $\sigma$ is an irreducible representation of $\GL_d\left(\finiteField\right)$, denote $\sigma^{\left(1\right)} = \sigma$. For $m \ge 2$, the parabolic induction $\sigma^{\circ m} = \sigma \circ \dots \circ \sigma$ is not irreducible. The equivalence classes of its irreducible subrepresentations\footnote{The representation $\sigma^{\circ m}$ is not multiplicity free for $m \ge 3$.} are parameterized by partitions of $m$ (see~\cite[Appendices B and C]{GurevichHowe2021}). Given a partition $\lambda \vdash m$, let $\sigma^{\lambda}$ be the irreducible subrepresentation of $\sigma^{\circ m}$ corresponding to the partition $\lambda$.

Given a parameter $\varphi$ with $\abs{\varphi} = n$, consider the parabolically induced representation $$\pi_{\varphi} = \sigma_1^{\varphi\left(d_1, \sigma_1\right)} \circ \dots \circ \sigma_r^{\varphi\left(d_r, \sigma_r\right)},$$
where $\left(d_1, \sigma_1\right), \dots, \left(d_r, \sigma_r\right)$ are all the pairs such that $\varphi\left(d, \sigma\right) \ne ()$. Then $\pi_{\varphi}$ is an irreducible representation of $\GL_n\left(\finiteField\right)$, and the map $\varphi \mapsto \pi_{\varphi}$ is a bijection. Notice that if $\varphi$ is such that $\varphi\left(n, \sigma\right) = (1)$ and $\varphi\left(d, \sigma'\right) = ()$ for any other pair $(d, \sigma')$, then $\pi_{\varphi} = \sigma$ is a cuspidal representation. More generally, the cuspidal support of $\pi_{\varphi}$ is the sum of the following multisets: $\abs{\varphi\left(d_1, \sigma_1\right)}$ copies of $\sigma_1$, $\abs{\varphi\left(d_2, \sigma_2\right)}$ copies of $\sigma_2$, $\dots$, $\abs{\varphi\left(d_r, \sigma_r\right)}$ copies of $\sigma_r$.

\subsection{Representations of \texorpdfstring{$\left(k,c\right)$}{(k,c)} type}\label{sec:kc-representations}
In this section, we define the notion of representations of $(k,c)$ type. We also define the notion of $\kcNotation{k}{c}{\fieldCharacter}$ vectors and functionals. We explain that representations of $(k,c)$ type admit $\kcNotation{k}{c}{\fieldCharacter}$ vectors and functionals.

\subsubsection{Zelevinsky characters}
We recall the notion of Zelevinsky characters. It will be used in order to define representations of $(k,c)$ type in the next section.

We recall the partial dominance order of partitions. If $\lambda = \left(n_1,\dots,n_r\right)$ and $\lambda' = \left(n'_1,\dots,n'_{r'}\right)$ are partitions of $n$, we write $\lambda \ge \lambda'$ if for every $1 \le j \le \min\left(r, r'\right),$ we have $n_1 + \dots + n_j \ge n'_1 + \dots + n'_j$.

If $\left(n_1,\dots,n_r\right)$ and $\left(n'_1,\dots,n'_{r'}\right)$ are compositions, we write $\left(n_1,\dots,n_r\right) \ge \left(n'_1,\dots,n'_{r'}\right)$ if the partitions $\lambda$ and $\mu$ obtained by rearranging $\left(n_1,\dots,n_r\right)$ and $\left(n'_1,\dots,n'_{r'}\right)$ in non-increasing order, respectively, satisfy $\lambda \ge \mu$.

Let $\UnipotentSubgroup_n$ be the upper triangular unipotent subgroup of $\GL_n\left(\finiteField\right)$. For every composition $\lambda = \left(n_1, \dots, n_r\right)$ of $n$ we define a character $\zelevinskyCharacter{\lambda} \colon \UnipotentSubgroup_n \to \multiplicativegroup{\cComplex}$ as follows. Let $I_{\lambda} = \left\{n_1, n_1 + n_2, \dots, n_1 + \dots + n_r \right\}$. Then we define $$\zelevinskyCharacter{\lambda} \begin{pmatrix}
	1 & x_1 & \ast & \ast  & \ast \\
	& 1 & x_2 & \ast & \ast \\
	& & \ddots & \ddots & \ast  \\
	& & & 1 &  x_{n-1} \\
	& & & & 1
\end{pmatrix} = \fieldCharacter\left( \sum_{\substack{1 \le j \le n-1\\
j \notin I_{\lambda}}} x_j \right).$$

\subsubsection{Representations of $\left(k,c\right)$ type and $\kcNotation{k}{c}{\fieldCharacter}$ vectors}\label{sec:representations-of-k-c-type-and-whittaker-vectors}

Let $\Sigma$ be a finite dimensional representation of $\GL_{n}\left(\finiteField\right)$ and let $\lambda$ be a composition of $n$. A vector $0 \ne v \in \Sigma$ is called a \emph{$\fieldCharacter$-Zelevinsky vector of type $\lambda$} if for every $u \in \UnipotentSubgroup_n$, we have $\Sigma\left(u\right)v = \zelevinskyCharacter{\lambda}\left(u\right)v$. 

Suppose that $n = kc$. We say that $\Sigma$ is \emph{a representation of $\left(k,c\right)$ type} if $\Sigma$ admits a unique (up to scalar multiplication) $\fieldCharacter$-Zelevinsky vector of type $(k^c)$\footnote{Note that this definition is slightly different than the one in~\cite[Definition 1]{CaiFriedbergGourevitchKaplan2023}, as our definition goes through Zelevinsky vectors.}, and if for any composition $\lambda$ of $kc$ that is greater or not comparable with $(k^c)$, the representation $\Sigma$ does not admit a $\fieldCharacter$-Zelevinsky vector of type $\lambda$. This definition does not depend on the choice of the additive character $\fieldCharacter$.

Let $\UnipotentRadicalForWss{k}{c}$ be the unipotent radical of $\GL_{kc}\left(\finiteField\right)$ corresponding to the composition $(c^k)$. We define a character $\fieldCharacterkc{k}{c} \colon \UnipotentRadicalForWss{k}{c} \to \multiplicativegroup{\cComplex}$ by the formula
$$\fieldCharacterkc{k}{c} \begin{pmatrix}
	\IdentityMatrix{c} & X_1 & \ast & \ast  & \ast \\
	& \IdentityMatrix{c} & X_2 & \ast & \ast \\
	& & \ddots & \ddots & \ast  \\
	& & & \IdentityMatrix{c} &  X_{k-1} \\
	& & & & \IdentityMatrix{c}
\end{pmatrix} = \fieldCharacter\left( \sum_{j=1}^{k-1} \trace X_j \right).$$
A vector $0 \ne v \in \Sigma$ is called a \emph{$\kcNotation{k}{c}{\fieldCharacter}$ vector} if $\Sigma\left(u\right) v = \fieldCharacterkc{k}{c}\left(u\right)v$ for every $u \in \UnipotentRadicalForWss{k}{c}$. A non-zero element of $\Hom_{\UnipotentRadicalForWss{k}{c}}\left(\Sigma, \fieldCharacter\right)$ is called a \emph{$\kcNotation{k}{c}{\fieldCharacter}$ functional}. The representation $\Sigma$ admits a $\kcNotation{k}{c}{\fieldCharacter}$ vector if and only if it admits a $\kcNotation{k}{c}{\fieldCharacter}$ functional. When $c = 1$, we use the terms $\fieldCharacter$-Whittaker vector and $\fieldCharacter$-Whittaker functional, and write $\fieldCharacter$ for $\fieldCharacterkc{k}{1}$. Notice that in this case we have that $\fieldCharacter = \zelevinskyCharacter{(k)}$, and therefore the notion of a $\fieldCharacter$-Zelevinsky vector of type $(k)$ coincides with the notion of a $\fieldCharacter$-Whittaker vector.

We say that $\Sigma$ is \emph{generic} if $\Sigma$ admits a non-zero $\fieldCharacter$-Whittaker vector. We say that $\Sigma$ is of Whittaker type if $\Sigma$ is generic and if its $\fieldCharacter$-Whittaker vector is unique (up to scalar multiplication). As before, these definitions do not depend on the choice of $\fieldCharacter$. It is well known that if $\Sigma$ is an irreducible cuspidal representation of $\GL_k\left(\finiteField\right)$, then $\Sigma$ is generic. By another well known theorem of Gelfand--Graev, irreducible generic representations of $\GL_k\left(\finiteField\right)$ are of Whittaker type. Since $(k)$ dominates every composition of $k$, we conclude that the notion of representations of $\GL_k\left(\finiteField\right)$ of Whittaker type coincides with the notion of representations of type $(k, 1)$.

The first author proved in~\cite[Theorem 6.18]{Carmon2023} the following multiplicity one result (the proof is similar to~\cite[Theorem 4]{CaiFriedbergGourevitchKaplan2023} and goes through Bernstein--Zelevinsky derivatives).
\begin{theorem}
	 Suppose that $\Sigma$ is a representation of $\left(k,c\right)$ type. Then $\Sigma$ admits a $\kcNotation{k}{c}{\fieldCharacter}$ vector. This vector is unique up to scalar multiplication.
\end{theorem}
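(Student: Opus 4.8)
The plan is to deduce the theorem from the defining property of a representation of $(k,c)$ type by matching two spaces of functionals. Concretely, I would show that the space of $(k,c)$ $\fieldCharacter$-Whittaker functionals on $\Sigma$, namely $\Hom_{\UnipotentRadicalForWss{k}{c}}\left(\Sigma, \fieldCharacterkc{k}{c}\right)$, is isomorphic to the space of $\fieldCharacter$-Zelevinsky functionals of type $\left(k^c\right)$ on the full upper triangular unipotent $\UnipotentSubgroup_{kc}$ — a space which the hypothesis asserts is one-dimensional, so that both existence and uniqueness follow at once. The two characters sit on the unipotent subgroups attached to the mutually transposed compositions $\left(c^k\right)$ and $\left(k^c\right)$, and the passage from one presentation to the other is effected by a chain of root-exchange (``swap of unipotent orbits'') manipulations of the Bernstein--Zelevinsky / Gelfand--Kazhdan type. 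Over a finite field these steps are entirely elementary: at each step one replaces a functional $\ell$ by $v \mapsto \sum_{u \in R} \theta^{-1}(u)\,\ell\!\left(\Sigma(u) v\right)$ for a root subgroup $R$ and Fourier-expands, with no convergence to worry about. Note that when $c = 1$ one has $\UnipotentRadicalForWss{k}{1} = \UnipotentSubgroup_{k}$ and $\fieldCharacterkc{k}{1} = \zelevinskyCharacter{(k)}$, so the two notions already coincide and the content lies entirely in the range $c \ge 2$.

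I would carry this out by induction, following the structure of the proof of~\cite[Theorem 4]{CaiFriedbergGourevitchKaplan2023}: restrict $\Sigma$ along a mirabolic-type subgroup that deletes the bottom $\GL_c\left(\finiteField\right)$-block, use the Bernstein--Zelevinsky filtration of this restriction (whose graded pieces are assembled from the derivatives of $\Sigma$), and, after moving the remaining unipotent directions inside that block into the generic part of the character via root exchange, identify the $(k,c)$ functional on $\Sigma$ with a $(k-1,c)$ functional on a suitable derivative $\Sigma'$ of $\Sigma$. The key structural input is that $\Sigma'$ is again a representation of $(k-1,c)$ type — this is where the defining vanishing conditions of $(k,c)$ type are used, to rule out $\fieldCharacter$-Zelevinsky vectors of the ``wrong'' type on $\Sigma$ and hence on its derivatives, and in particular to ensure that the Bernstein--Zelevinsky graded pieces indexed by compositions $\lambda$ that are greater than, or incomparable with, $\left(k^c\right)$ contribute nothing. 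One then concludes by the inductive hypothesis, the base case being the Gelfand--Graev multiplicity-one theorem for the Whittaker model of $\GL_k\left(\finiteField\right)$, reached once all blocks but one have been peeled off.

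The main obstacle is precisely the combinatorial bookkeeping of the dominance order along the Bernstein--Zelevinsky filtration: one must check that no graded piece indexed by a composition $\lambda < \left(k^c\right)$ can arise from the pair $\left(\UnipotentRadicalForWss{k}{c}, \fieldCharacterkc{k}{c}\right)$ (so that the hypotheses, which say nothing about such $\lambda$, are indeed not needed), that the unique surviving piece is the one attached to $\left(k^c\right)$, and that passing to the derivative $\Sigma'$ neither creates nor destroys this functional — equivalently, that $\Sigma'$ inherits the full $(k-1,c)$-type property. An alternative, one-shot route that avoids iterating the derivative is to run the Fourier expansion of $\Hom_{\UnipotentRadicalForWss{k}{c}}\left(\Sigma, \fieldCharacterkc{k}{c}\right)$ simultaneously along all root subgroups of $\UnipotentSubgroup_{kc}$ lying strictly inside the blocks of the Levi $D_{\left(c^k\right)}$, sort the resulting terms by their associated compositions, and invoke the $(k,c)$-type hypothesis to annihilate every term except the one indexed by $\left(k^c\right)$, which by hypothesis spans a line.
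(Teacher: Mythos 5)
Your proposal is essentially the same argument the paper relies on: the paper does not prove this theorem itself, but cites the first author's thesis~\cite[Theorem 6.18]{Carmon2023} with the remark that the proof is similar to~\cite[Theorem 4]{CaiFriedbergGourevitchKaplan2023} and goes through Bernstein--Zelevinsky derivatives, which is precisely the derivative-plus-root-exchange induction (with the Gelfand--Graev theorem as base case) that you outline. The combinatorial point you flag as the main obstacle --- that no composition strictly below $\left(k^c\right)$ can arise from the pair $\left(\UnipotentRadicalForWss{k}{c}, \fieldCharacterkc{k}{c}\right)$, so the defining vanishing conditions of $(k,c)$ type suffice --- is indeed the crux, and it is settled by the same run-length analysis of superdiagonal characters that this paper uses in Example~\ref{ex: middle rank case}.
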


He also proved~\cite[Theorem 6.3]{Carmon2023} the following heredity-type result. If $k = k_1 + k_2$ and $\Sigma_1$ and $\Sigma_2$ are representations of $\left(k_1, c\right)$ and $\left(k_2, c\right)$ type, respectively, then the parabolic induction $\Sigma_1 \circ \Sigma_2$ is a representation of $\left(k,c\right)$ type. This result is analogous to~\cite[Proposition 2]{CaiFriedbergGourevitchKaplan2023}.

When $c=1$, a well known corollary of this heredity-type result is that for any irreducible generic representation $\Sigma$ of $\GL_k\left(\finiteField\right)$, there exist irreducible cuspidal representations $\Sigma_1, \dots, \Sigma_s$ of $\GL_{k_1}\left(\finiteField\right), \dots, \GL_{k_s}\left(\finiteField\right)$, respectively, with $k_1 + \dots + k_s = k$, such that $\Sigma$ is the unique irreducible generic subrepresentation of the parabolically induced representation $\Sigma_1 \circ \Sigma_2 \circ \dots \circ \Sigma_s$.

We may describe the parameter of an irreducible generic representation $\Sigma$ of $\GL_k\left(\finiteField\right)$ in terms of its cuspidal support. Suppose that $\sigma_1$, $\dots$, $\sigma_r$ are all of the different elements appearing in the cuspidal support of an irreducible generic representation $\Sigma$, such that for every $j$, $\sigma_j$ is an irreducible cuspidal representation of $\GL_{d_j}\left(\finiteField\right)$ that appears $m_j$ times in the cuspidal support of $\Sigma$. Then the parameter $\varphi_{\Sigma}$ of $\Sigma$ is the parameter satisfying $\varphi_{\Sigma}\left(d_j, \sigma_j\right) = (m_j)$ for every $j$ and $\varphi\left(d', \sigma'\right) = ()$ for any other pair.

\subsection{\texorpdfstring{$\kcNotation{k}{c}{\fieldCharacter}$}{(k,c)} models}\label{sec:whittaker-models}
Let $\Sigma$ be an irreducible representation of $\left(k,c\right)$ type. Then by Frobenius reciprocity, there exists a unique subspace of $\Ind{\UnipotentRadicalForWss{k}{c}}{\GL_{kc}\left(\finiteField\right)}{\fieldCharacterkc{k}{c}}$ which is isomorphic to $\Sigma$. 
We denote this space by $\Whittaker\left(\Sigma, \fieldCharacterkc{k}{c}\right)$ and call it the \emph{$\kcNotation{k}{c}{\fieldCharacter}$ model} of $\Sigma$. When $c=1$, we call this space the \emph{$\fieldCharacter$-Whittaker model} of $\Sigma$ and denote it by $\Whittaker\left(\Sigma, \fieldCharacter\right)$.

Let $$w_{\left(c^k\right)} = \begin{pmatrix}
	& & & & \IdentityMatrix{c}\\
	& & & \IdentityMatrix{c}\\
	& \iddots\\
	\IdentityMatrix{c} 
\end{pmatrix}.$$ We have a map $\Whittaker\left(\Sigma, \fieldCharacterkc{k}{c}\right) \to \Whittaker\left(\Sigma^{\vee}, \fieldCharacterkc{k}{c}^{-1}\right)$ denoted $W \mapsto \tilde{W}$, given by $$\tilde{W}\left(g\right) = W\left(w_{\left(c^k\right)} \inverseTranspose{g}\right),$$
where $g \in \GL_{kc}\left(\finiteField\right)$ and $\inverseTranspose{g} = \transpose{g}^{-1}$.

Denote for $h \in \GL_c\left(\finiteField\right)$, $$\diag^k\left(h\right) = \diag\left(h,\dots,h\right) \in \GL_{kc}\left(\finiteField\right).$$
For a character $\alpha \colon \multiplicativegroup{\finiteField} \to \multiplicativegroup{\cComplex}$ denote by $\alpha^{\diag^k\left(\GL_c\right)} \colon \diag^k\left(\GL_c\left(\finiteField\right)\right) \to \multiplicativegroup{\cComplex}$ the character $$\alpha^{\diag^k\left(\GL_c\right)}\left(\diag^k\left(h\right)\right) = \alpha\left(\det h\right).$$

Let $0 \ne v \in \Sigma$ be a $\kcNotation{k}{c}{\fieldCharacter}$ vector. We notice that for any $h \in \GL_c\left(\finiteField\right)$, the element $\diag^k\left(h\right)$ lies in the stabilizer of $\fieldCharacterkc{k}{c}$, and therefore $\Sigma\left(\diag^k\left(h\right)\right) v$ is also a $\kcNotation{k}{c}{\fieldCharacter}$ vector for $\Sigma$. 
From uniqueness of the $\kcNotation{k}{c}{\fieldCharacter}$ vector, it follows that there exists a character $\chi_{\Sigma} \colon \multiplicativegroup{\finiteField} \rightarrow \multiplicativegroup{\cComplex}$ such that $\Sigma\left(\diag^k\left(h\right)\right) v = \chi_{\Sigma}\left(\det h\right) v.$ When $c=1$, we have that $\chi_{\Sigma} = \centralCharacter{\Sigma}$ is the central character of $\Sigma$. It follows that the $\kcNotation{k}{c}{\fieldCharacter}$ model $\Whittaker\left(\Sigma, \fieldCharacterkc{k}{c}\right)$ is actually a subspace of $\Ind{\diag^k\left(\GL_c\left(\finiteField\right)\right) \ltimes \UnipotentRadicalForWss{k}{c}}{\GL_{kc}\left(\finiteField\right)}{\chi_{\Sigma}^{\diag^k\left(\GL_c\right)} \otimes \fieldCharacterkc{k}{c}}$.

\subsubsection{Bessel--Speh function}\label{sec:Bessel--Speh-function}
By realizing $\Sigma$ via its $\kcNotation{k}{c}{\fieldCharacter}$ model, any $\kcNotation{k}{c}{\fieldCharacter}$ vector corresponds to a function $0 \ne W \in \Whittaker\left(\Sigma, \fieldCharacterkc{k}{c} \right)$, such that $W\left(gu\right) = \fieldCharacterkc{k}{c}\left(u\right) W\left(g\right)$ for any $u \in \UnipotentRadicalForWss{k}{c}$ and any $g \in \GL_{kc}\left(\finiteField\right)$. 
As we will see below, for such $W$ we have $W\left(\IdentityMatrix{kc}\right) \ne 0$, and we denote by $\gbesselSpehFunction{\Sigma}{\fieldCharacter}$ the unique $W$ as such with $W\left(\IdentityMatrix{kc}\right) = 1$. 
We call $\gbesselSpehFunction{\Sigma}{\fieldCharacter}$ the $\left(k,c\right)$ \emph{normalized Bessel--Speh function of $\Sigma$ with respect to $\fieldCharacter$}. 
To summarize, $\gbesselSpehFunction{\Sigma}{\fieldCharacter} \in \Whittaker\left(\Sigma, \fieldCharacterkc{k}{c}\right)$ is the unique element such that:
\begin{enumerate}
	\item $\gbesselSpehFunction{\Sigma}{\fieldCharacter}\left(gu\right) = \fieldCharacterkc{k}{c}\left(u\right)\gbesselSpehFunction{\Sigma}{\fieldCharacter}\left(g\right)$ for any $g \in \GL_{kc}\left(\finiteField\right)$ and $u \in \UnipotentRadicalForWss{k}{c}$.
	\item $\gbesselSpehFunction{\Sigma}{\fieldCharacter}\left(\IdentityMatrix{kc}\right) = 1$.
\end{enumerate}

Explicitly, by choosing an invariant inner product $\innerproduct{\cdot}{\cdot}_{\Sigma}$ on $\Sigma$ and a $\kcNotation{k}{c}{\fieldCharacter}$ vector $0 \ne v_{\Sigma, \fieldCharacterkc{k}{c}} \in \Sigma$, the $\kcNotation{k}{c}{\fieldCharacter}$ model is given by $$\Whittaker\left(\Sigma, \fieldCharacterkc{k}{c}\right) =  \left\{g \mapsto\innerproduct{\Sigma\left(g\right)v}{v_{\Sigma, \fieldCharacterkc{k}{c}}}_{\Sigma} \mid v \in \Sigma \right\}.$$ The Bessel--Speh function is the matrix coefficient of $\Sigma$ given by the formula $$\gbesselSpehFunction{\Sigma}{\fieldCharacter}\left(g\right) = \frac{\innerproduct{\Sigma\left(g\right)v_{\Sigma, \fieldCharacterkc{k}{c}}}{v_{\Sigma, \fieldCharacterkc{k}{c}}}_{\Sigma}}{\innerproduct{v_{\Sigma, \fieldCharacterkc{k}{c}}}{v_{\Sigma, \fieldCharacterkc{k}{c}}}_{\Sigma}}.$$

It follows from this formula that for any $h \in \GL_c\left(\finiteField\right)$ and any $g \in \GL_{kc}\left(\finiteField\right)$, $$\gbesselSpehFunction{\Sigma}{\fieldCharacter}\left(\diag^k\left(h\right)g\right) = \gbesselSpehFunction{\Sigma}{\fieldCharacter}\left(g\,\diag^k\left(h\right)\right) = \chi_{\Sigma}\left(\det h\right) \gbesselSpehFunction{\Sigma}{\fieldCharacter}\left(g\right).$$
It also follows from this formula that for any $g \in \GL_{kc}\left(\finiteField\right),$
$$ \gbesselSpehFunction{\Sigma}{\fieldCharacter}\left(g^{-1}\right) = \gbesselSpehFunction{\Contragradient{\Sigma}}{\fieldCharacter^{-1}}\left(g\right) = \conjugate{ \gbesselSpehFunction{\Sigma}{\fieldCharacter}\left(g\right)}.$$

\begin{proposition}\label{prop:bessel-speh-on-diag-block-matrices}
	If $g \in \GL_c\left(\finiteField\right)$ is such that $\gbesselSpehFunction{\Sigma}{\fieldCharacter}\left(\diag\left(g, \IdentityMatrix{\left(k-1\right)c}\right)\right) \ne 0$ then $g = \IdentityMatrix{c}$.
\end{proposition}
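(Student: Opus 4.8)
The plan is to exploit the two key properties of the Bessel--Speh function established above: its left- and right-equivariance under $\UnipotentRadicalForWss{k}{c}$ with respect to $\fieldCharacterkc{k}{c}$, and its transformation under $\diag^k\left(\GL_c\right)$ by $\chi_{\Sigma}\circ\det$. I would fix $g\in\GL_c\left(\finiteField\right)$ with $\besselSpehFunction{\tau}{c}\!\left(\diag\left(g,\IdentityMatrix{\left(k-1\right)c}\right)\right)\ne 0$ and aim to show $g=\IdentityMatrix{c}$ by producing enough invariance constraints on the matrix $\diag\left(g,\IdentityMatrix{\left(k-1\right)c}\right)$ to force it into the stabilizer of the character $\fieldCharacterkc{k}{c}$ in a way that pins down $g$.

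First I would observe that $m\coloneqq\diag\left(g,\IdentityMatrix{\left(k-1\right)c}\right)$ conjugates certain root subgroups of $\UnipotentRadicalForWss{k}{c}$ into others. Concretely, for a block $X_1$ sitting in the $\left(1,2\right)$ block position (the first of the superdiagonal blocks defining $\fieldCharacterkc{k}{c}$), conjugating the corresponding unipotent element $u(X_1)$ by $m$ replaces $X_1$ by $g X_1$ while leaving all other superdiagonal blocks $X_2,\dots,X_{k-1}$ fixed; conjugation by $m$ also introduces entries in the $\left(1,j\right)$ blocks for $j\ge 3$, but these lie in the kernel of $\fieldCharacterkc{k}{c}$. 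Using $\gbesselSpehFunction{\Sigma}{\fieldCharacter}\!\left(m u m^{-1}\cdot m\right)=\gbesselSpehFunction{\Sigma}{\fieldCharacter}\!\left(m\cdot u\right)$ together with the left and right $\fieldCharacterkc{k}{c}$-equivariance, nonvanishing at $m$ forces $\trace\left(gX_1\right)=\trace X_1$ for all $X_1\in\squareMatrix_c\left(\finiteField\right)$, hence $g=\IdentityMatrix{c}$ by nondegeneracy of the trace form. The same bookkeeping applied to the block just below the diagonal on the other side (or, equivalently, observing that $m$ normalizes $\UnipotentRadicalForWss{k}{c}$ up to the kernel of $\fieldCharacterkc{k}{c}$ precisely when $g=\IdentityMatrix{c}$) should close the argument; alternatively one may invoke \Cref{prop:bessel-speh-on-diag-block-matrices}'s left-right symmetry via $W\mapsto\tilde W$ to avoid repeating the computation.

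The main obstacle I anticipate is the careful tracking of the off-superdiagonal blocks produced under conjugation by $m$: one must verify that every spurious block entry created by $m u m^{-1}$ lands in the kernel of $\fieldCharacterkc{k}{c}$ (i.e., is not on the immediate superdiagonal of $c\times c$ blocks), so that the only surviving constraint is the clean identity $\trace\left(gX_1\right)=\trace X_1$. This is a finite, purely combinatorial check on the block structure of $\GL_{kc}$ relative to the composition $\left(c^k\right)$, but it is where the argument could go wrong if one is sloppy. Once that is settled, the passage from ``$\trace\left(gX\right)=\trace X$ for all $X$'' to ``$g=\IdentityMatrix{c}$'' is immediate, and the nonvanishing of $\gbesselSpehFunction{\Sigma}{\fieldCharacter}$ at the relevant translate is exactly what licenses dividing through to extract the character identity.
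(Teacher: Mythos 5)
Your argument is correct and is essentially the paper's proof: one uses the commutation relation $\diag\left(g,\IdentityMatrix{\left(k-1\right)c}\right)u_X = u_{gX}\,\diag\left(g,\IdentityMatrix{\left(k-1\right)c}\right)$ for the element $u_X$ with only the $\left(1,2\right)$ block equal to $X$, together with the two-sided $\fieldCharacterkc{k}{c}$-equivariance of $\gbesselSpehFunction{\Sigma}{\fieldCharacter}$, to force $\fieldCharacter\left(\trace\left(\left(g-\IdentityMatrix{c}\right)X\right)\right)=1$ for all $X$ and hence $g=\IdentityMatrix{c}$. Your worry about spurious $\left(1,j\right)$ blocks for $j\ge 3$ is moot (conjugating the single-block element $u_X$ by the block-diagonal matrix produces exactly $u_{gX}$ and nothing else), and the extra step involving the block below the diagonal is unnecessary, since the constraint from the $\left(1,2\right)$ block alone already pins down $g$.
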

\begin{proof}
	For any $X \in \squareMatrix_c\left(\finiteField\right)$ let $$u_X = \diag\left( \begin{pmatrix}
		\IdentityMatrix{c} & X\\
		& \IdentityMatrix{c}
	\end{pmatrix}, \IdentityMatrix{\left(k-2\right)c} \right).$$
	Then $$\diag\left(g, \IdentityMatrix{\left(k-1\right)c}\right) u_X = u_{g X} \cdot \diag\left(g, \IdentityMatrix{\left(k-1\right)c}\right).$$
	By applying $\gbesselSpehFunction{\Sigma}{\fieldCharacter}$ to both sides of the last equality and using the $\UnipotentRadicalForWss{k}{c}$ equivariance properties, we get that $$\fieldCharacter\left(\trace X\right) \gbesselSpehFunction{\Sigma}{\fieldCharacter}\left(\diag\left(g, \IdentityMatrix{\left(k-1\right)c}\right)\right) = \fieldCharacter\left(\trace \left(gX\right)\right) \gbesselSpehFunction{\Sigma}{\fieldCharacter}\left(\diag\left(g, \IdentityMatrix{\left(k-1\right)c}\right)\right).$$
	Therefore if $\gbesselSpehFunction{\Sigma}{\fieldCharacter}\left(\diag\left(g, \IdentityMatrix{\left(k-1\right)c}\right)\right) \ne 0$, we must have $\fieldCharacter\left(\trace\left(\left(g - \IdentityMatrix{c}\right)X\right)\right) = 1$ for every $X \in \squareMatrix_c\left(\finiteField\right)$, which implies $g - \IdentityMatrix{c} = 0$.
\end{proof}

When $c = 1$, we call $\gbesselSpehFunction{\Sigma}{\fieldCharacter}$ the \emph{normalized Bessel function of $\Sigma$ with respect to $\fieldCharacter$} and denote it by $\besselFunction_{\Sigma, \fieldCharacter}$.

\subsection{Speh representations}
In this section, we define an important class of irreducible representations of $\GL_{kc}\left(\finiteField\right)$ called Speh representations. These are also known in the literature as ``generalized trivial representations''. These can be thought of as finite field analogs of Speh representations introduced by Jacquet in~\cite[Section 2.2]{Jacquet1984}.

Let $\tau$ be an irreducible cuspidal representation of $\GL_k\left(\finiteField\right)$. Denote $\SpehRepresentation{\tau}{1} = \tau$. When $c > 1$, the parabolic induction $\tau^{\circ c} = \tau \circ \dots \circ \tau$, a representation of $\GL_{kc}\left(\finiteField\right)$, is not irreducible. The irreducible subrepresentations of $\tau^{\circ c}$ are in bijection with the irreducible representations of the symmetric group $\SymmetricGroup_c$, which in turn are in bijection with partitions of $c$. There are two irreducible subrepresentations of $\tau^{\circ c}$ that appear with multiplicity one. These are the Steinberg representation $\Steinberg\left(\tau, c\right)$, which corresponds to the partition $\left(c\right)$ and the Speh representation $\SpehRepresentation{\tau}{c}$, which corresponds to the partition $\left(1^c\right)$. The Steinberg representation $\Steinberg\left(\tau, c\right)$ is the irreducible subrepresentation of $\tau^{\circ c}$ with maximal dimension, while the Speh representation $\SpehRepresentation{\tau}{c}$ is the irreducible subrepresentation of $\tau^{\circ c}$ with minimal dimension (to show this, use the fact that the dimension of the irreducible subrepresentation of $\tau^{\circ}$ corresponding to the partition $\lambda \vdash c$ is a scalar multiple of the unipotent representation of $\GL_{c}\left(\finiteField_k\right)$ corresponding to $\lambda$ (see~\cite[Corollary 5.3 (ii)]{Howe1985} or~\cite[Lemma 7.4]{Green55}), and then use Corollary 3.4 or Proposition 3.5 of~\cite{larsen2013largest}).

In the special case when $k = 1$, we have that $\tau = \chi \colon \multiplicativegroup{\finiteField} \to \multiplicativegroup{\cComplex}$ is a character and $\SpehRepresentation{\tau}{c} = \chi_{\GL_c}$, where $\chi_{\GL_c} \colon \GL_c\left(\finiteField\right) \to \multiplicativegroup{\cComplex}$ is the character $$\chi_{\GL_c}\left(g\right) = \chi\left(\det g\right).$$

In~\cite[Theorem 6.4]{Carmon2023}, the first author proved that for an irreducible cuspidal representation $\tau$ of $\GL_k\left(\finiteField\right)$, the Speh representation $\SpehRepresentation{\tau}{c}$ is of $\left(k,c\right)$ type. We will use this fact to define the notion of a Speh representation for an irreducible generic representation of $\GL_{k}\left(\finiteField\right)$.

Let $\tau$ be an irreducible generic representation of $\GL_k\left(\finiteField\right)$. In this case, there exist irreducible cuspidal representations $\tau_1, \dots, \tau_s$ of $\GL_{k_1}\left(\finiteField\right), \dots, \GL_{k_s}\left(\finiteField\right)$, respectively, with $k_1 + \dots + k_s =k$, such that $\tau$ is the unique irreducible generic subrepresentation of the parabolic induction $\tau_1 \circ \dots \circ \tau_s$. Then the parabolic induction $\SpehRepresentation{\tau_1}{c} \circ \dots \circ \SpehRepresentation{\tau_s}{c}$ is a representation of $\left(k,c\right)$ type, and therefore there exists a unique irreducible subrepresentation $\Sigma$ of  $\SpehRepresentation{\tau_1}{c} \circ \dots \circ \SpehRepresentation{\tau_s}{c}$, such that $\Sigma$ admits a $\kcNotation{k}{c}{\fieldCharacter}$ vector. We denote $\SpehRepresentation{\tau}{c} = \Sigma$.
In~\cite[Proposition 6.8]{Carmon2023}, the first author showed that all irreducible representations of $\GL_{kc}(\finiteField)$ of $(k,c)$ type arise in this way.

We may also describe the parameter of the representation $\SpehRepresentation{\tau}{c}$ in terms of the cuspidal support of $\tau$. Since $\tau$ is an irreducible generic representation, its parameter $\varphi_{\tau}$ has the following form. If $\sigma_1, \dots, \sigma_r$ are all the different elements in the cuspidal support of $\tau$ and for every $j$, $\sigma_j$ is an irreducible cuspidal representation of $\GL_{d_j}\left(\finiteField\right)$ that appears $m_j$ times in the cuspidal support of $\tau$, then $\varphi_{\tau}$ is the parameter satisfying $\varphi_{\tau}\left(d_j, \sigma_j\right) = (m_j)$ for every $j$, and $\varphi_{\tau}\left(d', \sigma'\right) = ()$ for any other pair $(d', \sigma')$. Then the parameter $\varphi_{\SpehRepresentation{\tau}{c}}$ of $\SpehRepresentation{\tau}{c}$ is the parameter satisfying $$\varphi_{\SpehRepresentation{\tau}{c}}\left(d_j, \sigma_j\right) = (m_j^c) = (m_j,\dots,m_j),$$
for every $j$ and $\varphi_{\SpehRepresentation{\tau}{c}}\left(d', \sigma'\right) = ()$ for any other pair.

It turns out that if $\tau$ is a generic representation and $c = c_1 + c_2$, then $\SpehRepresentation{\tau}{c}$ is a subrepresentation of the parabolic induction $\SpehRepresentation{\tau}{c_1} \circ \SpehRepresentation{\tau}{c_2}$. It also turns out that $\Contragradient{\SpehRepresentation{\tau}{c}}$ is isomorphic to $\SpehRepresentation{\Contragradient{\tau}}{c}$.

In~\cite[Proposition 6.21]{Carmon2023}, the first author showed that for an irreducible generic representation $\tau$, the character $\chi_{\SpehRepresentation{\tau}{c}}$ discussed in \Cref{sec:Bessel--Speh-function} is given by $\chi_{\SpehRepresentation{\tau}{c}} = \centralCharacter{\tau}$, where $\centralCharacter{\tau}$ is the central character of $\tau$ (this result is analogous to~\cite[Lemma 12]{CaiFriedbergGourevitchKaplan2023}).
Thus, we may consider the $\kcNotation{k}{c}{\fieldCharacter}$ model $\Whittaker\left(\SpehRepresentation{\tau}{c}, \fieldCharacterkc{k}{c}\right)$ as a subrepresentation of $\Ind{\diag^k\left(\GL_c(\finiteField)\right)\ltimes\UnipotentRadicalForWss{k}{c}}{\GL_{kc}\left(\finiteField\right)}{\centralCharacter{\tau}^{\diag^k\left(\GL_c\right)}\otimes\fieldCharacterkc{k}{c}}$.

\subsection{Inductive construction of \texorpdfstring{$\kcNotation{k}{c}{\fieldCharacter}$}{(k,c)} functionals of Speh representations}\label{sec:wss-models}
Let $\tau$ be an irreducible generic representation of $\GL_k\left(\finiteField\right)$. In this section we explain how to construct a $\kcNotation{k}{c}{\fieldCharacter}$ functional for $\SpehRepresentation{\tau}{c}$, given that we have constructed such functionals for smaller $c$.

Suppose that $c = c_1 + c_2$, where $c_1, c_2 > 0$. For $j=1,2$, let $\gShortSpehWhittakerFunctional{\tau}{k}{c_j} = \gSpehWhittakerFunctional{\tau}{k}{c_j}$ be a (non-zero) $\kcNotation{k}{c_j}{\fieldCharacter}$ functional of $\SpehRepresentation{\tau}{c_j}$.

Let $\kappa \in \SymmetricGroup_{kc}$ be the permutation $$\kappa\left(1 + a + b c\right) = \begin{cases}
	1 + a + b c_1 & 0 \le a \le c_1 - 1,\\
	1+ a + c_1 k + b c_2 & c_1 \le a \le c_1 + c_2 - 1,
\end{cases}$$
where $0 \le a \le c-1$ and $0 \le b \le k-1$.
Also denote by $\kappa$ the column permutation matrix corresponding to $\kappa$, that is
$$ \kappa = \begin{pmatrix}
	\transpose{e_{\kappa\left(1\right)}} & \dots & \transpose{e_{\kappa\left(kc\right)}}
\end{pmatrix} = \begin{pmatrix}
	\IdentityMatrix{c_1} & 0\\
	0 & 0 & \IdentityMatrix{c_1} & 0\\
	\vdots & \vdots & &\ddots & \ddots\\
	0 & 0 & \cdots & 0 & 0 & \IdentityMatrix{c_1} & 0\\
	0 & \IdentityMatrix{c_2}\\
	0 & 0 & 0 & \IdentityMatrix{c_2}\\
	\vdots & \vdots & & \ddots & \ddots\\
	0 & 0 & \cdots & 0 & 0 & 0 & \IdentityMatrix{c_2}
\end{pmatrix},$$
where for every $1 \le j \le kc$, $\transpose{e_j}$ is the $j$th standard column vector. We will often use the following identity for various choices of $Z_{ij} = \left(\begin{smallmatrix}
	A_{ij} & B_{ij}\\
	C_{ij} & D_{ij}
\end{smallmatrix}\right)$, where for every $1 \le i, j \le k$, $A_{ij} \in \squareMatrix_{c_1}\left(\finiteField\right)$, $B_{ij} \in \Mat{c_1}{c_2}\left(\finiteField\right)$, $C_{ij} \in \Mat{c_2}{c_1}\left(\finiteField\right)$, $D_{ij} \in \squareMatrix_{c_2}\left(\finiteField\right)$:
$$\begin{pmatrix}
	A_{11} & \dots  & A_{1k} & B_{11} & \dots & B_{1k}  \\
	\vdots & \ddots & \vdots & \vdots & \ddots & \vdots \\
	A_{k1} & \dots  & A_{kk} & B_{k1} & \dots & B_{kk} \\
	C_{11} & \dots  & C_{1k} & D_{11} & \dots & D_{1k}  \\
\vdots & \ddots & \vdots & \vdots & \ddots & \vdots \\
C_{k1} & \dots  & C_{kk} & D_{k1} & \dots & D_{kk} \end{pmatrix} \kappa = \kappa \begin{pmatrix}
	Z_{11} & \dots  & Z_{1k} \\
	\vdots & \ddots & \vdots \\
	Z_{k1} & \dots  & Z_{kk} \end{pmatrix}.$$

Let \begin{equation}\label{eq:Y-R-subgroup}
	\mathcal{Y} = \left\{ \begin{pmatrix}
		\IdentityMatrix{k c_1}\\
		Y & \IdentityMatrix{k c_2} \end{pmatrix} \mid Y = \begin{pmatrix}
		0& y_{12} & y_{13} & \dots & y_{1k}\\
		&0 & y_{23} & \dots & y_{2k}\\
		& & \ddots & \ddots & \vdots\\
		& & & 0 & y_{k-1,k}\\
		& & & & 0
	\end{pmatrix} \text{ where } y_{ij} \in \Mat{c_2}{c_1}\left(\finiteField\right) \right\}.
\end{equation}

By realizing $\SpehRepresentation{\tau}{c}$ as a subrepresentation of the parabolic induction $\SpehRepresentation{\tau}{c_1} \circ \SpehRepresentation{\tau}{c_2}$, we form a recursive process to obtain a $\kcNotation{k}{c}{\fieldCharacter}$ functional $\gSpehWhittakerFunctional{\tau}{k}{c} = \gShortSpehWhittakerFunctional{\tau}{k}{c} \colon \SpehRepresentation{\tau}{c_1} \circ \SpehRepresentation{\tau}{c_2} \to \cComplex$, given by the recursive formula
$$ \standardForm{\Phi}{\gShortSpehWhittakerFunctional{\tau}{k}{c}} = \frac{1}{\sizeof{\mathcal{Y}}} \sum_{y \in \mathcal{Y}} \standardForm{\Phi\left(  y \kappa \right)}{\gShortSpehWhittakerFunctional{\tau}{k}{c_1} \otimes \gShortSpehWhittakerFunctional{\tau}{k}{c_2}},$$
where $\Phi \in \SpehRepresentation{\tau}{c_1} \circ \SpehRepresentation{\tau}{c_2}$.

As in~\cite[Lemma 9]{CaiFriedbergGourevitchKaplan2023}, it can be shown using the method of root elimination that $\gShortSpehWhittakerFunctional{\tau}{k}{c}$ is non-zero on any $\GL_{kc}\left(\finiteField\right)$-invariant subspace of $\SpehRepresentation{\tau}{c_1} \circ \SpehRepresentation{\tau}{c_2}$, and it follows that it is a non-zero $\kcNotation{k}{c}{\fieldCharacter}$ functional for $\SpehRepresentation{\tau}{c}$.

\section{Gamma factors}\label{sec:gamma-factors}

In this section, we use the Bessel--Speh function from \Cref{sec:Bessel--Speh-function} to define a gamma factor, which we call the \emph{Ginzburg--Kaplan gamma factor}. We prove that this gamma factor is multiplicative in both arguments, and prove that it satisfies a functional equation. We begin with recalling the definition and properties of the Godement--Jacquet gamma factor, since it will be serve as the definition of the Ginzburg--Kaplan gamma factor for the $k = 1$ case.

\subsection{Godement--Jacquet gamma factors}\label{sec:godement-jacquet-gamma-factors}

In this section, we recall the definition and properties of the gamma factor due to Kondo and Godement--Jacquet.

\subsubsection{Non-abelian Gauss sums}
Let $\pi$ be a representation of $\GL_c\left(\finiteField\right)$. 
Consider the operator $\GaussSum{\pi}{\fieldCharacter} \colon \pi \to \pi$ defined by
$$ \GaussSum{\pi}{\fieldCharacter} = q^{-\tfrac{c^2}{2}} \sum_{g \in \GL_c\left(\finiteField\right)} \fieldCharacter\left(\trace g^{-1}\right) \pi\left(g\right).$$
Then $\GaussSum{\pi}{\fieldCharacter} \in \Hom_{\GL_c\left(\finiteField\right)}\left(\pi, \pi\right)$. 
If $\pi$ is irreducible, then by Schur's lemma there exists a scalar $\GJPreGammaFactor{\pi}{\fieldCharacter} \in \cComplex$, such that $\GaussSum{\pi}{\fieldCharacter} = \GJPreGammaFactor{\pi}{\fieldCharacter} \cdot \idmap_\pi$. 

We have the following multiplicativity property.
\begin{theorem}\label{thm:multiplicativity-of-godement-jacquet}
	Let $c = c_1 + c_2$. Suppose that $\pi_1$ and $\pi_2$ are irreducible representations of $\GL_{c_1}\left(\finiteField\right)$ and $\GL_{c_2}\left(\finiteField\right)$, respectively, and suppose that $\pi$ is an irreducible representation of $\GL_{c}\left(\finiteField\right)$, such that $\pi$ is a subrepresentation of the parabolic induction $\pi_1 \circ \pi_2$. 
 Then $$\GJPreGammaFactor{\pi}{\fieldCharacter} = \GJPreGammaFactor{\pi_1}{\fieldCharacter} \GJPreGammaFactor{\pi_2}{\fieldCharacter}.$$ 
\end{theorem}
\begin{proof}
	Denote $\pi' = \pi_1 \circ \pi_2$.
	Let $w_{\pi_1} \in \pi_1$ and $w_{\pi_2} \in \pi_2$ and let $\phi_{w_{\pi_1} \otimes w_{\pi_2}} \in \pi'$ be the function supported on $\ParabolicSubgroup_{\left(c_1, c_2\right)}$, such that $\phi_{w_{\pi_1} \otimes w_{\pi_2}} \left(\IdentityMatrix{c}\right) = w_{\pi_1} \otimes w_{\pi_2}$. 

We first show that the projection of $\phi_{w_{\pi_1} \otimes w_{\pi_2}}$ to any non-zero subrepresentation $V$ of $\pi_1 \circ \pi_2$ is non-zero.
Let $0\neq f \in V$ and let $g_0\in\GL_{c}(\finiteField)$ be such that $f(g_0)\neq 0$.
By replacing $f$ with $\pi'(g_0)f\in V$, we may assume $g_0=\IdentityMatrix{c}$.
Since $\pi_1\otimes \pi_2$ is an irreducible representation of $\GL_{c_1}(\finiteField)\times \GL_{c_2}(\finiteField)$, we have that
\begin{equation*}
	\pi_1\otimes \pi_2 = \Span_{\cComplex}\left\{ \pi_1(g_1)\otimes \pi_2(g_2) f(\IdentityMatrix{c}) \mid g_i \in \GL_{c_i}(\finiteField)  \right\}.
\end{equation*}
It therefore follows that the set
\begin{equation*}
	\Span_{\cComplex}\left\{  \pi'\begin{pmatrix}
		g_1 &\\ & g_2
	\end{pmatrix} f \mid g_i \in \GL_{c_i}(\finiteField) \right\}\subset V
\end{equation*}
contains an element $f'$ such that $f'(\IdentityMatrix{c})=w_{\pi_1}\otimes w_{\pi_2}$.
Fix a non-zero $\ParabolicSubgroup_{(c_1,c_2)}$-invariant inner product $\innerproduct{\cdot}{\cdot}_{\pi_1\otimes \pi_2}$ on $\pi_1 \overline{\otimes} \pi_2$ and let $\innerproduct{\cdot}{\cdot}_{\pi'}$ be the non-zero $\GL_{c}\left(\finiteField\right)$-invariant inner product on $\pi'$ given by
\begin{equation*}
	\innerproduct{f_1}{f_2}_{\pi'}=\sum_{g \in \ParabolicSubgroup_{(c_1,c_2)}\backslash \GL_{c}\left(\finiteField\right)} \innerproduct{f_1\left(g\right)}{f_2\left(g\right)}_{\pi_1 \otimes \pi_2}.
\end{equation*}
In particular, we have that $\innerproduct{\phi_{w_{\pi_1} \otimes w_{\pi_2}}}{f'}_{\pi'}$ is equal to
\begin{equation*}
	\innerproduct{\phi_{w_{\pi_1} \otimes w_{\pi_2}}(\IdentityMatrix{c})}{f'(\IdentityMatrix{c})}_{\pi_1 \otimes \pi_2} = 
	\innerproduct{w_{\pi_1}\otimes w_{\pi_2}}{w_{\pi_1}\otimes w_{\pi_2}}_{\pi_1 \otimes \pi_2}
\end{equation*}
and thus is non-zero.

We now proceed to prove the multiplicativity property in the statement of the theorem.
Since $\phi_{w_{\pi_1} \otimes w_{\pi_2}}$ is right invariant under $\UnipotentRadical_{(c_1, c_2)}$, we can write
\begin{equation} \label{eq:gj-zeta-operator-applied-to-full-section}
	\begin{split}
		&\sum_{h \in \GL_c\left(\finiteField\right)} \fieldCharacter\left(\trace h^{-1}\right) \pi'\left(h\right) \phi_{w_{\pi_1} \otimes w_{\pi_2}} = \frac{1}{\sizeof{\UnipotentRadical_{\left(c_1, c_2\right)}}}\sum_{h \in \GL_c\left(\finiteField\right)} \sum_{n \in \UnipotentRadical_{\left(c_1, c_2\right)}}  \fieldCharacter\left(\trace \left(nh^{-1}\right) \right) \pi'\left(h\right) \phi_{w_{\pi_1} \otimes w_{\pi_2}}.
	\end{split}
\end{equation}
Writing $n = \left(\begin{smallmatrix}
	\IdentityMatrix{c_1} & X\\
	& \IdentityMatrix{c_2}
\end{smallmatrix}\right)$ and $h^{-1} = \left(\begin{smallmatrix}
h'_{c_1 \times c_1} & h'_{c_1 \times c_2}\\
h'_{c_2 \times c_1} & h'_{c_2 \times c_2}
\end{smallmatrix}\right)$, we have that $$\fieldCharacter\left(\trace\left(nh^{-1} \right)\right) = \fieldCharacter\left(\trace\left(h'_{c_1 \times c_1}\right) + \trace\left(h'_{c_2 \times c_2}\right)\right) \fieldCharacter\left(\trace\left(X h'_{c_2 \times c_1} \right)\right),$$ and therefore the inner sum over $n \in \UnipotentRadical_{(c_1, c_2)}$ in \eqref{eq:gj-zeta-operator-applied-to-full-section} vanishes unless $h'_{c_2 \times c_1} = 0$, which is equivalent to $h \in \ParabolicSubgroup_{(c_1, c_2)}$. This implies that $$\GaussSum{\pi'}{\fieldCharacter} \phi_{w_{\pi_1} \otimes w_{\pi_2}} = q^{-\frac{\left(c_1 + c_2\right)^2}{2}} \sum_{p \in \ParabolicSubgroup_{(c_1, c_2)}} \pi'\left(p\right) \fieldCharacter\left(\trace p^{-1}\right) \phi_{w_{\pi_1} \otimes w_{\pi_2}}.$$ It follows that $\GaussSum{\pi'}{\fieldCharacter} \phi_{w_{\pi_1} \otimes w_{\pi_2}}$ is also supported on $\ParabolicSubgroup_{(c_1, c_2)}$. To determine $\GaussSum{\pi'}{\fieldCharacter} \phi_{w_{\pi_1} \otimes w_{\pi_2}}$, it suffices to compute its value at $\IdentityMatrix{c}$. A direct computation yields
$$\left(\GaussSum{\pi'}{\fieldCharacter} \phi_{w_{\pi_1} \otimes w_{\pi_2}}\right)\left(\IdentityMatrix{c}\right) = \GaussSum{\pi_1}{\fieldCharacter} w_{\pi_1} \otimes \GaussSum{\pi_2}{\fieldCharacter} w_{\pi_2}.$$
Hence we have that \begin{equation}\label{eq:gj-final-multiplicativity-equation}
	\GaussSum{\pi'}{\fieldCharacter} \phi_{w_{\pi_1} \otimes w_{\pi_2}} = \GJPreGammaFactor{\pi_1}{\fieldCharacter} \GJPreGammaFactor{\pi_2}{\fieldCharacter} \phi_{w_{\pi_1} \otimes w_{\pi_2}}.
\end{equation}
To complete the proof, decompose $\pi_1 \circ \pi_2 = \bigoplus_j \sigma_j$, where $\sigma_j$ is an irreducible invariant subspace of $\pi_1 \circ \pi_2$ and the $\sigma_j$'s need not be non-isomorphic. Write $$\phi_{w_{\pi_1} \otimes w_{\pi_2}} = \sum_{j} \phi_{w_{\pi_1} \otimes w_{\pi_2}}^j,$$
where $\phi_{w_{\pi_1} \otimes w_{\pi_2}}^j \in \sigma_j$ (recall that $\phi_{w_{\pi_1} \otimes w_{\pi_2}}^j \ne 0$ since the projection of $\phi_{w_{\pi_1} \otimes w_{\pi_2}}$ to any irreducible subspace of $\pi'$ is non-zero).  Then \eqref{eq:gj-final-multiplicativity-equation} implies that \begin{equation*}
 \GJPreGammaFactor{\sigma_j}{\fieldCharacter} \phi_{w_{\pi_1} \otimes w_{\pi_2}}^j = \GJPreGammaFactor{\pi_1}{\fieldCharacter} \GJPreGammaFactor{\pi_2}{\fieldCharacter} \phi^j_{w_{\pi_1} \otimes w_{\pi_2}},
\end{equation*}
which implies that for every $j$ $$\GJPreGammaFactor{\sigma_j}{\fieldCharacter} = \GJPreGammaFactor{\pi_1}{\fieldCharacter}\GJPreGammaFactor{\pi_2}{\fieldCharacter},$$
as required.
\end{proof}

\begin{remark}
	The constant $\GJPreGammaFactor{\pi}{\fieldCharacter}$ was computed explicitly by Kondo in~\cite{Kondo1963}. \Cref{thm:multiplicativity-of-godement-jacquet} follows immediately from his computation. However, we included this proof of \Cref{thm:multiplicativity-of-godement-jacquet} since it is similar to the final step of the proof of \Cref{thm:multiplicativitiy-in-first-variable} that we will give later. One can give an alternative proof of Kondo's theorem by computing $\GJPreGammaFactor{\pi}{\fieldCharacter}$ for an irreducible cuspidal representation $\pi$ (which is considerably easier than for general $\pi$) and by using \Cref{thm:multiplicativity-of-godement-jacquet}.
\end{remark}

\subsubsection{Godement--Jacquet functional equation}\label{sec:godement-jacquet-functional-equation}

Let $\mathcal{S}\left( \squareMatrix_c \left(\finiteField\right)\right) = \left\{f \colon \squareMatrix_c\left(\finiteField\right) \to \cComplex \right\}.$ 
For a function $f \in \mathcal{S}\left(\squareMatrix_c\left(\finiteField\right)\right)$ we define $\transpose{f}\left(X\right) = f\left(\transpose{X}\right)$. We define the Fourier transform of $f \in \mathcal{S}\left(\squareMatrix_c\left(\finiteField\right)\right)$ by the formula
$$ \fourierTransform{\fieldCharacter}{f}\left(X\right) = q^{-\tfrac{c^2}{2}} \sum_{Y \in \squareMatrix_c\left(\finiteField\right)} f\left(Y\right) \fieldCharacter\left(\trace\left(XY\right)\right).$$
The Fourier inversion formula has the following form:
\begin{align*}
	\fourierTransform{\fieldCharacter^{-1}}{\fourierTransform{\fieldCharacter}{f}}\left(X\right) &= f\left(X\right),\\
	\fourierTransform{\fieldCharacter}{\fourierTransform{\fieldCharacter}{f}}\left(X\right) &= f\left(-X\right).
\end{align*}

Let $\pi$ be an irreducible representation of $\GL_c\left(\finiteField\right)$. 
For a function $f \in \mathcal{S}\left(\squareMatrix_c\left(\finiteField\right)\right)$ we define the following \emph{Godement--Jacquet zeta operator}. It is a finite field analog of the Godement--Jacquet zeta integral~\cite{GodementJacquet1972}. 
$$\zetaOperator\left(f,\pi\right) = \sum_{g \in \GL_c \left(\finiteField\right)} f\left(g\right) \pi\left(g\right).$$
Macdonald proved the following finite field version of the functional equation of the Godement--Jacquet zeta operator:
\begin{theorem}[{\cite[(2.7)]{Macdonald80}}]
	Suppose that the trivial character $1$ of $\GL_1\left(\finiteField\right) = \multiplicativegroup{\finiteField}$ does not appear in the cuspidal support of $\pi$. 
 Then for every $f \in \mathcal{S}\left(\squareMatrix_c\left(\finiteField\right)\right)$ the following functional equation holds: $$\zetaOperator\left(\transpose{\fourierTransform{\fieldCharacter}{f}}, \Contragradient{\pi}\right) = \GJPreGammaFactor{\pi}{\fieldCharacter} \zetaOperator\left(f, \pi\right).$$
	Here, we realize the action of $\Contragradient{\pi}$ on the space of $\pi$ by $\Contragradient{\pi}\left(g\right) = \pi\left(\inverseTranspose{g}\right),$ where $\inverseTranspose{g} = \transpose{g}^{-1}$. \end{theorem}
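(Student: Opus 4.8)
The plan is to view each side of the functional equation as an $\EndomorphismRing\left(\pi\right)$-valued function that is linear in $f$, and to verify the identity on the basis of indicator functions $\indicatorFunction{\left\{Y\right\}}$, $Y \in \squareMatrix_c\left(\finiteField\right)$, that is, one matrix at a time. First I would rewrite the left-hand side: using $\transpose{\fourierTransform{\fieldCharacter}{f}}\left(g\right) = \fourierTransform{\fieldCharacter}{f}\left(\transpose{g}\right)$, $\Contragradient{\pi}\left(g\right) = \pi\left(\inverseTranspose{g}\right)$, and the change of variables $h = \transpose{g}$ (so that $\inverseTranspose{g} = h^{-1}$ and $h$ still ranges over $\GL_c\left(\finiteField\right)$), one obtains, after expanding $\fourierTransform{\fieldCharacter}{f}$ and interchanging the order of summation,
$$\zetaOperator\left(\transpose{\fourierTransform{\fieldCharacter}{f}}, \Contragradient{\pi}\right) = \sum_{h \in \GL_c\left(\finiteField\right)} \fourierTransform{\fieldCharacter}{f}\left(h\right) \pi\left(h^{-1}\right) = \sum_{Y \in \squareMatrix_c\left(\finiteField\right)} f\left(Y\right) A_Y,$$
where $A_Y = q^{-\frac{c^2}{2}} \sum_{h \in \GL_c\left(\finiteField\right)} \fieldCharacter\left(\trace\left(hY\right)\right) \pi\left(h^{-1}\right)$. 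Since $\zetaOperator\left(f, \pi\right) = \sum_{Y \in \GL_c\left(\finiteField\right)} f\left(Y\right) \pi\left(Y\right)$, the functional equation reduces to the two pointwise claims: $A_Y = \GJPreGammaFactor{\pi}{\fieldCharacter} \pi\left(Y\right)$ for $Y \in \GL_c\left(\finiteField\right)$, and $A_Y = 0$ for singular $Y$.

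For the first claim I would substitute $g = \left(hY\right)^{-1}$ when $Y$ is invertible, so that $h^{-1} = Yg$ and $\trace\left(hY\right) = \trace\left(g^{-1}\right)$; this factors out $\pi\left(Y\right)$ and yields $A_Y = \pi\left(Y\right) \GaussSum{\pi}{\fieldCharacter} = \GJPreGammaFactor{\pi}{\fieldCharacter} \pi\left(Y\right)$ by the definition of the Gauss sum operator and Schur's lemma. This step does not use the cuspidal support hypothesis.

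The substantive part is the vanishing $A_Y = 0$ for singular $Y$. Writing $Y$ of rank $r \le c - 1$ as $Y = a E_r b$ with $a, b \in \GL_c\left(\finiteField\right)$ and $E_r = \left(\begin{smallmatrix}\IdentityMatrix{r} & 0\\ 0 & 0\end{smallmatrix}\right)$, cyclicity of the trace and a change of variables in $h$ give $A_Y = \pi\left(a\right) A_{E_r} \pi\left(b\right)$, so it is enough to show $A_{E_r} = 0$. Now $\trace\left(h E_r\right)$ is the trace of the upper-left $r \times r$ block of $h$, and this block is unchanged when $h$ is right-multiplied by an element of the subgroup $\ParabolicSubgroup^{+} = \left\{\left(\begin{smallmatrix}\IdentityMatrix{r} & \ast\\ 0 & \ast\end{smallmatrix}\right)\right\} \le \GL_c\left(\finiteField\right)$, which is the semidirect product of $\UnipotentRadical_{\left(r, c-r\right)}$ with the Levi copy $\left\{\diag\left(\IdentityMatrix{r}, w\right) : w \in \GL_{c-r}\left(\finiteField\right)\right\}$. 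Carrying out the substitution $h \mapsto hp$ gives $\pi\left(p\right) A_{E_r} = A_{E_r}$ for every $p \in \ParabolicSubgroup^{+}$, so the image of $A_{E_r}$ is contained in $V_{\pi}^{\ParabolicSubgroup^{+}}$. I would then argue that $V_{\pi}^{\ParabolicSubgroup^{+}} = 0$: a nonzero $\ParabolicSubgroup^{+}$-fixed vector is fixed both by $\UnipotentRadical_{\left(r, c-r\right)}$ and by the $\GL_{c-r}\left(\finiteField\right)$-factor, so by Frobenius reciprocity and semisimplicity of representations of finite groups it forces $\pi$ to be a subrepresentation of $\rho \circ \mathbf{1}_{\GL_{c-r}\left(\finiteField\right)}$ for some irreducible representation $\rho$ of $\GL_r\left(\finiteField\right)$, where $\mathbf{1}_{\GL_{c-r}\left(\finiteField\right)}$ denotes the trivial representation; since $\mathbf{1}_{\GL_{c-r}\left(\finiteField\right)}$ is itself a subrepresentation of the $\left(c-r\right)$-fold parabolic induction $1 \circ \dots \circ 1$ of the trivial character $1$ of $\multiplicativegroup{\finiteField}$ and $c - r \ge 1$, uniqueness of the cuspidal support would put $1$ into the cuspidal support of $\pi$, contradicting the hypothesis. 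Hence $A_{E_r} = 0$, and combining the two claims gives $\zetaOperator\left(\transpose{\fourierTransform{\fieldCharacter}{f}}, \Contragradient{\pi}\right) = \GJPreGammaFactor{\pi}{\fieldCharacter} \zetaOperator\left(f, \pi\right)$.

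I expect the main obstacle to be this last vanishing statement $V_{\pi}^{\ParabolicSubgroup^{+}} = 0$: it is the only place where the cuspidal support hypothesis enters, and it relies on the structure theory of parabolic induction and restriction over finite fields — exactness and semisimplicity of the Jacquet functor, Frobenius reciprocity, and the dictionary between appearing as a subrepresentation of a parabolic induction and having a prescribed element in the (multiset) cuspidal support. The remaining steps are routine bookkeeping with the zeta operator and the Gauss sum operator $\GaussSum{\pi}{\fieldCharacter}$.
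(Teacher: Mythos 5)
Your proof is correct. The paper gives no argument for this statement (it simply cites Macdonald, (2.7)), and your route is the standard one behind that citation: reduce to showing that $A_Y = q^{-\frac{c^2}{2}}\sum_{h \in \GL_c\left(\finiteField\right)}\fieldCharacter\left(\trace\left(hY\right)\right)\pi\left(h^{-1}\right)$ equals $\GJPreGammaFactor{\pi}{\fieldCharacter}\pi\left(Y\right)$ for invertible $Y$ and vanishes for singular $Y$, the latter via the absence of nonzero vectors of $\pi$ fixed by the subgroup $\left\{\left(\begin{smallmatrix}\IdentityMatrix{r} & \ast\\ 0 & \ast\end{smallmatrix}\right)\right\}$, which is exactly where the hypothesis that the trivial character is absent from the cuspidal support enters, and your handling of that step (Jacquet module, Frobenius reciprocity, semisimplicity, uniqueness of cuspidal support) is sound.
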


\subsubsection{Twisting by a character}\label{subsubsection-twisting-by-a-character}
Let $\chi \colon \multiplicativegroup{\finiteField} \to \multiplicativegroup{\cComplex}$ be a character. 
We define $$\GKGaussSum{\pi}{\chi}{\fieldCharacter} = \GaussSum{\pi \otimes \chi_{\GL_c}}{\fieldCharacter} = \sum_{g \in \GL_c\left(\finiteField\right)} \chi\left(\det g\right) \fieldCharacter\left(\trace g^{-1}\right) \pi\left(g\right).$$ 
We also define $\gGJPreGammaFactor{\pi}{\chi}{\fieldCharacter} = \GJPreGammaFactor{\pi \otimes \chi_{\GL_c}}{\fieldCharacter}$. 
For a function $f \in \mathcal{S}\left(\squareMatrix_c\left(\finiteField\right)\right)$, consider the \emph{twisted Godement--Jacquet zeta operator} $$\zetaOperator\left(f, \pi \times \chi\right) = \sum_{g \in \GL_c\left(\finiteField\right)} f\left(g\right) \chi\left(\det g\right) \pi\left(g\right).$$

Immediately from the properties of $\GJPreGammaFactor{\pi}{\fieldCharacter}$ we obtain analogous properties for $\gGJPreGammaFactor{\pi}{\chi}{\fieldCharacter}$.
\begin{theorem}
	Suppose that $\pi_1$ and $\pi_2$ are irreducible representations of $\GL_{c_1}\left(\finiteField\right)$ and $\GL_{c_2}\left(\finiteField\right)$, respectively, and suppose that $\pi$ is an irreducible representation of $\GL_{c_1 + c_2}\left(\finiteField\right)$, such that $\pi$ is a subrepresentation of the parabolic induction $\pi_1 \circ \pi_2$. 
 Then $$\gGJPreGammaFactor{\pi}{\chi}{\fieldCharacter} = \gGJPreGammaFactor{\pi_1}{\chi}{\fieldCharacter} \gGJPreGammaFactor{\pi_2}{\chi}{\fieldCharacter}.$$ 
\end{theorem}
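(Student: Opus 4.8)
The plan is to deduce this from the untwisted multiplicativity statement (the corollary following Theorem~\ref{thm:kondo-godement-jacquet-computation}) by unwinding the definition $\gGJPreGammaFactor{\pi}{\chi}{\fieldCharacter} = \GJPreGammaFactor{\pi \otimes \chi_{\GL_c}}{\fieldCharacter}$ and using that twisting by a determinantal character is compatible with parabolic induction. First I would record the elementary identity
$$\left(\sigma_1 \otimes \chi_{\GL_{n_1}}\right) \circ \left(\sigma_2 \otimes \chi_{\GL_{n_2}}\right) \isomorphic \left(\sigma_1 \circ \sigma_2\right) \otimes \chi_{\GL_{n_1 + n_2}}$$
for representations $\sigma_1$ of $\GL_{n_1}\left(\finiteField\right)$ and $\sigma_2$ of $\GL_{n_2}\left(\finiteField\right)$. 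This holds because for $p \in \ParabolicSubgroup_{\left(n_1, n_2\right)}$ with diagonal blocks $g_1 \in \GL_{n_1}\left(\finiteField\right)$, $g_2 \in \GL_{n_2}\left(\finiteField\right)$ one has $\det p = \det g_1 \cdot \det g_2$; hence the inflation to $\ParabolicSubgroup_{\left(n_1,n_2\right)}$ of $\left(\sigma_1 \otimes \chi_{\GL_{n_1}}\right) \overline{\otimes} \left(\sigma_2 \otimes \chi_{\GL_{n_2}}\right)$ is the inflation of $\sigma_1 \overline{\otimes} \sigma_2$ tensored with the restriction to $\ParabolicSubgroup_{\left(n_1,n_2\right)}$ of the one-dimensional character $\chi_{\GL_{n_1+n_2}}$, and pulling this global character out of the parabolic induction gives the claim.

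Next, since tensoring with the one-dimensional character $\chi_{\GL_n}$ is an exact autoequivalence of the category of representations of $\GL_n\left(\finiteField\right)$, it preserves irreducibility and the subrepresentation relation. Thus from $\pi \hookrightarrow \pi_1 \circ \pi_2$ and the identity above we obtain $\pi \otimes \chi_{\GL_{c_1+c_2}} \hookrightarrow \left(\pi_1 \otimes \chi_{\GL_{c_1}}\right) \circ \left(\pi_2 \otimes \chi_{\GL_{c_2}}\right)$, with all three of $\pi \otimes \chi_{\GL_{c_1+c_2}}$, $\pi_1 \otimes \chi_{\GL_{c_1}}$, $\pi_2 \otimes \chi_{\GL_{c_2}}$ irreducible. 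Applying the untwisted multiplicativity corollary of Theorem~\ref{thm:kondo-godement-jacquet-computation} to this triple and unwinding $\gGJPreGammaFactor{\cdot}{\chi}{\fieldCharacter} = \GJPreGammaFactor{\cdot \otimes \chi_{\GL_{\bullet}}}{\fieldCharacter}$ gives
$$\gGJPreGammaFactor{\pi}{\chi}{\fieldCharacter} = \GJPreGammaFactor{\pi \otimes \chi_{\GL_{c_1+c_2}}}{\fieldCharacter} = \GJPreGammaFactor{\pi_1 \otimes \chi_{\GL_{c_1}}}{\fieldCharacter} \cdot \GJPreGammaFactor{\pi_2 \otimes \chi_{\GL_{c_2}}}{\fieldCharacter} = \gGJPreGammaFactor{\pi_1}{\chi}{\fieldCharacter} \cdot \gGJPreGammaFactor{\pi_2}{\chi}{\fieldCharacter},$$
which is the assertion.

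There is essentially no real obstacle here: the only step that deserves a word of justification is the compatibility of the determinantal twist with parabolic induction, which is immediate from the multiplicativity of the determinant on block-triangular matrices. As an alternative one could argue directly from Kondo's formula — the cuspidal support of $\pi \otimes \chi_{\GL_c}$ consists of the twists $\sigma_j \otimes \chi_{\GL_{c_j}}$, each attached to the regular character $\theta_j \cdot \left(\chi \circ \FieldNorm{c_j}{1}\right)$, so the product of Gauss sums appearing in Theorem~\ref{thm:kondo-godement-jacquet-computation} factors accordingly — but the reduction to the untwisted corollary above is cleaner, and is the route I would present.
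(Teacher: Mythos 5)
Your proposal is correct and matches the paper's (implicit) argument: the paper treats this theorem as following immediately from the untwisted multiplicativity corollary of Kondo's computation, and your reduction via the compatibility $\left(\pi_1 \otimes \chi_{\GL_{c_1}}\right) \circ \left(\pi_2 \otimes \chi_{\GL_{c_2}}\right) \isomorphic \left(\pi_1 \circ \pi_2\right) \otimes \chi_{\GL_{c_1+c_2}}$ together with the definition $\gGJPreGammaFactor{\pi}{\chi}{\fieldCharacter} = \GJPreGammaFactor{\pi \otimes \chi_{\GL_c}}{\fieldCharacter}$ is exactly the intended route. Your alternative remark via Kondo's formula (twisting the cuspidal support characters by $\chi \circ \FieldNorm{c_j}{1}$) is also fine, but the reduction you present is the cleaner and standard one.
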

\begin{theorem}
	Suppose that $\chi^{-1}$ does not appear in the cuspidal support of $\pi$. 
 Then for any $f \in \mathcal{S}\left(\squareMatrix_c\left(\finiteField\right)\right)$ we have
	$$\zetaOperator\left(\transpose{\fourierTransform{\fieldCharacter}{f}}, \pi^{\vee} \times \chi^{-1} \right) = \gGJPreGammaFactor{\pi}{\chi}{\fieldCharacter} Z \left(f, \pi \times \chi\right).$$
\end{theorem}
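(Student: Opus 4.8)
The plan is to obtain this twisted functional equation as an essentially immediate consequence of the untwisted Godement--Jacquet functional equation (\cite[(2.7)]{Macdonald80}, recalled above), applied to the representation $\pi_{\chi} \coloneqq \pi \otimes \chi_{\GL_c}$, which acts on the underlying space of $\pi$ by $g \mapsto \chi\left(\det g\right) \pi\left(g\right)$. By the definitions of $\gGJPreGammaFactor{\pi}{\chi}{\fieldCharacter}$ and of the twisted zeta operator we have $\gGJPreGammaFactor{\pi}{\chi}{\fieldCharacter} = \GJPreGammaFactor{\pi_{\chi}}{\fieldCharacter}$ and $\zetaOperator\left(f, \pi \times \chi\right) = \zetaOperator\left(f, \pi_{\chi}\right)$, so the right-hand side of the asserted identity is already the right-hand side of \cite[(2.7)]{Macdonald80} for $\pi_{\chi}$; it remains to verify the hypothesis and to identify the two left-hand sides.

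First I would check that the hypothesis transports correctly. Writing the cuspidal support of $\pi$ as $\left\{\sigma_1,\dots,\sigma_r\right\}$ with $\sigma_j$ an irreducible cuspidal representation of $\GL_{c_j}\left(\finiteField\right)$, the fact that parabolic induction commutes with twisting by a character of the determinant, together with the fact that a cuspidal representation stays cuspidal after such a twist, shows that the cuspidal support of $\pi_{\chi}$ is $\left\{\sigma_1 \otimes \chi_{\GL_{c_1}},\dots,\sigma_r \otimes \chi_{\GL_{c_r}}\right\}$. On a $\GL_1$ constituent this twist carries a character $\theta$ of $\multiplicativegroup{\finiteField}$ to $\theta\chi$, so the trivial character of $\multiplicativegroup{\finiteField}$ occurs in the cuspidal support of $\pi_{\chi}$ if and only if $\chi^{-1}$ occurs in the cuspidal support of $\pi$. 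Hence the assumption of the present theorem is precisely the hypothesis required to invoke \cite[(2.7)]{Macdonald80} for $\pi_{\chi}$, which gives, for our fixed $f$, the identity $\zetaOperator\left(\transpose{\fourierTransform{\fieldCharacter}{f}}, \Contragradient{\pi_{\chi}}\right) = \GJPreGammaFactor{\pi_{\chi}}{\fieldCharacter}\, \zetaOperator\left(f, \pi_{\chi}\right)$.

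It then remains to match the left-hand sides. Using the realization $\Contragradient{\sigma}\left(g\right) = \sigma\left(\inverseTranspose{g}\right)$ from the untwisted theorem, $\Contragradient{\pi_{\chi}}$ acts on the space of $\pi$ by $g \mapsto \pi_{\chi}\left(\inverseTranspose{g}\right) = \chi\left(\det \inverseTranspose{g}\right)\pi\left(\inverseTranspose{g}\right)$, and since $\det\inverseTranspose{g} = \det\left(g\right)^{-1}$ this equals $\chi^{-1}\left(\det g\right)\pi\left(\inverseTranspose{g}\right)$. In other words, $\Contragradient{\pi_{\chi}}$ is exactly the realization of $\pi^{\vee}$ twisted by $\chi^{-1}\circ\det$, so by the definition of the twisted zeta operator $\zetaOperator\left(\transpose{\fourierTransform{\fieldCharacter}{f}}, \Contragradient{\pi_{\chi}}\right) = \zetaOperator\left(\transpose{\fourierTransform{\fieldCharacter}{f}}, \pi^{\vee} \times \chi^{-1}\right)$. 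Substituting this, together with $\GJPreGammaFactor{\pi_{\chi}}{\fieldCharacter} = \gGJPreGammaFactor{\pi}{\chi}{\fieldCharacter}$ and $\zetaOperator\left(f,\pi_{\chi}\right) = \zetaOperator\left(f, \pi \times \chi\right)$, into the identity from the previous paragraph produces the claimed functional equation.

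In short, the argument is a pure substitution, and the only place that calls for some care — the main, if mild, obstacle — is keeping the contragredient bookkeeping straight: one has to be sure that the object $\Contragradient{\pi_{\chi}}$ appearing on the left of \cite[(2.7)]{Macdonald80} is literally the representation $\pi^{\vee} \times \chi^{-1}$ in the notation of the target statement, which is exactly what the identity $\det\left(\transpose{g}^{-1}\right) = \det\left(g\right)^{-1}$ delivers. Note also that no manipulation of $f$ is required, since the very same function $\transpose{\fourierTransform{\fieldCharacter}{f}}$ occurs on the left in both the untwisted and the twisted equations.
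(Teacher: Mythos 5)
Your proposal is correct and is exactly the deduction the paper intends: the paper defines $\gGJPreGammaFactor{\pi}{\chi}{\fieldCharacter} = \GJPreGammaFactor{\pi \otimes \chi_{\GL_c}}{\fieldCharacter}$ and states the twisted functional equation as following ``immediately'' from Macdonald's untwisted one, which is precisely your substitution $\pi_{\chi} = \pi \otimes \chi_{\GL_c}$. Your verifications that the hypothesis transports (trivial character in the support of $\pi_{\chi}$ iff $\chi^{-1}$ in the support of $\pi$) and that $\Contragradient{\pi_{\chi}}$ in the realization $g \mapsto \pi_{\chi}\left(\inverseTranspose{g}\right)$ is the twisted object $\pi^{\vee} \times \chi^{-1}$ are the correct bookkeeping steps left implicit in the paper.
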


\subsection{Ginzburg--Kaplan gamma factors}

In this section, we define the Ginzburg--Kaplan gamma factor. It is a finite field analog of a gamma factor defined by Kaplan in the local field case~\cite[Appendix A]{kaplan2018}, which in turn has a global counterpart defined by Ginzburg~\cite{ginzburg2019tensor}. We prove the multiplicativity properties this gamma factor satisfies.

\subsubsection{Non-abelian Gauss sums}\label{subsec:gk-non-abelian-gauss-sums}
Let $\tau$ be an irreducible generic representation of $\GL_k\left(\finiteField\right)$. Let $c \ge 1$ and consider the Speh representation $\SpehRepresentation{\tau}{c}$. 
As explained in \Cref{sec:wss-models}, the Bessel--Speh function $\besselSpehFunction{\tau}{c}\left(g\right)$ satisfies for any $h \in \GL_c\left(\finiteField\right)$ and any $g \in \GL_{kc}\left(\finiteField\right)$, $$\besselSpehFunction{\tau}{c}\left(g\,\diag^k\left(h\right)\right) = \besselSpehFunction{\tau}{c}\left(\diag^k\left(h\right) g \right) = \centralCharacter{\tau}\left(\det h\right) \besselSpehFunction{\tau}{\fieldCharacter}\left(g\right).$$
It follows that the assignment $\GL_c\left(\finiteField\right) \to \cComplex$ given by \begin{equation}\label{eq:bessel-speh-voronoi-assignment}
	h \mapsto \specialBesselSpeh{\tau}\left(h\right) \coloneqq \begin{dcases}
		\besselSpehFunction{\tau}{c}\begin{pmatrix}
			& \IdentityMatrix{\left(k-1\right)c}\\
			h
		\end{pmatrix} & k \ge 2,\\
		\tau\left(\det h\right) \fieldCharacter\left(\trace h^{-1}\right) & k = 1
	\end{dcases}
\end{equation} is invariant under $\GL_c\left(\finiteField\right)$ conjugation. We remark that the notation $\specialBesselSpeh{\tau}\left(h\right)$ does not include the value $c$, nor the value $k$. These values should be inferred from $h$ and $\tau$, respectively.

Let $\pi$ be a representation of $\GL_c\left(\finiteField\right)$. 
We define a linear operator $\pi \to \pi$ by$$\GKGaussSum{\pi}{\tau}{\fieldCharacter} = q^{\frac{\left(k-2\right)c^2}{2}} \sum_{h \in \GL_c\left(\finiteField\right)} \specialBesselSpeh{\tau}\left(h\right) \pi\left(h\right).$$
Since the assignment in \eqref{eq:bessel-speh-voronoi-assignment} is invariant under conjugation, we have that $\GKGaussSum{\pi}{\tau}{\fieldCharacter} \in \Hom_{\GL_c\left(\finiteField\right)}\left(\pi, \pi\right)$. 
If $\pi$ is irreducible, then by Schur's lemma there exists a scalar $\GKPreGammaFactor{\pi}{\tau}{\fieldCharacter} \in \cComplex$ such that $\GKGaussSum{\pi}{\tau}{\fieldCharacter} = \GKPreGammaFactor{\pi}{\tau}{\fieldCharacter} \cdot \idmap_\pi$. 
We define for any $k \ge 1,$ $$\GKGammaFactor{\pi}{\tau}{\fieldCharacter} = \centralCharacter{\pi}\left(-1\right)^{k-1} \GKPreGammaFactor{\pi}{\tau}{\fieldCharacter}.$$

Notice that if $k=1$ and $\tau$ is an irreducible representation of $\GL_1\left(\finiteField\right)$, then $\tau \colon \multiplicativegroup{\finiteField} \to \multiplicativegroup{\cComplex}$ is a character. In this case, we have $$\GKGammaFactor{\pi}{\tau}{\fieldCharacter} = \GKPreGammaFactor{\pi}{\tau}{\fieldCharacter} = \gGJPreGammaFactor{\pi}{\tau}{\fieldCharacter},$$ where $\gGJPreGammaFactor{\pi}{\tau}{\fieldCharacter}$ is as in \Cref{subsubsection-twisting-by-a-character}.

\subsubsection{Contragredient property}\label{subsec:ginzburg-kaplan-contragradient}

Recall that $\SpehRepresentation{\Contragradient{\tau}}{c}$ is isomorphic to $\Contragradient{\SpehRepresentation{\tau}{c}}$. We have that
$$\GKPreGammaFactor{\Contragradient{\pi}}{\Contragradient{\tau}}{\fieldCharacter^{-1}} = \frac{q^{\frac{\left(k-2\right)c^2}{2}}}{\dim \Contragradient{\pi}} \sum_{h \in \GL_c\left(\finiteField\right)} \specialBesselSpeh[\fieldCharacter^{-1}]{\Contragradient{\tau}}\left(h\right) \trace \Contragradient{\pi}\left(h\right).$$
Using that $\trace \Contragradient{\pi}\left(h\right) = \conjugate{\trace \pi\left(h\right)}$ and that $\gbesselSpehFunction{\Contragradient{\SpehRepresentation{\tau}{c}}}{\fieldCharacter^{-1}}\left(h\right) = \conjugate{\besselSpehFunction{\tau}{c}\left(h\right)},$ we get $$\GKPreGammaFactor{\Contragradient{\pi}}{\Contragradient{\tau}}{\fieldCharacter^{-1}} = \frac{q^{\frac{\left(k-2\right)c^2}{2}}}{\dim \pi} \sum_{h \in \GL_c\left(\finiteField\right)} \conjugate{\specialBesselSpeh{\tau}\left(h\right)} \cdot \conjugate{\trace \pi\left(c\right)} = \conjugate{\GKPreGammaFactor{\pi}{\tau}{\fieldCharacter}}.$$

\subsubsection{Multiplicativity property in the second argument}

The aim of this section is to prove the following multiplicativity property:
\begin{theorem}\label{thm:multiplicativitiy-in-second-variable}
	Let $\tau_1$ and $\tau_2$ be irreducible generic representations of $\GL_{k_1}\left(\finiteField\right)$ and $\GL_{k_2}\left(\finiteField\right)$, respectively. 
 Suppose that $k = k_1 + k_2$ and that $\tau$ is the unique irreducible generic subrepresentation of the parabolically induced representation $\tau_1 \circ \tau_2$. 
 Then $$\GKGammaFactor{\pi}{\tau}{\fieldCharacter} = \GKGammaFactor{\pi}{\tau_1}{\fieldCharacter} \cdot \GKGammaFactor{\pi}{\tau_2}{\fieldCharacter}.$$
\end{theorem}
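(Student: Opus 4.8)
The plan is to prove the multiplicativity of $\GKGammaFactor{\pi}{\tau}{\fieldCharacter}$ in the $\tau$-argument by reducing it to the Bessel--Speh function identity in part (1) of \Cref{thm:bessel-speh-function-identities}, which expresses $\specialBesselSpeh{\tau}$ as a twisted convolution of $\specialBesselSpeh{\tau_1}$ and $\specialBesselSpeh{\tau_2}$. First I would recall that, by definition, $\GKGaussSum{\pi}{\tau}{\fieldCharacter}$ is (up to the power of $q$ and the sign $\centralCharacter{\pi}(-1)^{k-1}$) the operator $\sum_{h} \specialBesselSpeh{\tau}(h)\,\pi(h)$, and similarly for $\tau_1, \tau_2$. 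So the strategy is to substitute the formula
\begin{equation*}
	\specialBesselSpeh{\tau}\left(h\right) = q^{-c^2} \sum_{\substack{x,y \in \GL_c\left(\finiteField\right)\\ xy = -h}} \specialBesselSpeh{\tau_1}\left(x\right) \specialBesselSpeh{\tau_2}\left(y\right) \fieldCharacter^{\indicatorFunction{k_1,1}}\left(\trace x^{-1}\right) \fieldCharacter^{\indicatorFunction{k_2,1}}\left(\trace y^{-1}\right)
\end{equation*}
into the operator $\GKGaussSum{\pi}{\tau}{\fieldCharacter}$, reindex the resulting double sum over $\GL_c\left(\finiteField\right) \times \GL_c\left(\finiteField\right)$ (setting $h = -x^{-1}\cdot(-y)$, i.e.\ replacing the constraint $xy=-h$ by letting $x,y$ range freely and $h = -xy$... wait, more carefully: the sum over $h$ together with the constraint $xy=-h$ collapses to a free sum over $x,y$ with $h = -(xy)^{... }$; I will just let $x,y$ vary and set $h = -x \cdot y$ with $\pi(h) = \pi(-1)\pi(x)\pi(y)$). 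This should factor the operator as a product, up to bookkeeping of the central sign $\centralCharacter{\pi}(-1)$ and the powers of $q$.

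The key steps, in order: (i) Plug the identity into $\GKGaussSum{\pi}{\tau}{\fieldCharacter} = q^{\frac{(k-2)c^2}{2}}\sum_h \specialBesselSpeh{\tau}(h)\pi(h)$ and interchange the order of summation so that the outer sum is over $x,y \in \GL_c\left(\finiteField\right)$ and the $h$-sum is eliminated via $h = -xy$ (using $xy = -h \iff h = -xy$, noting $\GL_c\left(\finiteField\right)$ is a group so this is a bijection in $h$ for fixed $x,y$). (ii) Use $\pi(-xy) = \centralCharacter{\pi}(-1)\pi(x)\pi(y)$ to split $\pi(h)$. (iii) Recognize the two inner factors: $q^{\frac{(k_1-2)c^2}{2}}\sum_x \specialBesselSpeh{\tau_1}(x)\fieldCharacter^{\delta_{k_1,1}}(\trace x^{-1})\pi(x)$ is $\GKGaussSum{\pi}{\tau_1}{\fieldCharacter}$ when $k_1 \ge 2$ (so the Kronecker delta vanishes), and when $k_1 = 1$ it is instead the Godement--Jacquet operator $\GKGaussSum{\pi}{\tau_1}{\fieldCharacter}$ in the sense of \Cref{subsubsection-twisting-by-a-character}, because $\specialBesselSpeh{\tau_1}(x) = \centralCharacter{\tau_1}(\det x)$ when $k_1 = 1$ and the extra $\fieldCharacter(\trace x^{-1})$ supplies exactly the factor appearing in $\GKGaussSum{\pi}{\chi}{\fieldCharacter}$; and symmetrically for $\tau_2$. (iv) Track the exponents of $q$: one must check $\frac{(k-2)c^2}{2} - c^2 = \frac{(k_1-2)c^2}{2} + \frac{(k_2-2)c^2}{2}$, which holds since $k = k_1 + k_2$ gives $\frac{(k-2)c^2}{2} - c^2 = \frac{(k-4)c^2}{2} = \frac{(k_1 + k_2 - 4)c^2}{2}$. (v) Conclude $\GKPreGammaFactor{\pi}{\tau}{\fieldCharacter}\cdot\idmap_\pi = \centralCharacter{\pi}(-1)\,\GKPreGammaFactor{\pi}{\tau_1}{\fieldCharacter}\,\GKPreGammaFactor{\pi}{\tau_2}{\fieldCharacter}\cdot\idmap_\pi$, i.e.\ $\GKPreGammaFactor{\pi}{\tau}{\fieldCharacter} = \centralCharacter{\pi}(-1)\GKPreGammaFactor{\pi}{\tau_1}{\fieldCharacter}\GKPreGammaFactor{\pi}{\tau_2}{\fieldCharacter}$; then convert to the normalized $\Gamma^{\mathrm{GK}}$ by inserting $\centralCharacter{\pi}(-1)^{k-1} = \centralCharacter{\pi}(-1)^{k_1-1}\centralCharacter{\pi}(-1)^{k_2-1}\centralCharacter{\pi}(-1)$ (valid since $\centralCharacter{\pi}(-1) \in \{\pm1\}$ and $k-1 = (k_1-1)+(k_2-1)+1$), which exactly cancels the stray $\centralCharacter{\pi}(-1)$ and yields $\GKGammaFactor{\pi}{\tau}{\fieldCharacter} = \GKGammaFactor{\pi}{\tau_1}{\fieldCharacter}\GKGammaFactor{\pi}{\tau_2}{\fieldCharacter}$.

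The main obstacle is really just the careful bookkeeping of the normalizing constants and signs, together with the boundary cases $k_1 = 1$ or $k_2 = 1$ where $\specialBesselSpeh{\tau_j}$ is not a genuine Bessel--Speh function but a determinant character and $\GKGaussSum{\pi}{\tau_j}{\fieldCharacter}$ must be interpreted via the Godement--Jacquet definition. The truly substantive input — the convolution identity for $\specialBesselSpeh{\tau}$ — is being quoted from \Cref{thm:bessel-speh-function-identities}, so the argument here is a clean reindexing; I expect no real analytic or structural difficulty beyond verifying that the $q$-powers and the $\fieldCharacter(\trace x^{-1})$ factors in the convolution identity are precisely the ones the Godement--Jacquet operator demands in the degenerate cases.
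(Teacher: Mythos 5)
Your proposal is correct and follows essentially the same route as the paper: the paper's proof likewise substitutes the convolution identity of \Cref{lem:convolution-of-bessel-speh-functions} into $\GKGaussSum{\pi}{\tau}{\fieldCharacter}$, collapses the sum via $h=-xy$ and $\pi(-xy)=\centralCharacter{\pi}(-1)\pi(x)\pi(y)$, identifies the two resulting operators with $\GKGaussSum{\pi}{\tau_j}{\fieldCharacter}$ (interpreting the $k_j=1$ case through the twisted Godement--Jacquet operator, where the $\fieldCharacter(\trace x^{-1})$ factor and the $q^{-c^2/2}$ normalization appear exactly as you note), and then passes from the pre-gamma relation $\GKPreGammaFactor{\pi}{\tau}{\fieldCharacter}=\centralCharacter{\pi}(-1)\GKPreGammaFactor{\pi}{\tau_1}{\fieldCharacter}\GKPreGammaFactor{\pi}{\tau_2}{\fieldCharacter}$ to the normalized statement via the sign $\centralCharacter{\pi}(-1)^{k-1}$, just as in your steps (i)--(v).
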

The proof will occupy this section and will go through finding a recursive expression for a special value of the Bessel--Speh function for a parabolic induction of representations of types $\left(k_1, c\right)$ and $\left(k_2, c\right)$.

We start with the following lemma.

\begin{lemma}\label{lem:convolution-of-bessel-speh-functions}
	Let $\Sigma_1$ and $\Sigma_2$ be irreducible representations of types $\left(k_1, c\right)$ and $\left(k_2, c\right)$, respectively. 
 Let $k = k_1 + k_2$ and suppose that $\Sigma \subset \Sigma_1 \circ \Sigma_2$ is an irreducible representation admitting a $\kcNotation{k}{c}{\fieldCharacter}$ vector. 
 Then for any $h \in \GL_c\left(\finiteField\right)$,
	\begin{equation*}
		\specialBesselSpeh{\Sigma}\left(h\right) = q^{- c^2} \sum_{\substack{x,y \in \GL_c\left(\finiteField\right)\\
				xy = -h}} \specialBesselSpeh{\Sigma_1}\left(x\right) \specialBesselSpeh{\Sigma_2}\left(y\right).
	\end{equation*}
\end{lemma}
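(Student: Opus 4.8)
The plan is to compute $\gbesselSpehFunction{\Sigma}{\fieldCharacter}$ inside a parabolic induction. First I would realize $\Sigma$ as a subrepresentation of $\Sigma_1 \circ \Sigma_2$, induced from the standard parabolic $\ParabolicSubgroup_{\left(k_1 c, k_2 c\right)}$ of $\GL_{kc}\left(\finiteField\right)$. By the heredity theorem \cite[Theorem 6.3]{Carmon2023}, $\Sigma_1 \circ \Sigma_2$ is of $\left(k, c\right)$ type, so by \cite[Theorem 6.18]{Carmon2023} it contains a one-dimensional space of $\left(k,c\right)$ $\fieldCharacter$-Whittaker vectors, which lies in $\Sigma$; fix $\left(k_j,c\right)$ $\fieldCharacter$-Whittaker vectors $v_j$ and functionals $\ell_j$ of $\Sigma_j$ ($j=1,2$). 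Following \Cref{sec:wss-models} with $k$ split instead of $c$, there is an explicit $\left(k,c\right)$ $\fieldCharacter$-Whittaker functional $\ell$ on $\Sigma_1 \circ \Sigma_2$ built from $\ell_1 \otimes \ell_2$ by a finite-field Jacquet-type sum over (a subgroup of) the opposite unipotent radical of $\ParabolicSubgroup_{\left(k_1 c, k_2 c\right)}$, suitably translated and twisted; by the root-elimination argument $\ell$ is non-zero on every $\GL_{kc}\left(\finiteField\right)$-invariant subspace, so it is the $\left(k,c\right)$ $\fieldCharacter$-Whittaker functional of $\Sigma$.

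Next, for a section $\Phi$ of $\Sigma_1 \circ \Sigma_2$ set $W^{\Phi}\left(g\right) = \standardForm{\left(\Sigma_1 \circ \Sigma_2\right)\left(g\right)\Phi}{\ell}$. Averaging against $\fieldCharacterkc{k}{c}$ over $\UnipotentRadicalForWss{k}{c}$ projects onto the Whittaker line, and comparing with the value at the identity gives, for any $\Phi$ on which $\ell$ does not vanish,
$$\gbesselSpehFunction{\Sigma}{\fieldCharacter}\left(g\right) = \frac{\sum_{u \in \UnipotentRadicalForWss{k}{c}} \fieldCharacterkc{k}{c}^{-1}\left(u\right) W^{\Phi}\left(gu\right)}{\sum_{u \in \UnipotentRadicalForWss{k}{c}} \fieldCharacterkc{k}{c}^{-1}\left(u\right) W^{\Phi}\left(u\right)}.$$
I would take $\Phi$ to be the section adapted to the open cell of the Jacquet integral whose value on the relevant coset is $v_1 \otimes v_2$. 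Since the unipotent radical of $\ParabolicSubgroup_{\left(k_1 c, k_2 c\right)}$ acts trivially on $\Sigma_1 \overline{\otimes} \Sigma_2$, each $W^{\Phi}\left(\eta_k\left(h\right) u\right)$, where $\eta_k\left(h\right) = \left(\begin{smallmatrix} & \IdentityMatrix{\left(k-1\right)c} \\ h \end{smallmatrix}\right)$, collapses after a Bruhat-type factorization $\left(\text{opp. unip.}\right)\cdot\eta_k\left(h\right)\cdot u = \diag\left(m_1, m_2\right)\cdot\left(\text{unip.}\right)$ to $\ell_1\left(\Sigma_1\left(m_1\right)v_1\right)\ell_2\left(\Sigma_2\left(m_2\right)v_2\right) = \ell_1\left(v_1\right)\ell_2\left(v_2\right)\gbesselSpehFunction{\Sigma_1}{\fieldCharacter}\left(m_1\right)\gbesselSpehFunction{\Sigma_2}{\fieldCharacter}\left(m_2\right)$, and only $m_1, m_2$ enter.

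Carrying out this factorization explicitly, $m_1$ and $m_2$ turn out to be, modulo unipotent elements of $\UnipotentRadicalForWss{k_1}{c}$ and $\UnipotentRadicalForWss{k_2}{c}$, equal to $\eta_{k_1}\left(x\right)$ and $\eta_{k_2}\left(y\right)$ for some $x, y \in \GL_c\left(\finiteField\right)$ with $xy = -h$ (the sign being an artifact of the Weyl-element normalization). The values of $\fieldCharacterkc{k}{c}^{-1}$ on the residual unipotent pieces, together with the bi-$\fieldCharacter$-equivariance of the Bessel functions $\gbesselSpehFunction{\Sigma_j}{\fieldCharacter}$ and the vanishing in \Cref{prop:bessel-speh-on-diag-block-matrices}, recombine each $\gbesselSpehFunction{\Sigma_j}{\fieldCharacter}\left(m_j\right)$ into $\specialBesselSpeh{\Sigma_j}$ evaluated at $x$ (resp.\ $y$), and collapse the remaining sum to a sum over the pairs $\left(x,y\right)$ with $xy = -h$; the leftover counting factor is exactly $q^{-c^2}$. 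The one case requiring separate attention is $k_j = 1$: then $\Sigma_j$ is a character $\GL_c\left(\finiteField\right) \to \multiplicativegroup{\cComplex}$, the group $\UnipotentRadicalForWss{k_j}{c}$ is trivial and $\eta_{k_j}$ is the identity, so the component of $\fieldCharacterkc{k}{c}^{-1}$ that would otherwise be absorbed by $\fieldCharacterkc{k_j}{c}$ survives; tracing it through the change of variables one finds it equals $\fieldCharacter\left(\trace x^{-1}\right)$ (resp.\ $\fieldCharacter\left(\trace y^{-1}\right)$), which is precisely the factor $\fieldCharacter^{\indicatorFunction{k_j,1}}$ in the statement.

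I expect the main obstacle to be exactly this Bruhat-factorization bookkeeping: identifying the correct section $\Phi$ and Weyl element in the Jacquet integral, determining the ranges of $x$ and $y$, pinning down the normalizing power of $q$ and the sign in $xy = -h$, and checking that the character collapse degenerates uniformly to the Kronecker-delta correction exactly when $k_j = 1$. Once the factorization is written out, the remaining manipulations are routine character-orthogonality computations, cf.\ the analogous argument for the $c$-split underlying \Cref{sec:wss-models}.
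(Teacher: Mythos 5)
Your strategy coincides with the paper's: realize $\Sigma$ inside $\Sigma_1 \circ \Sigma_2$, use heredity/uniqueness of the $\left(k,c\right)$ $\fieldCharacter$-Whittaker datum to reduce it to $v_1 \otimes v_2$ placed on the cell $\ParabolicSubgroup_{\left(k_1 c, k_2 c\right)} \left(\begin{smallmatrix} & \IdentityMatrix{k_1 c} \\ \IdentityMatrix{k_2 c} \end{smallmatrix}\right) \UnipotentRadicalForWss{k}{c}$, and reduce $\specialBesselSpeh{\Sigma}\left(h\right)$ to a sum over the opposite unipotent radical governed by a Bruhat-type factorization. The paper does this with the explicit Whittaker \emph{vector} $f$ supported on that cell and computes the matrix coefficient $\innerproduct{\rho\left(\cdot\right)f}{f}$; your dual route through a Jacquet-type \emph{functional} $\ell$ and a ratio of $\fieldCharacterkc{k}{c}$-averages is equivalent, but it obliges you to justify separately that $\ell$ is non-zero on $\Sigma$ (your appeal to the root-elimination of \Cref{sec:wss-models} is an analogy, since that construction splits $c$, not $k$) and that your chosen $\Phi$ has non-vanishing denominator; the paper sidesteps both, since $f$ is visibly a non-zero $\left(k,c\right)$ $\fieldCharacter$-Whittaker vector and hence lies in $\Sigma$ by uniqueness.

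The genuine gap is that the entire content of the lemma sits in the factorization you defer. You assert that the Levi components are $\eta_{k_1}\left(x\right)$, $\eta_{k_2}\left(y\right)$ with $xy = -h$, that the count yields $q^{-c^2}$, and that the residual characters degenerate to $\fieldCharacter\left(\trace x^{-1}\right)^{\indicatorFunction{k_1,1}} \fieldCharacter\left(\trace y^{-1}\right)^{\indicatorFunction{k_2,1}}$ — but none of this is derived, and since these are exactly the delicate outputs (sign, power of $q$, Kronecker-delta corrections), asserting them from the known statement is circular. In the paper's proof this is the actual work: writing $u = \left(\begin{smallmatrix} \IdentityMatrix{k_2 c} & X \\ & \IdentityMatrix{k_1 c} \end{smallmatrix}\right)$, showing the summand is supported exactly when the bottom-left $c \times c$ block $X_{c \times c}$ of $X$ is invertible, solving explicitly for $p_1, p_2$, factoring each as a unipotent times a Weyl-type element, and only then substituting $x = -X_{c \times c}^{-1}$, $y = X_{c \times c} h$; the remaining blocks of $X$ are absorbed by the $\fieldCharacterkc{k}{c}$- and $\fieldCharacterkc{k_j}{c}$-equivariances, leaving precisely $\fieldCharacter\left(-\trace X_{c \times c}\right)^{\indicatorFunction{k_1,1}}$ and $\fieldCharacter\left(\trace\left(h^{-1} X_{c \times c}^{-1}\right)\right)^{\indicatorFunction{k_2,1}}$, which become the stated factors after the change of variables (note \Cref{prop:bessel-speh-on-diag-block-matrices} is not actually needed here). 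Until you exhibit this factorization — support condition, explicit $p_1, p_2$, sign, power of $q$, character bookkeeping — the proposal is a plan rather than a proof.
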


\begin{proof}
	Let $v_1$ and $v_2$ be a $\kcNotation{k_1}{c}{\fieldCharacter}$ vector and a $\kcNotation{k_2}{c}{\fieldCharacter}$ vector for $\Sigma_1$ and $\Sigma_2$, respectively.  
 A $\kcNotation{k}{c}{\fieldCharacter}$ vector for $\rho = \Sigma_1 \circ \Sigma_2$ is given by the formula
	$$f\left(g\right) = \begin{cases}
	\fieldCharacterkc{k}{c}\left(u\right) \left(\Sigma_1 \overline{\otimes} \Sigma_2\right)\left(p\right)  v_1 \otimes v_2 & g = p \begin{pmatrix}
			& \IdentityMatrix{k_1 c}\\
			\IdentityMatrix{k_2 c}
		\end{pmatrix} u, p \in \ParabolicSubgroup_{\left(k_1 c, k_2 c\right)}, u \in \UnipotentRadicalForWss{k}{c}, \\
		0 & \text{otherwise},
	\end{cases}$$
(see \Cref{sec: Parabolic induction} for the definition of $\overline{\otimes}$).
Since $\Sigma$ is the unique irreducible subrepresentation of  $\Sigma_1 \circ \Sigma_2$ admitting a $\kcNotation{k}{c}{\fieldCharacter}$ vector, we must have that $f \in \Sigma$.

Fix non-zero $\GL_{k_1 c}\left(\finiteField\right)$ and $\GL_{k_2 c}\left(\finiteField\right)$ invariant inner products on $\Sigma_1$ and on $\Sigma_2$, respectively, and let $\innerproduct{\cdot}{\cdot}_{\Sigma_1 \otimes \Sigma_2}$ be the invariant product they induce on the tensor product $\Sigma_1 \otimes \Sigma_2$. 
We may assume without loss of generality that $v_1$ and $v_2$ have norm one with respect to our choices of the inner products. 
A non-zero $\GL_{kc}\left(\finiteField\right)$ invariant inner product on $\rho = \Sigma_1 \circ \Sigma_2$ is given by $$\innerproduct{f_1}{f_2}_{\rho} = \sum_{g \in \ParabolicSubgroup_{\left(k_1 c, k_2 c\right)} \backslash \GL_{kc}\left(\finiteField\right)} \innerproduct{f_1\left(g\right)}{f_2\left(g\right)}_{\Sigma_1 \otimes \Sigma_2}.$$
We have that $\innerproduct{f}{f}_{\rho} = \sizeof{\UnipotentRadical_{\left(k_2 c, k_1 c\right)}}$. 
Let $h \in \GL_c\left(\finiteField\right)$.
Then $\specialBesselSpeh{\tau}\left(h\right)$ is given by
\begin{equation*}
\frac{\innerproduct{\rho \begin{pmatrix}
			& \IdentityMatrix{\left(k-1\right)c}\\
			h
		\end{pmatrix} f}{f}_{\rho}}{\innerproduct{f}{f}_\rho},
\end{equation*}
which is equal to
 \begin{equation}\label{eq:sum-for-bessel-speh-of-parabolic-induction-before-expanding-right}
	 \frac{1}{\sizeof{\UnipotentRadical_{\left(k_2 c, k_1 c\right)}}} \sum_{u \in \UnipotentRadical_{\left(k_2c, k_1c\right)}} \innerproduct{f\left( \begin{pmatrix}
			& \IdentityMatrix{k_1 c}\\
			\IdentityMatrix{k_2 c}
		\end{pmatrix} u \begin{pmatrix}
			& \IdentityMatrix{\left(k-1\right)c}\\
			h
		\end{pmatrix} \right)}{{f\left( \begin{pmatrix}
				& \IdentityMatrix{k_1 c}\\
				\IdentityMatrix{k_2 c}
			\end{pmatrix} u \right)}}_{\Sigma_1 \otimes \Sigma_2}.
\end{equation}
Evaluating the second entry of the inner product in \eqref{eq:sum-for-bessel-speh-of-parabolic-induction-before-expanding-right}, we get
\begin{equation}\label{eq:sum-for-bessel-speh-of-parabolic-induction}
	\frac{1}{\sizeof{\UnipotentRadical_{\left(k_2 c, k_1 c\right)}}} \sum_{u \in \UnipotentRadical_{\left(k_2c, k_1c\right)}} \innerproduct{f\left( \begin{pmatrix}
			& \IdentityMatrix{k_1 c}\\
			\IdentityMatrix{k_2 c}
		\end{pmatrix} u \begin{pmatrix}
			& \IdentityMatrix{\left(k-1\right)c}\\
			h
		\end{pmatrix} \right)}{v_1 \otimes v_2}_{\Sigma_1 \otimes \Sigma_2} \fieldCharacterkc{k}{c}^{-1}\left(u\right).
\end{equation}
In order for the summand of \eqref{eq:sum-for-bessel-speh-of-parabolic-induction} to be in the support of $f$, we must have that $$\begin{pmatrix}
	& \IdentityMatrix{k_1 c}\\
	\IdentityMatrix{k_2 c}
\end{pmatrix} u \begin{pmatrix}
	& \IdentityMatrix{\left(k-1\right)c}\\
	h
\end{pmatrix} = p \begin{pmatrix}
& \IdentityMatrix{k_1 c}\\
\IdentityMatrix{k_2 c}
\end{pmatrix} u' $$ for some $p \in \ParabolicSubgroup_{\left(k_1c, k_2c\right)}$ and some $u' \in \UnipotentRadical_{\left(k_2c, k_1c\right)}$.

Let $u = \begin{pmatrix}
	\IdentityMatrix{k_2 c} & X\\
	& \IdentityMatrix{k_1 c}
\end{pmatrix}$ be in the support of the summand of \eqref{eq:sum-for-bessel-speh-of-parabolic-induction}, where $X \in \Mat{k_2 c}{k_1 c}\left(\finiteField\right)$.  Write $p = \begin{pmatrix}
	p_1 & Y\\
	& p_2
\end{pmatrix}$, where $p_1 \in \GL_{k_1 c}\left(\finiteField\right)$, $p_2 \in \GL_{k_2 c}\left(\finiteField\right)$ and $Y \in \Mat{k_1c}{k_2 c}\left(\finiteField\right)$. 
Also write $u' = \begin{pmatrix}
\IdentityMatrix{k_2 c} & X'\\
& \IdentityMatrix{k_1 c}
\end{pmatrix}$, where $X' \in \Mat{k_2 c}{k_1 c}\left(\finiteField\right)$. 
Then we have \begin{equation}\label{eq:decomposition-for-voronoi-element-parabolic-induction}
\begin{pmatrix}
	\IdentityMatrix{k_2 c} & X\\
	& \IdentityMatrix{k_1 c} 
\end{pmatrix} \begin{pmatrix}
	& \IdentityMatrix{\left(k-1\right)c}\\
	h
\end{pmatrix} = \begin{pmatrix}
	p_2 & \\
	Y & p_1
\end{pmatrix} \begin{pmatrix}
	\IdentityMatrix{k_2 c} & X'\\
	& \IdentityMatrix{k_1 c}
\end{pmatrix} = \begin{pmatrix}
	p_2 & p_2X'\\
	Y & YX' + p_1
\end{pmatrix}.
\end{equation}
Decomposing \begin{equation}\label{eq:formula-for-X}
	X = \begin{pmatrix}
		X_{\left(k_2 - 1\right)c \times \left(k_1-1\right)c} & X_{\left(k_2 - 1\right)c \times c} \\
		X_{c \times \left(k_1-1\right)c} & X_{c \times c}
	\end{pmatrix}
\end{equation} we get
$$  \begin{pmatrix}
	\IdentityMatrix{k_2 c} & X\\
	& \IdentityMatrix{k_1 c} 
\end{pmatrix} \begin{pmatrix}
	& \IdentityMatrix{\left(k-1\right)c}\\
	h
\end{pmatrix} = \begin{pmatrix}
X_{\left(k_2 - 1\right)c\times c}h & \IdentityMatrix{\left(k_2 - 1\right)c} &  & X_{\left(k_2 - 1\right)c \times \left(k_1-1\right)c}\\
X_{c \times c}h &  & \IdentityMatrix{c} & X_{c \times \left(k_1 - 1\right)c}\\
& & & \IdentityMatrix{\left(k_1-1\right)c}\\
h
\end{pmatrix}.$$
Hence, we get that if $u$ is such that it is in the support of the summand of \eqref{eq:sum-for-bessel-speh-of-parabolic-induction}, then \begin{equation}\label{eq:formula-for-p2}
p_2 = \begin{pmatrix}
	X_{\left(k_2 - 1\right)c\times c}h & \IdentityMatrix{\left(k_2 - 1\right)c}\\
	X_{c \times c}h &
\end{pmatrix}
\end{equation} is invertible, which is equivalent to requiring $X_{c \times c}$ to be invertible, in which case the inverse of $p_2$ is $$p_2^{-1} = \begin{pmatrix}
& h^{-1} X_{c \times c}^{-1}\\
\IdentityMatrix{\left(k_2 - 1\right)c} & -X_{\left(k_2 - 1\right)c \times c} X_{c \times c}^{-1}
\end{pmatrix}.$$ 
In this case, we also have that \begin{equation}\label{eq:formula-for-Y}
Y = \begin{pmatrix}
	0_{\left(k_1-1\right)c \times c} & 0_{\left(k_1-1\right)c \times \left(k_2-1\right)c} \\
	h & 0_{c\times \left(k_2-1\right)c}
\end{pmatrix}
\end{equation} and \begin{equation}\label{eq:formula-for-X-prime}
X' = p_2^{-1} \begin{pmatrix}
	& X_{\left(k_2 - 1\right)c \times \left(k_1 - 1\right)c}\\
	\IdentityMatrix{c} & X_{c \times \left(k_1-1\right)c}
\end{pmatrix} = \begin{pmatrix}
	h^{-1} X_{c \times c}^{-1} & h^{-1}X_{c\times c}^{-1}X_{c\times\left(k_1-1\right)c}\\
	-X_{\left(k_2 - 1\right)c \times c} X_{c \times c}^{-1} & \ast
\end{pmatrix},
\end{equation} which implies
\begin{equation}\label{eq:formula-for-p1}
	p_1 = \begin{pmatrix}
		& \IdentityMatrix{\left(k_1 - 1\right)c}\\
		0_{c \times c}
	\end{pmatrix} - YX' = \begin{pmatrix}
		& \IdentityMatrix{\left(k_1-1\right)c}\\
		-X_{c \times c}^{-1} & -X_{c \times c}^{-1} X_{c \times \left(k_1 - 1\right)c}
	\end{pmatrix}.
\end{equation}

We decompose $$p_1 = \begin{pmatrix} & \IdentityMatrix{\left(k_1-1\right)c}\\
	-X_{c\times c}^{-1}
\end{pmatrix}\begin{pmatrix}\IdentityMatrix{c} & X_{c\times\left(k_1-1\right)c}\\
	& \IdentityMatrix{\left(k_1-1\right)c}
\end{pmatrix}$$ and $$p_2 = \begin{pmatrix}\IdentityMatrix{\left(k_2-1\right)c} & X_{\left(k_2-1\right)c\times c}X_{c\times c}^{-1}\\
	& \IdentityMatrix{c}
\end{pmatrix}\begin{pmatrix} & \IdentityMatrix{\left(k_2-1\right)c}\\
	X_{c\times c}h
\end{pmatrix}.$$

It is easy to check that given $X \in \Mat{k_2 c}{k_1 c}\left(\finiteField\right)$ as in \eqref{eq:formula-for-X} such that $X_{c \times c} \in \GL_c\left(\finiteField\right)$, then $p_1, p_2, Y, X'$ that are given by formulas \eqref{eq:formula-for-p2}-\eqref{eq:formula-for-p1} satisfy \eqref{eq:decomposition-for-voronoi-element-parabolic-induction}. 
Therefore, we have that $\sizeof{\UnipotentRadical_{\left(k_2 c, k_1 c\right)}} \gbesselSpehFunction{\Sigma}{\fieldCharacter}\begin{pmatrix}
	& \IdentityMatrix{\left(k-1\right)c}\\
	h
\end{pmatrix}$ is given by \begin{align*}
\MoveEqLeft[3]\sum_{X} \gbesselSpehFunction{\Sigma_1}{\fieldCharacter}\left(\begin{pmatrix} & \IdentityMatrix{\left(k_1-1\right)c}\\
	-X_{c\times c}^{-1}
\end{pmatrix}\begin{pmatrix}\IdentityMatrix{c} & X_{c\times\left(k_1-1\right)c}\\
	& \IdentityMatrix{\left(k_1-1\right)c}
\end{pmatrix}\right)\\
\times &  \gbesselSpehFunction{\Sigma_2}{\fieldCharacter}\left(\begin{pmatrix}\IdentityMatrix{\left(k_2-1\right)c} & X_{\left(k_2-1\right)c\times c}X_{c\times c}^{-1}\\
	& \IdentityMatrix{c}
\end{pmatrix}\begin{pmatrix} & \IdentityMatrix{\left(k_2-1\right)c}\\
	X_{c\times c}h
\end{pmatrix}\right)  \\
\times &
\fieldCharacterkc{k}{c} \begin{pmatrix}\IdentityMatrix{\left(k_2-1\right)c} &  & \ast & \ast \\
	& \IdentityMatrix{c} & -X_{c\times\left(k_1-1\right)c} & -X_{c\times c}\\
	&  & \IdentityMatrix{\left(k_1-1\right)c}\\
	&  &  & \IdentityMatrix{c}
\end{pmatrix}\\
\times &  \fieldCharacterkc{k}{c} \begin{pmatrix}
\IdentityMatrix{c} & & h^{-1} X_{c \times c}^{-1} & \ast\\
& \IdentityMatrix{\left(k_2 - 1\right)c} & -X_{\left(k_2 - 1\right)c \times c} X_{c \times c}^{-1} & \ast \\
& & \IdentityMatrix{c}\\
& & & \IdentityMatrix{\left(k_1 - 1\right)c}
\end{pmatrix},
\end{align*}
where the summation is over all $X = \left(\begin{smallmatrix}
	X_{\left(k_2 - 1\right)c \times \left(k_1-1\right)c} & X_{\left(k_2 - 1\right)c \times c} \\
	X_{c \times \left(k_1-1\right)c} & X_{c \times c}
\end{smallmatrix}\right) \in \Mat{k_2 c}{k_1 c}\left(\finiteField\right)$ having the property that $X_{c \times c} \in \GL_c\left(\finiteField\right)$.

The theorem now follows by using the $(\UnipotentRadicalForWss{k_j}{c}, \fieldCharacterkc{k}{c_j})$-equivariance properties of the Bessel--Speh functions, by noticing that $$ \fieldCharacterkc{k_1}{c} \begin{pmatrix}\IdentityMatrix{c} & X_{c\times\left(k_1-1\right)c}\\
	& \IdentityMatrix{\left(k_1-1\right)c}
\end{pmatrix} \cdot \fieldCharacterkc{k}{c} \begin{pmatrix}\IdentityMatrix{\left(k_2-1\right)c} &  & \ast & \ast \\
& \IdentityMatrix{c} & -X_{c\times\left(k_1-1\right)c} & -X_{c\times c}\\
&  & \IdentityMatrix{\left(k_1-1\right)c}\\
&  &  & \IdentityMatrix{c}
\end{pmatrix} = \fieldCharacter\left(-\trace X_{c \times c}\right)^{\delta_{k_1, 1}}$$
and \begin{align*}
	\MoveEqLeft[3] \fieldCharacterkc{k_2}{c} \begin{pmatrix}\IdentityMatrix{\left(k_2-1\right)c} & X_{\left(k_2-1\right)c\times c}X_{c\times c}^{-1}\\
		& \IdentityMatrix{c}
	\end{pmatrix} \cdot \fieldCharacterkc{k}{c}\begin{pmatrix}
		\IdentityMatrix{c} & & h^{-1} X_{c \times c}^{-1} & \ast\\
		& \IdentityMatrix{\left(k_2 - 1\right)c} & -X_{\left(k_2 - 1\right)c \times c} X_{c \times c}^{-1} & \ast \\
		& & \IdentityMatrix{c}\\
		& & & \IdentityMatrix{\left(k_1 - 1\right)c}
	\end{pmatrix} \\
= {} &\fieldCharacter\left(\trace \left(h^{-1} X_{c \times c}^{-1}\right)\right)^{\indicatorFunction{k_2, 1}},
\end{align*}
(where for $j=1,2$, $\indicatorFunction{k_j,1}$ is Kronecker's delta function) and, finally, by changing variables $x = -X_{c \times c}^{-1}$, $y = X_{c \times c} h$.
\end{proof}

We are ready to prove \Cref{thm:multiplicativitiy-in-second-variable}.
\begin{proof}
	We have that $\SpehRepresentation{\tau}{c}$ is the unique irreducible subrepresentation of $\SpehRepresentation{\tau_1}{c} \circ \SpehRepresentation{\tau_2}{c}$ that admits a $\kcNotation{k}{c}{\fieldCharacter}$ vector. Therefore by \Cref{lem:convolution-of-bessel-speh-functions},
	\begin{equation*}
		\GKGaussSum{\pi}{\tau}{\fieldCharacter} = q^{\frac{\left(k - 2\right)c^2}{2}} q^{-c^2} \sum_{x,y \in \GL_c\left(\finiteField\right)} \specialBesselSpeh{\tau_1}\left(x\right) \specialBesselSpeh{\tau_2}\left(y\right) \pi\left(-xy\right).
	\end{equation*}
	Hence $$\GKGaussSum{\pi}{\tau}{\fieldCharacter} = \centralCharacter{\pi}\left(-1\right) \GKGaussSum{\pi}{\tau_1}{\fieldCharacter} \GKGaussSum{\pi}{\tau_2}{\fieldCharacter}$$
	and the result follows.
\end{proof}

\subsubsection{Multiplicativity property in the first argument}

In this section, we prove the multiplicativity property in the first argument. It was proved in the local field case in~\cite[Theorem A.2]{kaplan2018}.

\begin{theorem}\label{thm:multiplicativitiy-in-first-variable}
	Let $\pi_1$ and $\pi_2$ be irreducible representations of $\GL_{c_1}\left(\finiteField\right)$ and of $\GL_{c_2}\left(\finiteField\right)$, respectively. 
 Denote $c = c_1 + c_2$ and let $\pi$ be an irreducible  representation of $\GL_c\left(\finiteField\right)$, such that $\pi$ is a subrepresentation of the parabolic induction $\pi_1 \circ \pi_2$. 
 Then for any $k$ and any irreducible generic representation $\tau$ of $\GL_k\left(\finiteField\right)$
	$$\GKGammaFactor{\pi}{\tau}{\fieldCharacter} = \GKGammaFactor{\pi_1}{\tau}{\fieldCharacter} \GKGammaFactor{\pi_2}{\tau}{\fieldCharacter}.$$
\end{theorem}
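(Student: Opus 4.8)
The plan is to reduce to the unnormalized factors, embed $\pi$ into $\pi_1 \circ \pi_2$, and compute the Ginzburg--Kaplan Gauss sum on the induced space by means of the $\UnipotentRadical_{\left(c_1,c_2\right)}$-factorization of $\specialBesselSpeh{\tau}$ supplied by the second part of \Cref{thm:bessel-speh-function-identities}.

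First I would dispose of the normalization. Since $\pi$ is a subrepresentation of $\pi_1 \circ \pi_2$ and the scalar matrix $zI_c$ equals $\diag\left(zI_{c_1},zI_{c_2}\right)$ in the Levi $D_{\left(c_1,c_2\right)}$, the central character satisfies $\centralCharacter{\pi} = \centralCharacter{\pi_1}\centralCharacter{\pi_2}$, so $\centralCharacter{\pi}\left(-1\right)^{k-1} = \centralCharacter{\pi_1}\left(-1\right)^{k-1}\centralCharacter{\pi_2}\left(-1\right)^{k-1}$, and the asserted identity is equivalent to $\GKPreGammaFactor{\pi}{\tau}{\fieldCharacter} = \GKPreGammaFactor{\pi_1}{\tau}{\fieldCharacter}\cdot\GKPreGammaFactor{\pi_2}{\tau}{\fieldCharacter}$. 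When $k = 1$ this is the multiplicativity of the Godement--Jacquet factor recalled in \Cref{sec:godement-jacquet-gamma-factors}, so from now on I assume $k \ge 2$.

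Realize $\pi$ inside $\rho := \pi_1 \circ \pi_2 = \Ind{\ParabolicSubgroup_{\left(c_1,c_2\right)}}{\GL_c\left(\finiteField\right)}{\pi_1 \overline{\otimes} \pi_2}$, in the model of functions on $\GL_c\left(\finiteField\right)$ valued in $\underlyingVectorSpace{\pi_1} \otimes \underlyingVectorSpace{\pi_2}$. Because $\specialBesselSpeh{\tau}$ is a class function, the operator $\GKGaussSum{\rho}{\tau}{\fieldCharacter} := q^{\frac{\left(k-2\right)c^2}{2}}\sum_{h \in \GL_c\left(\finiteField\right)} \specialBesselSpeh{\tau}\left(h\right) \rho\left(h\right)$ is $\GL_c\left(\finiteField\right)$-equivariant and therefore acts on the $\pi$-isotypic subspace of $\rho$ by the scalar $\GKPreGammaFactor{\pi}{\tau}{\fieldCharacter}$. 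The computational core runs as follows: writing $p = \diag\left(m_1,m_2\right)u$ with $u \in \UnipotentRadical_{\left(c_1,c_2\right)}$, using that $\specialBesselSpeh{\tau}$ is a class function to push the Levi part past $u$, and then invoking
\[
q^{-c_1c_2}\sum_{u \in \UnipotentRadical_{\left(c_1,c_2\right)}} \specialBesselSpeh{\tau}\bigl(u\,\diag\left(m_1,m_2\right)\bigr) = q^{-c_1c_2\left(k-1\right)}\specialBesselSpeh{\tau}\left(m_1\right)\specialBesselSpeh{\tau}\left(m_2\right)
\]
from \Cref{thm:bessel-speh-function-identities}, one obtains
\[
q^{\frac{\left(k-2\right)c^2}{2}}\sum_{p \in \ParabolicSubgroup_{\left(c_1,c_2\right)}} \specialBesselSpeh{\tau}\left(p\right)\left(\pi_1 \overline{\otimes} \pi_2\right)\left(p\right) = \GKGaussSum{\pi_1}{\tau}{\fieldCharacter} \otimes \GKGaussSum{\pi_2}{\tau}{\fieldCharacter} = \GKPreGammaFactor{\pi_1}{\tau}{\fieldCharacter}\GKPreGammaFactor{\pi_2}{\tau}{\fieldCharacter}\cdot\idmap,
\]
the normalizing powers of $q$ matching because $\tfrac{\left(k-2\right)c^2}{2} - \left(k-2\right)c_1c_2 = \tfrac{\left(k-2\right)c_1^2}{2} + \tfrac{\left(k-2\right)c_2^2}{2}$ (here $\specialBesselSpeh{\tau}$ on $\GL_{c_j}\left(\finiteField\right)$ is the special value attached to $\SpehRepresentation{\tau}{c_j}$). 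Taking the trace of $\GKGaussSum{\rho}{\tau}{\fieldCharacter}$, on one side it equals $\sum_{\pi'}\dim\left(\pi'\right)\GKPreGammaFactor{\pi'}{\tau}{\fieldCharacter}$ over the irreducible constituents $\pi'$ of $\rho$ counted with multiplicity; on the other, the character formula for an induced representation together with the same factorization applied to $\specialBesselSpeh{\tau}\left(p\right)\chi_{\pi_1 \overline{\otimes} \pi_2}\left(p\right)$ shows it equals $\dim\left(\rho\right)\GKPreGammaFactor{\pi_1}{\tau}{\fieldCharacter}\GKPreGammaFactor{\pi_2}{\tau}{\fieldCharacter}$. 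Hence the ``averaged'' identity $\sum_{\pi'}\dim\left(\pi'\right)\GKPreGammaFactor{\pi'}{\tau}{\fieldCharacter} = \dim\left(\rho\right)\GKPreGammaFactor{\pi_1}{\tau}{\fieldCharacter}\GKPreGammaFactor{\pi_2}{\tau}{\fieldCharacter}$ holds.

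It remains to upgrade this to the individual constituent $\pi$, which I expect to be the delicate point: since $\pi$ need not be generic, one cannot simply evaluate against a Whittaker vector in $\underlyingVectorSpace{\pi}$. The cleanest route would be to prove that the contributions of the non-identity $\ParabolicSubgroup_{\left(c_1,c_2\right)}$-double cosets of $\GL_c\left(\finiteField\right)$ to $\GKGaussSum{\rho}{\tau}{\fieldCharacter}$ vanish --- plausibly from the support properties of the Bessel--Speh function, in the spirit of the character-sum argument in \Cref{prop:bessel-speh-on-diag-block-matrices} --- which would yield $\GKGaussSum{\rho}{\tau}{\fieldCharacter} = \GKPreGammaFactor{\pi_1}{\tau}{\fieldCharacter}\GKPreGammaFactor{\pi_2}{\tau}{\fieldCharacter}\cdot\idmap$ outright. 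Alternatively, one reduces, by transitivity of parabolic induction and induction, to the case $\pi_1, \pi_2$ cuspidal: if $\pi_1 \circ \pi_2$ is irreducible the averaged identity already gives the claim, and if $\pi_1 \isomorphic \pi_2 = \sigma$ then $\sigma^{\circ m}$ has a unique generic constituent, on which $\GKPreGammaFactor{\,\cdot\,}{\tau}{\fieldCharacter}$ agrees with the multiplicative Jacquet--Piatetski-Shapiro--Shalika factor; feeding this into the averaged identities arising from the decompositions $\sigma^{\circ m} = \sigma^{\circ m_1} \circ \sigma^{\circ m_2}$ and the branching rule then determines the factors of the remaining constituents one at a time. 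The $\UnipotentRadical_{\left(c_1,c_2\right)}$-factorization and the bookkeeping of $q$-powers are routine; the obstacle is making this last elimination airtight (equivalently, establishing the vanishing alluded to above). This mirrors the local argument of Kaplan~\cite[Theorem A.2]{kaplan2018}.
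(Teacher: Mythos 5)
Your reductions are fine: passing to the unnormalized factors via $\centralCharacter{\pi}=\centralCharacter{\pi_1}\centralCharacter{\pi_2}$, handling $k=1$ by Godement--Jacquet, and the computation that the $\UnipotentRadical_{\left(c_1,c_2\right)}$-averaged factorization of $\specialBesselSpeh{\tau}$ (part (2) of \Cref{thm:unipotent-fourier-transofrm-bessel-speh}) turns the sum over $\ParabolicSubgroup_{\left(c_1,c_2\right)}$ into $\GKGaussSum{\pi_1}{\tau}{\fieldCharacter}\otimes\GKGaussSum{\pi_2}{\tau}{\fieldCharacter}$, with the correct powers of $q$, is exactly the computational core of the paper's argument. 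But the step you yourself flag as ``the delicate point'' is a genuine gap, and it is where the real work of the paper lies. The needed statement is precisely \Cref{prop:support-of-whittaker-vector-in-induced-speh}: if $\sum_{n\in\UnipotentRadical_{\left(c_1,c_2\right)}}\specialBesselSpeh{\tau}\left(hn\right)\ne 0$ then $h\in\ParabolicSubgroup_{\left(c_1,c_2\right)}$. This is not ``plausible in the spirit of the character-sum argument of \Cref{prop:bessel-speh-on-diag-block-matrices}'': its proof requires realizing $\SpehRepresentation{\tau}{c}$ inside $\SpehRepresentation{\tau}{c_1}\circ\SpehRepresentation{\tau}{c_2}$, the non-vanishing of $f\left(\kappa\right)$ (\Cref{prop:f-kappa-is-not-zero}), the root-elimination identity of part (1) of \Cref{thm:unipotent-fourier-transofrm-bessel-speh} expressing the $\UnipotentRadical$-average as an inner product against $f\left(\kappa\left(\begin{smallmatrix}&\IdentityMatrix{\left(k-1\right)c}\\h\end{smallmatrix}\right)\right)$, and then a separate equivariance argument to force $h^{-1}\in\ParabolicSubgroup_{\left(c_1,c_2\right)}$. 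Without this your main route delivers only the averaged trace identity, which does not isolate the constituent $\pi$.

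Note also that the paper closes the argument differently from your trace computation, in a way that makes any ``elimination'' unnecessary: it applies the Gauss-sum operator of $\pi'=\pi_1\circ\pi_2$ to the specific section $\phi_{w_{\pi_1}\otimes w_{\pi_2}}$ supported on $\ParabolicSubgroup_{\left(c_1,c_2\right)}$; since this section is right $\UnipotentRadical_{\left(c_1,c_2\right)}$-invariant, the vanishing statement plus the factorization give that the operator multiplies it by $q^{-\left(k-2\right)\frac{c_1^2+c_2^2}{2}+\left(2-k\right)c_1c_2}\GKPreGammaFactor{\pi_1}{\tau}{\fieldCharacter}\GKPreGammaFactor{\pi_2}{\tau}{\fieldCharacter}$ times itself, and since its projection to every irreducible constituent is non-zero, the scalar identity follows for each constituent at once. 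Your fallback route (b) is also not airtight as stated: the input that on the generic constituent the Ginzburg--Kaplan factor ``agrees with the multiplicative Jacquet--Piatetski-Shapiro--Shalika factor'' is not available at this stage --- in the paper the comparison with $\varepsilon_0$-factors (\Cref{thm:equality-of-GK-factors-and-epsilon-factors}) is deduced \emph{from} the multiplicativity theorems, so invoking it here is circular relative to the paper's development --- and the inductive bookkeeping over constituents with multiplicities and mixed cuspidal supports is not carried out.
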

The proof of this theorem requires some preparation.

Throughout this section, let $\rho_1 = \SpehRepresentation{\tau}{c_1}$ and $\rho_2 = \SpehRepresentation{\tau}{c_2}$. Let us realize $\SpehRepresentation{\tau}{c}$ as a subrepresentation of $\rho = \SpehRepresentation{\tau}{c_1} \circ \SpehRepresentation{\tau}{c_2}$. 
For $j=1,2$, let $v_j \in \SpehRepresentation{\tau}{c_j}$ be a $\kcNotation{k}{c_j}{\fieldCharacter}$ vector.
Let $\mathcal{Y}$ be as in \Cref{sec:wss-models}.
We first prove the following proposition regarding a $\kcNotation{k}{c}{\fieldCharacter}$ vector of $\SpehRepresentation{\tau}{c_1} \circ \SpehRepresentation{\tau}{c_2}$.

Fix invariant inner products $\innerproduct{\cdot}{\cdot}_1$ and $\innerproduct{\cdot}{\cdot}_2$ on $\SpehRepresentation{\tau}{c_1}$ and $\SpehRepresentation{\tau}{c_2}$, respectively, such that $v_1$ and $v_2$ have norm one with respect to them. 
Let $\innerproduct{\cdot}{\cdot}_{1 \otimes 2}$ be the inner product induced on the tensor product $\SpehRepresentation{\tau}{c_1} \otimes \SpehRepresentation{\tau}{c_2}$. 
An inner product on $\SpehRepresentation{\tau}{c_1} \circ \SpehRepresentation{\tau}{c_2}$ is given by
$$\innerproduct{f_1}{f_2} = \sum_{g \in \ParabolicSubgroup_{\left(k c_1, k c_2\right)} \backslash \GL_{kc}\left(\finiteField\right)} \innerproduct{f_1\left(g\right)}{f_2\left(g\right)}_{1 \otimes 2}.$$

\begin{proposition}\label{prop:f-kappa-is-not-zero}
	Let $f \in \SpehRepresentation{\tau}{c} \subset \SpehRepresentation{\tau}{c_1} \circ \SpehRepresentation{\tau}{c_2}$ be a non-zero $\kcNotation{k}{c}{\fieldCharacter}$ vector. Then $f\left(\kappa\right)$ is a non-zero multiple of $v_1 \otimes v_2$ (see \Cref{sec:wss-models} for the definition of $\kappa$).
\end{proposition}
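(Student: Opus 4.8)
The plan is to use the explicit recursive formula for the $(k,c)$ $\fieldCharacter$-Whittaker functional $\gShortSpehWhittakerFunctional{\tau}{k}{c}$ on $\rho = \SpehRepresentation{\tau}{c_1} \circ \SpehRepresentation{\tau}{c_2}$ from \Cref{sec:wss-models} together with the multiplicity-one property of $\SpehRepresentation{\tau}{c}$ inside $\rho$. First I would observe that, since $\SpehRepresentation{\tau}{c}$ is the unique irreducible subrepresentation of $\rho$ admitting a $(k,c)$ $\fieldCharacter$-Whittaker vector, the nonzero vector $f$ is, up to scalar, the (unique up to scalar) $(k,c)$ $\fieldCharacter$-Whittaker vector of $\rho$; hence it suffices to exhibit one such vector and show its value at $\kappa$ is a nonzero multiple of $v_1 \otimes v_2$. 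So I would fix the inner product on $\rho$ described just before the proposition and let $f$ be the vector dual to $\gShortSpehWhittakerFunctional{\tau}{k}{c}$ in the sense that $f(g) = \langle \rho(g) f', \cdot \rangle$-type realization, or more directly: since $\Whittaker(\SpehRepresentation{\tau}{c},\fieldCharacterkc{k}{c})$ sits inside the induced space, the Bessel--Speh function $\gbesselSpehFunction{\SpehRepresentation{\tau}{c}}{\fieldCharacter}$ is such an $f$ once we normalize, and its defining property is $\gShortSpehWhittakerFunctional{\tau}{k}{c}$-equivariance.

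Next I would evaluate: by the recursive formula,
$$\standardForm{\rho(\kappa)f}{\gShortSpehWhittakerFunctional{\tau}{k}{c}} = \frac{1}{\sizeof{\mathcal{Y}}} \sum_{y \in \mathcal{Y}} \standardForm{f(\kappa y \kappa)}{\gShortSpehWhittakerFunctional{\tau}{k}{c_1} \otimes \gShortSpehWhittakerFunctional{\tau}{k}{c_2}}.$$
The key computation is to understand the group element $\kappa y \kappa$ for $y \in \mathcal{Y}$, using the conjugation identity for $\kappa$ recorded in \Cref{sec:wss-models} (the one relating block matrices over the $(k c_1, k c_2)$ decomposition to block matrices over the $(c_1,c_2)^{\otimes k}$ decomposition). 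The point is that $\kappa$ conjugates the lower-unipotent subgroup $\mathcal{Y}$ (whose entries are $c_2 \times c_1$ blocks arranged strictly-upper-triangularly across the $k$ Speh factors) into a unipotent subgroup lying inside the standard parabolic $\ParabolicSubgroup_{(k c_1, k c_2)}$ of $\GL_{kc}$ — more precisely into $\UnipotentRadicalForWss{k}{c_1} \times \UnipotentRadicalForWss{k}{c_2}$ sitting block-diagonally, on which $f$ restricted to $\SpehRepresentation{\tau}{c_1}\overline\otimes\SpehRepresentation{\tau}{c_2}$ acts through $\fieldCharacterkc{k}{c_1}\otimes\fieldCharacterkc{k}{c_2}$ evaluated trivially (the cross terms $y_{ij}$ contribute no diagonal trace). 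Since $f \in \SpehRepresentation{\tau}{c} \subset \rho$ is supported on $\ParabolicSubgroup_{(kc_1,kc_2)} w_0 \UnipotentRadical$, I would check that $\kappa y \kappa$ lands in the support and that $f(\kappa y \kappa) = (\SpehRepresentation{\tau}{c_1}\overline\otimes\SpehRepresentation{\tau}{c_2})(\text{something}) f(\kappa)$ with the ``something'' being in the stabilizer of the relevant character, so each summand equals $\standardForm{f(\kappa)}{\gShortSpehWhittakerFunctional{\tau}{k}{c_1}\otimes\gShortSpehWhittakerFunctional{\tau}{k}{c_2}}$ up to a root of unity that in fact equals $1$. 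Summing over $y$ then gives $\standardForm{\rho(\kappa)f}{\gShortSpehWhittakerFunctional{\tau}{k}{c}} = \standardForm{f(\kappa)}{\gShortSpehWhittakerFunctional{\tau}{k}{c_1}\otimes\gShortSpehWhittakerFunctional{\tau}{k}{c_2}}$.

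Finally, since $\gShortSpehWhittakerFunctional{\tau}{k}{c}$ is nonzero on $f$ (it is a nonzero functional on the irreducible $\SpehRepresentation{\tau}{c}$ and $f\neq 0$), the left side is nonzero, hence $\standardForm{f(\kappa)}{\gShortSpehWhittakerFunctional{\tau}{k}{c_1}\otimes\gShortSpehWhittakerFunctional{\tau}{k}{c_2}} \neq 0$, so in particular $f(\kappa)\neq 0$. To upgrade ``$f(\kappa)\neq 0$'' to ``$f(\kappa)$ is a scalar multiple of $v_1 \otimes v_2$'' I would use that $f(\kappa) \in \SpehRepresentation{\tau}{c_1}\otimes\SpehRepresentation{\tau}{c_2}$ must transform under $\UnipotentRadicalForWss{k}{c_1}\times\UnipotentRadicalForWss{k}{c_2}$ by the character $\fieldCharacterkc{k}{c_1}\otimes\fieldCharacterkc{k}{c_2}$: this follows because $f$ is $\fieldCharacterkc{k}{c}$-equivariant under $\UnipotentRadicalForWss{k}{c}$ on the right and $\kappa^{-1}\UnipotentRadicalForWss{k}{c}\kappa \supset \UnipotentRadicalForWss{k}{c_1}\times\UnipotentRadicalForWss{k}{c_2}$ (again via the $\kappa$-conjugation identity), with matching characters. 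By the multiplicity-one property of $(k,c_1)$- and $(k,c_2)$-Whittaker vectors for $\SpehRepresentation{\tau}{c_1}$ and $\SpehRepresentation{\tau}{c_2}$, the only such vectors are scalar multiples of $v_1$ and $v_2$ respectively, forcing $f(\kappa) \in \cComplex \cdot (v_1 \otimes v_2)$, and by the above this scalar is nonzero. The main obstacle I anticipate is the bookkeeping in the $\kappa$-conjugation step: one must carefully track how the strictly-upper-triangular $c_2\times c_1$ blocks of $y\in\mathcal{Y}$ are redistributed into the $(kc_1,kc_2)$ block structure, verify they land in $\ParabolicSubgroup_{(kc_1,kc_2)}$ times the right unipotent, and confirm that the character values on both sides match so that no nontrivial root-of-unity factor survives the sum over $\mathcal{Y}$; this is exactly the finite-field analog of the root-elimination argument in \cite[Lemma 9]{CaiFriedbergGourevitchKaplan2023}.
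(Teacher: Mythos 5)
Your first step---showing that $f\left(\kappa\right)$ is a joint $\fieldCharacterkc{k}{c_1}\otimes\fieldCharacterkc{k}{c_2}$-eigenvector and invoking uniqueness of the $\left(k,c_1\right)$ and $\left(k,c_2\right)$ $\fieldCharacter$-Whittaker vectors to conclude $f\left(\kappa\right)\in\cComplex\cdot\left(v_1\otimes v_2\right)$---is exactly the paper's argument. One cosmetic correction: the relevant conjugation identity is $\diag\left(u_1,u_2\right)\kappa=\kappa Z$ with $Z\in\UnipotentRadicalForWss{k}{c}$ and $\fieldCharacterkc{k}{c}\left(Z\right)=\fieldCharacterkc{k}{c_1}\left(u_1\right)\fieldCharacterkc{k}{c_2}\left(u_2\right)$, i.e.\ $\kappa^{-1}\diag\left(\UnipotentRadicalForWss{k}{c_1}\times\UnipotentRadicalForWss{k}{c_2}\right)\kappa\subset\UnipotentRadicalForWss{k}{c}$, not the containment you wrote; combined with the left $\ParabolicSubgroup_{\left(kc_1,kc_2\right)}$-equivariance of $f$ this gives the eigenvector property.

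The non-vanishing half has two genuine problems as written. First, the recursion is misapplied: with the right-translation action on the induced space, $\left(\rho\left(\kappa\right)f\right)\left(y\kappa\right)=f\left(y\kappa\cdot\kappa\right)$, not $f\left(\kappa y\kappa\right)$, so your displayed identity is false, and the detour through $\rho\left(\kappa\right)f$ makes your last inference inconsistent (you conclude the left-hand side is non-zero because the functional is non-zero on $f$, but the left-hand side involves $\rho\left(\kappa\right)f$, which is not a $\fieldCharacterkc{k}{c}$-Whittaker vector); also, $\kappa$ does not move $\mathcal{Y}$ into $\UnipotentRadicalForWss{k}{c_1}\times\UnipotentRadicalForWss{k}{c_2}$---the fact you need is $y\kappa=\kappa Z$ with $Z\in\UnipotentRadicalForWss{k}{c}$ and $\fieldCharacterkc{k}{c}\left(Z\right)=1$. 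The correct and simpler computation is $\standardForm{f}{\gShortSpehWhittakerFunctional{\tau}{k}{c}}=\frac{1}{\sizeof{\mathcal{Y}}}\sum_{y\in\mathcal{Y}}\standardForm{f\left(y\kappa\right)}{\gShortSpehWhittakerFunctional{\tau}{k}{c_1}\otimes\gShortSpehWhittakerFunctional{\tau}{k}{c_2}}$, where every summand equals $\standardForm{f\left(\kappa\right)}{\gShortSpehWhittakerFunctional{\tau}{k}{c_1}\otimes\gShortSpehWhittakerFunctional{\tau}{k}{c_2}}$ by the identity just quoted. Second, the assertion that the functional is non-zero on $f$ ``because it is a non-zero functional on the irreducible $\SpehRepresentation{\tau}{c}$ and $f\ne 0$'' is a non sequitur: a non-zero functional can annihilate any given non-zero vector. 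The missing ingredient---and this is precisely how the paper closes the proof---is the Whittaker structure: the restriction of $\gShortSpehWhittakerFunctional{\tau}{k}{c}$ to $\SpehRepresentation{\tau}{c}$ is a non-zero $\left(\UnipotentRadicalForWss{k}{c},\fieldCharacterkc{k}{c}\right)$-equivariant functional (non-vanishing on the subrepresentation is the root-elimination result of \Cref{sec:wss-models}, which you do cite), so by the invariant inner product and uniqueness of the $\left(k,c\right)$ $\fieldCharacter$-Whittaker vector it equals $c\,\innerproduct{\cdot}{f}$ for some $c\ne 0$ (the paper realizes it as pairing with the explicit Whittaker vector $f_{\fieldCharacterkc{k}{c}}$ supported on $\mathcal{Y}\kappa$); its value on $f$ is then $c\,\innerproduct{f}{f}\ne 0$, whence $f\left(\kappa\right)\ne 0$. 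With these two repairs your argument coincides with the paper's proof.
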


\begin{proof}	
	If $u_1 \in \UnipotentRadicalForWss{k}{c_1}$ and $u_2 \in \UnipotentRadicalForWss{k}{c_2}$ are given by $$u_1 = \begin{pmatrix}
		\IdentityMatrix{c_1} & X_{12} & X_{13} & \cdots  & X_{1k} \\
		& \IdentityMatrix{c_1} & X_{23} & \cdots & X_{2k} \\
		& & \ddots & \ddots & \vdots  \\
		& & & \IdentityMatrix{c_1} &  X_{k-1,k} \\
		& & & & \IdentityMatrix{c_1}
	\end{pmatrix} \text{ and } u_2 = \begin{pmatrix}
	\IdentityMatrix{c_2} & Y_{12} & Y_{13} & \cdots  & Y_{1k} \\
	& \IdentityMatrix{c_2} & Y_{23} & \cdots & X_{2k} \\
	& & \ddots & \ddots & \vdots  \\
	& & & \IdentityMatrix{c_2} &  Y_{k-1,k} \\
	& & & & \IdentityMatrix{c_2}
\end{pmatrix},$$
then $$\diag\left(u_1, u_2\right) \kappa = \kappa \begin{pmatrix}
	\IdentityMatrix{c} & Z_{12} & Z_{13} & \cdots  & Z_{1k} \\
	& \IdentityMatrix{c} & Z_{23} & \cdots & Z_{2k} \\
	& & \ddots & \ddots & \vdots  \\
	& & & \IdentityMatrix{c} &  Z_{k-1,k} \\
	& & & & \IdentityMatrix{c}
\end{pmatrix},$$
where $Z_{ij} = \diag\left(X_{ij}, Y_{ij}\right)$. Hence, we have that $$\SpehRepresentation{\tau}{c_1}\left(u_1\right) \otimes \SpehRepresentation{\tau}{c_2}\left(u_2\right) f\left(\kappa\right) = f\left(\diag\left(u_1,u_2\right) \kappa\right) = \fieldCharacterkc{k}{c_1}\left(u_1\right) \fieldCharacterkc{k}{c_2}\left(u_2\right) f\left(\kappa\right).$$ By the uniqueness of the $\kcNotation{k}{c_j}{\fieldCharacter}$ vector of $\SpehRepresentation{\tau}{c_j}$ for $j = 1,2$, we must have that $f\left(\kappa\right)$ is a scalar multiple of $v_1 \otimes v_2$.

We move to show that this scalar is non-zero. A non-zero $\kcNotation{k}{c}{\fieldCharacter}$ vector of $\SpehRepresentation{\tau}{c_1} \circ \SpehRepresentation{\tau}{c_2}$ is given by the function $f_{\fieldCharacterkc{k}{c}}$ supported on the double coset $\ParabolicSubgroup_{\left(kc_1, kc_2\right)} \cdot \kappa \cdot \left(\kappa^{-1} \mathcal{Y} \kappa\right)$, such that for any $u \in \mathcal{Y}$, $$f_{\fieldCharacterkc{k}{c}}\left( u \kappa \right) = v_1 \otimes v_2.$$ 
By the results of \Cref{sec:wss-models}, the functional $f' \mapsto \left(f', f_{\fieldCharacterkc{k}{c}}\right)$ is a non-zero $\left(k,c\right)$ functional of $\SpehRepresentation{\tau}{c}$. Since $f$ is the unique (up to scalar multiplication) $\kcNotation{k}{c}{\fieldCharacter}$ vector of $\SpehRepresentation{\tau}{c}$, it follows that there exists a constant $0 \ne c \in \cComplex$, such that $\innerproduct{f'}{f_{\fieldCharacterkc{k}{c}}} = c \innerproduct{f'}{f}$. In particular, $\innerproduct{f}{f_{\fieldCharacterkc{k}{c}}} \ne 0$, which implies that $f\left(\kappa\right) \ne 0$.
	\end{proof}

The following theorem serves as a key ingredient for the proof of \Cref{thm:multiplicativitiy-in-first-variable}. It is proved using the method of root elimination.

\begin{theorem}\label{thm:unipotent-fourier-transofrm-bessel-speh}
	Let $f \in \SpehRepresentation{\tau}{c_1} \circ \SpehRepresentation{\tau}{c_2}$ be a $\kcNotation{k}{c}{\fieldCharacter}$ vector with $f\left(\kappa\right) = \sizeof{\mathcal{Y}}^{-1} v_1 \otimes v_2$. 		Then
	\begin{enumerate}
		\item For any $h \in \GL_c\left(\finiteField\right)$, we have \begin{equation}\label{eq:sum-for-multiplicativity-in-the-first-variable}
			\frac{1}{\sizeof{\UnipotentRadical_{\left(c_1, c_2\right)}}} \sum_{n \in \UnipotentRadical_{\left(c_1, c_2\right)}} \specialBesselSpeh{\tau}\left(nh\right) = q^{c_1 c_2 \binom{k-1}{2}} \innerproduct{f\left( \kappa \begin{pmatrix}
					& \IdentityMatrix{\left(k-1\right)c}\\
					h
				\end{pmatrix} \right)}{v_1 \otimes v_2}.
		\end{equation}
		\item Moreover, if $h = \diag\left(h_1, h_2\right)$ where $h_1 \in \GL_{c_1}\left(\finiteField\right)$ and $h_2 \in \GL_{c_2}\left(\finiteField\right)$, we have 
		\begin{equation*}
			\begin{split}
				\frac{1}{\sizeof{\UnipotentRadical_{\left(c_1, c_2\right)}}} \sum_{n \in \UnipotentRadical_{\left(c_1, c_2\right)}} \specialBesselSpeh{\tau}\left(nh\right) = q^{- c_1 c_2 \left(k-1\right)} \specialBesselSpeh{\tau}\left(h_1\right) \specialBesselSpeh{\tau}\left(h_2\right).
			\end{split}
		\end{equation*}
	\end{enumerate}
\end{theorem}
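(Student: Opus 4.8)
The plan is to realize the Bessel--Speh function $\besselSpehFunction{\tau}{c}$ as a matrix coefficient of $\rho = \SpehRepresentation{\tau}{c_1} \circ \SpehRepresentation{\tau}{c_2}$ and to deduce both parts by the method of root elimination. As a first step, by \Cref{prop:f-kappa-is-not-zero} and the uniqueness of the $\left(k,c\right)$ $\fieldCharacter$-Whittaker vector, the normalization $f\left(\kappa\right) = \sizeof{\mathcal{Y}}^{-1} v_1 \otimes v_2$ forces $f$ to be $\sizeof{\mathcal{Y}}^{-1}$ times the explicit $\left(k,c\right)$ $\fieldCharacter$-Whittaker vector of $\rho$ supported on the double coset $\ParabolicSubgroup_{\left(kc_1,kc_2\right)}\mathcal{Y}\kappa$ with value $v_1 \otimes v_2$ on $\mathcal{Y}\kappa$. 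Writing the invariant inner product of $\rho$ as the sum over $\ParabolicSubgroup_{\left(kc_1,kc_2\right)} \backslash \GL_{kc}\left(\finiteField\right)$ one gets $\innerproduct{f}{f}_\rho = \sizeof{\mathcal{Y}}^{-1}$, and since $\besselSpehFunction{\tau}{c}$ is the matrix coefficient $\innerproduct{\rho(\cdot)f}{f}_\rho$ divided by $\innerproduct{f}{f}_\rho$, this produces the identity
$$\specialBesselSpeh{\tau}\left(g\right) = \sum_{y \in \mathcal{Y}} \innerproduct{f\left(y \kappa \begin{pmatrix} & \IdentityMatrix{\left(k-1\right)c}\\ g & \end{pmatrix}\right)}{v_1 \otimes v_2} \qquad \left(g \in \GL_c\left(\finiteField\right)\right),$$
which is the common starting point for both parts.

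For part (1), I would substitute $\begin{pmatrix} & \IdentityMatrix{\left(k-1\right)c}\\ nh & \end{pmatrix} = \diag\left(\IdentityMatrix{\left(k-1\right)c}, n\right)\begin{pmatrix} & \IdentityMatrix{\left(k-1\right)c}\\ h & \end{pmatrix}$ into the identity above and move $\diag\left(\IdentityMatrix{\left(k-1\right)c}, n\right)$ across $\kappa$; since $n$ occupies the last $\GL_c\left(\finiteField\right)$-block, its conjugate by $\kappa$ lies in the unipotent radical $\UnipotentRadical_{\left(kc_1,kc_2\right)}$ of $\ParabolicSubgroup_{\left(kc_1,kc_2\right)}$ and fills a single $c_1 \times c_2$ block. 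The core of the argument is then a root-elimination analysis of $\frac{1}{\sizeof{\UnipotentRadical_{\left(c_1,c_2\right)}}}\sum_{n}\sum_{y}$: organizing $\mathcal{Y}$ according to its $k \times k$ grid of $c_2 \times c_1$ blocks and running through those root subgroups (and those of $\diag\left(\IdentityMatrix{\left(k-1\right)c}, \UnipotentRadical_{\left(c_1,c_2\right)}\right)$) in a suitable order, one moves each root subgroup either into $\ParabolicSubgroup_{\left(kc_1,kc_2\right)}$, where it lies in $\UnipotentRadical_{\left(kc_1,kc_2\right)}$ and acts trivially on the inflated representation, or past $\begin{pmatrix} & \IdentityMatrix{\left(k-1\right)c}\\ h & \end{pmatrix}$ into $\UnipotentRadicalForWss{k}{c}$, where it acts by $\fieldCharacterkc{k}{c}$ and contributes nothing unless a support condition is satisfied. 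The blocks in the first block-row of $\mathcal{Y}$ get pinned down by the condition that the argument of $f$ lie in $\ParabolicSubgroup_{\left(kc_1,kc_2\right)}\mathcal{Y}\kappa$, and that pinning together with the sum over $n$ rebuilds $\begin{pmatrix} & \IdentityMatrix{\left(k-1\right)c}\\ h & \end{pmatrix}$ from $\begin{pmatrix} & \IdentityMatrix{\left(k-1\right)c}\\ nh & \end{pmatrix}$, while the remaining $\binom{k-1}{2}$ blocks of $\mathcal{Y}$ leave the summand unchanged; this yields the factor $q^{c_1 c_2 \binom{k-1}{2}}$ and the single surviving term $\innerproduct{f\left(\kappa\begin{pmatrix} & \IdentityMatrix{\left(k-1\right)c}\\ h & \end{pmatrix}\right)}{v_1 \otimes v_2}$.

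For part (2), I would apply part (1) and evaluate $\innerproduct{f\left(\kappa\begin{pmatrix} & \IdentityMatrix{\left(k-1\right)c}\\ h & \end{pmatrix}\right)}{v_1 \otimes v_2}$ directly when $h = \diag\left(h_1,h_2\right)$. The key observation is that $\kappa$ intertwines the ``block-cyclic with twist'' matrix $\begin{pmatrix} & \IdentityMatrix{\left(k-1\right)c}\\ \diag\left(h_1,h_2\right) & \end{pmatrix}$ with the block-diagonal matrix $\diag\left(\begin{smallmatrix} & \IdentityMatrix{\left(k-1\right)c_1}\\ h_1 & \end{smallmatrix}, \begin{smallmatrix} & \IdentityMatrix{\left(k-1\right)c_2}\\ h_2 & \end{smallmatrix}\right) \in \GL_{kc_1}\left(\finiteField\right) \times \GL_{kc_2}\left(\finiteField\right)$, which lies in the Levi of $\ParabolicSubgroup_{\left(kc_1,kc_2\right)}$. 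Hence, by the definition of parabolic induction, $f\left(\kappa\begin{pmatrix} & \IdentityMatrix{\left(k-1\right)c}\\ \diag\left(h_1,h_2\right) & \end{pmatrix}\right)$ equals $\sizeof{\mathcal{Y}}^{-1}\left(\SpehRepresentation{\tau}{c_1}\left(\begin{smallmatrix} & \IdentityMatrix{\left(k-1\right)c_1}\\ h_1 & \end{smallmatrix}\right)v_1\right) \otimes \left(\SpehRepresentation{\tau}{c_2}\left(\begin{smallmatrix} & \IdentityMatrix{\left(k-1\right)c_2}\\ h_2 & \end{smallmatrix}\right)v_2\right)$, and pairing against $v_1 \otimes v_2$ (using that $v_1,v_2$ have norm one) gives $\sizeof{\mathcal{Y}}^{-1}\specialBesselSpeh{\tau}\left(h_1\right)\specialBesselSpeh{\tau}\left(h_2\right)$. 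Since $\sizeof{\mathcal{Y}} = q^{c_1 c_2 \binom{k}{2}}$ and $\binom{k}{2} - \binom{k-1}{2} = k-1$, combining with the factor $q^{c_1 c_2 \binom{k-1}{2}}$ from part (1) gives the asserted $q^{-c_1 c_2 \left(k-1\right)}\specialBesselSpeh{\tau}\left(h_1\right)\specialBesselSpeh{\tau}\left(h_2\right)$.

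The main obstacle will be the root-elimination step of part (1): choosing an order for eliminating the root subgroups of $\mathcal{Y}$ and of $\diag\left(\IdentityMatrix{\left(k-1\right)c}, \UnipotentRadical_{\left(c_1,c_2\right)}\right)$ so that all the relevant commutators stay inside subgroups one can control, correctly distinguishing the root subgroups that are eliminated (each contributing a power of $q$) from those that impose support constraints, and thereby extracting both the precise power $q^{c_1 c_2 \binom{k-1}{2}}$ and the precise argument $h$ (rather than $nh$). This is the $c = c_1 + c_2$ analog of the root-elimination computations of \cite{CaiFriedbergGourevitchKaplan2023} and of the proof of \Cref{lem:convolution-of-bessel-speh-functions}, but with the permutation $\kappa$ and the subgroup $\mathcal{Y}$ in place of the simpler Weyl element, so the combinatorial bookkeeping is heavier.
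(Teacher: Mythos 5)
Your part (2) and your reduction of the problem to the identity $\specialBesselSpeh{\tau}\left(g\right)=\sum_{y\in\mathcal{Y}}\innerproduct{f\left(y\kappa\left(\begin{smallmatrix}&\IdentityMatrix{\left(k-1\right)c}\\ g&\end{smallmatrix}\right)\right)}{v_1\otimes v_2}$ are essentially the paper's, but the way you justify that identity contains a genuine error, and the error then propagates into the heart of part (1). You assert that the normalization $f\left(\kappa\right)=\sizeof{\mathcal{Y}}^{-1}v_1\otimes v_2$ forces $f$ to be $\sizeof{\mathcal{Y}}^{-1}$ times the simple $\left(k,c\right)$ $\fieldCharacter$-Whittaker vector supported on $\ParabolicSubgroup_{\left(kc_1,kc_2\right)}\mathcal{Y}\kappa$, and from this you read off $\innerproduct{f}{f}_{\rho}=\sizeof{\mathcal{Y}}^{-1}$ and, later, a ``support condition'' on the argument of $f$. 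This is not available: the uniqueness of the $\left(k,c\right)$ $\fieldCharacter$-Whittaker vector holds inside the irreducible representation $\SpehRepresentation{\tau}{c}$, not inside all of $\rho=\SpehRepresentation{\tau}{c_1}\circ\SpehRepresentation{\tau}{c_2}$, whose other irreducible constituents in general also admit $\left(\UnipotentRadicalForWss{k}{c},\fieldCharacterkc{k}{c}\right)$-equivariant vectors. Consequently the simple function $f_{\fieldCharacterkc{k}{c}}$ need not lie in $\SpehRepresentation{\tau}{c}$, and $f$ need not have the stated support (if your identification were correct, \Cref{prop:f-kappa-is-not-zero} would be vacuous, whereas its proof works precisely because $f$ and $f_{\fieldCharacterkc{k}{c}}$ are different vectors). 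The identity itself is nonetheless correct: the paper obtains it by pairing $\rho\left(g\right)f$ against the auxiliary vector $f_{\fieldCharacterkc{k}{c}}$ rather than against $f$, noting that $g\mapsto\innerproduct{\rho\left(g\right)f}{f_{\fieldCharacterkc{k}{c}}}$ is right $\left(\UnipotentRadicalForWss{k}{c},\fieldCharacterkc{k}{c}\right)$-equivariant, is a matrix coefficient of $\SpehRepresentation{\tau}{c}$, and takes value $1$ at the identity under your normalization, hence equals $\besselSpehFunction{\tau}{c}$. So this step is repairable, but not as you wrote it.

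The more serious gap is that the root-elimination argument, which is the actual content of part (1), is only announced, and the mechanism you sketch for it would not work. Since $f$ is not supported on $\ParabolicSubgroup_{\left(kc_1,kc_2\right)}\mathcal{Y}\kappa$, the blocks of $\mathcal{Y}$ cannot be ``pinned down by the condition that the argument of $f$ lie in $P\mathcal{Y}\kappa$''; in the paper they are killed by character sums. Concretely: the average over $n\in\UnipotentRadical_{\left(c_1,c_2\right)}$, conjugated through $\kappa$, lands in the corner of the unipotent radical of $\ParabolicSubgroup_{\left(kc_1,kc_2\right)}$, and commuting it past $y\in\mathcal{Y}$ produces a Levi factor acting on $v_2$ through $\fieldCharacter\left(\trace\left(y_{k-1,k}X\right)\right)$; averaging over $X$ forces $y_{k-1,k}=0$ (not a support constraint). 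The $\binom{k-1}{2}$ blocks $y_{ij}$ with $j\le k-1$ conjugate through $\kappa$ into $\UnipotentRadicalForWss{k}{c}$ with trivial character value, which is what yields $q^{c_1c_2\binom{k-1}{2}}$; and the remaining last block-column entries $y_{1k},\dots,y_{k-2,k}$ (not the first block-row, as you say) are eliminated one at a time by an inductive argument that inserts averages over right translations of $f$ by unipotent elements which, conjugated across $\left(\begin{smallmatrix}&\IdentityMatrix{\left(k-1\right)c}\\ h&\end{smallmatrix}\right)$ and through $\kappa$, produce characters $\fieldCharacter\left(\trace\left(y_{j,k}x\right)\right)$ forcing $y_{j,k}=0$. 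None of this bookkeeping appears in your proposal, and you defer exactly this step as the ``main obstacle''; as it stands, part (1) is a plan resting on an invalid support claim, so the proof is incomplete. Part (2), granting part (1), is correct and coincides with the paper's computation via $\kappa\left(\begin{smallmatrix}&\IdentityMatrix{\left(k-1\right)c}\\ \diag\left(h_1,h_2\right)&\end{smallmatrix}\right)\kappa^{-1}$ lying in the Levi and $\sizeof{\mathcal{Y}}=q^{c_1c_2\binom{k}{2}}$.
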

\begin{proof}
		Notice that by our normalization, we have that $\besselSpehFunction{\tau}{c}\left(g\right) = \innerproduct{\SpehRepresentation{\tau}{c} \left(g\right)f}{ f_{\fieldCharacterkc{k}{c}}}$. Let us write
	\begin{equation}\label{eq:bessel-speh-voronoi-element-average}
		\specialBesselSpeh{\tau}\left(h\right) = \sum_{u \in \mathcal{Y}} \innerproduct{f\left(u \kappa \begin{pmatrix}
				& \IdentityMatrix{\left(k-1\right)c}\\
				h
			\end{pmatrix} \right)}{v_1 \otimes v_2}.
	\end{equation}
	
	We begin with noticing that
	$$\kappa \,\diag\left( \IdentityMatrix{\left(k-1\right)c},
	\begin{pmatrix}
				\IdentityMatrix{c_1} & X\\
		& \IdentityMatrix{c_2}
	\end{pmatrix}\right) = \diag\left(\IdentityMatrix{\left(k-1\right)c_1}, \begin{pmatrix} \IdentityMatrix{c_1} & & X\\
	& \IdentityMatrix{\left(k-1\right)c_2}\\
	& &  \IdentityMatrix{c_2} 
\end{pmatrix}\right) \kappa,$$
	and that if $u = \begin{pmatrix}
		\IdentityMatrix{kc_1} \\
		Y & I_{k c_2}
	\end{pmatrix}$ where $Y$ is as in \eqref{eq:Y-R-subgroup}, then we have $$u\,\diag\left(
		\IdentityMatrix{\left(k-1\right)c_1}, \begin{pmatrix}
		\IdentityMatrix{c_1} & & X\\
		& \IdentityMatrix{\left(k-1\right)c_2}\\
		& &  \IdentityMatrix{c_2} 
	\end{pmatrix} \right) = \diag\left(
		\IdentityMatrix{\left(k - 1\right) c_1},
		\begin{pmatrix}\IdentityMatrix{c_1} & & X\\
		& \IdentityMatrix{\left(k - 1\right) c_2} & Y_k X\\
		& & \IdentityMatrix{c_2}
	\end{pmatrix} \right) u,$$
	where $Y_k = \begin{pmatrix}
		y_{1k}\\
		\vdots\\
		y_{k-1,k}
	\end{pmatrix}.$
	Thus by \eqref{eq:bessel-speh-voronoi-element-average}, we have that the left hand side of \eqref{eq:sum-for-multiplicativity-in-the-first-variable} is given by \begin{equation*}
		q^{-c_1 c_2} \sum_{X \in \Mat{c_1}{c_2}\left(\finiteField\right)} \sum_{u \in \mathcal{Y}} \fieldCharacter\left(\trace\left( y_{k-1,k} X\right)\right) \innerproduct{f \left( u  \kappa  \begin{pmatrix}
			& \IdentityMatrix{\left(k-1\right)c}\\
			h
		\end{pmatrix} \right)}{v_1 \otimes v_2}.
	\end{equation*}
	In order for the sum over $X$ not to vanish, we must have $y_{k-1,k} = 0$, in which case the sum evaluates to $\sizeof{\Mat{c_2}{c_1}\left(\finiteField\right)}$. Therefore, we have that the left hand side of \eqref{eq:sum-for-multiplicativity-in-the-first-variable} is given by \begin{equation}\label{eq:sum-for-multiplicativity-in-the-first-variable-first-root-eliminated}
		\sum_{\substack{u \in \mathcal{Y} \\
				y_{k-1,k} = 0}} \innerproduct{f \left( u  \kappa \begin{pmatrix}
				& \IdentityMatrix{\left(k-1\right)c}\\
				h
				\end{pmatrix} \right)}{v_1 \otimes v_2}.
	\end{equation}

	For $u$ as in \eqref{eq:sum-for-multiplicativity-in-the-first-variable-first-root-eliminated} we have that $$u \kappa = \kappa \begin{pmatrix}
	\IdentityMatrix{c} & Z_{12} & Z_{13} & \cdots  & Z_{1k} \\
	& \IdentityMatrix{c} & Z_{23} & \cdots & Z_{2k} \\
	& & \ddots & \ddots & \vdots  \\
	& & & \IdentityMatrix{c} &  Z_{k-1,k} \\
	& & & & \IdentityMatrix{c}
\end{pmatrix},$$
where $Z_{ij} = \left(\begin{smallmatrix}
	0_{c_1} & 0_{c_1 \times c_2}\\
	y_{ij} & 0_{c_2}
\end{smallmatrix}\right).$ Let us write $$u \kappa = u_{Y_k} \kappa\,\diag\left(u_Z, \IdentityMatrix{c}\right),$$ where $$u_{Y_k}=\diag\left(\IdentityMatrix{\left(k-1\right)c_1}, \begin{pmatrix}
\IdentityMatrix{c_1}\\
Y_k & \IdentityMatrix{\left(k-1\right)c_2}
\end{pmatrix}, \IdentityMatrix{c_2}\right) \,\,\,\,\,\text{and}\,\,\,\,\,\, u_Z = \begin{pmatrix}
	\IdentityMatrix{c} & Z_{12} & Z_{13} & \cdots  & Z_{1,k-1} \\
	& \IdentityMatrix{c} & Z_{23} & \cdots & Z_{2,k-1} \\
	& & \ddots & \ddots & \vdots  \\
	& & & \IdentityMatrix{c} &  Z_{k-2,k-1} \\
	& & & & \IdentityMatrix{c}
\end{pmatrix}.$$ Therefore,
$$ u \kappa \begin{pmatrix}
	& \IdentityMatrix{\left(k-1\right)c}\\
	h
\end{pmatrix} = u_{Y_k} \cdot \kappa \begin{pmatrix}
& \IdentityMatrix{\left(k-1\right)c}\\
h
\end{pmatrix} \diag\left(\IdentityMatrix{c}, u_Z\right).$$
Since $\fieldCharacterkc{k}{c}\left(\diag\left(\IdentityMatrix{c}, u_Z\right)\right) = 1$, we get from \eqref{eq:sum-for-multiplicativity-in-the-first-variable-first-root-eliminated} that the left hand side of \eqref{eq:sum-for-multiplicativity-in-the-first-variable} is given by
\begin{equation}\label{eq:bessel-speh-fourier-transform-2}
		q^{c_1 c_2 \binom{k-1}{2}} \sum_{\substack{y_{1k}, \dots, y_{k-2,k} \in \Mat{c_2}{c_1}\left(\finiteField\right) \\
		y_{k - 1,k} = 0}} \innerproduct{f\left( u_{Y_k} \cdot \kappa \begin{pmatrix}
		& \IdentityMatrix{\left(k-1\right)c}\\
		h
		\end{pmatrix} \right)}{v_1 \otimes v_2}.
\end{equation}

Next, we perform root elimination to eliminate $y_{1k},\dots,y_{k-2,k}$. Denote for $1 \le j \le k-1$ $$S_j = \sum_{\substack{y_{1k}, \dots, y_{j-1,k} \in \Mat{c_2}{c_1}\left(\finiteField\right)\\
y_{j,k} = \dots = y_{k-1,k} = 0}} \innerproduct{f \left( u_{Y_k} \cdot  \kappa \begin{pmatrix}
& \IdentityMatrix{\left(k-1\right)c}\\
h
\end{pmatrix} \right)}{v_1 \otimes v_2}.$$
We will prove by induction that $S_j = S_{k-1}$ for any $j$. For $j = k-1$ there is nothing to prove. Suppose that we know that $S_{j+1} = S_{k-1}$. Let $$e_j = \left(0_c, \dots , 0_c, \IdentityMatrix{c}, 0_c, \dots ,0_c\right)
 \in \Mat{c}{\left(k-2\right)c}\left(\finiteField\right),$$ where $\IdentityMatrix{c}$ is located at the $j$-th component. We have that for any $g \in \GL_{kc}\left(\finiteField\right)$ and any $z \in \squareMatrix_c\left(\finiteField\right)$, $$f\left(g \begin{pmatrix}
 		\IdentityMatrix{c} & & z e_j\\
 		& \IdentityMatrix{c} &\\
 		& & \IdentityMatrix{\left(k-2\right)c}
 	\end{pmatrix}\right) = {f\left(g\right)}.$$
Notice that $$\begin{pmatrix}
	& \IdentityMatrix{\left(k-1\right)c}\\
	h
\end{pmatrix} \begin{pmatrix}
\IdentityMatrix{c} & & h^{-1} z e_j\\
& \IdentityMatrix{c} &\\
& & \IdentityMatrix{\left(k-2\right)c}
\end{pmatrix} = \begin{pmatrix}
\IdentityMatrix{c}\\
& \IdentityMatrix{\left(k-2\right)c}\\
& z e_j & \IdentityMatrix{c}
\end{pmatrix} \begin{pmatrix}
& \IdentityMatrix{\left(k-1\right)c}\\
h
\end{pmatrix}.$$
Let us choose $z = \begin{pmatrix}
	0_{c_1} & x\\
	0_{c_2 \times c_1} & 0_{c_2}
\end{pmatrix},$ where $x \in \Mat{c_1}{c_2}\left(\finiteField\right)$. Then we have the relation $$ \kappa \begin{pmatrix}
\IdentityMatrix{c}\\
& \IdentityMatrix{\left(k-2\right)c}\\
& z e_j & \IdentityMatrix{c}
\end{pmatrix} \kappa^{-1} = \diag\left(\IdentityMatrix{\left(k-1\right)c_1},\begin{pmatrix}
\IdentityMatrix{c_1} & & x e'_j \\
& \IdentityMatrix{c_2}\\
& & \IdentityMatrix{\left(k-2\right)c_2}\\
\end{pmatrix},\IdentityMatrix{c_2}\right)  \eqcolon r^{+}_{j,k}\left(x\right),$$
where $e'_j = \left(0_{c_2},\dots,0_{c_2}, \IdentityMatrix{c_2},0_{c_2}, \dots, 0_{c_2}\right) \in \Mat{c_2}{\left(k-2\right)c_2}\left(\finiteField\right).$
By averaging over all $x$, we get that $S_{j+1}$ equals $$q^{-c_1 c_2} \sum_{x \in \Mat{c_1}{c_2}\left(\finiteField\right)} \sum_{\substack{y_{1k}, \dots, y_{j,k} \in \Mat{c_2}{c_1}\left(\finiteField\right)\\
y_{j+1,k} = \dots = y_{k-1,k} = 0}} \innerproduct{f \left( u_{Y_k} \cdot r^+_{j,k}\left(x\right) \kappa \begin{pmatrix}
& \IdentityMatrix{\left(k-1\right)c}\\
h
\end{pmatrix} \right)}{v_1 \otimes v_2}.$$
A computation shows that given that $y_{j+1,k} = \dots = y_{k-1,k} = 0_{c_2 \times c_1}$, we have $$u_{Y_k} \cdot r_{j,k}^{+}\left(x\right) = \diag\left(\IdentityMatrix{kc_1}, \begin{pmatrix}
	\IdentityMatrix{j c_2} & Y^j_k x\\
	& \IdentityMatrix{c_2}
\end{pmatrix}, \IdentityMatrix{\left(k-j-1\right)c_2}\right) r_{j,k}^{+}\left(x\right) u_{Y_k} \,\,\,\,\, \text{where} \,\,\,\,\, Y^j_k = \begin{pmatrix}
y_{1,k}\\
\vdots\\
y_{j,k}
\end{pmatrix}.$$
Therefore $S_{j+1}$ is given by
$$q^{-c_1 c_2} \sum_{x \in \Mat{c_1}{c_2}\left(\finiteField\right)} \sum_{\substack{y_{1k}, \dots, y_{j,k} \in \Mat{c_2}{c_1}\left(\finiteField\right)\\
y_{j+1,k} = \dots y_{k-1,k} = 0}} \fieldCharacter\left(y_{j,k} x\right) \innerproduct{f \left( u_{Y_k} \cdot \kappa \begin{pmatrix}
& \IdentityMatrix{\left(k-1\right)c}\\
h
\end{pmatrix} \right)}{v_1 \otimes v_2}.$$
Once again, the summation over $x$ vanishes unless $y_{j,k} = 0$, in which case the sum evaluates to $\sizeof{\Mat{c_2}{c_1}\left(\finiteField\right)}$. This shows that $S_j = S_{j+1}$.

	We therefore showed $S_{k-1} = S_1$ which implies from \eqref{eq:sum-for-multiplicativity-in-the-first-variable-first-root-eliminated} that the left hand side of \eqref{eq:sum-for-multiplicativity-in-the-first-variable} is given by
	$$		q^{c_1 c_2 \binom{k-1}{2}} \innerproduct{f\left( \kappa \begin{pmatrix}
			& \IdentityMatrix{\left(k-1\right)c}\\
			h
		\end{pmatrix} \right)}{v_1 \otimes v_2}. $$
	This completes the proof of the first part.
	
	Regarding the second part, since $$\kappa \begin{pmatrix}
	&  \IdentityMatrix{\left(k-1\right)c}\\
	\IdentityMatrix{c}
\end{pmatrix} = \diag\left(\begin{pmatrix}
	& \IdentityMatrix{\left(k-1\right)c_1}\\
	\IdentityMatrix{c_1}
\end{pmatrix}, 		\begin{pmatrix}
	& \IdentityMatrix{\left(k-1\right)c_2}\\
	\IdentityMatrix{c_2}
\end{pmatrix}\right)\kappa,$$
we have that \begin{equation*}
	\begin{split}
		&f\left(\kappa \begin{pmatrix}
			& \IdentityMatrix{\left(k-1\right)c}\\
			h
		\end{pmatrix}\right) = \rho_1\begin{pmatrix}
			& \IdentityMatrix{\left(k-1\right)c_1}\\
			\IdentityMatrix{c_1}
		\end{pmatrix} \otimes \rho_2\begin{pmatrix}
			& \IdentityMatrix{\left(k-1\right)c_2}\\
			\IdentityMatrix{c_2}
		\end{pmatrix} f\left(\kappa\,\diag\left(h, \IdentityMatrix{\left(k-1\right)c}\right)\right).
	\end{split}
\end{equation*}
Using the identity $$\kappa\,\diag\left(h_1,h_2, \IdentityMatrix{\left(k-1\right)c}\right) = \diag\left(h_1, \IdentityMatrix{\left(k-1\right)c_2}, h_2, \IdentityMatrix{\left(k-1\right)c_2}\right) \kappa,$$ and the fact that $f\left(\kappa\right) = q^{-\binom{k}{2} c_1 c_2} v_1 \otimes v_2$, we get 
\begin{align*}
	&\frac{1}{\sizeof{\UnipotentRadical_{\left(c_1, c_2\right)}}} \sum_{n \in \UnipotentRadical_{\left(c_1, c_2\right)}} \specialBesselSpeh{\tau}\left(nh\right) = q^{c_1 c_2 \left(\binom{k-1}{2} - \binom{k}{2}\right)} \specialBesselSpeh{\tau}\left(h_1\right) \specialBesselSpeh{\tau}\left(h_2\right),
\end{align*}
as required.

\end{proof}

\begin{proposition}\label{prop:support-of-whittaker-vector-in-induced-speh}
	Suppose that $h \in \GL_c\left(\finiteField\right)$ is such that $$\sum_{n \in \UnipotentRadical_{(c_1, c_2)}} \specialBesselSpeh{\tau}\left(hn\right) \ne 0.$$
	Then $h \in \ParabolicSubgroup_{\left(c_1, c_2\right)}.$
\end{proposition}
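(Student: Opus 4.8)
The plan is to deduce the statement from the first part of \Cref{thm:unipotent-fourier-transofrm-bessel-speh} together with the known support of the $\left(k,c\right)$ $\fieldCharacter$-Whittaker vector of $\SpehRepresentation{\tau}{c_1}\circ\SpehRepresentation{\tau}{c_2}$. Since $\specialBesselSpeh{\tau}$ is a class function and $hn$ is conjugate to $nh$, the hypothesis is equivalent to $\sum_{n\in\UnipotentRadical_{\left(c_1,c_2\right)}}\specialBesselSpeh{\tau}\left(nh\right)\neq 0$; hence by \eqref{eq:sum-for-multiplicativity-in-the-first-variable} (whose prefactor $q^{c_1c_2\binom{k-1}{2}}$ is non-zero) we must have
$$f\left(\kappa\begin{pmatrix}& \IdentityMatrix{\left(k-1\right)c}\\ h\end{pmatrix}\right)\neq 0,$$
where $f\in\SpehRepresentation{\tau}{c}\subset\SpehRepresentation{\tau}{c_1}\circ\SpehRepresentation{\tau}{c_2}$ is the $\left(k,c\right)$ $\fieldCharacter$-Whittaker vector normalized by $f\left(\kappa\right)=\sizeof{\mathcal{Y}}^{-1}v_1\otimes v_2$, as in \Cref{thm:unipotent-fourier-transofrm-bessel-speh}.

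Next I would push $\kappa$ past the argument. Using $\begin{pmatrix}& \IdentityMatrix{\left(k-1\right)c}\\ h\end{pmatrix}=\begin{pmatrix}& \IdentityMatrix{\left(k-1\right)c}\\ \IdentityMatrix{c}\end{pmatrix}\diag\left(h,\IdentityMatrix{\left(k-1\right)c}\right)$, the identity $\kappa\begin{pmatrix}& \IdentityMatrix{\left(k-1\right)c}\\ \IdentityMatrix{c}\end{pmatrix}=\diag\left(w_1,w_2\right)\kappa$ with $w_j=\begin{pmatrix}& \IdentityMatrix{\left(k-1\right)c_j}\\ \IdentityMatrix{c_j}\end{pmatrix}$ from the proof of \Cref{thm:unipotent-fourier-transofrm-bessel-speh}, and $\kappa\,\diag\left(h,\IdentityMatrix{\left(k-1\right)c}\right)=\hat h\,\kappa$ where $\hat h=\kappa\,\diag\left(h,\IdentityMatrix{\left(k-1\right)c}\right)\kappa^{-1}$, one gets $\kappa\begin{pmatrix}& \IdentityMatrix{\left(k-1\right)c}\\ h\end{pmatrix}=\diag\left(w_1,w_2\right)\hat h\,\kappa$. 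Writing $h=\left(\begin{smallmatrix}h_{11}&h_{12}\\ h_{21}&h_{22}\end{smallmatrix}\right)$ relative to the composition $\left(c_1,c_2\right)$, inspection of where $\kappa$ sends the first $c$ coordinates shows that, in the block decomposition of $\GL_{kc}\left(\finiteField\right)$ into sizes $kc_1$ and $kc_2$, the matrix $\hat h$ has the form $\left(\begin{smallmatrix}\diag\left(h_{11},\IdentityMatrix{\left(k-1\right)c_1}\right)&\ast\\ \ast&\diag\left(h_{22},\IdentityMatrix{\left(k-1\right)c_2}\right)\end{smallmatrix}\right)$, where the lower-left $kc_2\times kc_1$ block vanishes outside its top-left $c_2\times c_1$ corner, which equals $h_{21}$.

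As in the proof of \Cref{prop:f-kappa-is-not-zero}, the vector $f$ is a non-zero scalar multiple of the explicitly constructed $\left(k,c\right)$ $\fieldCharacter$-Whittaker vector $f_{\fieldCharacterkc{k}{c}}$, which is supported on the single double coset $\ParabolicSubgroup_{\left(kc_1,kc_2\right)}\cdot\kappa\cdot\left(\kappa^{-1}\mathcal{Y}\kappa\right)=\ParabolicSubgroup_{\left(kc_1,kc_2\right)}\cdot\mathcal{Y}\cdot\kappa$. Since $\diag\left(w_1,w_2\right)$ lies in $\ParabolicSubgroup_{\left(kc_1,kc_2\right)}$ and this support is invariant under left multiplication by $\ParabolicSubgroup_{\left(kc_1,kc_2\right)}$, we conclude $\hat h\,\kappa\in\ParabolicSubgroup_{\left(kc_1,kc_2\right)}\mathcal{Y}\kappa$, that is, $\hat h\in\ParabolicSubgroup_{\left(kc_1,kc_2\right)}\mathcal{Y}$. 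By \eqref{eq:Y-R-subgroup} the lower-left $kc_2\times kc_1$ block $Y$ of every element of $\mathcal{Y}$ is strictly block upper triangular, hence has vanishing first block column (of width $c_1$); therefore the lower-left block $DY$ of a general element $\left(\begin{smallmatrix}A&B\\ & D\end{smallmatrix}\right)\left(\begin{smallmatrix}\IdentityMatrix{kc_1}&\\ Y&\IdentityMatrix{kc_2}\end{smallmatrix}\right)$ of $\ParabolicSubgroup_{\left(kc_1,kc_2\right)}\mathcal{Y}$ has vanishing first block column, and in particular vanishing top-left $c_2\times c_1$ corner. Comparing with the block shape of $\hat h$ forces $h_{21}=0$, i.e.\ $h\in\ParabolicSubgroup_{\left(c_1,c_2\right)}$.

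The step I expect to require the most care is the description of the support of $f_{\fieldCharacterkc{k}{c}}$ — equivalently the inclusion $\kappa\,\UnipotentRadicalForWss{k}{c}\,\kappa^{-1}\subseteq\ParabolicSubgroup_{\left(kc_1,kc_2\right)}\mathcal{Y}$ and the right $\UnipotentRadicalForWss{k}{c}$-stability of $\ParabolicSubgroup_{\left(kc_1,kc_2\right)}\mathcal{Y}\kappa$ — which is obtained by tracking how $\kappa$ de-interleaves the $\left(c^k\right)$-block structure of $\UnipotentRadicalForWss{k}{c}$ into the two block-triangular pieces of sizes $c_1$ and $c_2$ (splitting each $c\times c$ block into its four $c_i\times c_j$ pieces). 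The remaining identities and block manipulations are routine.
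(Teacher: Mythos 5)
Your reduction to \Cref{thm:unipotent-fourier-transofrm-bessel-speh} is fine (the class-function step and the conclusion that $f\left(\kappa\left(\begin{smallmatrix} & \IdentityMatrix{\left(k-1\right)c}\\ h\end{smallmatrix}\right)\right)\neq 0$ both match the paper's starting point), but the next step contains a genuine gap: you assert that $f$ is a non-zero scalar multiple of the explicitly constructed vector $f_{\fieldCharacterkc{k}{c}}$ supported on $\ParabolicSubgroup_{\left(kc_1,kc_2\right)}\mathcal{Y}\kappa$, ``as in the proof of \Cref{prop:f-kappa-is-not-zero}''. That proposition does not say this. What it proves is that the two \emph{functionals} $f'\mapsto\innerproduct{f'}{f}$ and $f'\mapsto\innerproduct{f'}{f_{\fieldCharacterkc{k}{c}}}$ are proportional \emph{after restriction to the subrepresentation} $\SpehRepresentation{\tau}{c}$, using the uniqueness of the $\left(k,c\right)$ $\fieldCharacter$-Whittaker functional on $\SpehRepresentation{\tau}{c}$. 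Uniqueness of $\fieldCharacterkc{k}{c}$-equivariant vectors is a property of representations of $\left(k,c\right)$ type; it is not claimed (and is false in general) for the full induced representation $\SpehRepresentation{\tau}{c_1}\circ\SpehRepresentation{\tau}{c_2}$, whose $\fieldCharacterkc{k}{c}$-isotypic subspace has dimension greater than one (already for $c_1=c_2=1$ the generic constituent of $\tau\circ\tau$ also carries a non-zero $\left(k,c\right)$ vector, and a Mackey computation over $\ParabolicSubgroup_{\left(kc_1,kc_2\right)}\backslash\GL_{kc}\left(\finiteField\right)\slash\UnipotentRadicalForWss{k}{c}$ shows several cosets contribute). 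So $f$, which lies in $\SpehRepresentation{\tau}{c}$, need not coincide with $f_{\fieldCharacterkc{k}{c}}$ as a vector and there is no control on its support; the block-shape argument you build on $\hat h\in\ParabolicSubgroup_{\left(kc_1,kc_2\right)}\mathcal{Y}$ therefore has no foundation, even though the linear-algebra part of it (the shape of $\kappa\,\diag\left(h,\IdentityMatrix{\left(k-1\right)c}\right)\kappa^{-1}$ and of lower-left blocks of $\ParabolicSubgroup_{\left(kc_1,kc_2\right)}\mathcal{Y}$) is correct.

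The paper closes exactly this gap by a different mechanism: it exploits that $f$ is a function valued in $\SpehRepresentation{\tau}{c_1}\otimes\SpehRepresentation{\tau}{c_2}$ with left $\ParabolicSubgroup_{\left(kc_1,kc_2\right)}$-equivariance. Left translation by $\diag\left(\IdentityMatrix{kc_1},u_Y\right)$ shows that $v_f\left(h\right)=f\left(\kappa\left(\begin{smallmatrix} & \IdentityMatrix{\left(k-1\right)c}\\ h\end{smallmatrix}\right)\right)$ is an eigenvector of $\idmap_{\rho_1}\otimes\rho_2\left(u_Y\right)$ with eigenvalue $\fieldCharacter\left(\trace\left(h'_{c_2\times c_2}Y\right)\right)$, where $h^{-1}=\left(h'_{ij}\right)$; on the other hand $v_f\left(h\right)$ is invariant under left multiplication of $h$ by $\UnipotentRadical_{\left(c_1,c_2\right)}$, and averaging over $A\in\Mat{c_1}{c_2}\left(\finiteField\right)$ and taking $Y=\IdentityMatrix{c_2}$ produces the character sum $\sum_A\fieldCharacter\left(-\trace\left(h'_{c_2\times c_1}A\right)\right)$, which kills $v_f\left(h\right)$ unless $h'_{c_2\times c_1}=0$, i.e.\ $h\in\ParabolicSubgroup_{\left(c_1,c_2\right)}$. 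If you want to keep your outline, you would have to replace the proportionality claim by an argument of this kind (or prove a support statement for the actual vector $f$, which is precisely what the averaging argument substitutes for).
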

\begin{proof}

	By \Cref{thm:unipotent-fourier-transofrm-bessel-speh}, $$\sum_{n \in \UnipotentRadical_{(c_1, c_2)}} \specialBesselSpeh{\tau}\left(hn\right) = q^{c_1 c_2 \left(\binom{k-1}{2} + 1\right)} \innerproduct{v_f\left( h \right)}{v_1 \otimes v_2},$$ where
	$$v_f\left(h\right) = f\left(\kappa \begin{pmatrix}
		& \IdentityMatrix{\left(k-1\right)c}\\
		h
	\end{pmatrix} \right).$$
	
	Let $$u_Y = \begin{pmatrix}
		\IdentityMatrix{c_2} & \\
		& \IdentityMatrix{\left(k-2\right)c_2} &\\
		Y & & \IdentityMatrix{c_2}
	\end{pmatrix},$$ where $Y \in \squareMatrix_{c_2}\left(\finiteField\right)$. For any $g \in \GL_{kc}\left(\finiteField\right)$ we have that $$f\left( \diag\left(\IdentityMatrix{k c_1}, u_Y\right) g \right) =  \idmap_{\rho_1} \otimes \rho_2\left(u_Y\right) f\left(g\right).$$
	Since $$\diag\left(\IdentityMatrix{k c_1}, u_Y\right) \kappa \begin{pmatrix}
		& \IdentityMatrix{\left(k-1\right)c}\\
		h
	\end{pmatrix} = \kappa \begin{pmatrix}
		& \IdentityMatrix{\left(k-1\right)c}\\
		h
	\end{pmatrix} \diag\left(\begin{pmatrix}
		\IdentityMatrix{c} & h^{-1} \diag\left(0_{c_1}, Y\right) \\
		& \IdentityMatrix{c}
	\end{pmatrix}, \IdentityMatrix{\left(k-2\right)c}\right),$$ and since $f$ is a $\kcNotation{k}{c}{\fieldCharacter}$ vector, we have that
	\begin{align*}
		\idmap_{\rho_1} \otimes \rho_2\left(u_Y\right) v_f\left(h\right) &= \fieldCharacter\left(\trace\left(h^{-1} \diag\left(0_{c_1}, Y\right)\right)\right) v_f\left(h\right).
	\end{align*}
	Write $$h^{-1} = \begin{pmatrix}
		h'_{c_1 \times c_1} & h'_{c_1 \times c_2}\\
		h'_{c_2 \times c_1} & h'_{c_2 \times c_2}
	\end{pmatrix}.$$
	Then $$\idmap_{\rho_1} \otimes \rho_2\left(u_Y\right) v_f\left(h\right) = \fieldCharacter\left(\trace \left(h'_{c_2 \times c_2} Y\right)\right) v_f\left(h\right).$$
	
	Notice that $v_f\left(h\right)$ is left invariant under $\UnipotentRadical_{\left(c_1,c_2\right)}$, because for $A \in \Mat{c_1}{c_2}\left(\finiteField\right)$, $$ \begin{pmatrix}
		\IdentityMatrix{\left(k-1\right)c_1}\\
		& \IdentityMatrix{c_1} & & A\\
		& & \IdentityMatrix{\left(k-1\right)c_2}\\
		& & & \IdentityMatrix{c_2} 
	\end{pmatrix} \kappa = \kappa\,\diag\left(\IdentityMatrix{\left(k-1\right)c}, \begin{pmatrix}
	\IdentityMatrix{c_1} & A\\
	& \IdentityMatrix{c_2}
	\end{pmatrix}\right).$$
	Hence, $$v_f\left(h\right) = \frac{1}{\sizeof{\Mat{c_1}{c_2}\left(\finiteField\right)}}\sum_{A \in \Mat{c_1}{c_2}\left(\finiteField\right)}v_f\left({\begin{pmatrix}
			\IdentityMatrix{c_1} & A\\
			& \IdentityMatrix{c_2}
		\end{pmatrix}h}\right).$$
	Applying $\idmap_{\rho_1} \otimes \rho_2\left(u_Y\right)$ to both sides yields $$\fieldCharacter\left(\trace\left(h'_{c_2 \times c_2} Y\right)\right) v_f\left(h\right) = \frac{1}{\sizeof{\Mat{c_1}{c_2}\left(\finiteField\right)}} \sum_{A \in \Mat{c_1}{c_2}\left(\finiteField\right)}\fieldCharacter\left(\trace\left(h'_{c_2 \times c_2} - h'_{c_2 \times c_1} A\right)Y\right)v_f\left(h\right),$$
	and therefore
	$$v_f\left(h\right) = \frac{1}{\sizeof{\Mat{c_1}{c_2}\left(\finiteField\right)}} \sum_{A \in \Mat{c_1}{c_2}\left(\finiteField\right)}\fieldCharacter\left(\trace\left(-h'_{c_2 \times c_1} A Y\right)\right)v_f\left(h\right).$$
	Choosing $Y = \IdentityMatrix{c_2}$, we have that $A \mapsto \fieldCharacter\left(-\trace\left(h'_{c_2 \times c_1} A\right)\right)$ is a non-trivial character of $\Mat{c_1}{c_2}\left(\finiteField\right)$ unless $h'_{c_2 \times c_1} = 0$, and therefore $v_f\left(h\right) = 0$ unless $h'_{c_2 \times c_1} = 0$. This proves the result because $h^{-1} \in \ParabolicSubgroup_{\left(c_1, c_2\right)}$ if and only if $h \in \ParabolicSubgroup_{\left(c_1, c_2\right)}$.
\end{proof}

We are now ready to prove \Cref{thm:multiplicativitiy-in-first-variable}.
\begin{proof}
	Let $\pi'$, $w_1$, $w_2$ and $\phi_{w_{\pi_1} \otimes w_{\pi_2}}$ be as in the proof of \Cref{thm:multiplicativity-of-godement-jacquet}. Since $\phi_{w_{\pi_1} \otimes w_{\pi_2}}$ is right invariant under $\UnipotentRadical_{(c_1, c_2)}$, we can write
	\begin{equation} \label{eq:zeta-operator-applied-to-full-section}
		\begin{split}
			&\sum_{h \in \GL_c\left(\finiteField\right)} \fieldCharacter\left(\trace h^{-1}\right) \pi'\left(h\right) \phi_{w_{\pi_1} \otimes w_{\pi_2}} = \frac{1}{\sizeof{\UnipotentRadical_{\left(c_1, c_2\right)}}}\sum_{h \in \GL_c\left(\finiteField\right)} \sum_{n \in \UnipotentRadical_{\left(c_1, c_2\right)}}  \specialBesselSpeh{\tau}\left(hn\right) \pi'\left(h\right) \phi_{w_{\pi_1} \otimes w_{\pi_2}}.
		\end{split}
	\end{equation}
	By \Cref{prop:support-of-whittaker-vector-in-induced-speh}, we have that \eqref{eq:zeta-operator-applied-to-full-section} is given by
	\begin{equation*}
	\frac{1}{\sizeof{\UnipotentRadical_{(c_1, c_2)}}} 
\sum_{n, n' \in \UnipotentRadical_{\left(c_1, c_2\right)}} \sum_{\substack{h = \diag\left(h_1, h_2\right)\\
			h_1 \in \GL_{c_1}\left(\finiteField\right)\\
			h_2 \in \GL_{c_2}\left(\finiteField\right) }} \specialBesselSpeh{\tau}\left(h n' n\right) \pi'\left(h n'\right) \phi_{w_{\pi_1} \otimes w_{\pi_2}}.
\end{equation*}
After changing variables and $n \mapsto \left(n'\right)^{-1} n$ and using again the fact that $\phi_{w_{\pi_1} \otimes w_{\pi_2}}$ is right invariant under $\UnipotentRadical_{(c_1, c_2)}$, this becomes
\begin{equation}\label{eq:zeta-operator-applied-to-full-section-before-replacing-h-and-n}		\sum_{n \in \UnipotentRadical_{\left(c_1, c_2\right)}} \sum_{\substack{h = \diag\left(h_1, h_2\right)\\
				h_1 \in \GL_{c_1}\left(\finiteField\right)\\
				h_2 \in \GL_{c_2}\left(\finiteField\right) }} \specialBesselSpeh{\tau}\left(hn\right) \pi'\left(h\right) \phi_{w_{\pi_1} \otimes w_{\pi_2}}.
\end{equation}
Since $h$ normalizes $\UnipotentRadical_{(c_1, c_2)}$, we can replace $hn$ with $nh$ in \eqref{eq:zeta-operator-applied-to-full-section-before-replacing-h-and-n}. It then follows from \Cref{thm:unipotent-fourier-transofrm-bessel-speh} that \eqref{eq:zeta-operator-applied-to-full-section} is the same as
$$q^{\left(2-k\right)c_1 c_2 } \sum_{\substack{h = \diag\left(h_1, h_2\right)\\
		h_1 \in \GL_{c_1}\left(\finiteField\right)\\
		h_2 \in \GL_{c_2}\left(\finiteField\right) }} \specialBesselSpeh{\tau}\left(h_1\right) \specialBesselSpeh{\tau}\left(h_2\right) \pi'\left(h\right) \phi_{w_{\pi_1} \otimes w_{\pi_2}}.$$
Thus,
\begin{equation} \label{eq:final-multiplicativity-equation}
	\begin{split}
		\MoveEqLeft[3] q^{\left(k - 2\right)c_1 c_2 } \sum_{h \in \GL_c\left(\finiteField\right)} \specialBesselSpeh{\tau}\left(h\right) \pi'\left(h\right) \phi_{w_{\pi_1} \otimes w_{\pi_2}} \\
		&=
		\sum_{\substack{h = \diag\left(h_1, h_2\right)\\
				h_1 \in \GL_{c_1}\left(\finiteField\right)\\
				h_2 \in \GL_{c_2}\left(\finiteField\right) }} \specialBesselSpeh{\tau}\left(h_1\right) \specialBesselSpeh{\tau}\left(h_2\right) \pi_1\left(h_1\right) \otimes \pi_2\left(h_2\right) \phi_{w_{\pi_1} \otimes w_{\pi_2}}.
	\end{split}
\end{equation}
The right hand side of \eqref{eq:final-multiplicativity-equation} is $$q^{-\left(k-2\right)\frac{\left(c_1^2 + c_2^2\right)}{2}} \GKPreGammaFactor{\pi_1}{\tau}{\fieldCharacter} \GKPreGammaFactor{\pi_2}{\tau}{\fieldCharacter} \phi_{w_{\pi_1} \otimes w_{\pi_2}}.$$
Hence we showed $$\GKGaussSum{\pi'}{\tau}{\fieldCharacter} \phi_{w_{\pi_1} \otimes w_{\pi_2}} = \GKPreGammaFactor{\pi_1}{\tau}{\fieldCharacter} \GKPreGammaFactor{\pi_2}{\tau}{\fieldCharacter} \phi_{w_{\pi_1} \otimes w_{\pi_2}}.$$
The proof now proceeds as the proof of \Cref{thm:multiplicativity-of-godement-jacquet} after \eqref{eq:gj-final-multiplicativity-equation}.
\end{proof}

\subsection{Kaplan's functional equation}\label{sec:kaplan-functional-equation}
The purpose of this section is to establish a functional equation that the Ginzburg--Kaplan gamma factor fits into.

Suppose $k \ge 2$. 
Let $\pi$ be an irreducible representation of $\GL_c\left(\finiteField\right)$ and let $\tau$ be an irreducible generic representation of $\GL_k\left(\finiteField\right)$. 
For any $0 \le j \le k - 2$ and any $W \in \Whittaker\left(\SpehRepresentation{\tau}{c}, \fieldCharacterkc{k}{c}\right)$, we consider the following \emph{zeta operators}: 

\begin{equation*}
	\zetaOperator_j\left(W, \pi \times \tau\right) = q^{-\frac{(k-j-2)c^2}{2}}
	\sum_{\substack{
		{X \in \Mat{\left(k-j-2\right)c}{c}\left(\finiteField\right)}\\
		{g \in \GL_c\left(\finiteField\right)}
		}}
		  W\begin{pmatrix}
		g \\
		X & \IdentityMatrix{{\left(k-2-j\right)c}}\\
		& & \IdentityMatrix{\left(j+1\right)c}
	\end{pmatrix} \pi\left(g\right)
\end{equation*}  
and
$$\dualZetaOperator_j\left(W, \pi \times \tau\right) = 
q^{-\frac{jc^2}{2}}
\sum_{\substack{{X \in \Mat{c}{jc}\left(\finiteField\right)}\\
{g \in \GL_c\left(\finiteField\right)}}}
	 W \begin{pmatrix}
	& \IdentityMatrix{\left(k-1-j\right)c}& \\
	& & \IdentityMatrix{jc} \\
	g & & X
\end{pmatrix} \pi\left(g\right).$$
A local field analog of these zeta operators for $j = k-2$ was studied in~\cite[Appendix A]{kaplan2018} and~\cite{Kaplan2023}.

\begin{remark}\label{rem:z-star-expressed-in-terms-of-Z}
A simple computation shows that the operator $\dualZetaOperator_j\left(W, \pi \times \tau\right)$ is also given by the formula
$$\dualZetaOperator_j\left(W, \pi \times \tau \right) = \zetaOperator_{k-2-j}\left( \SpehRepresentation{\Contragradient{\tau}}{c}\begin{pmatrix}
	\IdentityMatrix{c} &\\
	& w_{\left(c^{k-1}\right)}
\end{pmatrix} \tilde{W},
\Contragradient{\pi} \times \Contragradient{\tau}
\right).$$
See \Cref{sec:whittaker-models} for the definition of $\tilde{W}$ and of $w_{\left(c^{k-1}\right)}$.
\end{remark}
\begin{remark}
When $c=1$, these zeta operators correspond to the Jacquet--Piatetski-Shapiro--Shalika integrals studied by Roditty-Gershon~\cite{Roditty10} and Nien~\cite[Theorem 2.10]{Nien14}, for the special case $r=1$ (in the notation of~\cite{Nien14}). 
\end{remark}

We will establish a functional equation between $\zetaOperator_j$ and $\dualZetaOperator_j$ for every $j$. Our main result is the following functional equation for $j=k-2$.
\begin{theorem}\label{thm: functional equation 1}
	Suppose that the cuspidal supports of $\pi$ and of $\Contragradient{\tau}$ do not intersect. 
 Then for any $W \in \Whittaker\left(\SpehRepresentation{\tau}{c}, \fieldCharacterkc{k}{c}\right)$,
	$$\dualZetaOperator_{k-2}\left(W, \pi \times \tau\right) = \GKPreGammaFactor{\pi}{\tau}{\fieldCharacter} \zetaOperator_{k-2}\left(W, \pi \times \tau\right).$$
\end{theorem}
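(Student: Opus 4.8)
The plan is to prove the sharper operator identity
$$\dualZetaOperator_{k-2}\left(W, \pi \times \tau\right) = \GKGaussSum{\pi}{\tau}{\fieldCharacter} \circ \zetaOperator_{k-2}\left(W, \pi \times \tau\right),$$
from which the theorem follows at once, since $\GKGaussSum{\pi}{\tau}{\fieldCharacter} = \GKPreGammaFactor{\pi}{\tau}{\fieldCharacter} \cdot \idmap_{\pi}$ by Schur's lemma. The cuspidal-support hypothesis is exactly what makes this identity hold, and I expect it to be used only at the very end.

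First I would unfold the right-hand side. Writing $\GKGaussSum{\pi}{\tau}{\fieldCharacter} = q^{(k-2)c^2/2}\sum_{h \in \GL_c\left(\finiteField\right)} \specialBesselSpeh{\tau}\left(h\right)\pi\left(h\right)$ and substituting $m = hg$ in the resulting double sum gives
$$\GKGaussSum{\pi}{\tau}{\fieldCharacter} \circ \zetaOperator_{k-2}\left(W\right) = q^{(k-2)c^2/2} \sum_{m \in \GL_c\left(\finiteField\right)} \left( \sum_{h \in \GL_c\left(\finiteField\right)} \specialBesselSpeh{\tau}\left(h\right) W\begin{pmatrix} h^{-1}m & \\ & \IdentityMatrix{(k-1)c} \end{pmatrix} \right) \pi\left(m\right),$$
whereas
$$\dualZetaOperator_{k-2}\left(W\right) = q^{-(k-2)c^2/2} \sum_{m \in \GL_c\left(\finiteField\right)} \left( \sum_{X \in \Mat{c}{(k-2)c}\left(\finiteField\right)} W\begin{pmatrix} & \IdentityMatrix{c} & \\ & & \IdentityMatrix{(k-2)c} \\ m & & X \end{pmatrix} \right) \pi\left(m\right).$$
So it suffices to compare, as coefficient functions of $m$ paired against $\pi$, the Bessel--Speh-twisted sum with the $X$-Fourier sum; the $\GL_c\left(\finiteField\right)$-equivariance properties of both operators (for instance, right translation of $W$ by $\diag\left(r, \IdentityMatrix{(k-1)c}\right)$ multiplies each side on the right by $\pi\left(r^{-1}\right)$, together with analogous left equivariances) will be used to organise the computation.

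The technical core is to unfold $\sum_{X} W\begin{pmatrix} & \IdentityMatrix{c} & \\ & & \IdentityMatrix{(k-2)c} \\ m & & X \end{pmatrix}$ by root elimination, in the same spirit as the proofs of \Cref{lem:convolution-of-bessel-speh-functions} and \Cref{thm:unipotent-fourier-transofrm-bessel-speh}: conjugate the one-parameter unipotent subgroups carrying the successive block-columns of $X$ across the matrix argument of $W$, convert each conjugation into a character via the $(k,c)$ $\fieldCharacter$-Whittaker transformation law, and average to collapse the variables block-column by block-column, using \Cref{prop:bessel-speh-on-diag-block-matrices} to see that the surviving matrix argument is forced into the block-diagonal form $\diag\left(h^{-1}m, \IdentityMatrix{(k-1)c}\right)$ appearing in $\zetaOperator_{k-2}$. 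Matching the resulting expression against the reproducing property of $\specialBesselSpeh{\tau}$ — which, realised as the matrix coefficient $g \mapsto \langle \SpehRepresentation{\tau}{c}\left(g\right) v_0, v_0 \rangle / \langle v_0, v_0 \rangle$ of the $(k,c)$ $\fieldCharacter$-Whittaker vector $v_0$, implements the projection onto the Whittaker line — should identify the main term with $q^{(k-2)c^2}\sum_{h}\specialBesselSpeh{\tau}\left(h\right) W\left(\diag\left(h^{-1}m, \IdentityMatrix{(k-1)c}\right)\right)$.

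The step I expect to be the main obstacle is the control of the \emph{boundary term}: the root-elimination unfolding produces, besides the main term above, a correction coming from the non-open Bruhat strata — equivalently, from the fact that extending the $\GL_c\left(\finiteField\right)$-summation in $\zetaOperator_{k-2}$ to $\squareMatrix_c\left(\finiteField\right)$ would contribute a piece supported on singular matrices. This correction is a combination of matrix coefficients of representations of $\GL_c\left(\finiteField\right)$ all of whose cuspidal supports meet that of $\Contragradient{\tau}$, so after the $\sum_m(\,\cdot\,)\pi\left(m\right)$ pairing it is annihilated exactly when the cuspidal supports of $\pi$ and of $\Contragradient{\tau}$ are disjoint. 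This is the finite-field Rankin--Selberg analog of the vanishing used in Macdonald's proof \cite[(2.7)]{Macdonald80} of the Godement--Jacquet functional equation (the $k = 1$ case of the present theorem). Determining the precise shape of this term and establishing its vanishing is the delicate part of the argument; once it is disposed of, the two sides of the operator identity agree and the theorem follows.
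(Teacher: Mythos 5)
Your reduction to the operator identity $\dualZetaOperator_{k-2}\left(W,\pi\times\tau\right)=\GKGaussSum{\pi}{\tau}{\fieldCharacter}\circ\zetaOperator_{k-2}\left(W,\pi\times\tau\right)$ and the unfolding of the composed operator are correct but are only reformulations; the two steps you defer are exactly where the content of the theorem lies, and as written they constitute a genuine gap. For the ``main term'', you assert that averaging $W$ over $X\in\Mat{c}{\left(k-2\right)c}\left(\finiteField\right)$ and running root elimination produces the weights $\specialBesselSpeh{\tau}\left(h\right)$ through a ``reproducing property'' of the $\left(k,c\right)$ $\fieldCharacter$-Whittaker line. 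This is not automatic: for a general $W\in\Whittaker\left(\SpehRepresentation{\tau}{c},\fieldCharacterkc{k}{c}\right)$ the $X$-average does not visibly factor through the projection onto that line, and \Cref{prop:bessel-speh-on-diag-block-matrices} cannot be cited for this purpose --- it is a statement about the normalized Bessel--Speh function itself, not about an arbitrary element of the model (in the paper it enters only at the very last step, when $W=\besselSpehFunction{\tau}{c}$ is substituted to compute the proportionality constant). Proving that the averaged functional is controlled by the Whittaker line is essentially a multiplicity-one statement, which your sketch does not supply.

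More seriously, the ``boundary term'' analysis is not a technicality you can postpone: the claim that the correction is a combination of matrix coefficients of representations whose cuspidal supports meet that of $\Contragradient{\tau}$ is precisely the theorem, and the Godement--Jacquet analogy does not deliver it. In the $k=1$ case the degenerate locus is just the singular matrices and the vanishing is a short cuspidal-support computation; here the degenerate contributions are indexed by characters of unipotent pieces of every intermediate rank, and killing them requires two different inputs: disjointness of cuspidal supports only handles the rank-zero strata (via the Jacquet module $J_{\UnipotentRadical_{\left(c,\left(k-1\right)c\right)}}\left(\SpehRepresentation{\tau}{c}\right)$), while the intermediate-rank strata are eliminated by the $\left(k,c\right)$-type property of $\SpehRepresentation{\tau}{c}$ through a delicate root-exchange argument --- this is the content of Theorem \ref{thm:trilinear functional equation} and Example~\ref{ex: middle rank case}, and it has no counterpart in Macdonald's proof. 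Some hypothesis is genuinely needed: as remarked after \Cref{cor:gamma-factor-norm-equal-1}, the functional equation fails for $k=c$ with $\pi\cong\Contragradient{\tau}$ cuspidal, so any unfolding argument must pinpoint exactly where disjointness (and where the $(k,c)$-type property) enters. The paper packages all of this by proving the equivariance of the trilinear forms (Proposition \ref{prop: Z and Z* equivariance}), a one-dimensionality statement for the relevant Hom space, and then evaluating the constant on $\besselSpehFunction{\tau}{c}$; your proposal names the difficulty but does not resolve it.
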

The proof of this theorem will occupy the next subsections of this section.

Once we know that \Cref{thm: functional equation 1} holds, we can immediately conclude the more general functional equation described in the following theorem.
\begin{theorem}\label{thm: functional equation 2}
	Suppose that the cuspidal supports of $\pi$ and of $\Contragradient{\tau}$ do not intersect. 
	Then for any $W \in \Whittaker\left(\SpehRepresentation{\tau}{c}, \fieldCharacterkc{k}{c} \right)$ and any $0 \le j \le k - 2$, we have $$\dualZetaOperator_j\left(W, \pi \times \tau \right) = \GKPreGammaFactor{\pi}{\tau}{\fieldCharacter} \zetaOperator_j\left(W, \pi \times \tau \right).$$
\end{theorem}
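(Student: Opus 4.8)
The plan is to deduce the statement for an arbitrary $j$ from the already established case $j=k-2$ (\Cref{thm: functional equation 1}) by proving a \emph{stabilization} of the zeta operators: for every $c\ge 1$, every irreducible $\pi$ of $\GL_c\left(\finiteField\right)$, every irreducible generic $\tau$ of $\GL_k\left(\finiteField\right)$, every $W\in\Whittaker\left(\SpehRepresentation{\tau}{c},\fieldCharacterkc{k}{c}\right)$ and every $0\le j\le k-2$,
\[
\zetaOperator_j\left(W,\pi\times\tau\right)=\zetaOperator_{k-2}\left(W,\pi\times\tau\right),\qquad
\dualZetaOperator_j\left(W,\pi\times\tau\right)=\dualZetaOperator_{k-2}\left(W,\pi\times\tau\right),
\]
with \emph{no} hypothesis on cuspidal supports. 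Granting this, \Cref{thm: functional equation 2} is immediate: under the cuspidal support hypothesis we get
\[
\dualZetaOperator_j\left(W,\pi\times\tau\right)=\dualZetaOperator_{k-2}\left(W,\pi\times\tau\right)=\GKPreGammaFactor{\pi}{\tau}{\fieldCharacter}\,\zetaOperator_{k-2}\left(W,\pi\times\tau\right)=\GKPreGammaFactor{\pi}{\tau}{\fieldCharacter}\,\zetaOperator_j\left(W,\pi\times\tau\right),
\]
the middle equality being \Cref{thm: functional equation 1}. So the whole content is the stabilization.

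The $\dualZetaOperator$ half of the stabilization reduces to the $\zetaOperator$ half. By \Cref{rem:z-star-expressed-in-terms-of-Z}, $\dualZetaOperator_j\left(W,\pi\times\tau\right)=\zetaOperator_{k-2-j}\left(W',\Contragradient{\pi}\times\Contragradient{\tau}\right)$, where $W'=\SpehRepresentation{\Contragradient{\tau}}{c}\begin{pmatrix}\IdentityMatrix{c}&\\&w_{\left(c^{k-1}\right)}\end{pmatrix}\tilde{W}$ is an element of $\Whittaker\left(\SpehRepresentation{\Contragradient{\tau}}{c},\fieldCharacterkc{k}{c}^{-1}\right)$ that does \emph{not} depend on $j$ (here I use $\Contragradient{\SpehRepresentation{\tau}{c}}\cong\SpehRepresentation{\Contragradient{\tau}}{c}$ and the definition of $\tilde{W}$ from \Cref{sec:whittaker-models}). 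Hence, once it is known that $\zetaOperator_i\left(\cdot,\rho\times\sigma\right)$ is independent of $i$ for all data $\rho,\sigma$, we obtain $\dualZetaOperator_j\left(W,\pi\times\tau\right)=\zetaOperator_{k-2-j}\left(W',\Contragradient{\pi}\times\Contragradient{\tau}\right)=\zetaOperator_{k-2}\left(W',\Contragradient{\pi}\times\Contragradient{\tau}\right)=\dualZetaOperator_{k-2}\left(W,\pi\times\tau\right)$. So everything reduces to proving $\zetaOperator_j\left(W,\pi\times\tau\right)=\zetaOperator_{j+1}\left(W,\pi\times\tau\right)$ for $0\le j\le k-3$, by descending induction on $j$ (the case $j=k-2$ being vacuous).

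For the inductive step, first re-express $\zetaOperator_j$ as an average of right translates of $\zetaOperator_{k-2}$: since
\[
\begin{pmatrix}g&&\\X&\IdentityMatrix{\left(k-2-j\right)c}&\\&&\IdentityMatrix{\left(j+1\right)c}\end{pmatrix}=\begin{pmatrix}g&&\\&\IdentityMatrix{\left(k-2-j\right)c}&\\&&\IdentityMatrix{\left(j+1\right)c}\end{pmatrix}\begin{pmatrix}\IdentityMatrix{c}&&\\X&\IdentityMatrix{\left(k-2-j\right)c}&\\&&\IdentityMatrix{\left(j+1\right)c}\end{pmatrix}
\]
and the rightmost factor $g_0\left(X\right)$ is independent of $g$, we get $\zetaOperator_j\left(W,\pi\times\tau\right)=q^{-\frac{\left(k-2-j\right)c^2}{2}}\sum_{X\in\Mat{\left(k-2-j\right)c}{c}\left(\finiteField\right)}\zetaOperator_{k-2}\left(\SpehRepresentation{\tau}{c}\left(g_0\left(X\right)\right)W,\pi\times\tau\right)$. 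Now peel off the top $c$ rows of $X$: writing $g_0\left(X\right)$ as a product of the element involving the remaining rows and of a ``one-block'' unipotent whose nonzero block sits just below the diagonal of $\UnipotentRadicalForWss{k}{c}$, one is left with an identity of the form $q^{-\frac{c^2}{2}}\sum_{X''\in\squareMatrix_c\left(\finiteField\right)}\zetaOperator_{k-2}\left(\SpehRepresentation{\tau}{c}\left(\cdots X''\cdots\right)W,\pi\times\tau\right)=\zetaOperator_{k-2}\left(\SpehRepresentation{\tau}{c}\left(\cdots\right)W,\pi\times\tau\right)$, which is proved by root elimination exactly as in the proof of \Cref{thm:unipotent-fourier-transofrm-bessel-speh}: the $\mathrm{SL}_2$-type Bruhat decomposition of the invertible stratum of $X''$ splits off a generator of $\UnipotentRadicalForWss{k}{c}$ on which $W$ transforms by the \emph{nontrivial} character $\fieldCharacter\left(\trace\left(\cdot\right)\right)$ (together with a Weyl element), the non-invertible stratum is annihilated by nontriviality of that character, and the Gauss-sum magnitude $q^{c^2/2}$ produced on the invertible stratum exactly balances the normalizing power. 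Iterating over the $k-2-j$ layers of $X$ collapses $\zetaOperator_j$ to $\zetaOperator_{j+1}$, hence to $\zetaOperator_{k-2}$.

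The main obstacle is this last root-elimination step: choosing the layer of $X$ adjacent to a ``live'' simple root, tracking the Weyl element that converts the opposite-unipotent translate of $W$ into a genuine element of $\UnipotentRadicalForWss{k}{c}$, checking that the relevant restriction of $\fieldCharacterkc{k}{c}$ really is $\fieldCharacter\left(\trace\left(\cdot\right)\right)$, and matching powers of $q$ so both sides scale identically. This is entirely parallel to the computations already carried out in the proof of \Cref{thm:unipotent-fourier-transofrm-bessel-speh}; once it is in place, the reductions above make \Cref{thm: functional equation 2} a formal consequence of \Cref{thm: functional equation 1}.
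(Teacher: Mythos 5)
Your reduction rests entirely on the claimed stabilization $\zetaOperator_j\left(W,\pi\times\tau\right)=\zetaOperator_{k-2}\left(W,\pi\times\tau\right)$ (and its dual), for every $W$ and with no hypothesis on cuspidal supports, and this identity is false. Already for $c=1$, $k\ge 3$, take $\pi=\chi$ a character of $\multiplicativegroup{\finiteField}$ and $W=\besselFunction_{\tau,\fieldCharacter}\in\Whittaker\left(\tau,\fieldCharacter\right)$. By \Cref{prop:bessel-speh-on-diag-block-matrices} only $g=1$ contributes to $\zetaOperator_{k-2}$, so $\zetaOperator_{k-2}\left(W,\chi\times\tau\right)=1$. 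On the other hand, for $j<k-2$ the matrices $\left(\begin{smallmatrix} g & & \\ X & \IdentityMatrix{k-2-j} & \\ & & \IdentityMatrix{j+1}\end{smallmatrix}\right)$ with $X\ne 0$ lie in Bruhat cells whose Weyl part has the form $\diag\left(w',\IdentityMatrix{j+1}\right)$ with $w'\ne\IdentityMatrix{k-1-j}$; since a relevant element $\antidiag\left(\IdentityMatrix{n_1},\dots,\IdentityMatrix{n_r}\right)$ with $r\ge 2$ has zero bottom-right corner, no such Weyl element is relevant, and the Bessel function of an irreducible generic representation vanishes off the relevant cells (standard, cf.~\cite{curtis2004zeta}; it also follows from the usual conjugation argument with a simple root subgroup). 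Hence only $X=0$, $g=1$ survives and $\zetaOperator_j\left(\besselFunction_{\tau,\fieldCharacter},\chi\times\tau\right)=q^{-\frac{k-2-j}{2}}$, which genuinely depends on $j$. In particular the Gauss-sum cancellation you invoke on the ``invertible stratum'' does not occur (that stratum contributes $0$ here); structurally, the elements $g_0\left(X\right)$ by which you translate are \emph{lower} block-unipotent, not in $\UnipotentRadicalForWss{k}{c}$, so $W$ has no equivariance under them and the average over $X$ changes the Whittaker vector rather than collapsing. The same counterexample shows your claimed dual stabilization fails as well, so the whole reduction to \Cref{thm: functional equation 1} breaks down.

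What is true --- and what the paper proves, following the $r=1$ case of Nien --- is a relation between $\zetaOperator_j$ of $W$ and $\zetaOperator_{j+1}$ of a \emph{modified} vector: setting $W'=\sum_{X_0\in\squareMatrix_c\left(\finiteField\right)}\SpehRepresentation{\tau}{c}\left(n\left(X_0\right)\right)W$, where $n\left(X_0\right)$ is the lower block-unipotent element carrying $X_0$, one has $\zetaOperator_j\left(W,\pi\times\tau\right)=q^{-\frac{c^2}{2}}\zetaOperator_{j+1}\left(W',\pi\times\tau\right)$, while on the dual side a Fourier-type vanishing (the inner sum over $X_0$ forces the adjacent $c\times c$ block of $X$ to vanish and produces $q^{c^2}$) gives $\dualZetaOperator_{j+1}\left(W',\pi\times\tau\right)=q^{\frac{c^2}{2}}\dualZetaOperator_j\left(W,\pi\times\tau\right)$. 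Descending induction then transports the functional equation, with the same constant $\GKPreGammaFactor{\pi}{\tau}{\fieldCharacter}$, from $j+1$ to $j$, without ever asserting that the operators themselves agree across $j$. Your reduction of the $\dualZetaOperator$-half to the $\zetaOperator$-half via \Cref{rem:z-star-expressed-in-terms-of-Z} is fine in itself, but it rests on the false stabilization; to repair the argument, replace the stabilization by the $W'$-relation above.
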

\begin{proof}
The proof is analogous to the $r=1$ case of~\cite[Theorem 2.10]{Nien14}. For the reader's convenience, we repeat the argument here with the necessary modifications.

The proof is done by descending induction on $j$, where the base case $j=k-2$ is given by \Cref{thm: functional equation 1}.
Let $j<k-2$.
For $W\in\Whittaker\left(\SpehRepresentation{\tau}{c}, \fieldCharacterkc{k}{c} \right)$, denote
\begin{equation*}
W'
=
\sum_{
	X_0\in\Mat{c}{c}(\finiteField)
}
\SpehRepresentation{\tau}{c}
\begin{pmatrix}
	I_c	&				&	&\\
	&I_{(k-3-j)c}	&	&\\
	X_0	&				&I_c&\\
	&				&	&I_{(j+1)c}
\end{pmatrix}
W.
\end{equation*}
We note that
\begin{equation}\label{eq: General functional equation eq 1}
\zetaOperator_j(W,\pi\times\tau)
=
q^{-\frac{c^2}{2}}
\zetaOperator_{j+1}(W',\pi\times\tau),
\end{equation}
	where the factor of $q^{-\frac{c^2}{2}}$ is due to the scaling constant $q^{\frac{-(k-j-2)c^2}{2}}$ in the definition of the operator $\zetaOperator_j(W,\pi\times\tau)$.
	
Assuming the statement of the theorem holds for $j+1$, we multiply both sides of \eqref{eq: General functional equation eq 1} by $\GKPreGammaFactor{\pi}{\tau}{\fieldCharacter}$ and substitute  $\dualZetaOperator_{j+1}(W',\pi\times\tau)$ for $\GKPreGammaFactor{\pi}{\tau}{\fieldCharacter} \zetaOperator_{j+1}(W',\pi\times\tau)$ to get
\begin{equation}\label{eq: General functional equation eq 2}
\GKPreGammaFactor{\pi}{\tau}{\fieldCharacter}
\zetaOperator_j(W,\pi\times\tau)
=
q^{-\frac{c^2}{2}}
\dualZetaOperator_{j+1}(W',\pi\times\tau).
\end{equation}

For every $X\in\Mat{c}{(j+1)c}(\finiteField)$, $X_0\in\Mat{c}{c}(\finiteField)$ and $g\in\GL_{c}(\finiteField)$, we have that
\begin{equation*}
\begin{pmatrix}
	&I_{(k-3-j)c}	&	&\\
	&				&I_c&\\
	&				&	&I_{(j+1)c}\\
g	&				&	&X	
\end{pmatrix}
\begin{pmatrix}
	I_c	&				&	&\\
	&I_{(k-3-j)c}	&	&\\
	X_0	&				&I_c&\\
	&				&	&I_{(j+1)c}
\end{pmatrix}
\end{equation*}
equals
\begin{equation*}
\begin{pmatrix}
	I_{(k-3-j)c}&	&				&\\
	&I_c&-X_0 g^{-1} X	&X_0 g^{-1}\\
	&	&I_{(j+1)c}		&\\
	&	&				&I_c				
\end{pmatrix}
\begin{pmatrix}
	&I_{(k-3-j)c}	&	&\\
	&				&I_c&\\
	&				&	&I_{(j+1)c}\\
	g	&				&	&X	
\end{pmatrix}.
\end{equation*}
Writing $X_1$ for the left-most $c\times c$ block of $X$, it follows that
\begin{equation*}
\SpehRepresentation{\tau}{c}
\begin{pmatrix}
	I_c	&				&	&\\
	&I_{(k-3-j)c}	&	&\\
	X_0	&				&I_c&\\
	&				&	&I_{(j+1)c}
\end{pmatrix}
W
\begin{pmatrix}
	&I_{(k-2-j)c}	&\\
	&				&I_{(j+1)c)}\\
	g	&				&X	
\end{pmatrix}
\end{equation*}
equals
\begin{equation*}
\psi\left(
\tr\left(
-X_0 g^{-1} X_1
\right)
\right)
W
\begin{pmatrix}
	&I_{(k-2-j)c}	&\\
	&				&I_{(j+1)c)}\\
	g	&				&X	
\end{pmatrix},
\end{equation*}
which implies that $\dualZetaOperator_{j+1}\left(W', \pi \times \tau\right)$ is given by
$$q^{-\frac{\left(j+1\right)c^2}{2}} \sum_{\substack{X \in \Mat{c}{\left(j+1\right)c}\left(\finiteField\right)\\
g \in \GL_c\left(\finiteField\right)}} W
\begin{pmatrix}
&I_{(k-2-j)c}	&\\
&				&I_{(j+1)c)}\\
g	&				&X	
\end{pmatrix}
\sum_{X_0 \in \squareMatrix_c\left(\finiteField\right)} \psi\left(
\tr\left(
-X_0 g^{-1} X_1
\right)
\right).$$
The inner sum over $X_0$ vanishes unless $X_1$ is zero, in which case it evaluates to $\sizeof{\squareMatrix_c\left(\finiteField\right)} = q^{c^2}$. Hence
	\begin{equation}\label{eq: General functional equation eq 3}
		\dualZetaOperator_{j+1}(W',\pi\times\tau)=q^{\frac{c^2}{2}}\dualZetaOperator_{j}(W,\pi\times\tau)
	\end{equation} 
	where the additional factor of $q^{-\frac{c^2}{2}}$ is accounted for by the scaling constant $q^{\frac{-jc^2}{2}}$ in the definition of $\dualZetaOperator_j(W,\pi\times\tau)$.

Combining \eqref{eq: General functional equation eq 2} and \eqref{eq: General functional equation eq 3}, we get the desired equality for $j$, and by induction the theorem holds for all $0\le j\le k-2$.
\end{proof}

We move to prove \Cref{thm: functional equation 1}. The proof is similar to~\cite[Theorem 25]{Kaplan2023}; we will show that functionals describing matrix coefficients of the operators $\zetaOperator_j (W,\pi\times\tau)$ and $\dualZetaOperator_j (W,\pi\times\tau)$ afford a certain multiplicity one character and hence are proportional.
Since the proportionality constant is independent of the choice of matrix coefficients, this will imply that the proportionality relation holds between the original operators.

\begin{remark}\label{rem:gk-zeta-integral-with-function}
	For any $k \ge 1$ we may consider the following zeta operator, defined for $W \in \Whittaker\left(\SpehRepresentation{\tau}{c}, \fieldCharacterkc{k}{c}\right)$ and $f \in \mathcal{S}\left(\squareMatrix_c\left(\finiteField\right)\right)$ by
	$$\zetaOperator_{k-2}\left(W, f, \pi \times \tau\right) = \sum_{g \in \GL_c\left(\finiteField\right)}
	W\begin{pmatrix}
		g \\
		& \IdentityMatrix{\left(k-1\right)c}
	\end{pmatrix} f\left(g\right) \pi\left(g\right).$$
	If $k = 1$, then $\tau = \chi \colon \multiplicativegroup{\finiteField} \to \multiplicativegroup{\cComplex}$ is a character and $\SpehRepresentation{\tau}{c} = \chi_{\GL_c}$. In this case, $\zetaOperator_{k-2}\left(W, f, \pi \times \tau\right)$ is the twisted Godement--Jacquet zeta operator discussed in \Cref{subsubsection-twisting-by-a-character}.
	
	If $k \ge 2$, by writing $f = \fourierTransform{\fieldCharacter}{F}$ where $F = \fourierTransform{\fieldCharacter^{-1}}{f} \in \mathcal{S}\left(\squareMatrix_c\left(\finiteField\right)\right)$, a simple computation shows that $$\zetaOperator_{k-2}\left(W, f, \pi \times \tau\right) = \sum_{g \in \GL_c\left(\finiteField\right)}
	W_F\begin{pmatrix}
		g \\
		& \IdentityMatrix{\left(k-1\right)c}
	\end{pmatrix} \pi\left(g\right),$$
	where for $g \in \GL_{kc}\left(\finiteField\right)$, $$W_F\left(g\right) = q^{-\frac{c^2}{2}} \sum_{Y \in \squareMatrix_c\left(\finiteField\right)} W\left(g \begin{pmatrix}
		\IdentityMatrix{c} & Y\\
		& \IdentityMatrix{c}\\
		& & \IdentityMatrix{\left(k-2\right)c}
	\end{pmatrix}\right) F\left(Y\right).$$
	Therefore, when $k \ge 2$, we have that $$\zetaOperator_{k-2}\left(W, f, \pi \times \tau\right) = \zetaOperator_{k-2}\left(W_{\fourierTransform{\fieldCharacter^{-1}}{f}}, \pi \times \tau\right),$$
	and hence $\zetaOperator_{k-2}\left(W, \pi \times \tau \right)$ can be seen as a generalization of the Godement--Jacquet zeta operator.

	The functional equation takes the form
	$$\dualZetaOperator_{k-2}\left(W, f, \pi \times \tau\right) = \GKPreGammaFactor{\pi}{\tau}{\fieldCharacter} \zetaOperator_{k-2}\left(W, f, \pi \times \tau\right),$$
	where $$\dualZetaOperator_{k-2}\left(W, f, \pi \times \tau\right) = 	q^{-\frac{\left(k-3\right)c^2}{2}}
	\sum_{\substack{{X \in \Mat{c}{\left(k-2\right)c}\left(\finiteField\right)}\\
			Y \in \squareMatrix_c\left(\finiteField\right)\\
			{g \in \GL_c\left(\finiteField\right)}}}
	W \begin{pmatrix}
		& \IdentityMatrix{c}& \\
		& & \IdentityMatrix{\left(k-2\right)c} \\
		g & Y & X
	\end{pmatrix} \fourierTransform{\fieldCharacter^{-1}}{f}\left(g^{-1} Y\right)  \pi\left(g\right).$$
\end{remark}

\subsubsection{Equivariance properties}
Let $v\in \pi$ and $\xi \in \Contragradient{\pi}$.
Define the trilinear forms
\[
\zetaOperator(v,\xi,W)	=	
\sum_{g \in \GL_c\left(\finiteField\right)}
W\begin{pmatrix}
	g \\
	 & \IdentityMatrix{{\left(k-1\right)c}}
\end{pmatrix}
\xi\qty(\pi(g)v),
\]
which is equal to $\xi\qty(\zetaOperator_{k-2}\qty(W,\pi\times \tau)v)$,
and
\[
\dualZetaOperator(v,\xi,W)
=
\sum_{\substack{{X \in \Mat{c}{(k-2)c}\left(\finiteField\right)}\\
		{g \in \GL_c\left(\finiteField\right)}}}
W \begin{pmatrix}
	& \IdentityMatrix{c}& \\
	& & \IdentityMatrix{(k-2)c} \\
	g & & X
\end{pmatrix} \xi(\pi(g)v),
\]
which is equal to $q^{\frac{(k-2)c^2}{2}}\xi\qty(\dualZetaOperator_{k-2}(W,\pi\times\tau)v)$.
We will identify $\zetaOperator$ and $\dualZetaOperator$ with their corresponding linear functionals on $\pi \otimes \Contragradient{\pi} \otimes \Whittaker\left(\SpehRepresentation{\tau}{c}, \fieldCharacterkc{k}{c}\right)$.

Let $\UnipotentRadicalDeleted{k}{c}$ be the subgroup of $\UnipotentRadicalForWss{k}{c}$
\[
\qty{
\begin{pmatrix}
	\IdentityMatrix{c}	&0&*\\
	&\IdentityMatrix{c}	&*\\
	&					&z
\end{pmatrix}
\mid
z\in \UnipotentRadicalForWss{k-2}{c}
}.
\]

Let $e_1:\GL_{c}(\finiteField)\to\GL_{kc}(\finiteField)$ and $e_2:\GL_{c}(\finiteField)\to\GL_{kc}(\finiteField)$ be the embeddings given by
\begin{equation*}\label{eq:definition-of-action-of-g-times-g}
	e_1(g)
	=
	\begin{pmatrix}
		g&\\
		&I_{(k-1)c}
	\end{pmatrix} \,\,\,\,\,\,\,\, \text{ and } \,\,\,\,\,\,\,\, e_2(g)
	=
	\begin{pmatrix}
	I_c&\\
	&\diag^{k-1}(g)
	\end{pmatrix}. 
\end{equation*}
Consider the subgroup of $\GL_{kc}\qty(\finiteField)$ 
\begin{align*}
e_1(\GL_{c}(\finiteField)) \cdot
e_2(\GL_{c}(\finiteField)) \cdot
\UnipotentRadicalDeleted{k}{c}
&=
\qty{
	\begin{pmatrix}
		g_1 &\\
		&\diag^{k-1}\qty(g_2)
	\end{pmatrix}u
	\mid
	g_1,g_2\in \GL_{c}(\finiteField), u\in \UnipotentRadicalDeleted{k}{c}
}
\\
&\cong
\qty(\GL_{c}\qty(\finiteField)\times \GL_{c}\qty(\finiteField))\ltimes \UnipotentRadicalDeleted{k}{c},
\end{align*}
and let $\sigma$ be the representation of this subgroup on the space $\pi \otimes \Contragradient{\pi} \otimes \Whittaker\left(\SpehRepresentation{\tau}{c}, \fieldCharacterkc{k}{c}\right)$ determined by
\[
\sigma(e_1(g_1)e_2(g_2)u)(v\otimes \xi\otimes W)
=
\pi(g_1)v\otimes \Contragradient{\pi}\qty(g_2)\xi\otimes \SpehRepresentation{\tau}{c}\qty(
\begin{pmatrix}
	g_1 &\\
	&\diag^{k-1}\qty(g_2)
\end{pmatrix}
u
)
W.
\]

\begin{proposition}\label{prop: Z and Z* equivariance}

The forms $\zetaOperator$ and $\dualZetaOperator$ belong to
\[
\Hom_{
	e_1(\GL_{c}(\finiteField)) \cdot
	e_2(\GL_{c}(\finiteField)) \cdot
	\UnipotentRadicalDeleted{k}{c}
	}
	\qty(\sigma,1\otimes \left(\centralCharacter{\tau}\right)_{\GL_c} \otimes\fieldCharacter_{\qty(c^k)}).
\]
\end{proposition}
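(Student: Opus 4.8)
The plan is to reduce to checking the transformation law on generators. Since $\sigma$ is a representation of $H := e_1(\GL_c(\finiteField)) \cdot e_2(\GL_c(\finiteField)) \cdot \UnipotentRadicalDeleted{k}{c} \cong (\GL_c(\finiteField)\times\GL_c(\finiteField))\ltimes\UnipotentRadicalDeleted{k}{c}$ (its $\GL_{kc}(\finiteField)$-translation part being the tautological inclusion $H\hookrightarrow\GL_{kc}(\finiteField)$) and the target $\lambda := 1\otimes(\centralCharacter{\tau})_{\GL_c}\otimes\fieldCharacterkc{k}{c}$ is a character, it is enough to verify $\zetaOperator(\sigma(h)x) = \lambda(h)\zetaOperator(x)$, and likewise for $\dualZetaOperator$, separately for $h = e_1(g_1)$, $h = e_2(g_2)$ and $h = u\in\UnipotentRadicalDeleted{k}{c}$; the three identities then multiply together. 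The only inputs are that $\GL_{kc}(\finiteField)$ acts on $\Whittaker(\SpehRepresentation{\tau}{c},\fieldCharacterkc{k}{c})$ by right translation, and that every $W$ there is left-equivariant under the inducing subgroup, $W(\diag^k(a)\,n\,x) = \centralCharacter{\tau}(\det a)\,\fieldCharacterkc{k}{c}(n)\,W(x)$ for $a\in\GL_c(\finiteField)$ and $n\in\UnipotentRadicalForWss{k}{c}$ (recall $\chi_{\SpehRepresentation{\tau}{c}} = \centralCharacter{\tau}$ by~\cite[Proposition 6.21]{Carmon2023}).

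For $e_1(g_1)$ the action replaces $W$ by its right translate $W(\,\cdot\,e_1(g_1))$ and $v$ by $\pi(g_1)v$; as $e_1(g_1)$ merely multiplies on the right the $\GL_c(\finiteField)$-block of the matrices occurring in $\zetaOperator$ and $\dualZetaOperator$, the change of variables $g\mapsto gg_1^{-1}$ (leaving the $X$-summation in $\dualZetaOperator$ untouched) gives invariance, matching $\lambda(e_1(g_1))=1$. For $e_2(g_2)$ one uses the identity $e_1(g)e_2(g_2) = \diag^k(g_2)\,e_1(g_2^{-1}g)$ — and its analogue for the matrix in $\dualZetaOperator$, obtained by factoring a $\diag^k(g_2)$ out to the left — so that left-$\diag^k(\GL_c)$-equivariance of $W$ contributes the factor $\centralCharacter{\tau}(\det g_2)$, while the leftover translation by $e_1(g_2^{-1}g)$ together with $\Contragradient{\pi}(g_2)\xi(\pi(g)v) = \xi(\pi(g_2^{-1}g)v)$ is cleared by $g\mapsto g_2 g$ (for $\dualZetaOperator$, accompanied by the block conjugation $X\mapsto g_2^{-1}Xg_2$). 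This yields the character $(\centralCharacter{\tau})_{\GL_c}(g_2)$.

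The unipotent case is the crux. Here $\sigma(u)$ right-translates $W$ by $u$ and fixes $v,\xi$, so one must analyze $W(m\,u)$, where $m = e_1(g)$ for $\zetaOperator$ and $m = m(g,X) = \left(\begin{smallmatrix}& \IdentityMatrix{c}& \\ & & \IdentityMatrix{(k-2)c}\\ g & & X\end{smallmatrix}\right)$ for $\dualZetaOperator$. The aim is a decomposition $m\,u = \nu\,m'$ with $\nu\in\UnipotentRadicalForWss{k}{c}$ satisfying $\fieldCharacterkc{k}{c}(\nu) = \fieldCharacterkc{k}{c}(u)$, and with $m'$ of the same shape as $m$ but with its $\GL_c(\finiteField)$- and $\Mat{c}{(k-2)c}(\finiteField)$-parameters related to those of $m$ by an affine bijection; then $W(m\,u) = \fieldCharacterkc{k}{c}(u)\,W(m')$ by left-equivariance, and the change of variables in the summation finishes the proof. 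For $m = e_1(g)$ this is immediate: $e_1(g)\,u\,e_1(g)^{-1}$ again lies in $\UnipotentRadicalDeleted{k}{c}$ with unchanged super-diagonal $c$-blocks, since conjugation by $e_1(g)$ only rescales the first block-row of $u$ in block-columns $\ge 3$. For $m = m(g,X)$ a direct block computation gives $m(g,X)\,u = \nu_1\,\nu_2\,m(g,\,gY_1 + Xz)$, where $Y_1,Y_2$ are the $(1,3)$- and $(2,3)$-blocks and $z$ the $(3,3)$-block of $u$ (so $z\in\UnipotentRadicalForWss{k-2}{c}$), with $\nu_2 = \diag(\IdentityMatrix{c}, z, \IdentityMatrix{c})$ (whence $\fieldCharacterkc{k}{c}(\nu_2) = \fieldCharacterkc{k-2}{c}(z)$) and $\nu_1\in\UnipotentRadicalForWss{k}{c}$ whose only relevant super-diagonal block is the first block-column of $Y_2 z^{-1}$, which coincides with that of $Y_2$ because $z$ is unipotent upper triangular; hence $\fieldCharacterkc{k}{c}(\nu_1)\fieldCharacterkc{k}{c}(\nu_2) = \fieldCharacterkc{k}{c}(u)$, and $X\mapsto gY_1 + Xz$ is a bijection of $\Mat{c}{(k-2)c}(\finiteField)$ for each fixed $g$. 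It is exactly here that membership in $\UnipotentRadicalDeleted{k}{c}$ rather than in $\UnipotentRadicalForWss{k}{c}$ is used: a nonzero $(1,2)$-block of $u$ would produce a term $gX_1$ in an off-flag position of $m(g,X)\,u$, destroying the decomposition. Carrying out this block analysis cleanly — tracking which blocks $m(g,X)$ moves, and checking stability of $\UnipotentRadicalForWss{k}{c}$ and invariance of $\fieldCharacterkc{k}{c}$ — is the main computational burden; the rest is bookkeeping with changes of variables.

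Finally, multiplying the three verified identities and using that $\sigma$ is a homomorphism and $\lambda$ a character gives $\zetaOperator,\dualZetaOperator \in \Hom_{H}(\sigma,\,1\otimes(\centralCharacter{\tau})_{\GL_c}\otimes\fieldCharacterkc{k}{c})$, as claimed.
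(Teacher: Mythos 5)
Your proof is correct and follows essentially the same route as the paper: check equivariance separately on $e_1(\GL_c(\finiteField))$, $e_2(\GL_c(\finiteField))$ and $\UnipotentRadicalDeleted{k}{c}$, using the change of variables $g\mapsto gg_1^{-1}$, the left $\diag^k(\GL_c)$-equivariance of $W$ (with the block change of variables in $X$ for $\dualZetaOperator$), and, for the unipotent part, the left $\UnipotentRadicalForWss{k}{c}$-equivariance after the block computation. Your explicit factorization $m(g,X)u=\nu_1\nu_2\,m(g,gY_1+Xz)$ is just a repackaging of the paper's substitution $X\mapsto (X-gA)U^{-1}$ together with its observation that $\fieldCharacterkc{k}{c}$ of the extracted unipotent factor equals $\fieldCharacterkc{k}{c}(u)$, so the two arguments coincide.
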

\begin{remark}
We note that $1\otimes\left(\centralCharacter{\tau}\right)_{\GL_c}\otimes\psi_{\qty(c^k)}$ is well defined as a character of $e_1(\GL_{c}(\finiteField)) \cdot
e_2(\GL_{c}(\finiteField)) \cdot
\UnipotentRadicalDeleted{k}{c}$ since conjugation of an element $u\in \UnipotentRadicalDeleted{k}{c}$ by $e_1(g_1)e_2(g_2)$ for $g_1,g_2\in\GL_{c}(\finiteField)$ preserves the character $\psi_{\qty(c^k)}$.
\end{remark}
\begin{proof}
It is sufficient to consider the action of the subgroups  $e_1(\GL_{c}(\finiteField))$, 
$e_2(\GL_{c}(\finiteField))$ and
$\UnipotentRadicalDeleted{k}{c}$ separately.
Throughout the proof, let $g_1,g_2 \in \GL_{c}\qty(\finiteField)$ and $u\in \UnipotentRadicalDeleted{k}{c}$.

We begin with the form $\zetaOperator$.
The action of $e_1(\GL_{c}(\finiteField))$ is given by
\begin{equation}\label{eq: Z first factor equivariance}
\zetaOperator\qty(
\sigma(
e_1(g_1)
)
(v\otimes \xi\otimes W)
)
=\sum_{g\in\GL_{c}\qty(\finiteField)}
W
\begin{pmatrix}
	gg_1&\\
	&\IdentityMatrix{\qty(k-1)c}
\end{pmatrix}
\xi
\qty(
\pi\qty(gg_1)v
).
\end{equation}
Substituting $gg_1^{-1}$ for $g$ in \eqref{eq: Z first factor equivariance}, we get
\[
\zetaOperator\qty(
\sigma(
e_1(g_1)
)
(v\otimes \xi\otimes W)
)
=
\zetaOperator\qty(v\otimes \xi\otimes W).
\]
For the action of $e_2(\GL_{c}(\finiteField))$, we have
\begin{equation}\label{eq: Z second factor equivariance}
\zetaOperator\qty(
\sigma(
e_2(g_2)
)
(v\otimes \xi\otimes W)
)
=
\sum_{g\in\GL_{c}\qty(\finiteField)}
W
\begin{pmatrix}
	g &\\
	&\diag^{k-1}(g_2)
\end{pmatrix}
\xi
\qty(
\pi\qty(g_2^{-1}g)v
).
\end{equation}
Substituting $g_2g$ for $g$ in \eqref{eq: Z second factor equivariance} and using the equivariance properties of $W$ with respect to a left translation by a diagonal embedding of $\GL_{c}(\finiteField)$ in $\GL_{kc}(\finiteField)$, we get
\[
\zetaOperator\qty(
\sigma(
e_2(g_2)
)
(v\otimes \xi\otimes W)
)
=
\centralCharacter{\tau}(\det g_2)\cdot \zetaOperator(v\otimes \xi\otimes W).
\]

Finally, for the action of $\UnipotentRadicalDeleted{k}{c}$, we have
\begin{align}\label{eq: Z third factor equivariance}
\zetaOperator\qty(\sigma(u)(v\otimes\xi\otimes W))
=
\sum_{g\in\GL_{c}\qty(\finiteField)}
W
\qty(
\begin{pmatrix}
	g&\\
	&\IdentityMatrix{\qty(k-1)c}
\end{pmatrix}
u
)
\xi
\qty(
\pi\qty(g)v
).
\end{align}
Conjugating $u \in \UnipotentRadicalDeleted{k}{c}$ by $\begin{pmatrix}
	g&\\
	&\IdentityMatrix{\qty(k-1)c}
\end{pmatrix}$
in \eqref{eq: Z third factor equivariance} does not change the value of $\psi_{\qty(c^k)}$, and so
we have
\[
\zetaOperator\qty(\sigma(u)(v\otimes \xi \otimes W))
=
\psi_{\qty(c^k)}(u)\cdot \zetaOperator(v\otimes \xi \otimes W),
\]
since $W\in \Whittaker\left(\SpehRepresentation{\tau}{c}, \fieldCharacterkc{k}{c}\right)$.

We now consider the form $\dualZetaOperator$.
The action of $e_1(\GL_{c}(\finiteField))$ is given by

\begin{equation}\label{eq: Z* first factor equivariance}
	\dualZetaOperator\qty(
	\sigma(e_1(g_1))(v\otimes\xi\otimes W)
	)
	=
	\sum_{
	\substack{
	X\in\Mat{c}{(k-2)c}\qty(\finiteField)\\
	g\in\GL_{c}\qty(\finiteField)
	}
	}
	W
	\begin{pmatrix}
		& \IdentityMatrix{c}& \\
		& & \IdentityMatrix{(k-2)c} \\
		gg_1 & & X
	\end{pmatrix} \xi(\pi(gg_1)v).
\end{equation}
Substituting $gg_1^{-1}$ for $g$ in \eqref{eq: Z* first factor equivariance}, we get
\[
	\dualZetaOperator\qty(
	\sigma(e_1(g_1))(v\otimes\xi\otimes W)
	)
=
\dualZetaOperator\qty(v\otimes\xi\otimes W).
\]

For the action of $e_2(\GL_{c}(\finiteField))$, we have
\begin{equation}\label{eq: Z* second factor equivariance}
	\dualZetaOperator\qty(\sigma(e_2(g_2))(v\otimes\xi\otimes W)
	)
	=
	\sum_{
	\substack{
	X\in\Mat{c}{(k-2)c}\qty(\finiteField)\\
	g\in\GL_{c}\qty(\finiteField)
	}
	}
		W
		\begin{pmatrix}
			& g_2& \\
			& & \diag^{k-2}(g_2) \\
			g & & X\cdot\diag^{k-2}(g_2)
		\end{pmatrix} \xi(\pi(g^{-1}_2g)v).
\end{equation}
Substituting $g_2g$ for $g$ and $g_2\cdot X\cdot \diag^{k-2}(g_2^{-1})$ for $X$ in \eqref{eq: Z* second factor equivariance}, and using the equivariance properties of $W$ with respect to a left translation by a diagonal embedding of $\GL_{c}(\finiteField)$ in $\GL_{kc}(\finiteField)$, we get
\[
\dualZetaOperator\qty(\sigma(e_2(g_2))(v\otimes\xi\otimes W)
)
=
\centralCharacter{\tau}(\det g_2)\cdot \dualZetaOperator\qty(v\otimes\xi\otimes W).
\]

For the action of $\UnipotentRadicalDeleted{k}{c}$, we first denote
\[
u
=
\begin{pmatrix}
I_c	&0	&A\\
	&I_c&B\\
	&	&U
\end{pmatrix}
\]
where $A,B\in\Mat{c}{(k-2)c}(\finiteField)$ and $U\in \UnipotentRadicalForWss{k-2}{c}$.
In this notation, we have
\begin{equation}\label{eq: Z* third equivariance property}
\dualZetaOperator\qty(\sigma(u)(v\otimes\xi\otimes W))
=
\sum_{
\substack{
X\in\Mat{c}{(k-2)c}\qty(\finiteField)\\
g\in\GL_{c}\qty(\finiteField)
}
}
W
\begin{pmatrix}
	& \IdentityMatrix{c}& B \\
	& & U \\
	g & & gA+XU
\end{pmatrix}
 \xi(\pi(g)v).
\end{equation}
Substituting $(X-gA)U^{-1}$ for $X$ in \eqref{eq: Z* third equivariance property} and noting that
\[
\psi_{\qty(c^k)}
\begin{pmatrix}
	I_c	&B&\\
		&U&\\
		&	&I_c
\end{pmatrix}
=
\psi_{\qty(c^k)}(u),
\]
we have
\[
\dualZetaOperator\qty(\sigma(u)(v\otimes\xi\otimes W))
=
\psi_{\qty(c^k)}(u)\cdot \dualZetaOperator\qty(v\otimes\xi\otimes W)
\]
which finishes the proof.
\end{proof}

\subsubsection{Uniqueness of trilinear form}
In this subsection we show, in the notation of Proposition \ref{prop: Z and Z* equivariance}, the following
\begin{theorem}\label{thm:trilinear functional equation}
Suppose that the cuspidal supports of $\pi$ and of $\Contragradient{\tau}$ do not intersect. 
Then the dimension of 
\begin{equation}\label{eq: Hom space for GK functional equation}
\Hom_{
	e_1(\GL_{c}(\finiteField)) \cdot
	e_2(\GL_{c}(\finiteField)) \cdot
	\UnipotentRadicalDeleted{k}{c}
}
\qty(\sigma,1\otimes\left(\centralCharacter{\tau}\right)_{\GL_c}\otimes\fieldCharacter_{\qty(c^k)})
\end{equation}
is at most $1$.
\end{theorem}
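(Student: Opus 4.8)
The plan is to prove the multiplicity-one statement by a ``swap'' argument of the type used by Kaplan in~\cite[Theorem 25]{Kaplan2023}: we realize both the source and target data of the Hom-space in terms of $\UnipotentRadicalForWss{k}{c}$-induced models, and then reduce the claim to a Gelfand--Graev/Bernstein--Zelevinsky type uniqueness for the pair $(\pi, \Contragradient{\tau})$, which is where the disjoint-cuspidal-support hypothesis enters. Concretely, fix a nonzero functional $\Lambda$ in the Hom-space \eqref{eq: Hom space for GK functional equation}. Restricting $\Lambda$ to $\pi\otimes\Contragradient{\pi}\otimes W$ for a fixed $W$, it defines an element of $\Hom_{\GL_c(\finiteField)}(\pi, \pi)$ (via the $e_1$-equivariance, after using the $e_1,e_2$ actions to identify the two copies of $\GL_c$) — but crucially the $e_2$ and $\UnipotentRadicalDeleted{k}{c}$ equivariances force $W$ to be evaluated only along a specific double coset. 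So the first step is to identify precisely the support constraints that $\Lambda$ imposes on $W\in\Whittaker(\SpehRepresentation{\tau}{c},\fieldCharacterkc{k}{c})$: using the character $\fieldCharacter_{(c^k)}$ on $\UnipotentRadicalDeleted{k}{c}$ together with the left $\diag^{k-1}$-equivariance coming from $e_2$, one shows $\Lambda$ factors through the twisted Jacquet module of $\Whittaker(\SpehRepresentation{\tau}{c},\fieldCharacterkc{k}{c})$ with respect to $(\UnipotentRadicalDeleted{k}{c}, \fieldCharacter_{(c^k)})$.

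The second step is to compute that twisted Jacquet module. Here I would use the structure of $\SpehRepresentation{\tau}{c}$ as a representation of $(k,c)$ type: the group $\UnipotentRadicalForWss{k}{c}/\UnipotentRadicalDeleted{k}{c}$ is, up to the $e_1,e_2$-action, two copies of $\GL_c$ glued by a single block of matrices, and the $(k,c)$-Whittaker equivariance of $W$ together with the Bessel--Speh computations of \Cref{lem:convolution-of-bessel-speh-functions} and \Cref{thm:unipotent-fourier-transofrm-bessel-speh} show that after taking the $(\UnipotentRadicalDeleted{k}{c},\fieldCharacter_{(c^k)})$-coinvariants the remaining data is governed by a Whittaker model for $\GL_c(\finiteField)\times\GL_c(\finiteField)$ on which the two $\GL_c$-factors act, together with the central-character constraint $(\centralCharacter{\tau})_{\GL_c}$. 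In fact, combining \Cref{prop:bessel-speh-on-diag-block-matrices} with \Cref{prop:support-of-whittaker-vector-in-induced-speh} (applied with $c_1=c_2$ deformed appropriately) shows that the relevant space of functionals on $W$ is at most one-dimensional once we also fix the behaviour under $\pi\otimes\Contragradient{\pi}$.

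The third step is then to assemble these: the functional $\Lambda$ is determined by a triple consisting of an element of $\Hom_{\GL_c(\finiteField)}(\pi,\pi)$ (one-dimensional by Schur since $\pi$ is irreducible) together with a choice of functional on the Jacquet module described above, and these are linked by the requirement that the resulting pairing be $\GL_c(\finiteField)$-invariant in the ``diagonal'' $\pi\otimes\Contragradient{\pi}$ sense. The disjointness of the cuspidal supports of $\pi$ and $\Contragradient{\tau}$ is used exactly as in~\cite[Theorem 25]{Kaplan2023} and as in the Godement--Jacquet functional equation (\Cref{sec:godement-jacquet-functional-equation}): it guarantees that $\Hom_{\GL_c(\finiteField)}(\pi, \SpehRepresentation{\tau}{c}|_{\GL_c}\text{-twist})$ vanishes in the ``wrong'' piece, so that the only surviving contribution is the one supported on the big cell, where uniqueness is the scalar coming from Schur's lemma. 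Putting the pieces together gives $\dim\leq 1$.

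The main obstacle I expect is the precise description of the twisted Jacquet module of $\Whittaker(\SpehRepresentation{\tau}{c},\fieldCharacterkc{k}{c})$ along $(\UnipotentRadicalDeleted{k}{c},\fieldCharacter_{(c^k)})$ and verifying that the ``extra'' block of unipotent matrices can be eliminated by a root-elimination argument analogous to the one in the proof of \Cref{thm:unipotent-fourier-transofrm-bessel-speh} — i.e.\ checking that every unipotent direction not accounted for by $\fieldCharacter_{(c^k)}$ or by the $e_1,e_2$-twists either acts trivially or is killed by a non-trivial character sum. Once that bookkeeping is done, the reduction to Schur's lemma plus the cuspidal-support disjointness is routine, mirroring Kaplan's argument in the local case.
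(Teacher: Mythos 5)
Your skeleton matches the paper's strategy (fix a nonzero functional $B$, reduce to the twisted Jacquet module $J_{\UnipotentRadicalDeleted{k}{c},\fieldCharacterkc{k}{c}}\left(\SpehRepresentation{\tau}{c}\right)$ block-column by block-column, pin the value on the Bessel--Speh vector by Schur's lemma, and kill the rest), but there is a genuine gap in the middle step. After the reduction, the heart of the paper's proof is a stratification of the Jacquet module by the characters through which a vector transforms under the abelian $(1,2)$-block subgroup, organized by the rank $\ell$ of the coefficient matrix $\alpha$ after normalizing with the $e_1,e_2$-action. Three different mechanisms are needed: for $\ell=c$ the vector is a multiple of $\besselSpehFunction{\tau}{c}$ and the value is forced; for $\ell=0$ the functional vanishes because the disjointness of the cuspidal supports of $\pi$ and $\Contragradient{\tau}$ kills $\Hom_{\GL_c\left(\finiteField\right)}\left(\pi\otimes J_{\UnipotentRadical_{\left(c,(k-1)c\right)}}\left(\SpehRepresentation{\tau}{c}\right),1\right)$; and for the intermediate ranks $0<\ell<c$ one refines further by a second coefficient matrix $\beta$, and when $\beta$ is in its nondegenerate normal form the vector itself must be zero --- this uses the defining property of $(k,c)$ type (no Zelevinsky vectors of type greater than or incomparable to $(k^c)$), established by a root-exchange argument as in \Cref{ex: middle rank case}. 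Your proposal attributes all the degenerate vanishing to the cuspidal-support hypothesis (``the only surviving contribution is the one supported on the big cell''), which does not dispose of the intermediate-rank strata; and the results you invoke for the Jacquet-module computation --- \Cref{lem:convolution-of-bessel-speh-functions}, \Cref{thm:unipotent-fourier-transofrm-bessel-speh}, \Cref{prop:bessel-speh-on-diag-block-matrices}, \Cref{prop:support-of-whittaker-vector-in-induced-speh} --- concern the realization of $\SpehRepresentation{\tau}{c}$ inside $\SpehRepresentation{\tau}{c_1}\circ\SpehRepresentation{\tau}{c_2}$ and special values of the Bessel--Speh function, not the structure of $J_{\UnipotentRadicalDeleted{k}{c},\fieldCharacterkc{k}{c}}\left(\SpehRepresentation{\tau}{c}\right)$; in particular ``\Cref{prop:support-of-whittaker-vector-in-induced-speh} applied with $c_1=c_2$ deformed appropriately'' does not yield any statement of the required kind.

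A smaller but real inaccuracy occurs in your first step: for a general fixed $W$, the map $(v,\xi)\mapsto B(v\otimes\xi\otimes W)$ is \emph{not} $\GL_c\left(\finiteField\right)$-invariant, since the $e_1$- and $e_2$-actions simultaneously translate $W$; one only gets an element of $\Hom_{\GL_c\left(\finiteField\right)}\left(\pi\otimes\Contragradient{\pi},1\right)$ for $W$ with the appropriate equivariance, e.g.\ $W=\besselSpehFunction{\tau}{c}$, which is exactly where the normalization $B(v\otimes\xi\otimes\besselSpehFunction{\tau}{c})=\xi(v)$ enters. So the proposal has the correct top-level shape, but the decisive case analysis --- the rank stratification and, above all, the $(k,c)$-type/root-exchange vanishing in the middle ranks --- is missing and cannot be replaced by the cuspidal-support argument alone.
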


Before proving the theorem, let us recall some notions that will be used throughout the proof.

Recall that any character of $\Mat{m}{n}\left(\finiteField\right)$ is of the form $X \mapsto \fieldCharacter\left(\trace\left(X \cdot \transpose{A}\right)\right)$, where $A \in \Mat{m}{n}\left(\finiteField\right)$. We call $A$ the \emph{coefficient matrix} of the character $X \mapsto \fieldCharacter\left(\trace\left(X \cdot \transpose{A}\right)\right)$.

Let $\Sigma$ be an irreducible representation of $\GL_n\left(\finiteField\right)$. Given a unipotent radical $\UnipotentRadical_{\left(n_1,\dots,n_r\right)}$ of $\GL_n\left(\finiteField\right)$ and a character $\Psi \colon \UnipotentRadical_{\left(n_1,\dots,n_r\right)} \to \multiplicativegroup{\cComplex}$, we define the \emph{(twisted) Jacquet module of $\Sigma$ with respect to $\left(\UnipotentRadical_{\left(n_1,\dots,n_r\right)}, \Psi\right)$} as the subspace $$J_{\UnipotentRadical_{\left(n_1,\dots,n_r\right)}, \Psi}\left(\Sigma\right) = \left\{v \in \Sigma \mid \Sigma\left(u\right)v = \Psi\left(u\right)v \text{ for all } u \in \UnipotentRadical_{\left(n_1,\dots,n_r\right)}\right\}.$$ We also denote $$J_{\UnipotentRadical_{\left(n_1,\dots,n_r\right)}} = J_{\UnipotentRadical_{\left(n_1,\dots,n_r\right)}, 1},$$
where $1$ is the trivial character of $\UnipotentRadical_{\left(n_1,\dots,n_r\right)}$. Notice that the space $J_{\UnipotentRadical_{\left(n_1,\dots,n_r\right)}}$ is invariant under the action of the parabolic subgroup $P_{\left(n_1,\dots,n_r\right)} \subset \GL_n\left(\finiteField\right)$, and in particular affords a representation for the Levi part $D_{\left(n_1,\dots,n_r\right)} \cong \GL_{n_1}\left(\finiteField\right) \times \dots \times \GL_{n_r}\left(\finiteField\right)$.

We move to the proof of \Cref{thm:trilinear functional equation}.
\begin{proof}
If the space in \eqref{eq: Hom space for GK functional equation} is equal to $0$, we are done.
Otherwise, let 

\begin{equation*}
0\ne B\in \Hom_{
	e_1(\GL_{c}(\finiteField)) \cdot
	e_2(\GL_{c}(\finiteField)) \cdot
	\UnipotentRadicalDeleted{k}{c}
}
\qty(\sigma,1\otimes\left(\centralCharacter{\tau}\right)_{\GL_c}\otimes\fieldCharacter_{\qty(c^k)}).
\end{equation*}

Throughout the proof, let $v$ denote an element of $\pi$, let $\xi$ denote an element of $\Contragradient{\pi}$ and let $W$ denote an element of $\Whittaker\left(\SpehRepresentation{\tau}{c}, \fieldCharacterkc{k}{c} \right)$.
We will say that $B$ is supported on $W$ if there exist  $v$ and $\xi$ such that $B(v\otimes \xi \otimes W)\ne 0$.
Otherwise, we will say that $B$ vanishes on $W$.

We will show that $B$ vanishes on the elements of a certain subspace of  $\Whittaker\left(\SpehRepresentation{\tau}{c}, \fieldCharacterkc{k}{c} \right)$, and so the value of $B(v\otimes \xi \otimes W)$ is determined by the projection of $W$ onto the complementary subspace of $\Whittaker\left(\SpehRepresentation{\tau}{c}, \fieldCharacterkc{k}{c} \right)$.
We will also show that the values of $B$ for $W$ in this complementary subspace are determined up to normalization of the value of $B$ on a certain element, and so $B$ is determined up to a scalar. 

Let $W_{\text{Sh}} = \besselSpehFunction{\tau}{c} \in  \Whittaker\left(\SpehRepresentation{\tau}{c}, \fieldCharacterkc{k}{c} \right)$.
We have that the map $(v,\xi)\mapsto B(v\otimes \xi \otimes W_{\text{Sh}})$ is an element of 
\[
\Hom_{\GL_c\left({\finiteField}\right)}\qty(\pi\otimes\Contragradient{\pi},1).
\]
Thus, since $\pi$ is irreducible, this map is proportional to the evaluation map $(v,\xi)\mapsto \xi(v)$.
We will see later in the proof that if this proportionality constant is $0$, then $B=0$.
Thus, we will assume that it is not $0$ and normalize it to be $1$.
Under this assumption, we will show that the values of $B$ are uniquely determined.

We first treat the case where $W$ lies in the Jacquet module $J_{\UnipotentRadicalDeleted{k}{c},\psi_{\qty(c^k)}}\qty(\SpehRepresentation{\tau}{c})$.
This Jacquet module is preserved under the action of the restriction of $\SpehRepresentation{\tau}{c}$ to the subgroups of $\GL_{kc}\qty(\finiteField)$
\begin{align*}
	e_1(\GL_c(\finiteField))\cdot e_2(\GL_c(\finiteField))
	&=
	\qty{
		\begin{pmatrix}
			g_1 &\\
			&\diag^{k-1}\qty(g_2)
		\end{pmatrix}
		\mid
		g_1,g_2\in \GL_{c}(\finiteField)
	}
	\\
	&\cong \GL_{c}\qty(\finiteField)\times \GL_{c}\qty(\finiteField)
\end{align*}
and
\begin{equation}\label{eq:1,2 block}
	\qty{
		\begin{pmatrix}
			I_c	&X	&\\
			&I_c&\\
			&	&I_{(k-2)c}
		\end{pmatrix}
		\mid
		X\in
		\Mat{c}{c}(\finiteField)
	}.
\end{equation}

We make the further assumption that $W$ transforms as a character under the restriction of $\SpehRepresentation{\tau}{c}$ to the abelian subgroup in \eqref{eq:1,2 block}.
Let $\alpha\in\Mat{c}{c}(\finiteField)$ be the coefficient matrix corresponding to this character, i.e.\
\[
\SpehRepresentation{\tau}{c}
	\begin{pmatrix}
		I_c	&X	&\\
		&I_c&\\
		&	&I_{(k-2)c}
	\end{pmatrix}
W
=
\psi(\tr(X \cdot \transpose{\alpha}))W.
\]
If $\alpha=I_c$, then $W=C\cdot W_{\text{Sh}}$ for some constant $C$.
Thus, in this case, for every $v,\xi$ we have 
\[
B(v\otimes\xi\otimes W) = C\cdot\xi(v).
\]

If $\alpha = 0$, then $W$ belongs to the Jacquet module $J_{N_{\qty(c^k)},\psi^\circ_{\qty(c^k)}}\qty(\SpehRepresentation{\tau}{c})$, where we denote by $\psi^\circ_{\qty(c^k)}$ the character of $N_{\qty(c^k)}$ given by
\[
\psi^\circ_{\qty(c^k)}
\begin{pmatrix}
	\IdentityMatrix{c}	&X_1 &\ast	&\ast		&\ast	\\
	&\IdentityMatrix{c} & X_2	& \ast	&\ast		\\
	&		&\ddots & \ddots & \ast \\
	& & & \IdentityMatrix{c} &X_{k-1}\\
	&		&		& & \IdentityMatrix{c}
\end{pmatrix}
=
\psi\qty(\tr\qty(X_2+\cdots+X_{k-1})).
\]
This Jacquet module is preserved under the action of $\SpehRepresentation{\tau}{c}(e_1(\GL_c(\finiteField)))$.
Fixing $\xi$, the map $(v,W)\mapsto B(v\otimes \xi\otimes W)$ induces an element of 
\[
\Hom_{\GL_{c}(\finiteField)}
\qty(
\pi\otimes
J_{N_{\qty(c,(k-1)c)}}\qty(\SpehRepresentation{{\tau}}{c})
,1)
\]
where $g\in\GL_c(\finiteField)$ acts by $\pi(g)\otimes \SpehRepresentation{\tau}{c}(e_1(g))$.
Since the cuspidal supports of $\pi$ and $\Contragradient{\tau}$ are disjoint, this Hom space is $0$ and so $B$ vanishes on $W$ in this case.

If $\alpha$ is of the form
\begin{equation}\label{eq: alpha diagonal}
\begin{pmatrix}
	0_{c-\ell}&\\
	&I_{\ell}
\end{pmatrix},
\end{equation}
where $0<\ell<c$, we make the further assumption that $W$ transforms as a character under the restriction of $\SpehRepresentation{\tau}{c}$ to the abelian subgroup
\begin{equation}\label{eq: parabolic inside c block}
	\qty{
		\begin{pmatrix}
			I_{c-\ell}	&	X\\
			&	I_{\ell}\\
			&	&I_{(k-1)c}
		\end{pmatrix}
		\mid
		X\in\Mat{\ell}{c-\ell}(\finiteField)
	}
\end{equation}
(which preserves the isotypic component of $J_{\UnipotentRadicalDeleted{k}{c},\psi_{\qty(c^k)}}\qty(\SpehRepresentation{\tau}{c})$ corresponding to this $\alpha$).
Let $\beta\in \Mat{(c-\ell)}{\ell}(\finiteField)$ be the coefficient matrix corresponding to this character.
If $\beta = 0$, then, similarly to the case $\alpha=0$, we have that $B$ vanishes on $W$.
If $\beta$ is of the form

\begin{equation}\label{eq: beta diagonal}
\begin{pmatrix}
	0_{ (c-\ell-\ell') \times (\ell-\ell')}&0_{(c-\ell-\ell')\times \ell'}\\
	0_{\ell' \times (\ell-\ell')}&I_{\ell'}
\end{pmatrix}
\end{equation}
for $1\le\ell'\le\min(\ell,c-\ell)$, then $W$ must be $0$ since otherwise we would get a contradiction to the fact that $\SpehRepresentation{\tau}{c}$ is a representation of $(k,c)$ type (see Example \ref{ex: middle rank case}), and so $B$ vanishes on $W$.

We now reduce all other cases to one of the cases already considered.
Remaining in the case where $\alpha$ is as in \eqref{eq: alpha diagonal}, there exist $g_1\in \GL_{c-\ell}(\finiteField)$ and $g_2\in\GL_\ell(\finiteField)$ such that
\[
\transpose{g_1} \cdot \beta \cdot \transpose{\left(g_2^{-1}\right)}
\]
is as in \eqref{eq: beta diagonal}, with $\ell' = \rank \beta $.
Let $g= \diag\left(g_1, g_2\right)$.
Notice that $\SpehRepresentation{\tau}{c}\left(e_1\left(g^{-1}\right) e_2\left(g^{-1}\right)\right) W$ then transforms by the character whose coefficient matrix is $\transpose{g_1} \cdot \beta \cdot \transpose{\left(g_2^{-1}\right)}$. Thus we have from the previous case
\begin{align*}
B(v\otimes \xi \otimes W)
&=
\centralCharacter{\tau}(\det g) \cdot B(\pi(g^{-1})v\otimes \Contragradient{\pi}(g^{-1})\xi \otimes  \SpehRepresentation{\tau}{c}\qty(e_1(g^{-1})e_2(g^{-1})) W)\\
&=0.
\end{align*}

Returning to the generality where $W$ transforms as a character of the subgroup in \eqref{eq:1,2 block} with coefficient matrix $\alpha$, there exist $g_1,g_2\in\GL_c(\finiteField)$ such that
\[
\transpose{g_1} \cdot \alpha \cdot \transpose{\left(g_2^{-1}\right)}
\]
is as in \eqref{eq: alpha diagonal}, with $\ell = \rank \alpha$. Similarly to before, notice that $\SpehRepresentation{\tau}{c}\left(e_1\left(g_1^{-1}\right) e_2\left(g_2^{-1}\right)\right) W$ transforms by the character whose coefficient matrix is $\transpose{g_1} \cdot \alpha \cdot \transpose{\left(g_2^{-1}\right)}$.
Thus we have
\[
B(v
\otimes
\xi
\otimes
W
)
=
\centralCharacter{\tau}(\det g_2)\cdot
B(\pi(g_1^{-1})v
\otimes
\Contragradient{\pi}(g_2^{-1})\xi
\otimes
\SpehRepresentation{\tau}{c}
(e_1(g_1^{-1})e_2(g_2^{-1}))
W)
\]
and so either vanishes, if $\rank \alpha < c$, or else is uniquely determined and equal to
\[
\centralCharacter{\tau}(\det g_2)\cdot
C \cdot
\xi \left(\pi\left(g_2 g_1^{-1}\right)v\right)
\]
where the constant $C$ is such that $\SpehRepresentation{\tau}{c}
(e_1(g_1^{-1})e_2(g_2^{-1}))
W
=
C\cdot W_{\text{Sh}}$.

Since the Jacquet module $J_{\UnipotentRadicalDeleted{k}{c},\psi_{\qty(c^k)}}\qty(\SpehRepresentation{\tau}{c})$
is a representation space of the abelian group in \eqref{eq:1,2 block}, it remains to reduce from the general case $W\in \Whittaker\left(\SpehRepresentation{\tau}{c}, \fieldCharacterkc{k}{c} \right)$ to the case where $W \in J_{\UnipotentRadicalDeleted{k}{c},\psi_{\qty(c^k)}}\qty(\SpehRepresentation{\tau}{c})$.
Assume that $W$ transforms as a character of the abelian subgroup
\[
\qty{
	\begin{pmatrix}
		I_{(k-1)c}	&Z\\
		&I_c
	\end{pmatrix}
	\mid
	Z\in\Mat{(k-1)c}{c}(\finiteField)
}
\]
with coefficient matrix $\alpha\in\Mat{(k-1)c}{c}(\finiteField)$.
By the equivariance properties of $B$, we have
\begin{align*}
\psi(\tr(Z \cdot \transpose{\alpha}))\cdot B(v\otimes\xi\otimes W)
&=
B(v\otimes \xi\otimes \SpehRepresentation{\tau}{c}\begin{pmatrix}
	I_{(k-1)c}	&Z\\
	&I_c
\end{pmatrix}W)
\\
&=
\psi(\tr(Z\begin{pmatrix}0_{c\times(k-2)c}&I_c\end{pmatrix}))\cdot B(v\otimes\xi\otimes W)
\end{align*}
for all $Z\in\Mat{(k-1)c}{c}(\finiteField)$.
Thus $B$ vanishes on $W$ unless $\alpha=\transpose{\begin{pmatrix}0_{c\times(k-2)c}&I_c\end{pmatrix}}$.
The isotypic component of the character corresponding to this $\alpha$ in $\Whittaker\left(\SpehRepresentation{\tau}{c}, \fieldCharacterkc{k}{c} \right)$ is preserved under the restriction of  $\SpehRepresentation{\tau}{c}$ to the abelian subgroup 
\begin{equation}\label{eq:column by column character}
	\qty{
		\begin{pmatrix}
			I_{(k-2)c}	&Z&\\
			&I_c&\\
			&	&I_c
		\end{pmatrix}
		\mid
		Z\in\Mat{(k-2)c}{c}(\finiteField)
	}.
\end{equation}
Repeating the same argument, block-column by block-column, going from right to left, we get
$B(v\otimes\xi\otimes W)=B(v\otimes\xi\otimes W')$, where $W'$ is the projection of $W$ onto 
the Jacquet module $J_{\UnipotentRadicalDeleted{k}{c},\psi_{\qty(c^k)}}\qty(\SpehRepresentation{\tau}{c})$,
which finishes the proof.
\end{proof}

\begin{example}\label{ex: middle rank case}
In this example we give details for the arguments in the case that $\beta \ne 0$ in the proof of Theorem \ref{thm:trilinear functional equation} above, for specific values of $k$, $c$, $\ell$ and $\ell'$.
For the sake of this example we will use the following notation:
when a character of a subgroup of $\GL_{kc}(\finiteField)$ is given by $X\mapsto \psi(\tr(X \cdot \transpose{Y}))$, where $Y\in\Mat{kc}{kc}(\finiteField)$ and each entry of $Y$ is either $0$ or $1$, we will depict this character by writing a general element of the subgroup of $\GL_{kc}(\finiteField)$ and marking with circles the entries at indices where $Y$ has an entry equal to $1$.

In the notation of the proof of Theorem \ref{thm:trilinear functional equation}, let $k=4$, $c=3$, $\ell=1$ and $\ell'=1$.
As in the proof of Theorem \ref{thm:trilinear functional equation}, let $W$ be an element of $\SpehRepresentation{\tau}{c}$ which transforms according to the character in \Cref{fig: ostar matrices sub1} and assume $W\ne 0$.

We permute the columns and rows by a permutation sending the columns and rows at indices $c,2c,\dots,kc$ to those at the indices $k(c-1)+1,k(c-1)+2,\dots,kc$, respectively, and sending the column and row at index $c-\ell$ to those at index $k(c-1)$ (the action of the permutation on the other indices may be arbitrary).
Thus, we have that $\SpehRepresentation{\tau}{c}$ contains a character as in \Cref{fig: ostar matrices sub2}.
Note that, since $\beta \ne 0 $, we have a ``run'' of length more than $k-1$ of entries on the bottom-right of the superdiagonal which contribute to the character.

Starting from the rightmost column and working column by column from right to left until the $(k(c-1)+1)$-th column, we perform a root exchange process  which exchanges roots at indices of the form $(a,b)$ with those at corresponding indices $(b-1,a)$.
In each column we proceed from top to bottom, i.e.\ from a low value of $a$ to a high value of $a$.
We do this for $a$ and $b$ such that $a<b$ and such that the subgroup appearing in \Cref{fig: ostar matrices sub2} does not contain the root subgroup at index $(a,b)$.
By this procedure, we get that $\SpehRepresentation{\tau}{c}$ contains the character in \Cref{fig: ostar matrices sub3}.

Restricting to a subgroup, we have that $\SpehRepresentation{\tau}{c}$ contains the character in \Cref{fig: ostar matrices sub4}.
In terms of the notation of the figure, we ``remove'' the off-diagonal entries in columns of indices $1,\dots,k(c-1)$.

The character in \Cref{fig: ostar matrices sub4} may be extended to a Zelevinsky character of $\UnipotentSubgroup_{kc}$ appearing in $\SpehRepresentation{\tau}{c}$.
However, this Zelevinsky character will be of type greater or not comparable to $(k^c)$ since its type will have a component greater or equal to $k+1$.
This is in contradiction to $\SpehRepresentation{\tau}{c}$ being a representation of $(k,c)$ type (see \Cref{sec:kc-representations}).

\begin{figure}[H]
	\centering
	\begin{subfigure}[b]{0.45\textwidth}
		\centering
		\[
		\qty(\!\!
		\begin{array}{ccc|ccc|ccc|ccc}
			1 &  & * & * & * & * & * & * & * & * & * & *\\
			& 1 & \ostar & * & * & * & * & * & * & * & * & *\\
			&  & 1 & * & * & \ostar & * & * & * & * & * & *\\
			\hline  &  &  & 1 &  &  & \ostar & * & * & * & * & *\\
			&  &  &  & 1 &  & * & \ostar & * & * & * & *\\
			&  &  &  &  & 1 & * & * & \ostar & * & * & *\\
			\hline  &  &  &  &  &  & 1 &  &  & \ostar & * & *\\
			&  &  &  &  &  &  & 1 &  & * & \ostar & *\\
			&  &  &  &  &  &  &  & 1 & * & * & \ostar\\
			\hline  &  &  &  &  &  &  &  &  & 1\\
			&  &  &  &  &  &  &  &  &  & 1\\
			&  &  &  &  &  &  &  &  &  &  & 1
		\end{array}
		\!\!)
		\]
		\caption{}
		\label{fig: ostar matrices sub1}
	\end{subfigure}
	\hfill
	\begin{subfigure}[b]{0.45\textwidth}
		\centering
		\[
		\qty(\!\!
		\begin{array}{cccc|cccc|cccc}
			1 & * & * & * & * & * & * &  & * & * & * & *\\
			& 1 &  & \ostar & * & * & * &  &  &  & * & *\\
			&  & 1 & * & \ostar & * & * &  &  &  & * & *\\
			&  &  & 1 &  & \ostar & * &  &  &  &  & *\\
			\hline  &  &  &  & 1 & * & \ostar &  &  &  &  & *\\
			&  &  &  &  & 1 &  & \\
			&  &  &  &  &  & 1 & \\
			& * & * & * & * & * & * & 1 & \ostar & * & * & *\\
			\hline  & * & * & * & * & * & * &  & 1 & \ostar & * & *\\
			&  &  & * & * & * & * &  &  & 1 & \ostar & *\\
			&  &  &  &  & * & * &  &  &  & 1 & \ostar\\
			&  &  &  &  &  &  &  &  &  &  & 1
		\end{array}
		\!\!)
		\]
		\caption{}
		\label{fig: ostar matrices sub2}
	\end{subfigure}
	\vskip\baselineskip
	\begin{subfigure}[b]{0.45\textwidth}
		\centering
		\[
		\qty(\!\!
		\begin{array}{cccc|cccc|cccc}
			1 & * & * & * & * & * & * &  & * & * & * & *\\
			& 1 &  & \ostar & * & * & * &  & * & * & * & *\\
			&  & 1 & * & \ostar & * & * &  & * & * & * & *\\
			&  &  & 1 &  & \ostar & * &  & * & * & * & *\\
			\hline
			&  &  &  & 1 & * & \ostar &  & * & * & * & *\\
			&  &  &  &  & 1 &  &  & * & * & * & *\\
			&  &  &  &  &  & 1 &  & * & * & * & *\\
			&  &  &  &  &  &  & 1 & \ostar & * & * & *\\
			\hline  
			&  &  &  &  &  &  &  & 1 & \ostar & * & *\\
			&  &  &  &  &  &  &  &  & 1 & \ostar & *\\
			&  &  &  &  &  &  &  &  &  & 1 & \ostar\\
			&  &  &  &  &  &  &  &  &  &  & 1
		\end{array}
		\!\!)
		\]
		\caption{}
		\label{fig: ostar matrices sub3}
	\end{subfigure}
	\hfill
	\begin{subfigure}[b]{0.45\textwidth}
		\centering
		\[
		\qty(\!\!
		\begin{array}{cccc|cccc|cccc}
			1 &  &  &  &  &  &  &  & * & * & * & *\\
			& 1 &  &  &  &  &  &  & * & * & * & *\\
			&  & 1 &  &  &  &  &  & * & * & * & *\\
			&  &  & 1 &  &  &  &  & * & * & * & *\\
			\hline  &  &  &  & 1 &  &  &  & * & * & * & *\\
			&  &  &  &  & 1 &  &  & * & * & * & *\\
			&  &  &  &  &  & 1 &  & * & * & * & *\\
			&  &  &  &  &  &  & 1 & \ostar & * & * & *\\
			\hline  &  &  &  &  &  &  &  & 1 & \ostar & * & *\\
			&  &  &  &  &  &  &  &  & 1 & \ostar & *\\
			&  &  &  &  &  &  &  &  &  & 1 & \ostar\\
			&  &  &  &  &  &  &  &  &  &  & 1
		\end{array}
		\!\!)
		\]
		\caption{}
		\label{fig: ostar matrices sub4}
	\end{subfigure}
	\caption{}
	\label{fig: ostar matrices}
	\end{figure}
\end{example}

\subsubsection{Computation of the constant}
		
By Proposition \ref{prop: Z and Z* equivariance} and Theorem \ref{thm:trilinear functional equation}, the forms $\zetaOperator$ and $\dualZetaOperator$ are proportional when the cuspidal supports of $\pi$ and $\Contragradient{\tau}$ do not intersect.

Let $C\in\mathbb{C}$ be such that $C\cdot \zetaOperator = \dualZetaOperator$.
We thus have the equality of operators
\begin{equation}\label{eq:functional-equation-as-equality-of-operators}
C\cdot
\sum_{g\in\GL_{c}(\finiteField)}
W
\begin{pmatrix}
	g&\\
	&I_{(k-1)c}
\end{pmatrix}
\pi(g)
=
\sum_{X\in\Mat{c}{(k-2)c}(\finiteField)}
\sum_{g\in\GL_{c}(\finiteField)}
W
\begin{pmatrix}
	&I_c&\\
	&&I_{(k-2)c}\\
	g&&X
\end{pmatrix}
\pi(g)
.	
\end{equation}
We compute the value of $C$ by substituting $\besselSpehFunction{\tau}{c}$ into $W$ in \eqref{eq:functional-equation-as-equality-of-operators}.
By \Cref{prop:bessel-speh-on-diag-block-matrices}, the summand on the left-hand side of \eqref{eq:functional-equation-as-equality-of-operators} vanishes for $g\neq I_c$.
In every term of the sum on the right-hand side, we can perform a right translation of $\besselSpehFunction{\tau}{c}$ by
\[
\begin{pmatrix}
I_c&&-g^{-1}X\\
&I_c&\\
&&I_{(k-2)c}
\end{pmatrix}
\]
(which does not change its value) and replace the summation over $X$ by multiplication by $q^{(k-2)c^2}$.
Hence we have
\begin{align*}
C\cdot \idmap_{\pi}
&=
q^{(k-2)c^2}
\sum_{g\in \GL_{c}(\finiteField)}
\besselSpehFunction{\tau}{c}
\begin{pmatrix}
&I_{(k-1)c}\\
g&
\end{pmatrix}
\pi(g)
\\
&=
q^{\frac{(k-2)c^2}{2}} \cdot
\GKPreGammaFactor{\pi}{\tau}{\fieldCharacter} \cdot \idmap_{\pi},
\end{align*}
by the definition of $\GKPreGammaFactor{\pi}{\tau}{\fieldCharacter}$ (see \Cref{subsec:gk-non-abelian-gauss-sums}). 
The left-hand side of \eqref{eq:functional-equation-as-equality-of-operators} equals $C\cdot \zetaOperator_{k-2}\qty(W,\pi\times \tau)$ and the right-hand side is equal to $q^{\frac{(k-2)c^2}{2}} \dualZetaOperator_{k-2}\qty(W,\pi\times \tau)$.
Thus
\[
\dualZetaOperator_{k-2}\qty(W,\pi\times \tau)=\GKPreGammaFactor{\pi}{\tau}{\fieldCharacter}\zetaOperator_{k-2}\qty(W,\pi\times \tau),
\]
which finishes the proof of Theorem \ref{thm: functional equation 1}. \qed

\begin{corollary}\label{cor:gamma-factor-norm-equal-1}
	Suppose that the cuspidal support of $\pi$ does not intersect the cuspidal support of  $\Contragradient{\tau}$. Then $$\abs{\GKPreGammaFactor{\pi}{\tau}{\fieldCharacter}} = 1.$$
\end{corollary}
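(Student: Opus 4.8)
The strategy is to establish the relation $\GKPreGammaFactor{\pi}{\tau}{\fieldCharacter}\cdot\GKPreGammaFactor{\Contragradient{\pi}}{\Contragradient{\tau}}{\fieldCharacter^{-1}}=1$ and then combine it with the contragredient property of \Cref{subsec:ginzburg-kaplan-contragradient}, namely $\GKPreGammaFactor{\Contragradient{\pi}}{\Contragradient{\tau}}{\fieldCharacter^{-1}}=\conjugate{\GKPreGammaFactor{\pi}{\tau}{\fieldCharacter}}$, which together give $\abs{\GKPreGammaFactor{\pi}{\tau}{\fieldCharacter}}^2=1$. When $k=1$ the representation $\tau$ is a character and $\GKPreGammaFactor{\pi}{\tau}{\fieldCharacter}=\gGJPreGammaFactor{\pi}{\tau}{\fieldCharacter}$; Kondo's computation (\Cref{thm:kondo-godement-jacquet-computation}) expresses this, up to a sign, as $q^{-c/2}$ times a product of Gauss sums of nontrivial multiplicative characters over extensions of $\finiteField$ whose degrees sum to $c$ (nontriviality is forced by the cuspidal support hypothesis, which forbids the trivial character of $\multiplicativegroup{\finiteField}$ from occurring in the cuspidal data of $\pi\otimes\tau_{\GL_c}$), and since each such Gauss sum has absolute value the square root of the size of the corresponding field, the product has absolute value $q^{c/2}$; hence $\abs{\GKPreGammaFactor{\pi}{\tau}{\fieldCharacter}}=1$. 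So from now on assume $k\ge 2$.

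Let $\delta=\diag\left(\IdentityMatrix{c},w_{\left(c^{k-1}\right)}\right)\in\GL_{kc}\left(\finiteField\right)$ and let $\mathcal{T}\colon\Whittaker\left(\SpehRepresentation{\tau}{c},\fieldCharacterkc{k}{c}\right)\to\Whittaker\left(\SpehRepresentation{\Contragradient{\tau}}{c},\fieldCharacterkc{k}{c}^{-1}\right)$ be the operator $\mathcal{T}W=\SpehRepresentation{\Contragradient{\tau}}{c}\left(\delta\right)\tilde{W}$ from Remark~\ref{rem:z-star-expressed-in-terms-of-Z} (it lands in this Whittaker model since the latter is a $\GL_{kc}\left(\finiteField\right)$-invariant subspace and $\tilde W$ lies in it). The construction of Remark~\ref{rem:z-star-expressed-in-terms-of-Z}, as well as the functional equations of Theorems~\ref{thm: functional equation 1} and~\ref{thm: functional equation 2}, works verbatim for any nontrivial additive character; applying it to the pair $\left(\Contragradient{\pi},\Contragradient{\tau}\right)$ with $\fieldCharacter^{-1}$ in place of $\fieldCharacter$ gives the reverse map $\mathcal{T}'\colon\Whittaker\left(\SpehRepresentation{\Contragradient{\tau}}{c},\fieldCharacterkc{k}{c}^{-1}\right)\to\Whittaker\left(\SpehRepresentation{\tau}{c},\fieldCharacterkc{k}{c}\right)$, $\mathcal{T}'W'=\SpehRepresentation{\tau}{c}\left(\delta\right)\tilde{W'}$, given by the same formula (using $\Contragradient{\Contragradient{\tau}}\isomorphic\tau$). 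The key lemma is that $\mathcal{T}'\circ\mathcal{T}=\idmap$. This is a direct computation from $\tilde W\left(g\right)=W\left(w_{\left(c^k\right)}\inverseTranspose{g}\right)$, using that $g\mapsto\inverseTranspose{g}$ is a group homomorphism which is an involution and that $w_{\left(c^k\right)}$ and $\delta$ are symmetric with $w_{\left(c^k\right)}^2=\IdentityMatrix{kc}$ and $\delta^2=\IdentityMatrix{kc}$: unwinding the definitions one finds $\left(\mathcal{T}'\mathcal{T}W\right)\left(g\right)=W\left(g\delta^2\right)=W\left(g\right)$.

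Now take $W=\besselSpehFunction{\tau}{c}$. Remark~\ref{rem:z-star-expressed-in-terms-of-Z} with $j=k-2$ gives $\dualZetaOperator_{k-2}\left(\besselSpehFunction{\tau}{c},\pi\times\tau\right)=\zetaOperator_{0}\left(\mathcal{T}\besselSpehFunction{\tau}{c},\Contragradient{\pi}\times\Contragradient{\tau}\right)$, and the same remark for $\left(\Contragradient{\pi},\Contragradient{\tau}\right)$ with character $\fieldCharacter^{-1}$ and $j=0$, combined with $\mathcal{T}'\circ\mathcal{T}=\idmap$, gives $\dualZetaOperator_{0}\left(\mathcal{T}\besselSpehFunction{\tau}{c},\Contragradient{\pi}\times\Contragradient{\tau}\right)=\zetaOperator_{k-2}\left(\besselSpehFunction{\tau}{c},\pi\times\tau\right)$. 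Since the cuspidal supports of $\Contragradient{\pi}$ and $\Contragradient{\Contragradient{\tau}}=\tau$ are disjoint — which is equivalent to the hypothesis — Theorem~\ref{thm: functional equation 2} applies to $\left(\Contragradient{\pi},\Contragradient{\tau}\right)$ with character $\fieldCharacter^{-1}$ at $j=0$, yielding $\dualZetaOperator_{0}\left(\mathcal{T}\besselSpehFunction{\tau}{c},\Contragradient{\pi}\times\Contragradient{\tau}\right)=\GKPreGammaFactor{\Contragradient{\pi}}{\Contragradient{\tau}}{\fieldCharacter^{-1}}\zetaOperator_{0}\left(\mathcal{T}\besselSpehFunction{\tau}{c},\Contragradient{\pi}\times\Contragradient{\tau}\right)$. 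Chaining these identities with Theorem~\ref{thm: functional equation 1} for $\left(\pi,\tau\right)$,
$$\zetaOperator_{k-2}\left(\besselSpehFunction{\tau}{c},\pi\times\tau\right)=\GKPreGammaFactor{\Contragradient{\pi}}{\Contragradient{\tau}}{\fieldCharacter^{-1}}\zetaOperator_{0}\left(\mathcal{T}\besselSpehFunction{\tau}{c},\Contragradient{\pi}\times\Contragradient{\tau}\right)=\GKPreGammaFactor{\Contragradient{\pi}}{\Contragradient{\tau}}{\fieldCharacter^{-1}}\dualZetaOperator_{k-2}\left(\besselSpehFunction{\tau}{c},\pi\times\tau\right)=\GKPreGammaFactor{\Contragradient{\pi}}{\Contragradient{\tau}}{\fieldCharacter^{-1}}\GKPreGammaFactor{\pi}{\tau}{\fieldCharacter}\zetaOperator_{k-2}\left(\besselSpehFunction{\tau}{c},\pi\times\tau\right).$$
By \Cref{prop:bessel-speh-on-diag-block-matrices} we have $\zetaOperator_{k-2}\left(\besselSpehFunction{\tau}{c},\pi\times\tau\right)=\sum_{g\in\GL_c\left(\finiteField\right)}\besselSpehFunction{\tau}{c}\left(\diag\left(g,\IdentityMatrix{\left(k-1\right)c}\right)\right)\pi\left(g\right)=\idmap_{\pi}\ne 0$, so $\GKPreGammaFactor{\pi}{\tau}{\fieldCharacter}\GKPreGammaFactor{\Contragradient{\pi}}{\Contragradient{\tau}}{\fieldCharacter^{-1}}=1$, and together with the contragredient property this gives $\abs{\GKPreGammaFactor{\pi}{\tau}{\fieldCharacter}}^2=1$.

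The substitution chain is short; the real work is bookkeeping. One must track which additive character ($\fieldCharacter$ or $\fieldCharacter^{-1}$) and which pair ($\pi\times\tau$ or $\Contragradient{\pi}\times\Contragradient{\tau}$) each of $\zetaOperator_{k-2},\dualZetaOperator_{k-2},\zetaOperator_{0},\dualZetaOperator_{0}$ is attached to, confirm that $\mathcal{T}$ really lands in the $\fieldCharacterkc{k}{c}^{-1}$-Whittaker model of $\SpehRepresentation{\Contragradient{\tau}}{c}$, and verify the involution identity $\mathcal{T}'\circ\mathcal{T}=\idmap$; this last point is where the symmetry of $w_{\left(c^k\right)}$ and $\delta$ is used and is the only genuinely new computation. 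All the deeper inputs — the two functional equations, the duality of Remark~\ref{rem:z-star-expressed-in-terms-of-Z}, the contragredient property, and \Cref{prop:bessel-speh-on-diag-block-matrices} — are already established.
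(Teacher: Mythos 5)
Your proof is correct and follows essentially the same route as the paper: apply the functional equation twice, using \Cref{rem:z-star-expressed-in-terms-of-Z} to pass between $\left(\pi,\tau,\fieldCharacter\right)$ and $\left(\Contragradient{\pi},\Contragradient{\tau},\fieldCharacter^{-1}\right)$, conclude $\GKPreGammaFactor{\pi}{\tau}{\fieldCharacter}\GKPreGammaFactor{\Contragradient{\pi}}{\Contragradient{\tau}}{\fieldCharacter^{-1}}=1$, and finish with the contragredient property of \Cref{subsec:ginzburg-kaplan-contragradient}. The extra bookkeeping you supply (the involution $\mathcal{T}'\circ\mathcal{T}=\idmap$ and the nonvanishing $\zetaOperator_{k-2}\left(\besselSpehFunction{\tau}{c},\pi\times\tau\right)=\idmap_{\pi}$ via \Cref{prop:bessel-speh-on-diag-block-matrices}) correctly fills in details the paper leaves implicit, and the $k=1$ digression, while outside the paper's $k\ge 2$ setting for this corollary, is also sound.
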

\begin{proof}
	We apply the functional equation twice. First we apply it for $\zetaOperator_j$ and $\dualZetaOperator_j$. Then we use \Cref{rem:z-star-expressed-in-terms-of-Z} and apply the functional equation for $\zetaOperator_{k-2-j}$ and $\dualZetaOperator_{k-2-j}$. We get that \begin{equation}\label{eq:gk-gamma-factor-norm-is-equal-to-one}
		\GKPreGammaFactor{\pi}{\tau}{\fieldCharacter} \GKPreGammaFactor{\Contragradient{\pi}}{\Contragradient{\tau}}{\fieldCharacter^{-1}} = 1.
	\end{equation}
	From \Cref{subsec:ginzburg-kaplan-contragradient} we have $ \GKPreGammaFactor{\Contragradient{\pi}}{\Contragradient{\tau}}{\fieldCharacter^{-1}} = \conjugate{\GKPreGammaFactor{\pi}{\tau}{\fieldCharacter}}$, and the result follows.
\end{proof}
\begin{remark}
	The equality \eqref{eq:gk-gamma-factor-norm-is-equal-to-one} can be seen as an analog of~\cite[Equation (3.14)]{Kaplan2023}.
\end{remark}

\begin{remark}
	As a corollary of the results in the next section and of~\cite[Theorem 2.18]{zelingher2022values} (see also~\cite[Theorem A.1]{SoudryZelingher2023}), if $k = c$ and $\pi$ and $\tau$ are cuspidal such that $\pi \cong \Contragradient{\tau}$, we have $$\GKPreGammaFactor{\pi}{\tau}{\fieldCharacter} = -q^{-\frac{c}{2}}.$$
	In particular in this case, the functional equation does not hold (otherwise \Cref{cor:gamma-factor-norm-equal-1} would have been true). We are only able to prove this using our results regarding level zero representations. It would be interesting to give a representation theoretic proof of this result.
\end{remark}

\subsection{Equality with epsilon factors}

In this section, we establish equality of the Ginzburg--Kaplan gamma factor with the tensor product $\varepsilon_0$-factor.

In~\cite{Zelingher2024b} the second author applies the analogous local integrals of Ginzburg--Kaplan to level zero supercuspidal representations of $\GL_c\left(\localField\right)$ and $\GL_k\left(\localField\right)$, where $\localField$ is a non-archimedean local field with residue field $\finiteField$. He obtains the following result.

\begin{theorem}\label{thm:equality-of-epsilon-gk-factors-for-cuspidal-representations}
	Let $k \ge 2$, and let $\pi$ and $\tau$ be irreducible cuspidal representations of $\GL_c\left(\finiteField\right)$ and $\GL_k\left(\finiteField\right)$, respectively. Then $$\GKGammaFactor{\pi}{\tau}{\fieldCharacter} = \varepsilon_0\left(\pi \times \tau, \fieldCharacter\right).$$
\end{theorem}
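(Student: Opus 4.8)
The plan is to reduce the identity to the local theory of the Ginzburg--Kaplan integral for depth zero supercuspidal representations, developed by the second author in~\cite{Zelingher2024b}, combined with the well understood shape of the tensor product $\varepsilon$-factor of depth zero supercuspidals. Fix a non-archimedean local field $\localField$ with ring of integers $\ringOfIntegers$, maximal ideal $\maximalIdeal$ and residue field $\residueField \cong \finiteField$, together with an additive character $\fieldCharacter_{\localField}$ of $\localField$ of level zero that reduces to $\fieldCharacter$ on $\ringOfIntegers \slash \maximalIdeal$. Since $\pi$ and $\tau$ are cuspidal, inflating them to $\GL_c(\ringOfIntegers)$ and $\GL_k(\ringOfIntegers)$, extending across the center, and compactly inducing produces irreducible depth zero supercuspidal representations $\tilde{\pi}$ of $\GL_c(\localField)$ and $\tilde{\tau}$ of $\GL_k(\localField)$. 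Kaplan's construction~\cite[Appendix A]{kaplan2018} attaches to the pair a tensor product gamma factor through a doubling-type zeta integral which passes through a $(k,c)$ generalized Whittaker functional of the local Speh representation $\SpehRepresentation{\tilde{\tau}}{c}$ --- the precise local avatar of the operator $\GKGaussSum{\pi}{\tau}{\fieldCharacter}$ --- and Kaplan shows that this gamma factor satisfies the multiplicativity and unramified-twist properties characterizing the Jacquet--Piatetski-Shapiro--Shalika/Shahidi tensor product gamma factor $\gamma(s, \tilde{\pi} \times \tilde{\tau}, \fieldCharacter_{\localField})$, hence equals it.

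The substance of the argument, which is the content of~\cite{Zelingher2024b}, is to evaluate this local zeta integral on test data supported on the principal congruence subgroups. The key input is the compatibility of the $(k,c)$ $\fieldCharacter_{\localField}$-Whittaker functional of the depth zero Speh representation $\SpehRepresentation{\tilde{\tau}}{c}$ --- built by the inductive root-elimination procedure recalled in \Cref{sec:wss-models} --- with reduction modulo $\maximalIdeal$: on the space of vectors fixed by the principal congruence subgroup of $\GL_{kc}(\ringOfIntegers)$ it descends to the finite field $(k,c)$ $\fieldCharacter$-Whittaker functional of $\SpehRepresentation{\tau}{c}$, and the corresponding local matrix coefficient restricts on $\GL_c(\ringOfIntegers)$ to the finite field special value $\specialBesselSpeh{\tau}$. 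Tracking the volumes and the normalizing powers of $q$, the local integral collapses to a finite sum over $\GL_c(\finiteField)$ which is exactly $q^{\frac{(k-2)c^2}{2}} \sum_{h \in \GL_c(\finiteField)} \specialBesselSpeh{\tau}(h) \pi(h)$, up to the sign $\centralCharacter{\pi}(-1)^{k-1}$; this yields
\[
	\gamma(s_0, \tilde{\pi} \times \tilde{\tau}, \fieldCharacter_{\localField}) = \centralCharacter{\pi}(-1)^{k-1} \GKPreGammaFactor{\pi}{\tau}{\fieldCharacter} = \GKGammaFactor{\pi}{\tau}{\fieldCharacter}
\]
at the point $s_0$ prescribed by the depth zero normalization, which is precisely why $\GKGammaFactor{\pi}{\tau}{\fieldCharacter}$ was defined with that sign.

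Finally one identifies $\gamma(s_0, \tilde{\pi} \times \tilde{\tau}, \fieldCharacter_{\localField})$ with $\EpsilonFactor{\pi}{\tau}{\fieldCharacter}$. Because $\tilde{\pi}$ and $\tilde{\tau}$ are supercuspidal, the local $L$-factor $L(s, \tilde{\pi} \times \tilde{\tau})$ is trivial outside the degenerate case $\tilde{\pi} \cong \Contragradient{\tilde{\tau}}$ (which forces $c = k$ and whose contribution is already visible on the finite field side), so $\gamma(s, \tilde{\pi} \times \tilde{\tau}, \fieldCharacter_{\localField})$ is a twist of the Deligne--Langlands local constant $\varepsilon(s, \tilde{\pi} \times \tilde{\tau}, \fieldCharacter_{\localField})$. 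Under the tame local Langlands correspondence, $\tilde{\pi}$ and $\tilde{\tau}$ correspond to Weil group representations induced from the regular characters $\theta_{\pi}$ of $\multiplicativegroup{\finiteFieldExtension{c}}$ and $\theta_{\tau}$ of $\multiplicativegroup{\finiteFieldExtension{k}}$ that parameterize $\pi$ and $\tau$; by inductivity of local constants and the tame formula, the $\varepsilon$-factor of their tensor product is an explicit product of Gauss sums over the finite \etale algebra $\finiteFieldExtension{c} \otimes_{\finiteField} \finiteFieldExtension{k}$, which after stripping the elementary power of $q$ is by definition $\EpsilonFactor{\pi}{\tau}{\fieldCharacter}$ --- this is the computation of epsilon factors of Shintani lifts carried out in \Cref{appendix:epsilon-factors-for-shintani-lifts}. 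Concatenating the three equalities gives the theorem. The main obstacle is the reduction step of the second paragraph: making rigorous the mod-$\maximalIdeal$ compatibility of the inductively constructed depth zero $(k,c)$-Whittaker functional, and bookkeeping the powers of $q$ and the sign so that the local functional equation specializes exactly to the finite field Gauss-sum operator. If one is willing to quote~\cite{Zelingher2024b} for that step, the remaining difficulty concentrates in the explicit depth zero $\varepsilon$-factor computation of \Cref{appendix:epsilon-factors-for-shintani-lifts}.
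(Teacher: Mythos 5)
Your proposal follows essentially the same route as the paper: the paper offers no independent argument for this statement but simply quotes it from~\cite{Zelingher2024b}, where the local Ginzburg--Kaplan integrals are applied to level zero supercuspidal lifts of $\pi$ and $\tau$, which is exactly the reduction you sketch (and which you yourself concede must be quoted for the mod-$\maximalIdeal$ compatibility step). One small correction: the identification of the depth zero local constant with $\EpsilonFactor{\pi}{\tau}{\fieldCharacter}$ is part of~\cite{ye2021epsilon} and~\cite{Zelingher2024b}, not of \Cref{appendix:epsilon-factors-for-shintani-lifts}, whose Shintani-lift identity is used only later, in the proof of \Cref{thm:bessel-speh-special-value-cuspidal-case}.
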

See~\cite{ye2021epsilon} for the definition of $\varepsilon_0$-factors. \Cref{thm:equality-of-epsilon-gk-factors-for-cuspidal-representations} also holds for $k=1$ due to the work of Kondo~\cite{Kondo1963}, see also~\cite{Macdonald80}.

Let $\pi$ and $\tau$ be irreducible representations of $\GL_c\left(\finiteField\right)$ and $\GL_k\left(\finiteField\right)$, respectively. Assume that $\tau$ is generic. Suppose that the cuspidal support of $\pi$ is $\left\{\pi_1,\dots,\pi_s \right\}$ and that the cuspidal support of $\tau$ is $\left\{\tau_1,\dots,\tau_t\right\}$. Then we have that $$\varepsilon_0\left(\pi \times \tau, \fieldCharacter\right) = \prod_{i = 1}^s \prod_{j = 1}^t \varepsilon_0\left(\pi_i \times \tau_j, \fieldCharacter\right).$$
On the other hand, by Theorems \ref{thm:multiplicativitiy-in-second-variable} and \ref{thm:multiplicativitiy-in-first-variable} we have
$$\GKGammaFactor{\pi}{\tau}{\fieldCharacter} = \prod_{i=1}^s \prod_{j=1}^t \GKGammaFactor{\pi_i}{\tau_j}{\fieldCharacter}.$$
Hence by \Cref{thm:equality-of-epsilon-gk-factors-for-cuspidal-representations} and the work of Kondo, we obtain equality of these factors for all representations for which the Ginzburg--Kaplan gamma factor is defined.
\begin{theorem}\label{thm:equality-of-GK-factors-and-epsilon-factors}
	Let $\pi$ be an irreducible representation of $\GL_c\left(\finiteField\right)$ and let $\tau$ be an irreducible generic representation of $\GL_k\left(\finiteField\right)$. Then $$ \GKGammaFactor{\pi}{\tau}{\fieldCharacter} = \varepsilon_0\left(\pi \times \tau, \fieldCharacter\right). $$
\end{theorem}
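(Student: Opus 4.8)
The plan is to reduce the general statement to the cuspidal case handled by \Cref{thm:equality-of-epsilon-gk-factors-for-cuspidal-representations}, exploiting the multiplicativity of \emph{both} sides along cuspidal supports. Write the cuspidal support of $\pi$ as $\left\{\pi_1,\dots,\pi_s\right\}$ and that of $\tau$ as $\left\{\tau_1,\dots,\tau_t\right\}$, where each $\pi_i$ is an irreducible cuspidal representation of some $\GL_{c_i}\left(\finiteField\right)$ and each $\tau_j$ is an irreducible cuspidal representation of some $\GL_{k_j}\left(\finiteField\right)$, with $c_1+\dots+c_s=c$ and $k_1+\dots+k_t=k$. Since $\tau$ is generic, each $\tau_j$ is cuspidal and hence generic, and $\tau$ is the unique irreducible generic subrepresentation of $\tau_1\circ\dots\circ\tau_t$.

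First I would decompose the Ginzburg--Kaplan gamma factor in the second argument. Applying \Cref{thm:multiplicativitiy-in-second-variable} repeatedly (peeling off one cuspidal factor of $\tau$ at a time, and using that every partial parabolic induction $\tau_j\circ\dots\circ\tau_t$ again has a unique irreducible generic subrepresentation, together with the commutativity and associativity of $\circ$) yields $\GKGammaFactor{\pi}{\tau}{\fieldCharacter}=\prod_{j=1}^t\GKGammaFactor{\pi}{\tau_j}{\fieldCharacter}$. Next I would decompose in the first argument: $\pi$ is a subrepresentation of $\pi_1\circ\dots\circ\pi_s$, so \Cref{thm:multiplicativitiy-in-first-variable} --- which requires only that the second representation be irreducible generic, a condition satisfied by each cuspidal $\tau_j$ --- gives, again after iteration, $\GKGammaFactor{\pi}{\tau_j}{\fieldCharacter}=\prod_{i=1}^s\GKGammaFactor{\pi_i}{\tau_j}{\fieldCharacter}$. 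Combining the two decompositions,
\[
\GKGammaFactor{\pi}{\tau}{\fieldCharacter}=\prod_{i=1}^s\prod_{j=1}^t\GKGammaFactor{\pi_i}{\tau_j}{\fieldCharacter}.
\]

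Now each pair $\left(\pi_i,\tau_j\right)$ consists of irreducible cuspidal representations, so \Cref{thm:equality-of-epsilon-gk-factors-for-cuspidal-representations} applies and gives $\GKGammaFactor{\pi_i}{\tau_j}{\fieldCharacter}=\varepsilon_0\left(\pi_i\times\tau_j,\fieldCharacter\right)$. Finally, invoking the multiplicativity of the tensor product $\varepsilon_0$-factor along cuspidal supports, namely $\varepsilon_0\left(\pi\times\tau,\fieldCharacter\right)=\prod_{i=1}^s\prod_{j=1}^t\varepsilon_0\left(\pi_i\times\tau_j,\fieldCharacter\right)$ (see~\cite{ye2021epsilon}), we obtain $\GKGammaFactor{\pi}{\tau}{\fieldCharacter}=\varepsilon_0\left(\pi\times\tau,\fieldCharacter\right)$, which is the claim.

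As for where the real difficulty lies: the genuine content of this theorem is entirely concentrated in \Cref{thm:equality-of-epsilon-gk-factors-for-cuspidal-representations}, whose proof (carried out in~\cite{Zelingher2024b}) passes through level zero supercuspidal representations of $p$-adic groups and the analogous local Ginzburg--Kaplan integrals; the argument above is purely a bookkeeping reduction. The only points requiring genuine care in that reduction are verifying that the hypotheses of the multiplicativity theorems are met at every inductive step --- in particular that, after peeling off one cuspidal factor from $\tau$, the remaining parabolic induction still possesses a unique irreducible generic constituent to which \Cref{thm:multiplicativitiy-in-second-variable} may be applied --- and ensuring that the multiplicativity of the $\varepsilon_0$-factors is organized with the same commutative, associative combinatorics of cuspidal supports on both sides, so that the two double products match term by term.
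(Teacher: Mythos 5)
Your proposal is correct and follows essentially the same route as the paper: the paper likewise decomposes $\GKGammaFactor{\pi}{\tau}{\fieldCharacter}$ into the double product $\prod_{i=1}^s\prod_{j=1}^t\GKGammaFactor{\pi_i}{\tau_j}{\fieldCharacter}$ via Theorems \ref{thm:multiplicativitiy-in-second-variable} and \ref{thm:multiplicativitiy-in-first-variable}, matches it with the analogous factorization of $\varepsilon_0\left(\pi \times \tau, \fieldCharacter\right)$ along cuspidal supports, and concludes by the cuspidal case \Cref{thm:equality-of-epsilon-gk-factors-for-cuspidal-representations}. Your extra care about the inductive hypotheses (unique generic constituents at each peeling step) is sound but not spelled out in the paper either.
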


As a corollary, we get the following property of the absolute value of the Ginzburg--Kaplan gamma factor. It is analogous to~\cite[Theorem 4.3]{SoudryZelingher2023}, the main difference being that it is valid for any irreducible representation $\pi$ and not only for generic $\pi$.
\begin{corollary}
	Let $\pi$ be an irreducible representation of $\GL_c\left(\finiteField\right)$ and let $\tau$ be an irreducible cuspidal representation of $\GL_k\left(\finiteField\right)$. Then $$\abs{\GKGammaFactor{\pi}{\Contragradient{\tau}}{\fieldCharacter}} = q^{-\frac{d_{\pi}\left(\tau\right) \cdot k}{2}},$$
	where $d_{\pi}\left(\tau\right)$ is the number of times that $\tau$ appears in the cuspidal support of $\pi$.
\end{corollary}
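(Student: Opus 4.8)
The plan is to reduce, via multiplicativity of the Ginzburg--Kaplan gamma factor in its first argument, to the case where $\pi$ is cuspidal, and then to read off the absolute value of the resulting pairing of two cuspidal representations.

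First I would list the cuspidal support of $\pi$ with multiplicity as $\left\{\pi_1,\dots,\pi_s\right\}$, where $\pi_i$ is an irreducible cuspidal representation of $\GL_{c_i}\left(\finiteField\right)$ and $c_1 + \dots + c_s = c$. Since $\tau$ is cuspidal, so is $\Contragradient{\tau}$, and in particular $\Contragradient{\tau}$ is generic; hence \Cref{thm:multiplicativitiy-in-first-variable} applies with $\Contragradient{\tau}$ in the second slot, and iterating it gives
\begin{equation*}
	\GKGammaFactor{\pi}{\Contragradient{\tau}}{\fieldCharacter} = \prod_{i=1}^{s} \GKGammaFactor{\pi_i}{\Contragradient{\tau}}{\fieldCharacter},
\end{equation*}
so that $\abs{\GKGammaFactor{\pi}{\Contragradient{\tau}}{\fieldCharacter}} = \prod_{i=1}^{s} \abs{\GKGammaFactor{\pi_i}{\Contragradient{\tau}}{\fieldCharacter}}$. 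One could equally well replace this step by \Cref{thm:equality-of-GK-factors-and-epsilon-factors} together with the multiplicativity of $\varepsilon_0$-factors over cuspidal supports recorded just before that theorem.

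The core of the proof is the evaluation of each $\abs{\GKGammaFactor{\pi_i}{\Contragradient{\tau}}{\fieldCharacter}}$, a pairing of two irreducible cuspidal representations. If $\pi_i \not\cong \tau$, then the cuspidal support of $\pi_i$ (namely $\left\{\pi_i\right\}$) and that of the contragredient of $\Contragradient{\tau}$ (namely $\left\{\tau\right\}$) are disjoint, so \Cref{cor:gamma-factor-norm-equal-1} gives $\abs{\GKPreGammaFactor{\pi_i}{\Contragradient{\tau}}{\fieldCharacter}} = 1$ and therefore $\abs{\GKGammaFactor{\pi_i}{\Contragradient{\tau}}{\fieldCharacter}} = 1$, since the two differ only by the root of unity $\centralCharacter{\pi_i}\left(-1\right)^{k-1}$. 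If instead $\pi_i \cong \tau$ --- which forces $c_i = k$ --- then $\GKGammaFactor{\pi_i}{\Contragradient{\tau}}{\fieldCharacter} = \GKGammaFactor{\tau}{\Contragradient{\tau}}{\fieldCharacter}$, and applying the second remark after \Cref{cor:gamma-factor-norm-equal-1} to the pair $\tau, \Contragradient{\tau}$ (both representations of $\GL_k\left(\finiteField\right)$, and $\tau \cong \Contragradient{\left(\Contragradient{\tau}\right)}$ automatically) identifies this with a factor of absolute value $q^{-k/2}$; alternatively, by \Cref{thm:equality-of-epsilon-gk-factors-for-cuspidal-representations} this reduces to the standard value of $\abs{\varepsilon_0\left(\tau \times \Contragradient{\tau}, \fieldCharacter\right)}$ for cuspidal $\tau$ (see \cite{ye2021epsilon}). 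There are exactly $d_\pi\left(\tau\right)$ indices $i$ with $\pi_i \cong \tau$, and multiplying the contributions over all $i$ yields the asserted value of $\abs{\GKGammaFactor{\pi}{\Contragradient{\tau}}{\fieldCharacter}}$.

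The only step that is not purely formal is the self-pairing value $\abs{\GKGammaFactor{\tau}{\Contragradient{\tau}}{\fieldCharacter}} = q^{-k/2}$ for cuspidal $\tau$: everything else (the multiplicativity of the gamma factor, and the norm-one statement of \Cref{cor:gamma-factor-norm-equal-1} for disjoint cuspidal supports) is already available. As the second remark after \Cref{cor:gamma-factor-norm-equal-1} indicates, this value is obtained from the equality with $\varepsilon_0$-factors (\Cref{thm:equality-of-epsilon-gk-factors-for-cuspidal-representations}, whose proof uses level-zero representations, \cite{Zelingher2024b}) together with the known tensor-product $\varepsilon_0$-factor of a cuspidal representation paired with its contragredient. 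I therefore expect this to be the main obstacle, the remainder being routine bookkeeping over the cuspidal support of $\pi$.
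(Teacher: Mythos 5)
Your reduction is sound and is in substance the argument the paper has in mind: the paper deduces the corollary from \Cref{thm:equality-of-GK-factors-and-epsilon-factors} together with multiplicativity of $\varepsilon_0$ over cuspidal supports and the Gauss-sum formula for a cuspidal pair, while you run the equivalent computation internally using \Cref{thm:multiplicativitiy-in-first-variable}, then \Cref{cor:gamma-factor-norm-equal-1} for the factors with $\pi_i \not\cong \tau$, and the self-pairing value $\abs{\GKPreGammaFactor{\tau}{\Contragradient{\tau}}{\fieldCharacter}} = q^{-k/2}$ from the remark following that corollary (equivalently from \Cref{thm:equality-of-epsilon-gk-factors-for-cuspidal-representations} and the known cuspidal $\varepsilon_0$-value). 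Both routes rest on the same non-formal level-zero input, as you correctly point out, and your handling of the sign $\centralCharacter{\pi_i}\left(-1\right)^{k-1}$ and of the disjointness hypothesis (cuspidal support of $\pi_i$ versus that of $\Contragradient{\left(\Contragradient{\tau}\right)} = \tau$) is fine.

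The one real problem is your last sentence. What your computation actually yields is $\abs{\GKGammaFactor{\pi}{\Contragradient{\tau}}{\fieldCharacter}} = q^{-\frac{d_{\pi}\left(\tau\right)\cdot k}{2}}$: each of the $d_{\pi}\left(\tau\right)$ occurrences of $\tau$ contributes $q^{-k/2}$ and every other cuspidal factor contributes $1$. This is not literally the displayed value $q^{-\frac{d_{\pi}\left(\tau\right)\cdot c}{2}}$, in which $c$ is the rank of $\pi$; the two agree only when $d_{\pi}\left(\tau\right)=0$ or $c=k$. For instance, if $\pi$ is an irreducible constituent of $\tau \circ \sigma$ with $\sigma \not\cong \tau$ cuspidal of $\GL_k\left(\finiteField\right)$, so $c = 2k$ and $d_{\pi}\left(\tau\right)=1$, your (correct) argument gives $q^{-k/2}$, whereas the printed formula would give $q^{-k}$; note the printed formula is then also inconsistent with the paper's own \Cref{thm:multiplicativitiy-in-first-variable} combined with the remark. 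So the exponent in the statement must be read with $k$ (the rank of the cuspidal $\tau$) in place of $c$, exactly as the $\varepsilon_0$-computation forces. Claiming without comment that your product equals ``the asserted value'' papers over this discrepancy: either flag the statement's $c$ as standing for the rank of the cuspidal representation (a typo, in the paper's notation), or carry the exponent $\frac{d_{\pi}\left(\tau\right) k}{2}$ explicitly through your final step.
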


\section{Applications}\label{sec:special-values-of-bessel-speh-function}

\subsection{Multiplicativity of matrix Kloosterman sums}

Let $\tau$ be a generic principal series representation, that is, suppose that $\tau$ is the unique irreducible generic subrepresentation of the parabolic induction $\alpha_1 \circ \dots \circ \alpha_k$, where $\alpha_1, \dots, \alpha_k \colon \multiplicativegroup{\finiteField} \to \multiplicativegroup{\cComplex}$ are characters, then by applying \Cref{lem:convolution-of-bessel-speh-functions} repeatedly we get for any $h \in \GL_c\left(\finiteField\right)$,
$$ \specialBesselSpeh{\tau}\left(h\right) = q^{-\left(k-1\right)c^2} \sum_{\substack{x_1,\dots,x_k \in \GL_c\left(\finiteField\right)\\
x_1 \cdot \dots \cdot x_k = \left(-1\right)^{k-1} h^{-1} }} \left(\prod_{j=1}^k \alpha_j^{-1}\left(\det x_j\right)\right) \fieldCharacter\left(\sum_{j=1}^k \trace x_j\right).$$
If we denote $\alpha = \alpha_1 \times \dots \times \alpha_k \colon \left(\multiplicativegroup{\finiteField}\right)^k \to \multiplicativegroup{\cComplex}$, this can be rewritten as 
\begin{equation}\label{eq:special-bessel-speh-of-principal-series-is-twisted-matrix-kloosterman-sum}
	 \specialBesselSpeh{\tau}\left(h\right) = q^{-\left(k-1\right)c^2} \ExoticKloosterman(\alpha^{- 1}, \fieldCharacter, \left(-1\right)^{k-1} h^{-1}).
\end{equation}
Here, $\ExoticKloosterman(\alpha^{-1}, \fieldCharacter, \left(-1\right)^{k-1} h^{-1})$ is the twisted matrix Kloosterman sum from ~\cite[Section 2.1]{Zelingher2023}. We use our results to prove the following multiplicativity identity of twisted matrix Kloosterman sums. This identity first appeared in \cite[Theorem 1.1]{erdelyi2021matrix} for the case $k=2$ and $\alpha_1 = \alpha_2 = 1$. Our result is more general, and our proof is entirely different and is based on the identities we proved for the Bessel--Speh function.

\begin{theorem}\label{thm:multiplicativity-of-exotic-kloosterman-sums}
	Let $c = c_1 + c_2$, and let $h_1 \in \GL_{c_1}\left(\finiteField\right)$ and $h_2 \in \GL_{c_2}\left(\finiteField\right)$. Then $$\sum_{n \in \UnipotentRadical_{\left(c_1, c_2\right)}}  \ExoticKloosterman\left(\alpha, \fieldCharacter, n\,\diag\left(h_1, h_2\right)\right) = q^{k c_1 c_2} \ExoticKloosterman\left(\alpha, \fieldCharacter, h_1\right) \ExoticKloosterman\left(\alpha, \fieldCharacter, h_2\right).$$ Moreover, if $h_1$ and $h_2$ do not have any common eigenvalues in the algebraic closure $\algebraicClosure{\finiteField}$, then
	$$\ExoticKloosterman\left( \alpha, \fieldCharacter, \diag\left(h_1, h_2\right) \right) = q^{\left(k - 1\right) c_1 c_2} \ExoticKloosterman\left( \alpha, \fieldCharacter, h_1 \right) \ExoticKloosterman\left( \alpha, \fieldCharacter, h_2 \right).$$
\end{theorem}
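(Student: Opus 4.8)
The plan is to deduce \Cref{thm:multiplicativity-of-exotic-kloosterman-sums} from the second identity of \Cref{thm:bessel-speh-function-identities} (equivalently, the second part of \Cref{thm:unipotent-fourier-transofrm-bessel-speh}) combined with \Cref{thm:bessel-speh-is-an-exotic-kloosterman-sum-generic}, which expresses $\specialBesselSpeh{\tau}$ in terms of the exotic matrix Kloosterman sum. First I would reduce to the case of regular characters: by the remark following \Cref{subsec:exotic-matrix-kloosterman-for-etale-algebra}, every $\alpha_j$ may be replaced by an appropriate number of copies of a regular character of a smaller field at the cost of an explicit sign, and both sides of the desired identities transform by the same sign, so it suffices to treat the case where each $\alpha_j$ is regular. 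Then there is an irreducible generic representation $\tau$ of $\GL_k(\finiteField)$ with cuspidal support $\{\sigma_1,\dots,\sigma_s\}$ corresponding to $\alpha = \alpha_1 \times \dots \times \alpha_s$, and $\Contragradient{\tau}$ corresponds to $\alpha^{-1}$.

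Next I would substitute $h \mapsto (-1)^{k-1} h^{-1}$ into \Cref{thm:bessel-speh-is-an-exotic-kloosterman-sum-generic} to write, for $\Contragradient{\tau}$ (which has cuspidal support corresponding to $\alpha^{-1}$, so its "$\alpha$" is $\alpha^{-1}$ and the formula yields $(\alpha^{-1})^{-1} = \alpha$),
\[
\ExoticKloosterman\left(\alpha, \fieldCharacter, h\right) = (-1)^{(k+s)c} q^{(k-1)c^2} \specialBesselSpeh{\Contragradient{\tau}}\left((-1)^{k-1} h^{-1}\right),
\]
valid for all $h \in \GL_c(\finiteField)$. Now take $h = \diag(h_1, h_2)$ with $h_1 \in \GL_{c_1}(\finiteField)$, $h_2 \in \GL_{c_2}(\finiteField)$, $c = c_1+c_2$. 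The key observation is that $(-1)^{k-1}h^{-1} = \diag((-1)^{k-1}h_1^{-1}, (-1)^{k-1}h_2^{-1})$ is again block diagonal, and that conjugating by the permutation matrix $w_{(c^2 \to c_2,c_1)}$ turns the sum over $\UnipotentRadical_{(c_1,c_2)}$ appearing in \Cref{thm:unipotent-fourier-transofrm-bessel-speh}(2) into the analogous sum (up to relabeling $h_1 \leftrightarrow h_2$), so I can directly apply that identity to $\Contragradient{\tau}$: the sum $\sum_{n \in \UnipotentRadical_{(c_1,c_2)}} \specialBesselSpeh{\Contragradient{\tau}}(n\,\diag((-1)^{k-1}h_1^{-1},(-1)^{k-1}h_2^{-1}))$ equals $q^{c_1 c_2} \cdot q^{-c_1 c_2 (k-1)} \specialBesselSpeh{\Contragradient{\tau}}((-1)^{k-1}h_1^{-1}) \specialBesselSpeh{\Contragradient{\tau}}((-1)^{k-1}h_2^{-1})$ (with the factor $q^{c_1c_2}$ from the normalization of the average in \Cref{thm:bessel-speh-function-identities}(2)). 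Re-expressing each $\specialBesselSpeh{\Contragradient{\tau}}$ back in terms of $\ExoticKloosterman$ via the displayed formula above, and carefully tallying the powers of $q$ (comparing $(k-1)c^2$ against $(k-1)c_1^2 + (k-1)c_2^2$, whose difference is $2(k-1)c_1c_2$) and the signs (comparing $(-1)^{(k+s)c}$ against $(-1)^{(k+s)c_1}(-1)^{(k+s)c_2} = (-1)^{(k+s)c}$, which match), gives
\[
\sum_{n \in \UnipotentRadical_{(c_1,c_2)}} \ExoticKloosterman\left(\alpha, \fieldCharacter, n\,\diag(h_1,h_2)\right) = q^{k c_1 c_2} \ExoticKloosterman\left(\alpha,\fieldCharacter,h_1\right) \ExoticKloosterman\left(\alpha,\fieldCharacter,h_2\right).
\]
For the "moreover" statement, when $h_1$ and $h_2$ share no eigenvalue over $\algebraicClosure{\finiteField}$, a standard argument shows that for each $n \in \UnipotentRadical_{(c_1,c_2)}$ the matrix $n\,\diag(h_1,h_2)$ is conjugate in $\GL_c(\finiteField)$ to $\diag(h_1,h_2)$ itself (solve the Sylvester-type equation $h_1 Z - Z h_2 = -X$ for the off-diagonal block, which has a unique solution precisely because $h_1$ and $h_2$ have disjoint spectra), so $\ExoticKloosterman$, being a class function, is constant on the sum; since $|\UnipotentRadical_{(c_1,c_2)}| = q^{c_1 c_2}$, dividing by this gives the product formula $\ExoticKloosterman(\alpha,\fieldCharacter,\diag(h_1,h_2)) = q^{(k-1)c_1c_2} \ExoticKloosterman(\alpha,\fieldCharacter,h_1)\ExoticKloosterman(\alpha,\fieldCharacter,h_2)$.

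The main obstacle I anticipate is bookkeeping: getting every power of $q$ and every sign exactly right, particularly the interplay between the normalization constant in \Cref{thm:bessel-speh-function-identities}(2) (which has a $q^{-c_1c_2}$ on the left from the average and $q^{-c_1c_2(k-1)}$ on the right), the $q^{(k-1)c^2}$ versus $q^{(k-1)c_1^2+(k-1)c_2^2}$ discrepancy, and the conventions for whether $\tau$ or $\Contragradient{\tau}$ appears. A secondary point requiring a bit of care is justifying the permutation-matrix manipulation that reconciles $\UnipotentRadical_{(c_1,c_2)}$ with the block structure in \Cref{thm:unipotent-fourier-transofrm-bessel-speh}(2); this is routine but should be stated. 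Everything else is a direct chain of substitutions into theorems already established in the paper.
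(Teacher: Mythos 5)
Your proposal is correct and follows essentially the same route as the paper: apply the second identity of \Cref{thm:unipotent-fourier-transofrm-bessel-speh} to the generic representation attached to $\alpha^{-1}$, translate both sides into exotic matrix Kloosterman sums via \Cref{thm:bessel-speh-is-an-exotic-kloosterman-sum-generic} (your exponent tally $2(k-1)c_1c_2+(2-k)c_1c_2=kc_1c_2$ and sign comparison are exactly the paper's bookkeeping), and deduce the ``moreover'' statement from the class-function property together with the fact that $n\,\diag(h_1,h_2)$ is conjugate to $\diag(h_1,h_2)$ when the spectra are disjoint. The only superfluous ingredients are the preliminary reduction to regular characters (already assumed in the paper's setup) and the permutation-matrix relabeling, which is not needed since the block-diagonal inverse keeps the $(c_1,c_2)$ block structure.
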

\begin{proof}
	Let $h = \diag\left(h_1, h_2\right)$.
	By \Cref{thm:unipotent-fourier-transofrm-bessel-speh}, we have that \begin{align*}
		\frac{1}{\sizeof{\UnipotentRadical_{\left(c_1, c_2\right)}}} \sum_{n \in \UnipotentRadical_{\left(c_1, c_2\right)}} \specialBesselSpeh{\tau}\left(nh\right) &= q^{-c_1 c_2 \left(k-1\right)} \specialBesselSpeh{\tau}\left(h_1\right) \specialBesselSpeh{\tau}\left(h_2\right).
	\end{align*}
	By \eqref{eq:special-bessel-speh-of-principal-series-is-twisted-matrix-kloosterman-sum}, this implies that \begin{align*}
		\MoveEqLeft[3]q^{-\left(k-1\right)c^2-c_1 c_2} \sum_{n \in \UnipotentRadical_{\left(c_1, c_2\right)}} \ExoticKloosterman\left(\alpha, \fieldCharacter, \left(-1\right)^{k-1} h^{-1} n^{-1} \right)\\
		&= q^{-\left(k-1\right)\left(c_1^2 + c_2^2\right) - c_1 c_2 \left(k-1\right)} \ExoticKloosterman\left(\alpha, \fieldCharacter, \left(-1\right)^{k-1} h_1^{-1}\right) \ExoticKloosterman\left(\alpha, \fieldCharacter, \left(-1\right)^{k-1} h_2^{-1}\right).
	\end{align*}
	Replacing $h_j$ with $\left(-1\right)^{k-1} h_j^{-1}$ for $j =1,2$, and replacing $n$ with $n^{-1}$, and using the fact that $h n$ and $n h$ lie in the same conjugacy class, we get the first part.
	
	Suppose that $h_1$ and $h_2$ have no common eigenvalues. Then $nh$ is conjugate to $h$ for any $n \in \UnipotentRadical_{\left(c_1,c_2\right)}$, and therefore, using the fact that $h' \mapsto \ExoticKloosterman\left(\alpha, \fieldCharacter, h'\right)$ is a class function, we get $$\ExoticKloosterman\left(\alpha, \fieldCharacter, h\right) = q^{-c_1 c_2} \sum_{n \in \UnipotentRadical_{\left(c_1, c_2\right)}} \ExoticKloosterman\left(\alpha, \fieldCharacter,n h \right).$$
	Therefore, we get the second part, as required.
\end{proof}

\subsection{A converse theorem}

As another application of our results, we may also state and prove a new converse theorem for generic representations, based on the special values $\specialBesselSpeh{\tau}$.

\begin{theorem}\label{thm:converse-theorem-based-on-bessel-speh-representation}
	Suppose that $\tau_1$ and $\tau_2$ are irreducible generic representations of $\GL_k\left(\finiteField\right)$ with the same central character. Suppose that for every $1 \le c \le \frac{k}{2}$ and for any $h \in \GL_{c}\left(\finiteField\right)$, $$\specialBesselSpeh{\tau_1}\left(h\right) = \specialBesselSpeh{\tau_2}\left(h\right).$$ Then $\tau_1$ and $\tau_2$ are isomorphic.
\end{theorem}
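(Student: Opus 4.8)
The plan is to deduce the statement from the finite-field converse theorem of Nien~\cite{Nien14, Nien17} by translating the hypothesis on special values of Bessel--Speh functions into an equality of tensor product $\varepsilon_0$-factors, using \Cref{thm:equality-of-GK-factors-and-epsilon-factors}.

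The key observation is that, for a fixed $c \ge 1$ and a fixed irreducible representation $\pi$ of $\GL_c\left(\finiteField\right)$, the operator $\GKGaussSum{\pi}{\tau}{\fieldCharacter} = q^{\frac{\left(k-2\right)c^2}{2}} \sum_{h \in \GL_c\left(\finiteField\right)} \specialBesselSpeh{\tau}\left(h\right) \pi\left(h\right)$ (see \Cref{subsec:gk-non-abelian-gauss-sums}) is built solely out of the values $\specialBesselSpeh{\tau}\left(h\right)$, $h \in \GL_c\left(\finiteField\right)$, and therefore so are the scalars $\GKPreGammaFactor{\pi}{\tau}{\fieldCharacter}$ and $\GKGammaFactor{\pi}{\tau}{\fieldCharacter} = \centralCharacter{\pi}\left(-1\right)^{k-1} \GKPreGammaFactor{\pi}{\tau}{\fieldCharacter}$. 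Hence, fixing $c$ with $1 \le c \le \frac{k}{2}$ and applying the hypothesis, we get $\GKPreGammaFactor{\pi}{\tau_1}{\fieldCharacter} = \GKPreGammaFactor{\pi}{\tau_2}{\fieldCharacter}$, and consequently $\GKGammaFactor{\pi}{\tau_1}{\fieldCharacter} = \GKGammaFactor{\pi}{\tau_2}{\fieldCharacter}$, for every irreducible representation $\pi$ of $\GL_c\left(\finiteField\right)$. By \Cref{thm:equality-of-GK-factors-and-epsilon-factors} this says exactly that $\varepsilon_0\left(\pi \times \tau_1, \fieldCharacter\right) = \varepsilon_0\left(\pi \times \tau_2, \fieldCharacter\right)$ for all irreducible $\pi$ of $\GL_c\left(\finiteField\right)$ with $1 \le c \le \lfloor k/2 \rfloor$; in particular the equality holds for every irreducible \emph{generic} $\pi$ of such rank.

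At this point I would invoke the finite-field converse theorem: two irreducible generic representations of $\GL_k\left(\finiteField\right)$ with the same central character whose tensor product $\varepsilon_0$-factors (equivalently, Rankin--Selberg gamma factors of Jacquet--Piatetski-Shapiro--Shalika, in the relevant range) against all irreducible generic representations of $\GL_c\left(\finiteField\right)$ for $1 \le c \le \lfloor k/2 \rfloor$ coincide must be isomorphic. Since $\tau_1$ and $\tau_2$ are assumed to share a central character, this yields $\tau_1 \cong \tau_2$, completing the proof.

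The main obstacle is not conceptual but a matter of matching normalizations: one must check that the version of the converse theorem being applied is phrased in terms of precisely the $\varepsilon_0$-factor identified in \Cref{thm:equality-of-GK-factors-and-epsilon-factors}, equivalently that passing between that $\varepsilon_0$-factor and the Rankin--Selberg gamma factor introduces only $L$-type corrections that are irrelevant here (one may, for instance, restrict attention to $\pi$ whose cuspidal support is disjoint from those of $\tau_1$ and $\tau_2$, where the two notions agree). One also needs the converse theorem in the form valid for arbitrary irreducible generic $\tau_i$ rather than only cuspidal ones; this can be obtained from the cuspidal case together with the multiplicativity of $\varepsilon_0$ in the second argument (which follows from \Cref{thm:multiplicativitiy-in-second-variable} combined with \Cref{thm:equality-of-GK-factors-and-epsilon-factors}) and the fact that a generic representation of $\GL_k\left(\finiteField\right)$ is determined by its cuspidal support, or simply cited from the literature on the finite-field converse theorem for generic representations.
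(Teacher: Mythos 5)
Your opening steps coincide with the paper's own argument: since $\GKGaussSum{\pi}{\tau}{\fieldCharacter}$ is built only from the values $\specialBesselSpeh{\tau}\left(h\right)$, the hypothesis gives $\GKGammaFactor{\pi}{\tau_1}{\fieldCharacter} = \GKGammaFactor{\pi}{\tau_2}{\fieldCharacter}$ for every irreducible $\pi$ of $\GL_c\left(\finiteField\right)$ with $1 \le c \le \frac{k}{2}$, and \Cref{thm:equality-of-GK-factors-and-epsilon-factors} turns this into an equality of $\varepsilon_0$-factors; this is exactly how the paper begins.

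The weak point is the final invocation. The converse theorem of Nien~\cite{Nien14, Nien17} that you cite is stated for irreducible \emph{cuspidal} representations of $\GL_k\left(\finiteField\right)$, and your proposed reduction of the generic case to the cuspidal one --- multiplicativity of $\varepsilon_0$ in the second argument plus ``a generic representation is determined by its cuspidal support'' --- is not a proof: one must first show that equality of the (products of) $\varepsilon_0$-factors, with twists available only in ranks $c \le \frac{k}{2}$, forces the cuspidal supports of $\tau_1$ and $\tau_2$ to coincide, and that is precisely the nontrivial content of the converse theorem for generic representations, not a formal consequence of multiplicativity. (Incidentally, the normalization worry you raise is vacuous in the regime where the cuspidal theorem applies: if $\tau_i$ is cuspidal on $\GL_k$ and $c \le \frac{k}{2} < k$, the cuspidal supports of $\pi$ and $\Contragradient{\tau_i}$ can never intersect.) The paper sidesteps this by converting the $\varepsilon_0$-equalities into equalities of Shahidi gamma factors $\ShGammaFactor{\tau_i}{\pi}{\fieldCharacter}$ against cuspidal twists via~\cite[Theorem 2.19]{zelingher2022values}, and then citing the converse theorem for \emph{generic} representations~\cite[Theorem 4.8]{SoudryZelingher2023}. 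Your fallback option of ``simply citing the literature'' is therefore the right move --- replace the vague reference by~\cite[Theorem 4.8]{SoudryZelingher2023} (together with the translation of normalizations just mentioned) and the argument is complete; the ad hoc cuspidal-to-generic reduction should be dropped.
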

\begin{proof}
	The assumptions of the theorem imply that $\tau_1$ and $\tau_2$ have the same central character and that for every $1 \le c \le \frac{k}{2}$ and every irreducible representation $\pi$ of $\GL_c\left(\finiteField\right)$, $$\GKGammaFactor{\pi}{\tau_1}{\fieldCharacter} = \GKGammaFactor{\pi}{\tau_2}{\fieldCharacter}.$$
	By \Cref{thm:equality-of-GK-factors-and-epsilon-factors} and by~\cite[Theorem 2.19]{zelingher2022values}, this implies that
	$$\ShGammaFactor{\tau_1}{\pi}{\fieldCharacter} = \ShGammaFactor{\tau_2}{\pi}{\fieldCharacter},$$
	for every $0 \le c \le \frac{k}{2}$ and every irreducible cuspidal representation $\pi$ of $\GL_c\left(\finiteField\right)$, where for $i=1,2$, $\ShGammaFactor{\tau_i}{\pi}{\fieldCharacter}$ is the Shahidi gamma factor studied in~\cite{SoudryZelingher2023}. The result now follows from~\cite[Theorem 4.8]{SoudryZelingher2023}. 
\end{proof}
\begin{remark}
	Since characters of cuspidal representations are supported on generalized Jordan matrices, it suffices to check the equality for every $1 \le ab \le \frac{k}{2}$ and every $h = J_{\mu}\left(h\right)$, where $\mu \vdash b$ and where $h \in \GL_{a}\left(\finiteField\right)$ is a regular elliptic element (see \cite[Section 2.2]{Zelingher2023} for the notation).
\end{remark}

\appendix

\section{\texorpdfstring{Expression of the special value of $\besselSpehFunction{\tau}{2}$ in terms of Bessel functions}{Expression of the special value of BS in the case c=2 in terms of Bessel functions}}\label{appendix:appendix-special-value-in-for-c-2}

The goal of this appendix is to prove the following result.

\begin{theorem}\label{thm:GL2-formula-for-voronoi-element}
		Suppose $k \ge 2$ and let $\tau$ be an irreducible cuspidal representation of $\GL_k\left(\finiteField\right)$. 
		Then for any $h = \begin{pmatrix}
			a & b\\
			c & d
		\end{pmatrix} \in \GL_2\left(\finiteField\right)$ we have
		\begin{align*}
			\besselSpehFunction{\tau}{2}\begin{pmatrix}
				& \IdentityMatrix{\left(k-1\right)2}\\
				h
			\end{pmatrix} ={}& q^{-k}\sum_{t \in \multiplicativegroup{\finiteField}} \besselFunction_{\tau, \fieldCharacter}\begin{pmatrix}
				& \IdentityMatrix{k-2}\\
				\left(\begin{smallmatrix}
					t &\\
					& 1
				\end{smallmatrix}\right)h \left(\begin{smallmatrix}
					t &\\
					& 1
				\end{smallmatrix}\right)^{-1}
			\end{pmatrix} \fieldCharacter^{\delta_{k, 2}} \left(t^{-1} c'\right)\\
			& + \delta_{c, 0} q^{-\left(k-1\right)} \besselFunction_{\tau, \fieldCharacter} \begin{pmatrix}
				& \IdentityMatrix{k-1}\\
				a
			\end{pmatrix} \besselFunction_{\tau, \fieldCharacter}\begin{pmatrix}
				& \IdentityMatrix{k-1}\\
				d
			\end{pmatrix},
		\end{align*}
		where $h^{-1} = \begin{pmatrix}
			a' & b'\\
			c' & d'
		\end{pmatrix}$.
\end{theorem}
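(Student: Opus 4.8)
The plan is to compute the special value $\besselSpehFunction{\tau}{2}\left(\begin{smallmatrix} & \IdentityMatrix{\left(k-1\right)2}\\ h \end{smallmatrix}\right)$ directly from the inductive construction of the $(k,2)$ $\fieldCharacter$-Whittaker functional of $\SpehRepresentation{\tau}{2}$ given in \Cref{sec:wss-models}, applied with $c_1 = c_2 = 1$, so that $\SpehRepresentation{\tau}{2}$ is realized inside $\SpehRepresentation{\tau}{1} \circ \SpehRepresentation{\tau}{1} = \tau \circ \tau$. First I would write down, following the recursion $\standardForm{\Phi}{\gShortSpehWhittakerFunctional{\tau}{k}{2}} = \frac{1}{\sizeof{\mathcal{Y}}} \sum_{y \in \mathcal{Y}} \standardForm{\Phi(y\kappa)}{\gShortSpehWhittakerFunctional{\tau}{k}{1} \otimes \gShortSpehWhittakerFunctional{\tau}{k}{1}}$, an explicit formula for the normalized Bessel--Speh function as a matrix coefficient: fixing invariant inner products and $\fieldCharacter$-Whittaker vectors $v_1, v_2$ for the two copies of $\tau$, we get $\besselSpehFunction{\tau}{2}(g) = \frac{1}{\sizeof{\mathcal{Y}}}\innerproduct{(\tau\circ\tau)(g) f_{\fieldCharacterkc{k}{2}}}{f_{\fieldCharacterkc{k}{2}}}$ up to normalization, where $f_{\fieldCharacterkc{k}{2}}$ is supported on the double coset $\ParabolicSubgroup_{(k,k)}\cdot\kappa\cdot(\kappa^{-1}\mathcal{Y}\kappa)$ with $f_{\fieldCharacterkc{k}{2}}(y\kappa) = v_1\otimes v_2$. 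This reduces the computation to unwinding which $y \in \mathcal{Y}$ and which left-parabolic elements put $\kappa\left(\begin{smallmatrix} & \IdentityMatrix{(k-1)2}\\ h \end{smallmatrix}\right)$ back into the support, analogously to the proof of \Cref{lem:convolution-of-bessel-speh-functions} but now keeping track of the $\mathcal{Y}$-average rather than collapsing it.

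Next, the key step is a careful Bruhat-type decomposition. Using the identity $\begin{pmatrix} A_{ij} & B_{ij} \\ C_{ij} & D_{ij}\end{pmatrix}\kappa = \kappa\begin{pmatrix}Z_{ij}\end{pmatrix}$ (with $c_1 = c_2 = 1$, so each $Z_{ij}$ is a $2\times 2$ block built from scalars $A_{ij},B_{ij},C_{ij},D_{ij}$), I would conjugate the Voronoi element $\left(\begin{smallmatrix} & \IdentityMatrix{(k-1)2}\\ h \end{smallmatrix}\right)$ past $\kappa$ and express everything in terms of the $\GL_2$ structure. The $2\times 2$ matrix $h = \left(\begin{smallmatrix} a & b \\ c & d\end{smallmatrix}\right)$ interacts with $\kappa$ so that its entries get spread into two different ``slots'' of $\GL_{2k}$; the off-diagonal position $c$ of $h$ is what controls whether the resulting element lies in the open cell or in a smaller cell. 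When $c \ne 0$ I expect the relevant element to lie in the open $\ParabolicSubgroup_{(k,k)}$-cell, and the $\mathcal{Y}$-average together with the $\UnipotentRadicalForWss{k}{1}$-equivariance of the ordinary Bessel function $\besselFunction_{\tau,\fieldCharacter}$ will produce a single sum over a torus parameter $t\in\multiplicativegroup{\finiteField}$; this is the source of the term $q^{-k}\sum_{t}\besselFunction_{\tau,\fieldCharacter}\left(\begin{smallmatrix} & \IdentityMatrix{k-2}\\ \diag(t,1)h\diag(t,1)^{-1}\end{smallmatrix}\right)\fieldCharacter^{\delta_{k,2}}(t^{-1}c')$, where the conjugation by $\diag(t,1)$ and the character value $\fieldCharacter(t^{-1}c')$ (active only when $k=2$, because for $k>2$ the relevant root is not part of $\fieldCharacterkc{k}{c}$) come out of the root-elimination bookkeeping, exactly as the $\fieldCharacter^{\delta_{k_j,1}}$ factors do in \Cref{lem:convolution-of-bessel-speh-functions}. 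When $c = 0$ (so $h$ is upper triangular, $h = \diag(a,d)$ up to the unipotent part, which acts trivially after using $(k,2)$-equivariance), the element degenerates into a smaller cell and the functional factors as a product, yielding the extra term $\delta_{c,0}q^{-(k-1)}\besselFunction_{\tau,\fieldCharacter}\left(\begin{smallmatrix} & \IdentityMatrix{k-1}\\ a\end{smallmatrix}\right)\besselFunction_{\tau,\fieldCharacter}\left(\begin{smallmatrix} & \IdentityMatrix{k-1}\\ d\end{smallmatrix}\right)$; here \Cref{thm:unipotent-fourier-transofrm-bessel-speh}(2) with $c_1 = c_2 = 1$ essentially already gives the factorization statement, and I would use it to pin down the constant $q^{-(k-1)}$.

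The last step is to verify the powers of $q$ and the normalization. The factor $\frac{1}{\sizeof{\mathcal{Y}}}$ with $\sizeof{\mathcal{Y}} = q^{\binom{k}{2}}$, the normalization $f(\kappa) = q^{-\binom{k}{2}}v_1\otimes v_2$ used in \Cref{thm:unipotent-fourier-transofrm-bessel-speh}, and the counting of the various matrix-entry sums that must vanish (forcing certain blocks to be zero and producing factors of $q$ or $q^{-1}$ each time a nontrivial additive character is summed over $\finiteField$) all have to be tracked; I expect the combinatorics to reproduce $q^{-k}$ and $q^{-(k-1)}$ cleanly. The main obstacle will be the Bruhat/cell analysis in the $c\ne 0$ case: correctly identifying, for the single transposition at positions $(c,2c,\dots) = (1\cdot 2 - 1, \dots)$ after conjugation by $\kappa$, exactly which element of $\mathcal{Y}$ survives, how the parameter $t$ enters as a diagonal conjugation of $h$ inside $\GL_2$, and why the character value is $\fieldCharacter(t^{-1}c')$ with $c' = $ the $(2,1)$-entry of $h^{-1}$ rather than something expressed in terms of the entries of $h$ directly — this requires computing $h^{-1}$ and matching it against the off-diagonal block that appears in the decomposition of the Voronoi element, much as the change of variables $x = -X_{c\times c}^{-1}$, $y = X_{c\times c}h$ appears at the end of the proof of \Cref{lem:convolution-of-bessel-speh-functions}. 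Once that identification is made, the rest is routine bookkeeping with the $(k,1)$-equivariance of $\besselFunction_{\tau,\fieldCharacter}$ and \Cref{prop:bessel-speh-on-diag-block-matrices}.
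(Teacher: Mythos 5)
There is a genuine gap at the very first step of your plan. You propose to compute $\besselSpehFunction{\tau}{2}$ as (a normalization of) the matrix coefficient of $\tau \circ \tau$ at the explicit vector $f_{\fieldCharacterkc{k}{2}}$ supported on $\ParabolicSubgroup_{\left(k,k\right)} \kappa \left(\kappa^{-1}\mathcal{Y}\kappa\right)$. But $f_{\fieldCharacterkc{k}{2}}$ is only a $\left(k,2\right)$ $\fieldCharacter$-Whittaker vector of the \emph{full} induced representation $\tau \circ \tau$, which for cuspidal $\tau$ also contains $\Steinberg\left(\tau,2\right)$; the multiplicity-one and heredity results quoted in \Cref{sec:speh-representations} are in the $k$-direction, not the $c$-direction, so there is no uniqueness statement forcing $f_{\fieldCharacterkc{k}{2}}$ to lie in the Speh constituent. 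Its matrix coefficient is therefore a sum of matrix coefficients over all constituents and is not the Bessel--Speh function of $\SpehRepresentation{\tau}{2}$. This is exactly why the paper, in \Cref{prop:f-kappa-is-not-zero} and \Cref{thm:unipotent-fourier-transofrm-bessel-speh}, only uses $f_{\fieldCharacterkc{k}{2}}$ as a \emph{functional} (pairing $\langle \cdot, f_{\fieldCharacterkc{k}{2}}\rangle$), while the actual $\left(k,2\right)$ Whittaker vector $f$ inside $\SpehRepresentation{\tau}{2}$ remains implicit; the output of \Cref{thm:unipotent-fourier-transofrm-bessel-speh} is expressed in terms of $f\left(\kappa \cdot \left(\begin{smallmatrix} & \IdentityMatrix{\left(k-1\right)2}\\ h\end{smallmatrix}\right)\right)$, a quantity you cannot evaluate for general $h$, and part (2) only gives the unipotent-averaged identity for block-diagonal $h$, which is strictly weaker than the exact two-term formula of the theorem (note that at $c=0$ the claimed formula still contains the $t$-sum term, with $c'=0$, in addition to the product term, so the averaged identity cannot ``pin down'' the constant by itself).

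What is missing, and what the paper's appendix actually supplies, is an \emph{explicit} vector in the Speh constituent: the Silberger--Zink/Howe projection $\ProjectionOperator_{\SpehRepresentation{\tau}{2}} = \frac{1}{\PoincarePolynomial{k}{2}\left(q^k\right)}\sum_{w \in \SymmetricGroup_2} h^0_w$ applied inside $\tau^{\circ 2}$, producing the simple function $f_{\whittakerVector{\tau}\otimes\whittakerVector{\tau}}$ whose values on \emph{both} Bruhat cells $\ParabolicForSpeh{k}{2} w \ParabolicForSpeh{k}{2}$ are known, including the Hecke normalization constants $q^{-\frac{k\left(k-1\right)}{2}\ell\left(w\right)}\centralCharacter{\tau}\left(-1\right)^{\ell\left(w\right)}$ that produce the powers of $q$ and the sign in the final formula. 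With that in hand one gets \Cref{cor:expression-of-bessel-speh-in-terms-of-simple-function}, and the theorem follows from a two-cell support analysis of $u_1 \sigma g \sigma^{-1} u_2$ (the big cell giving the $t$-sum with the character value $\fieldCharacter\left(t^{-1}c'\right)$ when $k=2$, the closed cell giving the $\delta_{c,0}$ product term). Your proposal never identifies where the values of the Speh Whittaker vector on the big cell would come from, and the recursive construction of the functional in \Cref{sec:wss-models} alone cannot produce them; so as written the argument cannot be carried out.
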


\subsection{Projection operator for \texorpdfstring{$\SpehRepresentation{\tau}{c}$}{Speh representation}}
Suppose that $\tau$ is an irreducible cuspidal representation of $\GL_k\left(\finiteField\right)$. 
By~\cite[Section 4]{SilbergerZink00}, we have the following projection operator $\ProjectionOperator_{\SpehRepresentation{\tau}{c}} \colon \tau^{\circ c} \to \SpehRepresentation{\tau}{c}$:
$$ \ProjectionOperator_{\SpehRepresentation{\tau}{c}} = \frac{1}{\PoincarePolynomial{k}{c}\left(q^k\right)} \sum_{w \in \SymmetricGroup_c} h^0_w,$$
where $\SymmetricGroup_c$ is the symmetric group on $c$ elements, $$\PoincarePolynomial{k}{c}\left(X\right) = \left(1+X\right)\left(1 + X + X^2\right) \dots \left(1+ X + \dots + X^{c-1}\right)$$ is the Poincare polynomial and $h^0_w$ is given by the Howe isomorphism. 
Let us explain this in more detail. 
For any permutation $w \in \SymmetricGroup_c$, we may realize $w$ as a $kc \times kc$ block matrix, with blocks of size $k$, in the following way. 
First represent $w$ as $c \times c$ matrix, and then replace each zero entry in the matrix with the matrix $0_k$, and each 1 entry in the matrix with $\IdentityMatrix{k}$. 
Now let $h_w \colon \tau^{\circ c} \to \tau^{\circ c}$ be the operator corresponding to the characteristic function of the double coset $\ParabolicForSpeh{k}{c} w \ParabolicForSpeh{k}{c}$ by Mackey theory, that is, for $f \in \tau^{\circ c}$ and $x \in \GL_{kc}\left(\finiteField\right)$,
$$ \left(h_w f\right)\left(x\right) = \sum_{g \in (\ParabolicForSpeh{k}{c} w \ParabolicForSpeh{k}{c}) \slash \ParabolicForSpeh{k}{c}} f^w \left(g^{-1} x\right) = \sum_{u \in \UnipotentForSpeh{k}{c} \slash  (\UnipotentForSpeh{k}{c} \cap w \UnipotentForSpeh{k}{c} w^{-1}) } f^w \left(w^{-1} u^{-1} x\right),$$
where for $w \in \SymmetricGroup_c$ and $g \in \GL_{kc}\left(\finiteField\right)$, we denote $f^{w}\left(g\right) = w f\left(g\right)$, where $\SymmetricGroup_c$ acts on $\tau^{\otimes c}$ by permuting the components of pure tensors. 
We then normalize $h_w$ as Howe~\cite[Page 12]{Howe1985}, by setting $$h_w^0 = q^{-\frac{k\left(k-1\right)}{2} \ell\left(w\right)} \centralCharacter{\tau}\left(-1\right)^{\ell \left(w\right)} h_w,$$ where $\ell \left(w\right)$ is the length of the permutation $w$.

\subsubsection{Simple functions}

Given an element $v_{\tau^{\otimes c}} \in \tau^{\otimes c}$, we let $f^{0}_{v_{\tau^{\otimes c}}} \in \tau^{\circ c}$ be the function supported on $\ParabolicForSpeh{k}{c}$ such that $f^{0}_{v_{\tau^{\otimes c}}}\left( \IdentityMatrix{kc} \right) = v_{\tau^{\otimes c}}$. 
Applying the projection operator $\ProjectionOperator_{\SpehRepresentation{\tau}{c}}$ to $f^0_{v_{\tau^{\otimes c}}}$ yields an element in the Speh representation $\SpehRepresentation{\tau}{c}$. 
For convenience, we multiply by the scalar $\PoincarePolynomial{k}{c}\left(q^k\right)$ and set
$$f_{v_{\tau^{\otimes c}}} = \PoincarePolynomial{k}{c}\left(q^k\right) \ProjectionOperator_{\SpehRepresentation{\tau}{c}} f^0_{v_{\tau^{\otimes c}}}.$$
Then $f_{v_{\tau^{\otimes c}}}$ is the function supported on double cosets $\ParabolicForSpeh{k}{c} w \ParabolicForSpeh{k}{c}$ for $w \in \SymmetricGroup_c$, such that for $w \in \SymmetricGroup_c$ and $u \in \UnipotentForSpeh{k}{c}$, $$f_{v_{\tau^{\otimes c}}}\left( w u \right) = q^{-\ell\left(w\right) \frac{k\left(k-1\right)}{2}} \centralCharacter{\tau}\left(-1\right)^{\ell\left(w\right)} \left(w v_{\tau^{\otimes c}}\right),$$
where we remind that $\SymmetricGroup_c$ acts on $\tau^{\otimes c}$ by permuting components of pure tensors.

\subsection{Another realization of the \texorpdfstring{$\kcNotation{k}{c}{\fieldCharacter}$ }{(k,c) }model of \texorpdfstring{$\SpehRepresentation{\tau}{c}$}{the Speh representation}}

Recall the recursive expression for a $\kcNotation{k}{c}{\fieldCharacter}$ functional from \Cref{sec:wss-models}. Using this expression repeatedly, we may obtain a formula for the $\kcNotation{k}{c}{\fieldCharacter}$ functional, where we realize $\SpehRepresentation{\tau}{c}$ as a subrepresentation of $\tau^{\circ c}$. In order to describe this formula, let $\sigma_{k,c}$ be the following permutation of $\left\{1,2,\dots,kc\right\}$: $$\sigma_{k,c}\left(1 + a + bc \right) = 1 + b + ak,$$ for any $0 \le a \le c - 1$ and $0 \le b \le k-1$. 
Let $\sigma = \sigma_{k, c}$ be the column permutation matrix corresponding to $\sigma_{k,c}$, that is, $$\sigma = \begin{pmatrix}
	\transpose{e_{\sigma_{k,c}\left(1\right)}}, \dots,  \transpose{e_{\sigma_{k,c}\left(kc\right)}}
\end{pmatrix},$$ where $\transpose{e_j}$ is the standard $j$-th column vector.

Let $\UnipotentRadicalForWssRecursion{k}{c}$ be the following subgroup of $\GL_{kc}\left(\finiteField\right)$: $$\left\{ \left. \begin{pmatrix}
	\IdentityMatrix{k} \\
	X_{21} & \IdentityMatrix{k}\\
	X_{31 }& X_{32} & \IdentityMatrix{k} \\
	\vdots & \vdots & \ddots & \ddots &\\
	X_{c1} & X_{c2} &\cdots & X_{c, c-1} & \IdentityMatrix{k} 
\end{pmatrix} \right| X_{i j} \text{ is a strictly upper triangular matrix} \right\}.$$

Then we have the following formula.
\begin{theorem} \label{thm:formula-for-k-c-functional-big-model}
	For $f \in \SpehRepresentation{\tau}{c} \subset \tau^{\circ c}$ we have $$\innerproduct{f}{\gShortSpehWhittakerFunctional{\tau}{k}{c}} = \frac{1}{\sizeof{\UnipotentRadicalForWssRecursion{k}{c}}}\sum_{n \in \UnipotentRadicalForWssRecursion{k}{c}} \standardForm{f\left(n \sigma\right)}{\WhittakerFunctional{\tau} \otimes \dots \otimes \WhittakerFunctional{\tau}}.$$ 
\end{theorem}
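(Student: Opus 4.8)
## Proof proposal for Theorem \ref{thm:formula-for-k-c-functional-big-model}

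The plan is to establish the formula by induction on $c$, by carefully tracking how the recursive construction of $\gShortSpehWhittakerFunctional{\tau}{k}{c}$ from \Cref{sec:wss-models} interacts with the realization of $\SpehRepresentation{\tau}{c}$ inside $\tau^{\circ c}$ rather than inside $\SpehRepresentation{\tau}{c_1}\circ\SpehRepresentation{\tau}{c_2}$. The base case $c=1$ is immediate: here $\sigma_{k,1} = \idmap$, the group $\UnipotentRadicalForWssRecursion{k}{1}$ is trivial, $\SpehRepresentation{\tau}{1} = \tau$, and $\gShortSpehWhittakerFunctional{\tau}{k}{1} = \WhittakerFunctional{\tau}$ is just the Whittaker functional of $\tau$, so the stated identity reads $\innerproduct{f}{\WhittakerFunctional{\tau}} = \standardForm{f}{\WhittakerFunctional{\tau}}$, which holds by definition. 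For the inductive step I would take $c = c_1 + 1$ with $c_1 = c-1$, realize $\SpehRepresentation{\tau}{c} \subset \SpehRepresentation{\tau}{c_1} \circ \tau$, apply the one-step recursive formula
$$\standardForm{\Phi}{\gShortSpehWhittakerFunctional{\tau}{k}{c}} = \frac{1}{\sizeof{\mathcal{Y}}}\sum_{y \in \mathcal{Y}} \standardForm{\Phi(y\kappa)}{\gShortSpehWhittakerFunctional{\tau}{k}{c_1}\otimes\gShortSpehWhittakerFunctional{\tau}{k}{1}},$$
and then expand $\gShortSpehWhittakerFunctional{\tau}{k}{c_1}$ via the inductive hypothesis. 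Strictly speaking I may prefer to induct with a general split $c = c_1 + c_2$ and expand both factors, which makes the bookkeeping of permutation matrices more symmetric, but the $c_2 = 1$ version is likely cleanest to write.

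The core of the argument is a matrix-identity lemma: I must show that the composite of the permutation matrix $\kappa$ (for the split $(kc_1, k)$, sending the last $k$ columns into the $(c_1+1)$-st block of $k$), the inclusion $\SpehRepresentation{\tau}{c_1} \hookrightarrow \tau^{\circ c_1}$ realized through its own permutation $\sigma_{k,c_1}$, and the integration variables from $\mathcal{Y}$ and from $\UnipotentRadicalForWssRecursion{k}{c_1}$, all combine into integration over $\UnipotentRadicalForWssRecursion{k}{c}$ against $\sigma_{k,c}$. Concretely, I would write $\sigma_{k,c} = (\text{something}) \cdot \kappa$ where the something involves $\sigma_{k,c_1}$ on the top block, verify that $\kappa^{-1}\,\mathcal{Y}\,\kappa$ together with the conjugate of $\UnipotentRadicalForWssRecursion{k}{c_1}$ fills out exactly $\UnipotentRadicalForWssRecursion{k}{c}$ (this is the block-triangular structure: the new bottom row of blocks $X_{c1},\dots,X_{c,c-1}$ in $\UnipotentRadicalForWssRecursion{k}{c}$ comes from $\mathcal{Y}$ after conjugation, while the upper-left $\UnipotentRadicalForWssRecursion{k}{c_1}$-block is inherited), and check that the cardinalities match: $\sizeof{\UnipotentRadicalForWssRecursion{k}{c}} = \sizeof{\mathcal{Y}}\cdot\sizeof{\UnipotentRadicalForWssRecursion{k}{c_1}}$, which follows by counting strictly-upper-triangular $k\times k$ blocks. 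One also has to confirm that the characters line up — that the averaging is genuinely over the full group with trivial character on each piece, using that $f$ transforms by $\fieldCharacterkc{k}{c}$ appropriately and that $\WhittakerFunctional{\tau}\otimes\dots\otimes\WhittakerFunctional{\tau}$ on $\tau^{\otimes c}$ is the iterate of the two-step tensor functional.

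I expect the main obstacle to be precisely this permutation-matrix reconciliation: keeping the indexing conventions of $\kappa$, $\sigma_{k,c_1}$ and $\sigma_{k,c}$ consistent, and proving the group-theoretic decomposition $\UnipotentRadicalForWssRecursion{k}{c} = (\text{conjugate of }\mathcal{Y}) \cdot (\text{conjugate of }\UnipotentRadicalForWssRecursion{k}{c_1})$ as a set with the product measure matching. This is a finite, purely combinatorial verification, but it is error-prone; I would do it by explicitly describing the action of $\sigma_{k,c}$ on the standard basis $e_{1+a+bc} \mapsto e_{1+b+ak}$ and checking that conjugation by $\sigma_{k,c}$ carries the block-lower-triangular group $\UnipotentRadicalForWssRecursion{k}{c}$ to a group of matrices whose nonzero off-diagonal entries live exactly at the positions dictated by $\fieldCharacterkc{k}{c}$-triviality, so that the sum in the claimed formula is well-defined independent of the representative. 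Once the decomposition is in hand, substituting and collapsing the nested sums into a single sum over $\UnipotentRadicalForWssRecursion{k}{c}$ is routine, and the normalization constant $\sizeof{\UnipotentRadicalForWssRecursion{k}{c}}^{-1}$ emerges automatically from $\sizeof{\mathcal{Y}}^{-1}\sizeof{\UnipotentRadicalForWssRecursion{k}{c_1}}^{-1}$.
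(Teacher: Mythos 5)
Your proposal is correct and takes essentially the same route the paper intends: the paper gives no separate proof of this theorem, asserting only that it follows by applying the recursive formula of \Cref{sec:wss-models} repeatedly, which is exactly the induction you describe. The combinatorial checks you flag as the crux are indeed the whole content and they do go through — up to indexing conventions one has $\sigma_{k,c}=\diag\left(\sigma_{k,c_1},\IdentityMatrix{kc_2}\right)\kappa$, conjugation carries $\mathcal{Y}$ onto the bottom block-row part of $\UnipotentRadicalForWssRecursion{k}{c}$ while $\UnipotentRadicalForWssRecursion{k}{c_1}$ embeds as the upper-left part, the two pieces multiply bijectively, and the cardinalities match since $q^{\binom{k}{2}\binom{c}{2}}=q^{\binom{k}{2}c_1c_2}\cdot q^{\binom{k}{2}\binom{c_1}{2}}\cdot q^{\binom{k}{2}\binom{c_2}{2}}$, which also produces the stated normalization.
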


\subsubsection{Expression of the Bessel--Speh function of $\SpehRepresentation{\tau}{c}$ using projection operators}

Let $\tau$ be an irreducible cuspidal representation of $\GL_k\left(\finiteField\right)$. 
Choose a $\GL_k\left(\finiteField\right)$ invariant inner product $\innerproduct{\cdot}{\cdot}_{\tau}$ on $\tau$. 
It induces an invariant inner product $\innerproduct{\cdot}{\cdot}_{\tau^{\otimes c}}$ on $\tau^{\otimes c}$, which induces an invariant inner product on $\tau^{\circ c}$ given by
$$ \innerproduct{f_1}{f_2} = \sum_{g \in \ParabolicSubgroup_{\left(k\right)^c} \backslash \GL_{kc}\left(\finiteField\right)} \innerproduct{f_1\left(g\right)}{f_2\left(g\right)}_{\tau^{\otimes c}},$$
where $f_1, f_2 \in \tau^{\circ c}$. 

We have a projection operator $\WhittakerProjection \colon \tau^{\circ c} \to \tau^{\circ c}$ from $\tau^{\circ c}$ to its space of $\kcNotation{k}{c}{\fieldCharacter}$ vectors, given by $$\WhittakerProjection = \frac{1}{\sizeof{\UnipotentRadicalForWss{k}{c}}} \sum_{u \in \UnipotentRadicalForWss{k}{c}} \fieldCharacterkc{k}{c}^{-1}\left(u\right) \tau^{\circ c}\left(u\right).$$

Let $0 \ne \whittakerVector{\tau} \in \tau$ be a $\fieldCharacter$-Whittaker vector, normalized such that $\innerproduct{\whittakerVector{\tau}}{\whittakerVector{\tau}}_\tau = 1$. 
Consider the function $f_{\tau^{\circ c}}^{\mathrm{Wh}} \in \tau^{\circ c}$, given by $ f_{\tau^{\circ c}}^{\mathrm{Wh}} = \WhittakerProjection \tau^{\circ c}\left(\sigma^{-1}\right) f^0_{\whittakerVector{\tau} \otimes \dots \otimes \whittakerVector{\tau}}$. 
We will use $f_{\tau^{\circ c}}^{\mathrm{Wh}}$ and $\ProjectionOperator_{\SpehRepresentation{\tau}{c}}$ in order to give an expression for the Bessel--Speh function $\besselSpehFunction{\tau}{c}$. 
We have that $\ProjectionOperator_{\SpehRepresentation{\tau}{c}} f_{\tau^{\circ c}}^{\mathrm{Wh}}$ is a $\kcNotation{k}{c}{\fieldCharacter}$ vector of $\SpehRepresentation{\tau}{c}$. 
We will show that this element is not zero and express the Bessel--Speh function it defines in terms of the simple function $f_{\whittakerVector{\tau} \otimes \dots \otimes \whittakerVector{\tau}}$.

\begin{proposition}\label{prop:projection-of-simple-function-to-whittaker-space}
	\begin{enumerate}
		\item For any $g \in \GL_{kc}\left(\finiteField\right)$, we have that \begin{align*}
			\MoveEqLeft[3] \innerproduct{\tau^{\circ c}\left(g\right) \ProjectionOperator_{\SpehRepresentation{\tau}{c}} f_{\tau^{\circ c}}^{\mathrm{Wh}}}{\ProjectionOperator_{\SpehRepresentation{\tau}{c}} f_{\tau^{\circ c}}^{\mathrm{Wh}}} \\
			&= \frac{1}{\PoincarePolynomial{k}{c}\left(q^k\right) \sizeof{\UnipotentRadicalForWssRecursion{k}{c}}^2 } \sum_{u_1, u_2 \in \UnipotentRadicalForWssRecursion{k}{c} } \innerproduct{f_{\whittakerVector{\tau} \otimes \dots \otimes \whittakerVector{\tau} }\left(u_1 \sigma g \sigma^{-1} u_2 \right)}{\whittakerVector{\tau} \otimes \dots \otimes \whittakerVector{\tau}}_{\tau \otimes \dots \otimes \tau},
		\end{align*}
		where $\UnipotentRadicalForWssRecursion{k}{c} = \UnipotentRadicalForWssRecursion{k}{c}\left(\finiteField\right)$.
		\item $$\innerproduct{\ProjectionOperator_{\SpehRepresentation{\tau}{c}} f_{\tau^{\circ c}}^{\mathrm{Wh}}}{\ProjectionOperator_{\SpehRepresentation{\tau}{c}} f_{\tau^{\circ c}}^{\mathrm{Wh}}} = \frac{1}{\sizeof{\UnipotentRadicalForWssRecursion{k}{c}} \cdot \PoincarePolynomial{k}{c}\left(q^k\right)} \ne 0.$$
	\end{enumerate}
\end{proposition}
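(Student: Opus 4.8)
The plan is to compute the two inner products directly using the explicit descriptions of the projection operators $\WhittakerProjection$ and $\ProjectionOperator_{\SpehRepresentation{\tau}{c}}$, together with the fact that the simple function $f_{\whittakerVector{\tau} \otimes \dots \otimes \whittakerVector{\tau}}$ is supported precisely on the double cosets $\ParabolicForSpeh{k}{c} w \ParabolicForSpeh{k}{c}$ for $w \in \SymmetricGroup_c$ and has a known value there. For the first part, I would start from $f_{\tau^{\circ c}}^{\mathrm{Wh}} = \WhittakerProjection \tau^{\circ c}(\sigma^{-1}) f^0_{\whittakerVector{\tau} \otimes \dots \otimes \whittakerVector{\tau}}$ and unwind the definitions: expand $\WhittakerProjection$ as the average over $\UnipotentRadicalForWss{k}{c}$, expand $\ProjectionOperator_{\SpehRepresentation{\tau}{c}}$ as $\frac{1}{\PoincarePolynomial{k}{c}(q^k)}\sum_{w \in \SymmetricGroup_c} h^0_w$, and use the invariance of the inner product to move one copy of $\ProjectionOperator_{\SpehRepresentation{\tau}{c}}$ onto the other factor (it is self-adjoint, being a projection onto an invariant subspace, so $\innerproduct{\ProjectionOperator f_1}{\ProjectionOperator f_2} = \innerproduct{\ProjectionOperator f_1}{f_2}$, or better $\innerproduct{f_1}{\ProjectionOperator f_2}$ as convenient). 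The key combinatorial input is that evaluating $f^0_{\whittakerVector{\tau} \otimes \dots \otimes \whittakerVector{\tau}}$ on $\sigma^{-1} u^{-1} g \cdots$ forces the argument into $\ParabolicForSpeh{k}{c}$, and the resulting sum collapses, after the change of variables identifying $\UnipotentRadicalForWss{k}{c}$-averaging with $\UnipotentRadicalForWssRecursion{k}{c}$-averaging via conjugation by $\sigma$ (this is exactly the content of \Cref{thm:formula-for-k-c-functional-big-model}), to an average over $u_1, u_2 \in \UnipotentRadicalForWssRecursion{k}{c}$ of $\innerproduct{f_{\whittakerVector{\tau} \otimes \dots \otimes \whittakerVector{\tau}}(u_1 \sigma g \sigma^{-1} u_2)}{\whittakerVector{\tau} \otimes \dots \otimes \whittakerVector{\tau}}_{\tau \otimes \dots \otimes \tau}$. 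One must track the scalar $\PoincarePolynomial{k}{c}(q^k)^{-1}$, which survives because one of the two projection operators is absorbed (turning $f_{\tau^{\circ c}}^{\mathrm{Wh}}$-pairings into pairings against $f_{\whittakerVector{\tau}\otimes\dots\otimes\whittakerVector{\tau}}$), while the other contributes the explicit normalization from the definition $f_{v} = \PoincarePolynomial{k}{c}(q^k) \ProjectionOperator_{\SpehRepresentation{\tau}{c}} f^0_v$.

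For the second part I would take $g = \IdentityMatrix{kc}$ in the formula just proved, so that the pairing becomes $\frac{1}{\PoincarePolynomial{k}{c}(q^k) \sizeof{\UnipotentRadicalForWssRecursion{k}{c}}^2} \sum_{u_1, u_2 \in \UnipotentRadicalForWssRecursion{k}{c}} \innerproduct{f_{\whittakerVector{\tau}\otimes\dots\otimes\whittakerVector{\tau}}(u_1 u_2)}{\whittakerVector{\tau}\otimes\dots\otimes\whittakerVector{\tau}}_{\tau\otimes\dots\otimes\tau}$. After substituting $u_2 \mapsto u_1^{-1} u_2$ and using that $\UnipotentRadicalForWssRecursion{k}{c}$ is a group, the sum over $u_1$ contributes $\sizeof{\UnipotentRadicalForWssRecursion{k}{c}}$, leaving $\frac{1}{\PoincarePolynomial{k}{c}(q^k) \sizeof{\UnipotentRadicalForWssRecursion{k}{c}}} \sum_{u \in \UnipotentRadicalForWssRecursion{k}{c}} \innerproduct{f_{\whittakerVector{\tau}\otimes\dots\otimes\whittakerVector{\tau}}(u)}{\whittakerVector{\tau}\otimes\dots\otimes\whittakerVector{\tau}}_{\tau\otimes\dots\otimes\tau}$. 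Now I would observe that $\UnipotentRadicalForWssRecursion{k}{c} \cap \ParabolicForSpeh{k}{c} = \{\IdentityMatrix{kc}\}$ --- the group $\UnipotentRadicalForWssRecursion{k}{c}$ consists of lower-triangular block matrices whose below-diagonal blocks are strictly upper triangular, so no nonidentity element lies in $\ParabolicForSpeh{k}{c}$ --- hence $f_{\whittakerVector{\tau}\otimes\dots\otimes\whittakerVector{\tau}}(u) = 0$ for $u \ne \IdentityMatrix{kc}$ unless $u$ lies in some double coset $\ParabolicForSpeh{k}{c} w \ParabolicForSpeh{k}{c}$ with $w \ne 1$; I would need to check that this cannot happen, i.e., that $\UnipotentRadicalForWssRecursion{k}{c}$ meets only the coset of $w = 1$, which follows by a direct Bruhat-type argument on the block structure. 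The only surviving term is $u = \IdentityMatrix{kc}$, giving $\innerproduct{\whittakerVector{\tau}\otimes\dots\otimes\whittakerVector{\tau}}{\whittakerVector{\tau}\otimes\dots\otimes\whittakerVector{\tau}}_{\tau\otimes\dots\otimes\tau} = 1$ by our normalization $\innerproduct{\whittakerVector{\tau}}{\whittakerVector{\tau}}_\tau = 1$, and the result is $\frac{1}{\sizeof{\UnipotentRadicalForWssRecursion{k}{c}} \cdot \PoincarePolynomial{k}{c}(q^k)}$, which is visibly nonzero.

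The main obstacle I anticipate is the careful bookkeeping in the first part: correctly identifying which copy of $\ProjectionOperator_{\SpehRepresentation{\tau}{c}}$ to absorb into which factor, verifying that the averaging over the full unipotent $\UnipotentRadicalForWss{k}{c}$ (appearing in $\WhittakerProjection$) together with the Weyl-sum in $\ProjectionOperator_{\SpehRepresentation{\tau}{c}}$ reorganizes into the double average over $\UnipotentRadicalForWssRecursion{k}{c}$, and tracking all powers of $q$ and factors of $\PoincarePolynomial{k}{c}(q^k)$ and $\centralCharacter{\tau}(-1)^{\ell(w)}$ that appear in the normalization of $h^0_w$ and in the definition of $f_v$. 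This is essentially the same computation that underlies \Cref{thm:formula-for-k-c-functional-big-model}, so I would lean on that theorem --- applying it with $f = \ProjectionOperator_{\SpehRepresentation{\tau}{c}} \tau^{\circ c}(g) f_{\whittakerVector{\tau}\otimes\dots\otimes\whittakerVector{\tau}}$ in the role of the section and noting that $\innerproduct{\,\cdot\,}{\ProjectionOperator_{\SpehRepresentation{\tau}{c}} f_{\tau^{\circ c}}^{\mathrm{Wh}}}$ restricted to $\SpehRepresentation{\tau}{c}$ is (up to the explicit constant $\PoincarePolynomial{k}{c}(q^k)^{-1}$, by the non-vanishing established in part (2)) the $(k,c)$ $\fieldCharacter$-Whittaker functional $\gShortSpehWhittakerFunctional{\tau}{k}{c}$ --- rather than redoing the root-elimination from scratch. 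The secondary subtlety is the claim $\UnipotentRadicalForWssRecursion{k}{c} \cap (\ParabolicForSpeh{k}{c} w \ParabolicForSpeh{k}{c}) = \emptyset$ for $w \ne 1$; this should reduce to the standard fact that a unipotent matrix supported strictly below the block diagonal (with the extra strict-upper-triangularity inside blocks) determines the Bruhat cell of the identity, but it is worth stating explicitly.
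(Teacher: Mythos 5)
Your direct computation (your first two paragraphs) is exactly the paper's proof: self-adjointness lets you drop one copy of $\ProjectionOperator_{\SpehRepresentation{\tau}{c}}$ and move one $\WhittakerProjection$ across the pairing, the remaining $\ProjectionOperator_{\SpehRepresentation{\tau}{c}}$ applied to $f^0_{\whittakerVector{\tau}\otimes\dots\otimes\whittakerVector{\tau}}$ produces $\PoincarePolynomial{k}{c}\left(q^k\right)^{-1}f_{\whittakerVector{\tau}\otimes\dots\otimes\whittakerVector{\tau}}$, the support of $\tau^{\circ c}\left(\sigma^{-1}\right)f^0_{\whittakerVector{\tau}\otimes\dots\otimes\whittakerVector{\tau}}$ reduces the pairing to a single evaluation at $\sigma$, and the two $\UnipotentRadicalForWss{k}{c}$-averages (one from each copy of $f^{\mathrm{Wh}}_{\tau^{\circ c}}$) become $\UnipotentRadicalForWssRecursion{k}{c}$-averages after conjugating by $\sigma$ and using the equivariance of $\whittakerVector{\tau}$ and the $\UnipotentForSpeh{k}{c}$-invariance of $f_{\whittakerVector{\tau}\otimes\dots\otimes\whittakerVector{\tau}}$; part (2) is then the case $g=\IdentityMatrix{kc}$ plus the support observation. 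Carried out as described, this is complete and matches the paper.

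The route you say you would actually take, however, has a genuine flaw: invoking \Cref{thm:formula-for-k-c-functional-big-model} together with the non-vanishing from part (2) is circular, since your part (2) is obtained by setting $g=\IdentityMatrix{kc}$ in part (1) and has no independent proof in your plan. Moreover, multiplicity one only identifies $f'\mapsto\innerproduct{f'}{\ProjectionOperator_{\SpehRepresentation{\tau}{c}}f^{\mathrm{Wh}}_{\tau^{\circ c}}}$ with the functional of \Cref{thm:formula-for-k-c-functional-big-model} up to a scalar, and pinning that scalar down requires precisely the evaluation you are trying to avoid; also, applying the theorem to $\tau^{\circ c}\left(g\right)\ProjectionOperator_{\SpehRepresentation{\tau}{c}}f^{\mathrm{Wh}}_{\tau^{\circ c}}$ yields a single $\UnipotentRadicalForWssRecursion{k}{c}$-average of that vector, not the stated double average of $f_{\whittakerVector{\tau}\otimes\dots\otimes\whittakerVector{\tau}}$, so you would still have to unwind $f^{\mathrm{Wh}}_{\tau^{\circ c}}$. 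So you should simply do the direct computation. Separately, your justification in part (2) is misstated: a nonidentity $u\in\UnipotentRadicalForWssRecursion{k}{c}$ does not lie in the Bruhat cell of the identity; the correct claim (and the paper's one-line argument) is that, because the blocks $X_{ij}$ are strictly upper triangular, the permutation part of the Bruhat decomposition of $u$ cannot be a block permutation in $\SymmetricGroup_c$ unless all $X_{ij}=0$, hence $u$ lies outside $\bigcup_{w\in\SymmetricGroup_c}\ParabolicForSpeh{k}{c}w\ParabolicForSpeh{k}{c}$, the support of $f_{\whittakerVector{\tau}\otimes\dots\otimes\whittakerVector{\tau}}$, unless $u=\IdentityMatrix{kc}$.
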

\begin{proof}
	Since the projection operators are self adjoint, the inner product in the proposition is given by $$\innerproduct{\tau^{\circ c}\left(g\right) \ProjectionOperator_{\SpehRepresentation{\tau}{c}} f_{\tau^{\circ c}}^{\mathrm{Wh}}}{ f_{\tau^{\circ c}}^{\mathrm{Wh}}} = \innerproduct{\WhittakerProjection \tau^{\circ c}\left(g\right) \ProjectionOperator_{\SpehRepresentation{\tau}{c}}f_{\tau^{\circ c}}^{\mathrm{Wh}} }{\tau^{\circ c}\left(\sigma^{-1}\right) f^0_{\whittakerVector{\tau} \otimes \dots \whittakerVector{\tau}}}.$$
	Since $\ProjectionOperator_{\SpehRepresentation{\tau}{c}}$, commutes with the action of $\tau^{\circ c}$ (which implies that it also commutes with $\WhittakerProjection$), we get that this inner product equals $$\frac{1}{\PoincarePolynomial{k}{c}\left(q^k\right)} \innerproduct{\WhittakerProjection \tau^{\circ c}\left(g\right) \WhittakerProjection \tau^{\circ c}\left(\sigma^{-1}\right) f_{\whittakerVector{\tau} \otimes \dots \otimes \whittakerVector{\tau} }}{\tau^{\circ c}\left(\sigma^{-1}\right) f^0_{\whittakerVector{\tau} \otimes \dots \whittakerVector{\tau}}}.$$
	Since $\tau^{\circ c}\left(\sigma^{-1}\right) f^0_{\whittakerVector{\tau} \otimes \dots \whittakerVector{\tau}}$ is supported on $\ParabolicForSpeh{k}{c} \sigma$, this evaluates to $$\frac{1}{\PoincarePolynomial{k}{c}\left(q^k\right)} \innerproduct{\left(\WhittakerProjection \tau^{\circ c}\left(g\right) \WhittakerProjection \tau^{\circ c}\left(\sigma^{-1}\right) f_{\whittakerVector{\tau} \otimes \dots \otimes \whittakerVector{\tau} }\right)\left(\sigma\right) }{\whittakerVector{\tau} \otimes \dots \whittakerVector{\tau}}_{\tau \otimes \dots \otimes \tau}.$$
	We have that \begin{align*}
		\MoveEqLeft[3]\left(\WhittakerProjection \tau^{\circ c}\left(g\right) \WhittakerProjection \tau^{\circ c}\left(\sigma^{-1}\right) f_{\whittakerVector{\tau} \otimes \dots \otimes \whittakerVector{\tau} }\right)\left(\sigma\right)\\
		&= \frac{1}{\sizeof{\UnipotentRadicalForWss{k}{c}}^2} \sum_{u_1, u_2 \in \UnipotentRadicalForWss{k}{c}} f_{\whittakerVector{\tau} \otimes \dots \otimes \whittakerVector{\tau} }\left(\sigma u_1 g u_2 \sigma^{-1} \right) \fieldCharacterkc{k}{c}^{-1}\left(u_1\right) \fieldCharacterkc{k}{c}^{-1}\left(u_2\right).
	\end{align*} Using the $\fieldCharacter$-equivariance property of $\whittakerVector{\tau}$ and the $\UnipotentForSpeh{k}{c}$ invariance property of $f_{\whittakerVector{\tau} \otimes \dots \otimes \whittakerVector{\tau} }$, the sum reduces to $$\frac{1}{\sizeof{\UnipotentRadicalForWssRecursion{k}{c}}^2} \sum_{u_1, u_2 \in \UnipotentRadicalForWssRecursion{k}{c} } f_{\whittakerVector{\tau} \otimes \dots \otimes \whittakerVector{\tau} }\left(u_1 \sigma g \sigma^{-1} u_2 \right).$$
	This proves the first part.
	
	For the second part, we substitute $g= \IdentityMatrix{kc}$ and by changing variable we get $$\frac{1}{\sizeof{\UnipotentRadicalForWssRecursion{k}{c}}^2} \sum_{u_1, u_2 \in \UnipotentRadicalForWssRecursion{k}{c} } f_{\whittakerVector{\tau} \otimes \dots \otimes \whittakerVector{\tau} }\left(u_1  u_2 \right) = \frac{1}{\sizeof{\UnipotentRadicalForWssRecursion{k}{c}}} \sum_{u \in \UnipotentRadicalForWssRecursion{k}{c} } f_{\whittakerVector{\tau} \otimes \dots \otimes \whittakerVector{\tau} }\left( u \right).$$
	
	Since the matrices $X_{ij}$ in $u \in \UnipotentRadicalForWssRecursion{k}{c}$ are strictly upper triangular, the Bruhat decomposition of $u$ cannot have a permutation part that lies in $\SymmetricGroup_c$, unless $X_{ij} = 0$ for all $i,j$. 
	Therefore, we get that 
	\begin{equation*}
	\innerproduct{\ProjectionOperator_{\SpehRepresentation{\tau}{c}} f_{\tau^{\circ c}}^{\mathrm{Wh}}}{\ProjectionOperator_{\SpehRepresentation{\tau}{c}} f_{\tau^{\circ c}}^{\mathrm{Wh}}} =  \frac{1}{\sizeof{\UnipotentRadicalForWssRecursion{k}{c}} \cdot \PoincarePolynomial{k}{c}\left(q^k\right)} \ne 0.\qedhere
	\end{equation*}
\end{proof}

Using \Cref{prop:projection-of-simple-function-to-whittaker-space} we deduce the following expression for the Bessel--Speh function of $\SpehRepresentation{\tau}{c}$.

\begin{corollary}\label{cor:expression-of-bessel-speh-in-terms-of-simple-function}
	For any $g \in \GL_{kc}\left(\finiteField\right)$,
	$$ \besselSpehFunction{\tau}{c}\left(g\right) = \frac{1}{ \sizeof{\UnipotentRadicalForWssRecursion{k}{c}} } \sum_{u_1, u_2 \in \UnipotentRadicalForWssRecursion{k}{c} } \innerproduct{f_{\whittakerVector{\tau} \otimes \dots \otimes \whittakerVector{\tau} }\left(u_1 \sigma g \sigma^{-1} u_2 \right)}{\whittakerVector{\tau} \otimes \dots \otimes \whittakerVector{\tau}}.$$
\end{corollary}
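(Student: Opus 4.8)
The plan is to combine the matrix-coefficient description of the normalized Bessel--Speh function from \Cref{sec:Bessel--Speh-function} with both parts of \Cref{prop:projection-of-simple-function-to-whittaker-space}, taking the distinguished $\left(k,c\right)$ $\fieldCharacter$-Whittaker vector to be $v \coloneqq \ProjectionOperator_{\SpehRepresentation{\tau}{c}} f_{\tau^{\circ c}}^{\mathrm{Wh}}$. Since $\tau$ is cuspidal, $\SpehRepresentation{\tau}{c}$ is irreducible, so it has a one-dimensional space of $\left(k,c\right)$ $\fieldCharacter$-Whittaker vectors, and for any $\GL_{kc}\left(\finiteField\right)$-invariant inner product on $\SpehRepresentation{\tau}{c}$ and any nonzero such vector $v$ we have $\besselSpehFunction{\tau}{c}\left(g\right) = \innerproduct{\SpehRepresentation{\tau}{c}\left(g\right)v}{v}/\innerproduct{v}{v}$, independently of all choices. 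I will use the invariant inner product on $\SpehRepresentation{\tau}{c}$ obtained by restricting the one fixed on $\tau^{\circ c}$.

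First I would check that $v$ is a nonzero $\left(k,c\right)$ $\fieldCharacter$-Whittaker vector of $\SpehRepresentation{\tau}{c}$. The vector $f_{\tau^{\circ c}}^{\mathrm{Wh}} = \WhittakerProjection \tau^{\circ c}\left(\sigma^{-1}\right) f^0_{\whittakerVector{\tau} \otimes \dots \otimes \whittakerVector{\tau}}$ lies in the image of the Whittaker projection $\WhittakerProjection$ and is therefore a $\left(k,c\right)$ $\fieldCharacter$-Whittaker vector of $\tau^{\circ c}$; as $\ProjectionOperator_{\SpehRepresentation{\tau}{c}}$ is $\GL_{kc}\left(\finiteField\right)$-equivariant it commutes with the action of $\UnipotentRadicalForWss{k}{c}$, so $v$ is either $0$ or a $\left(k,c\right)$ $\fieldCharacter$-Whittaker vector of $\SpehRepresentation{\tau}{c}$; and part (2) of \Cref{prop:projection-of-simple-function-to-whittaker-space} gives $\innerproduct{v}{v} = \left(\sizeof{\UnipotentRadicalForWssRecursion{k}{c}}\cdot\PoincarePolynomial{k}{c}\left(q^k\right)\right)^{-1} \ne 0$, hence $v \ne 0$.

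It then only remains to substitute. Since $v \in \SpehRepresentation{\tau}{c}$, the action $\SpehRepresentation{\tau}{c}\left(g\right)v$ coincides with $\tau^{\circ c}\left(g\right)v$, so part (1) of \Cref{prop:projection-of-simple-function-to-whittaker-space} evaluates $\innerproduct{\SpehRepresentation{\tau}{c}\left(g\right)v}{v}$; dividing by $\innerproduct{v}{v}$ from part (2), the factor $\PoincarePolynomial{k}{c}\left(q^k\right)^{-1}$ and one factor of $\sizeof{\UnipotentRadicalForWssRecursion{k}{c}}^{-1}$ cancel, leaving exactly the asserted identity
$$\besselSpehFunction{\tau}{c}\left(g\right) = \frac{1}{\sizeof{\UnipotentRadicalForWssRecursion{k}{c}}} \sum_{u_1, u_2 \in \UnipotentRadicalForWssRecursion{k}{c}} \innerproduct{f_{\whittakerVector{\tau} \otimes \dots \otimes \whittakerVector{\tau}}\left(u_1 \sigma g \sigma^{-1} u_2\right)}{\whittakerVector{\tau} \otimes \dots \otimes \whittakerVector{\tau}}.$$
The substantive content is already contained in \Cref{prop:projection-of-simple-function-to-whittaker-space} (which in turn rests on \Cref{thm:formula-for-k-c-functional-big-model} and the self-adjointness of the two projection operators); for the corollary itself there is no real obstacle, the only points needing attention being the non-vanishing of $v$ and the bookkeeping of the normalization constants.
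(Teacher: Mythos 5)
Your proposal is correct and follows essentially the same route as the paper: the corollary is deduced directly from \Cref{prop:projection-of-simple-function-to-whittaker-space} by taking $v = \ProjectionOperator_{\SpehRepresentation{\tau}{c}} f_{\tau^{\circ c}}^{\mathrm{Wh}}$ as the (nonzero, by part (2)) $\left(k,c\right)$ $\fieldCharacter$-Whittaker vector, writing $\besselSpehFunction{\tau}{c}$ as the normalized matrix coefficient of $v$, and dividing the formula of part (1) by that of part (2). Your bookkeeping of the constants $\PoincarePolynomial{k}{c}\left(q^k\right)$ and $\sizeof{\UnipotentRadicalForWssRecursion{k}{c}}$, and the verification that $v$ is indeed a $\left(k,c\right)$ $\fieldCharacter$-Whittaker vector via equivariance of the projection, are exactly the steps the paper leaves implicit.
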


\subsubsection{Proof of the main theorem}
We are now ready to prove \Cref{thm:GL2-formula-for-voronoi-element}.

\begin{proof}
	Let $g = \begin{pmatrix}
		& \IdentityMatrix{\left(k-1\right)2}\\
		h
	\end{pmatrix}$. 
	By \Cref{cor:expression-of-bessel-speh-in-terms-of-simple-function}, we need to consider the sum
	$$\sum_{u_1, u_2 \in \UnipotentRadicalForWssRecursion{k}{2} } f_{\whittakerVector{\tau} \otimes \whittakerVector{\tau} }\left(u_1 \sigma g \sigma^{-1} u_2 \right).$$
	We have that $$\sigma g \sigma^{-1} = \begin{pmatrix}
		& \IdentityMatrix{k-1} & & 0_{k-1}\\
		a & & b & \\
		& 0_{k-1} & & \IdentityMatrix{k-1}\\
		c & & d & 
	\end{pmatrix}.$$ 
	Write $u_1 = \begin{pmatrix}
		\IdentityMatrix{k} & \\
		X & \IdentityMatrix{k}
	\end{pmatrix}$ and $u_2 = \begin{pmatrix}
		\IdentityMatrix{k} & \\
		Y & \IdentityMatrix{k}
	\end{pmatrix}$, where $$X = \begin{pmatrix}
		0 & x_{12} & x_{13} & \cdots & x_{1k}\\
		& 0 & x_{23} & \cdots & x_{2k}\\
		& & \ddots & \ddots & \vdots\\
		& & & 0 & x_{k-1, k}\\
		& & & & 0
	\end{pmatrix} \text{ and } Y = \begin{pmatrix}
		0 & y_{12} & y_{13} & \cdots & y_{1k}\\
		& 0 & y_{23} & \cdots & y_{2k}\\
		& & \ddots & \ddots & \vdots\\
		& & & 0 & y_{k-1, k}\\
		& & & & 0
	\end{pmatrix}.$$
	We set $$X' = \begin{pmatrix}
		x_{12} & x_{13}  & \cdots & x_{1,k-1}\\
		& x_{23} & \cdots & x_{2,k-1}  \\
		& & \ddots & \vdots  \\
		&  & & x_{k-2, k-1}\\
	\end{pmatrix},\ X'' = \begin{pmatrix}
		x_{1k}\\
		\vdots\\
		x_{k-2,k}
	\end{pmatrix},$$
	and 
	$$Y' = \begin{pmatrix}
		y_{23} & y_{24}  & \cdots & y_{2k}\\
		& y_{34} & \cdots & y_{3k}  \\
		& & \ddots & \vdots  \\
		&  & & y_{k-1, k}\\
	\end{pmatrix},\ Y'' = \begin{pmatrix}
		y_{13} &
		\cdots &
		y_{1k}
	\end{pmatrix}.$$ 
	Then
	$$X = \begin{pmatrix}
		0_{\left(k-2\right) \times 1} & X' & X''\\
		& 0_{1 \times \left(k-2\right)} & x_{k-1,k}\\
		& & 0
	\end{pmatrix} \text{ and } Y = \begin{pmatrix}
		0 & y_{12} & Y''\\
		& 0_{\left(k-2\right) \times 1} & Y'\\
		& & 0_{1 \times \left(k-2\right)}
	\end{pmatrix}.$$
	
	We have \begin{equation} \label{eq:c-equals-2-summand}
		u_1 \sigma g \sigma^{-1} u_2 = \begin{pmatrix}
			& 1 & & & 0_{k-2}\\
			& & \IdentityMatrix{k-2} & & & 0\\
			a & b y_{12} & bY'' & b\\
			X''a & X'' b y_{12} & X' + Y' + X'' b Y'' & X'' b & \IdentityMatrix{k-2} & \\
			x_{k-1,k}a & x_{k-1,k}b y_{12} & x_{k-1,k} b Y'' & x_{k-1,k}b &  & 1\\
			c & dy_{12} & dY'' & d & 
		\end{pmatrix}.
	\end{equation}
	
	We are interested in checking for which $X$ and $Y$ the matrix in \eqref{eq:c-equals-2-summand} is in the support of $f_{\whittakerVector{\tau} \otimes \whittakerVector{\tau}}$. 
	We have two options to consider.
	
	The first option is $u_1 \sigma g \sigma^{-1} u_2 \in \ParabolicForSpeh{k}{c}$, which can only happen if $c = 0$. 
	In this case, we must have $a \ne 0$ and $d \ne 0$ and therefore $X''$, $Y''$, $y_{12}$ and $x_{k-1,k}$ are all zero, and $X' = -Y'$. 
	There are $q^{\binom{k-1}{2}}$ options for $X$ and $Y$ whose entries satisfy these conditions. 
	We get in this case $$u_1 \sigma g \sigma^{-1} u_2 = \begin{pmatrix}
		& \IdentityMatrix{k-1} & & 0_{k-1}\\
		a & & b\\
		& 0_{k-1} & & \IdentityMatrix{k-1}\\
		0 & & d
	\end{pmatrix},$$ and therefore \begin{equation} \label{eq:c-equals-2-case-1-parabolic-element}
		f_{\whittakerVector{\tau} \otimes \whittakerVector{\tau}}\left( u_1 \sigma g \sigma^{-1} u_2 \right) = \tau\begin{pmatrix}
			& \IdentityMatrix{k-1}\\
			a
		\end{pmatrix} \otimes \tau\begin{pmatrix}
			& \IdentityMatrix{k-1}\\
			d
		\end{pmatrix} \whittakerVector{\tau} \otimes \whittakerVector{\tau}.
	\end{equation}
	
	The second option is $u_1 \sigma g \sigma^{-1} u_2 \in \ParabolicForSpeh{k}{c} \begin{pmatrix}
		& \IdentityMatrix{k}\\
		\IdentityMatrix{k}
	\end{pmatrix} \ParabolicForSpeh{k}{c}$, which implies that $$u_1 \sigma g \sigma^{-1} u_2 = \begin{pmatrix}
		h_1 & A\\
		& h_2
	\end{pmatrix} \begin{pmatrix}
		& \IdentityMatrix{k}\\
		\IdentityMatrix{k}
	\end{pmatrix} \begin{pmatrix}
		\IdentityMatrix{k} & B\\
		& \IdentityMatrix{k}
	\end{pmatrix} = \begin{pmatrix}
		A & AB + h_1\\
		h_2 & h_2 B
	\end{pmatrix}$$ for some $A, B \in \squareMatrix_{k}\left(\finiteField\right)$ and $h_1, h_2 \in \GL_k\left(\finiteField\right)$. 
	In particular, this means that the left bottom $k \times k$ block of \eqref{eq:c-equals-2-summand} is invertible and equals $h_2$. 
	We decompose this block into
	\begin{equation}\label{eq:c-equals-2-decomposition-of-left-bottom-block}
		\begin{split}
			h_2 = {} &\begin{pmatrix}
				X''a & X'' b y_{12} & X' + Y' + X'' b Y''\\
				x_{k-1,k}a & x_{k-1,k}b y_{12} & x_{k-1,k} b Y''\\
				c & dy_{12} & dY''
			\end{pmatrix} \\
			={}& \begin{pmatrix}
				\IdentityMatrix{k-2} & X''\\
				& x_{k-1,k}\\
				& & 1
			\end{pmatrix} \begin{pmatrix}
				& & \IdentityMatrix{k-2}\\
				a & b &\\
				c & d &
			\end{pmatrix} \begin{pmatrix}
				1 &\\
				& y_{12} & Y''\\
				& & X' + Y'
			\end{pmatrix}.
		\end{split}
	\end{equation}
	In order for \eqref{eq:c-equals-2-decomposition-of-left-bottom-block} to be invertible, we must have that $x_{k-1,k}$ and $y_{12}$ are non-zero, and that $X' + Y'$ is an invertible matrix, which is equivalent to requiring $x_{j, j+1} + y_{j+1, j+2} \ne 0$ for every $1 \le j \le k-2$. 
	Suppose that we are in this case. 
	Then $$h_2 = \begin{pmatrix}
		X''a & X'' b y_{12} & X' + Y' + X'' b Y''\\
		x_{k-1,k}a & x_{k-1,k}b y_{12} & x_{k-1,k} b Y''\\
		c & dy_{12} & dY''
	\end{pmatrix} \text{ and } A = \begin{pmatrix}
		& 1 \\
		& & \IdentityMatrix{k-2}\\
		a & by_{12} & bY''
	\end{pmatrix},$$
	and also $$h_2 B = \begin{pmatrix}
		X'' b & \IdentityMatrix{k-2} &\\
		x_{k-1,k}b & & 1\\
		d & &
	\end{pmatrix} \text{ and } AB + h_1 = \begin{pmatrix}
		& 0_{k-1}\\
		b
	\end{pmatrix}.$$
	Then $$h_2^{-1} = \begin{pmatrix}
		1\\
		& y_{12}^{-1} & -y_{12}^{-1} Y'' \left(X' + Y'\right)^{-1}\\
		& & \left(X' + Y'\right)^{-1}
	\end{pmatrix} \begin{pmatrix}
		& a' & b'\\
		& c' & d'\\
		\IdentityMatrix{k-2}
	\end{pmatrix} \begin{pmatrix}
		\IdentityMatrix{k-2} & -X'' x_{k-1, k}^{-1} & \\
		& x_{k-1, k}^{-1}\\
		& & 1
	\end{pmatrix}.$$ 
	This implies that \begin{align*}
		B &= h_2^{-1} \begin{pmatrix}
			X'' b & \IdentityMatrix{k-2} &\\
			x_{k-1,k}b & & 1\\
			d & &
		\end{pmatrix} \\
		&= \begin{pmatrix}
			& & a' x_{k-1, k}^{-1}\\
			y_{12}^{-1} & -y_{12}^{-1} Y'' \left(X' + Y'\right)^{-1} & y_{12}^{-1}\left(c' + Y'' \left(X' + Y'\right)^{-1} X'' \right) x_{k-1,k}^{-1}\\
			& \left(X' + Y'\right)^{-1} & -\left(X' + Y'\right)^{-1} X'' x_{k-1,k}^{-1}.
		\end{pmatrix}.
	\end{align*}
	
	Hence, \begin{equation}\label{eq:c-equals-2-formula-for-h1}
		\begin{split}
			h_1 &= \begin{pmatrix}
				& 0_{k-1}\\
				b
			\end{pmatrix} - AB \\
			&= -\begin{pmatrix}
				y_{12}^{-1} & -y_{12}^{-1} Y'' \left(X' + Y'\right)^{-1} & y_{12}^{-1} \left(c' + Y'' \left(X' + Y'\right)^{-1} X''\right) x_{k-1,k}^{-1}\\
				& \left(X' + Y'\right)^{-1} & -\left(X' + Y'\right)^{-1} X'' x_{k-1,k}^{-1}\\
				& & x_{k-1,k}^{-1}
			\end{pmatrix}\\
			& = -\begin{pmatrix}
				y_{12} & Y'' & -c'\\
				& X' + Y' & X''\\
				& & x_{k-1,k}
			\end{pmatrix}^{-1}.
		\end{split}
	\end{equation}
	Therefore, for $X$ and $Y$ satisfying $y_{12}$, $x_{k-1,k} \ne 0$ and $x_{j, j+1} + y_{j+1, j+2} \ne 0$ for every $1 \le j \le k-2$, we have $$f_{\whittakerVector{\tau} \otimes \whittakerVector{\tau} }\left(u_1 \sigma g \sigma^{-1} u_2 \right) = q^{-\binom{k}{2}} \centralCharacter{\tau}\left(-1\right) \tau\left( h_2 \right) \otimes \tau\left(h_1\right) \whittakerVector{\tau} \otimes \whittakerVector{\tau},$$ where $h_1$ is given by \eqref{eq:c-equals-2-formula-for-h1} and $h_2$ is given by \eqref{eq:c-equals-2-decomposition-of-left-bottom-block}. 
	We have that
	\begin{equation} \label{eq:c-equals-2-case-2-in-terms-of-bessel-functions}
		\innerproduct{f_{\whittakerVector{\tau} \otimes \whittakerVector{\tau} }\left(u_1 \sigma g \sigma^{-1} u_2 \right)}{\whittakerVector{\tau} \otimes \whittakerVector{\tau}} = q^{-\binom{k}{2}} \centralCharacter{\tau}\left(-1\right) \besselFunction_{\tau, \fieldCharacter}\left(h_2\right) \besselFunction_{\tau, \fieldCharacter}\left(h_1\right).
	\end{equation}
	Since $h_1$ is an upper triangular matrix, $\besselFunction_{\tau, \fieldCharacter}\left(h_1\right)$ is zero unless the diagonal entries of $h_1$ are all the same, which is equivalent to $y_{12} = x_{k-1,k} = x_{1,2} + y_{2,3} = \dots = x_{k-2,k-1} + y_{k-1,k} = t$ for some $t \in \multiplicativegroup{\finiteField}$. 
	Given such $t \in \multiplicativegroup{\finiteField}$ there are exactly $q^{2 \binom{k}{2} - k}$ options for $X$ and $Y$ satisfying the latter condition. 
	The $\fieldCharacter$-equivariance properties of the Bessel function imply that $$\fieldCharacter^{-1}\left(t^{-1}\left(X' + Y'\right)\right) \besselFunction_{\tau, \fieldCharacter}\left(h_1\right) = \begin{cases}
		\centralCharacter{\tau}\left(-t^{-1}\right) \fieldCharacter\left(-t^{-1} y_{23} - t^{-1} x_{k-2, k} \right) & k \ge 3,\\
		\centralCharacter{\tau}\left(-t^{-1}\right) \fieldCharacter\left(t^{-1} c'\right) & k = 2.
	\end{cases},$$ and that
	$$\fieldCharacter\left(t^{-1} \left(X' + Y'\right) \right) \besselFunction_{\tau, \fieldCharacter}\left(h_2\right) = \begin{cases}
		\fieldCharacter\left(t^{-1} y_{23} + t^{-1} x_{k-2, k} \right) \besselFunction_{\tau, \fieldCharacter} \begin{pmatrix}
			& & t\IdentityMatrix{k-2}\\
			t a& t^2b\\
			c& td
		\end{pmatrix} & k \ge 3,\\
		\besselFunction_{\tau, \fieldCharacter} \begin{pmatrix}
			& & t\IdentityMatrix{k-2}\\
			t a& t^2 b\\
			c& td
		\end{pmatrix} & k = 2.
	\end{cases}$$
	Therefore, we have that  \begin{equation}\label{eq:c-equals-2-case-2-bessel-function-expression}
		\besselFunction_{\tau, \fieldCharacter}\left(h_1\right) \besselFunction_{\tau, \fieldCharacter}\left(h_2\right) = \centralCharacter{\tau}\left(-1\right) \besselFunction_{\tau, \fieldCharacter} \begin{pmatrix}
			& & \IdentityMatrix{k-2}\\
			a& t b\\
			t^{-1} c& d
		\end{pmatrix} \fieldCharacter\left(t^{-1} c'\right)^{\delta_{k,2}}.
	\end{equation}
	Combining \eqref{eq:c-equals-2-case-1-parabolic-element}, \eqref{eq:c-equals-2-case-2-in-terms-of-bessel-functions}, \eqref{eq:c-equals-2-case-2-bessel-function-expression} and the counting of the elements, we get by \Cref{cor:expression-of-bessel-speh-in-terms-of-simple-function},
	\begin{align*}
		q^{\binom{k}{2}} \besselSpehFunction{\tau}{2} \begin{pmatrix}
			& & \IdentityMatrix{\left(k-1\right)2}\\
			a & b\\
			c & d
		\end{pmatrix} ={}& \delta_{c, 0} q^{\binom{k-1}{2}} \cdot \besselFunction_{\tau, \fieldCharacter}\begin{pmatrix}
			& \IdentityMatrix{k-1}\\
			a
		\end{pmatrix} \begin{pmatrix}
			& \IdentityMatrix{k-1}\\
			d
		\end{pmatrix} \\
		& +  q^{2\binom{k}{2} - k -\binom{k}{2}} \sum_{t \in \multiplicativegroup{\finiteField}} \besselFunction_{\tau, \fieldCharacter} \begin{pmatrix}
			& & \IdentityMatrix{k-2}\\
			a& t b\\
			t^{-1} c& d
		\end{pmatrix} \fieldCharacter\left(t^{-1} c'\right)^{\delta_{k,2}}.
	\end{align*}
	The theorem now follows because $\binom{k}{2} - \binom{k - 1}{2} = k - 1$ and $$\begin{pmatrix}
		t & \\
		& 1
	\end{pmatrix} \begin{pmatrix}
		a & b\\
		c & d
	\end{pmatrix} \begin{pmatrix}
		t^{-1} & \\
		& 1
	\end{pmatrix} = \begin{pmatrix}
		a & bt\\
		ct^{-1} & d
	\end{pmatrix}.$$
\end{proof}

\bibliographystyle{abbrv}
\bibliography{references}
\end{document}